\documentclass{article}
\usepackage{graphicx}
\graphicspath{{../}}
\usepackage{amssymb,amsmath}
\usepackage{cancel}
\usepackage{epstopdf}
\usepackage{pdflscape}
\usepackage{empheq}
\usepackage{verbatim}

\usepackage{xurl}
\usepackage[
colorlinks=true,
linkcolor=blue,
citecolor=darkgreen,
urlcolor=darkblue,
]{hyperref}

\usepackage{enumitem}
\usepackage{calc}
\usepackage{mathtools}
\usepackage{color}
\definecolor{darkgreen}{rgb}{0,0.5,0}
\definecolor{purple}{rgb}{0.5,0,0.5}
\definecolor{darkblue}{rgb}{0,0,0.6}

\DeclareMathOperator\sgn{sgn}

\numberwithin{equation}{section}

\newtheorem{theorem}{Theorem}[section]
\newtheorem{lemma}[theorem]{Lemma}

\newtheorem{prop}[theorem]{Proposition}
\newtheorem{corollary}[theorem]{Corollary}

\newtheorem{defi}[theorem]{Definition}
\newtheorem{rem}[theorem]{Remark}

\newcommand{\RR}{\mathbb{R}}
\newcommand{\C}{\mathbb{C}}
\newcommand{\Z}{\mathbb{Z}}
\newcommand{\N}{\mathbb{N}}
\newcommand{\T}{\mathbb{T}}

\newcommand{\supp}{{\rm supp}\thinspace}

\newcommand{\la}{\lambda}
\newcommand{\te}{\theta}
\newcommand{\eps}{\epsilon}
\newcommand{\ep}{\varepsilon}
\newcommand{\de}{\delta}
\newcommand{\foba}[1]{{{\bf e}^{#1}}}

\newcommand{\pa}{\partial}
\newcommand{\om}{\omega}

\newcommand{\intpi}{\int_{-\pi}^\pi}
\newcommand{\intpin}{\frac 1{2\pi}\intpi}
\renewcommand{\Re}{{\rm Re}\thinspace}

\newcommand{\ha}{\widehat}
\newcommand{\ti}{\widetilde}

\newcommand{\Pcal}{\mathcal{P}}

\newcommand{\LfibD}{L^{{\rm fib}\,D}}

\renewcommand{\exp}{\mathrm{exp}} % avoid the thin space TeX inserts between an \mathop and an opening parenthesis
\newcommand{\norm}[1]{\|#1\|}
\newcommand{\bignorm}[1]{\big\|#1\big\|}
\newcommand{\D}{\mathbb{D}}
\newcommand{\al}{\alpha}

\newcommand{\spn}{\operatorname{span}}
\newcommand{\Scal}{\mathcal{S}}
\newcommand{\Q}{\mathbb{Q}}

\newcommand{\OLdisk}[2]{{L^{\rm hol}_{#1,#2}}}
\newcommand{\Hard}[2]{{\mathsf H^{#1}_{#2}}}

\newcommand{\Fholh}[2]{
{F_{\la,\te,#1}^{h,#2}}
}
\newcommand{\Fholhte}[3]{
{F_{\la,\te^#3,#1}^{h,#2}}
}
\newcommand{\Fholp}[1]{
{F_{G,\la,\te}^{p,#1}}
}

\newcommand{\Pglo}{
{P_{\la,\te}}
}
\newcommand{\Qglo}{
{Q_{\la,\te}}
}

\newcommand{\Qglolm}{
{Q_{-\la,\te}}
}
\newcommand{\Pglolm}{
{P_{-\la,\te}}
}

\newcommand{\Ploc}[1]{
{P_{\la,\te}^{#1}}
}
\newcommand{\Qloc}[1]{
{Q_{\la,\te}^{#1}}
}
\newcommand{\ploc}[1]{
{p_{\la,\te}^{#1}}
}
\newcommand{\qloc}[1]{
{q_{\la,\te}^{#1}}
}

\newcommand{\Affterm}[2]{
{ G_{#1}^{#2}}
}
\newcommand{\FholpAff}[3]{
{F_{\la,#2,#1}^{p,#3}}
}
\newcommand{\oldDFp}[2]{%
    \FholpAff{#1}{#2}{1}-\FholpAff{#1}{#2}{0}%
}

\newcommand{\Mcritnov}{
{{\bf M}}
}
\newcommand{\Mcrit}{
{\Mcritnov_{\la,\te}}
}
\newcommand{\Mcritap}{
{\Mcritnov_{\la,\te}^{\rm ap}}
}
\newcommand{\Fholhol}[2]{
{\mathcal{F}_{\la,\te^{#2}}^{{#2},{#1}}}
}
\newcommand{\detcrit}{
\nu
}

\newcommand{\XH}[1]{{H^{#1}(\T)}}
\newcommand{\XHZmean}[1]{{\dot{H}^{#1 }(\T)}}
\newcommand{\XHhol}[1]{{\mathcal{H}^{#1}}}
\newcommand{\XHholpair}[1]{\XHhol{#1} \oplus \XHhol{#1}}
\newcommand{\XHholo}{{\mathcal{H}(\T)}}
\newcommand{\XL}{{L^2(\T)}}

\newcommand{\Itheta}{\left[\frac13,\frac12\right]}
\newcommand{\Ilambd}{\left(\getvar{low_bnd_eig},\getvar{upp_bnd_eig}\right)}
\newcommand{\cIlambd}{\left[\getvar{low_bnd_eig},\getvar{upp_bnd_eig}\right]}
\newcommand{\Slambd}{\{\getvar{low_bnd_eig},\getvar{upp_bnd_eig}\}}

\newcommand{\andspace}{\qquad\text{and}\qquad}

\newif\ifhideproofs
\ifhideproofs

\else 
    \newenvironment{proof}{\begin{trivlist} \item[] {\bf Proof:}}{\hfill $\Box$
                       \end{trivlist}}
\fi 

\newenvironment{proofsth}[1]{\begin{trivlist} \item[] {\textbf{Proof of #1:}}}{\hfill $\Box$
                       \end{trivlist}} % Used

\newcommand{\proofcompassi}{
\begin{trivlist}
\item[] {\bf Proof:}
 The proof is computer-assisted and the code can be found in the supplementary material. We refer to the Appendix~\ref{sec:appendix:impl} for details about the implementation.
\hfill $\Box$ \end{trivlist}}

\newcommand{\prooffuchstoappendixH}{
\begin{proof}
    The statements follow from standard Frobenius theory for regular singular points after computing the corresponding indicial equations.
    
    For completeness, and since explicit recursive formulas are useful for constructive computations, detailed derivations are provided in Appendix~\ref{sec:appendix:frobenius}.
\end{proof}
}

\newcommand{\prooffuchstoappendixW}[1]{
\begin{proof}
    The solution is constructed directly from the Frobenius recurrence for the inhomogeneous equation, using the local normal form of {#1}. See Appendix~\ref{sec:appendix:frobenius}.
\end{proof}
}

\newcommand{\ga}{\gamma}
\newcommand{\ka}{\kappa}
\newcommand{\linsol}{\eta^{\rm lin}_\de}
\newcommand{\nonsol}{\eta_\de}
\newcommand{\difsol}{\varpi_\de}
\newcommand{\resol}{\eta_{\de,n}}
\newcommand{\rthr}{\varrho}

\newcommand{\dx}{\,dx}
\newcommand{\dy}{\,dy}
\newcommand{\ds}{\,ds}

\newcommand{\wiajustao}[2]{%
  \mathord{%
    \mathrlap{%
      \ha{%
        \mathrlap{#1_{#2}}%
        \phantom{#1}%
      }%
    }%
    \phantom{#1_{#2}}%
  }%
}

\newcommand{\setvar}[2]{%
  \expandafter\def\csname var@#1\endcsname{#2}%
}

\newcommand{\getvar}[1]{%
  \csname var@#1\endcsname
}

\usepackage[
backend=biber,
style=alphabetic,
url=false,
eprint=false,
maxbibnames=199,
giveninits=true,
block=space,
]{biblatex}
\title{On the instability of De Gregorio excited states}
\author{Miguel M. G. Pascual-Caballo}  

\begin{document}

\maketitle

\begin{abstract}
    We prove that all excited states $-\sin(nx)$, $n\geq2$, of the De Gregorio equation on the torus are nonlinearly unstable in the sense of Hadamard. More precisely, perturbations arbitrarily small in $H^2$ leave a fixed $H^1$ neighborhood of the state. For every $n\geq2$, the linearized operator has a real eigenvalue $\lambda_n\in(0.56,0.6)$. To prove this, we transform the eigenvalue problem into a Fuchsian equation on the complex disk and obtain a matching condition for the eigenvalue. This condition is verified rigorously using interval arithmetic. The nonlinear instability follows from a bootstrap argument in the spirit of Guo, Hallstrom, and Spirn.
\end{abstract}

{\hypersetup{linkcolor=black}\tableofcontents}

\setvar{low_bnd_eig}{0.56}
\setvar{upp_bnd_eig}{0.6}

\setvar{prime_split_N}{300}
\setvar{proof_n0}{31}
\setvar{proof_nres}{62}
\setvar{proof_bits}{200}
\setvar{proof_center}{\frac{1}{2}}
\setvar{proof_rho}{\frac{3}{4}}
\setvar{proof_bzero}{\frac{1}{2}}

\setvar{linkdgreg}{
\href{https://github.com/MiguelMGPascualCaballo/dgreg/tree/v2.0}{\nolinkurl{github.com/MiguelMGPascualCaballo/dgreg/tree/v2.0}}
}

\section{Introduction}

The De Gregorio equation is the one-dimensional active scalar model
\begin{equation}\label{DeGreg}
    \left\{\begin{array}{l}
    \partial_t \om + u \pa_x \om = \om \pa_x u,\quad (x,t)\in\RR\times [0,\infty).\\
    u=-\Lambda^{-1} \om,\\
     \om(x,0)= \om_0(x).
    \end{array}\right.
\end{equation}
Here $\Lambda:=(-\Delta)^{1/2}$.

Equations of this type were introduced as one-dimensional models for the vorticity formulation of the three-dimensional incompressible Euler equations, where the dynamics is governed by the competition between transport and vortex stretching. The model of Constantin, Lax and Majda~\cite{CLM85} keeps only the stretching term $\om\pa_x u$ and develops singularities in finite time from smooth data. De Gregorio~\cite{DeGregorio1990,DeGregorio1996} added back the transport term $u\pa_x\om$, whose regularizing effect can compete with the stretching and prevent blow-up. The two terms are interpolated by the one-parameter family of Okamoto, Sakajo and Wunsch~\cite{OSW08}, of which~\eqref{DeGreg} is the endpoint where transport is fully present.

The behavior of~\eqref{DeGreg} depends strongly on both the regularity of the data and the underlying domain.
On the real line, Chen, Hou and Huang~\cite{CHH21} proved finite-time blow-up from smooth odd initial data.
Singularity formation for lower-regularity data was established in~\cite{EJ20}. The periodic problem on $\T$ has a rather different character. 
Local well-posedness for $\om_0\in\XH{1}$ is proved in~\cite{OSW08}.
It extends to $s>\frac12$ by the methods of~\cite{Danchin,Chemin-book}.
Jia, Stewart and \v Sver\'ak~\cite{JSS19} outline this extension and prove that, for initial data $C^2$-close to an equilibrium $A\sin(x-x_0)$, the solution exists globally and converges exponentially to a nearby equilibrium in $\XH{s}$ for any $s<\frac32$.
Lei, Liu and Ren~\cite{LLR20} proved global well-posedness for initial data with nonnegative vorticity and stability near the ground state $-\sin x$.
Chen~\cite{Chen21} proved finite-time blow-up for the related gCLM equation on the circle when the transport coefficient is slightly below one, starting from smooth data with the same oddness and sign as the $\pi$-periodic class studied in~\cite{chen2023regularity}. For the De Gregorio equation, Chen~\cite{chen2023regularity} established a one-point blow-up criterion and global regularity for $C^1$ data in this class under an additional integral condition, while constructing finite-time blow-up solutions from $C^\alpha\cap H^s$ data for any $0<\alpha<1$ and $s<\frac32$. The latter construction yields blow-up data arbitrarily close to $-\sin x$ in $H^1$ and, by spatial rescaling, to $-\sin(2x)$ within the $\pi$-periodic subspace of $\T$. Thus these equilibria admit blow-up data in arbitrarily small $H^1$ neighborhoods even though the corresponding sufficiently regular data in Chen's symmetry class are global. Our instability result concerns $H^2$ perturbations in the full $2\pi$-periodic space.
The periodic stationary states
\begin{align*}
    x\mapsto -\sin(nx),\qquad n \geq 1
\end{align*}
form a natural hierarchy. The case $n=1$ is the ground state, while the higher-frequency states $n \geq 2$ may be viewed as excited states. A natural question is then whether stability is a special feature of the lowest mode, or whether it persists along the whole family.

We prove that every excited state $-\sin(nx)$, $n \geq 2$, is nonlinearly unstable. Perturbations arbitrarily small in $\XH2$ leave a fixed $\XH1$ neighborhood of the stationary state. The proof starts by locating a real unstable eigenvalue in $(0.56,0.6)$ with an eigenfunction in $\XH2$.

Instability of the first excited state $-\sin(2x)$ was recently obtained by Guo and Jiu~\cite{guo2025stabilityinstabilitygregoriomodification} through an energy method. Our spectral approach applies to every excited state and gives the following linear instability result.

\begin{theorem}
    \label{thm:eig}
    For every $n \geq 2$, the stationary solution
    \begin{align*}
        x\mapsto -\sin(n x)
    \end{align*}
    of the De Gregorio equation is linearly unstable in $\XH2$.
\end{theorem}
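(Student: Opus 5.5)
We linearize the De Gregorio equation around $\om_n(x)=-\sin(nx)$, for which $u_n=-\Lambda^{-1}\om_n=\sin(nx)/n$. Writing $\om=\om_n+\varepsilon w$ and $u=u_n-\varepsilon\Lambda^{-1}w$, the linearized operator is
\begin{align*}
    L_n w = -u_n w' + \om_n' \Lambda^{-1} w + w\, u_n' - \om_n \pa_x\Lambda^{-1} w ,
\end{align*}
and linear instability in $\XH2$ means exhibiting $\la$ with $\Re\la>0$ and $0\neq w\in\XH2$ such that $L_n w=\la w$.

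\textbf{Reduction to a problem independent of $n$.} Under the rescaling $x\mapsto nx$, functions of the form $w(x)=W(nx)$ are mapped by $L_n$ to $(L_1W)(nx)$, because $\Lambda^{-1}$ scales by $n^{-1}$ on $n$-periodic frequencies while $u_n$ also carries the factor $1/n$. The frequency-$0$ mode is excluded by the zero-mean convention. So it suffices to find an unstable eigenfunction $W$ of $L_1$ that is \emph{not} a ground state perturbation, i.e.\ the same equation posed on $\T$. Since $-\sin x$ is stable, this does not work directly for $n=1$ data. Instead I would use subharmonic perturbations: write $w(x)=e^{i\te x}$ times a $2\pi/n$-periodic function, with Bloch parameter $\te\in\{k/n\}$, which turns $L_n$ into a family $L_{1,\te}$ on $\T$. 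The macros in the paper ($\te$, $\Itheta=[1/3,1/2]$) suggest exactly this. The main task is then to find, for every rational $\te=k/n$ (and by symmetry $\te\mapsto1-\te$ it suffices to take $\te\in[1/3,1/2]$ plus small $\te$, probably handled by choosing $k$ suitably, since for each $n\ge2$ some $k$ gives $k/n\in[1/3,1/2]$ up to the symmetry), a real eigenvalue $\la\in(0.56,0.6)$.

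\textbf{Fuchsian equation.} Setting $z=e^{ix}$ and splitting $w$ into its holomorphic and antiholomorphic parts, $\Lambda^{-1}$ and $\pa_x\Lambda^{-1}$ act as $\mp i\pa_x^{-1}$ and $\pm$identity on each part. The eigenvalue equation becomes a pair of first/second-order ODEs with rational coefficients in $z$, having regular singular points at $z=0$, $z=\pm1$ (where $\sin x$ vanishes) and $\infty$. I would compute the indicial exponents there; these depend on $\la$ and $\te$. Frobenius expansions then give a local solution near $0$ that is analytic in the disk and satisfies the Bloch condition, and local solutions near $z=1$ whose exponents govern the $\XH2$ regularity on the circle. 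The eigenvalue condition is a matching condition: the holomorphic solution from $0$, continued to $z=1$, must have zero coefficient on the nonregular branch. This amounts to the vanishing of a Wronskian-type determinant $\detcrit(\la,\te)$ at an interior point, e.g.\ $z=1/2$ with convergence radius controlled near $3/4$.

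\textbf{Rigorous verification (main obstacle).} The main obstacle is to prove that the real analytic function $\la\mapsto\detcrit(\la,\te)$ changes sign on $\Ilambd$ for all $\te\in\Itheta$. I would evaluate the Frobenius series with interval arithmetic, using truncation at order around 60 plus explicit geometric tail bounds obtained from the recurrence. I would then check that $\detcrit$ has opposite signs at $\la=0.56$ and $\la=0.6$, uniformly over a finite cover of $\te$-intervals. By the intermediate value theorem this yields a real root. Finally I would verify from the exponent at $z=\pm1$, which for $\la>1/2$ should exceed $3/2$, that the resulting eigenfunction lies in $\XH2$.
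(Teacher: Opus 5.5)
Your outline matches the paper's proof in every structural step. These are the fiber decomposition of $L_n$ into operators depending only on $\theta=k/n$, the reduction of each fiber problem to Fuchsian equations on the disk with singular points $0,\pm1$, and a matching determinant at $z=\frac12$ enclosed by truncated Frobenius series with Cauchy tail bounds. The paper then uses a certified sign change between $\lambda=0.56$ and $\lambda=0.6$ with the intermediate value theorem. Regularity comes from the admissible branch $(z+1)^{2+\lambda}$ at $z=-1$; the branch $(z-1)^{2-\lambda}$ at $z=1$ is what the matching removes. This gives $\eta\in H^s$ for $s<\frac32+\lambda$, hence $H^2$ since $\lambda>\frac12$.

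The genuine difference is which $\theta$ you certify. You plan a uniform certificate on all of $[\frac13,\frac12]$. The paper instead takes a prime divisor $p$ of $n$ and the fiber $\theta_p=\lfloor p/2\rfloor/p$, which is also a fiber of $L_n$ via $l=\lfloor p/2\rfloor\, n/p$. It therefore only certifies the points $\theta_p$ for primes $p<300$ and the short interval $[\frac12-\frac1{600},\frac12]$.

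Your route would prove more, namely an unstable eigenvalue on every fiber in that range. But it is a much larger interval computation, and it needs the sign change at $0.56$ and $0.6$ to persist across the whole interval. The paper never verifies this; for instance, no certified $\theta$ lies in $(\frac13,\frac25)$. It is also unnecessary: any choice of fractions accumulating at one point suffices, leaving finitely many points plus one short interval. For example, $k=\lfloor n/2\rfloor$ gives $k/n\in[\frac12-\frac1{2n},\frac12]$, and $k=\lceil n/3\rceil$ gives $k/n\in[\frac13,\frac13+\frac2{3n}]$.

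One point to get right when you derive the equations. The indicial exponents at $z=0$ are $1-\theta$ and $-1-\theta$, so no nonzero homogeneous solution is holomorphic there. Holomorphic solutions exist only through the coupling of positive and negative modes via the constants $F^\pm(0)$, which appear as affine right-hand sides; the paper uses generating functions of $f=(\partial_x+i\theta)^{-1}\eta$ for this. The matching condition is therefore a $4\times4$ determinant in $(C^+,C^-,F^-(0),F^+(0))$, not a Wronskian of a single continued solution.

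Two minor slips: in the original variable the Bloch factor is $e^{ikx}$, not $e^{i\theta x}$, and $\partial_x\Lambda^{-1}$ acts as $\pm i$, not $\pm1$, on the two parts.
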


We then prove nonlinear instability in the sense of Hadamard.

\begin{theorem}
\label{thm:alln:nonlinear}
    For every integer $n\ge2$ there exists $\ep_n>0$ and $\eta_{0,n}\in\XH2$, such that for every $\de\in(0,\ep_n)$, the solution $\om$ to~\eqref{DeGreg} with $\om_0=-\sin(nx)+\de\eta_{0,n}$ satisfies, writing $\resol:=\om+\sin(nx)$, $\norm{\resol(T_{\de,n})}_\XH1\geq\ep_n$ at a finite escape time $T_{\de,n}<\infty$.
\end{theorem}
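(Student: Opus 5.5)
The plan is a Grenier/Guo--Strauss style bootstrap, in the spirit of Guo, Hallstrom and Spirn. We use the unstable eigenpair furnished by Theorem~\ref{thm:eig}: a real eigenvalue $\la_n\in\Ilambd$ with eigenfunction $\eta_{0,n}\in\XH2$, normalized so that $\norm{\eta_{0,n}}_{\XH1}=1$.

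\textbf{Perturbation equation.} Write $\om=-\sin(nx)+\resol$ and $L$ for the linearization at $-\sin(nx)$. Then
\begin{align*}
\pa_t\resol=L\resol+N(\resol),\qquad N(\eta)=-v\pa_x\eta+\eta\pa_x v,\quad v=-\Lambda^{-1}\eta .
\end{align*}
We compare $\resol$ with the linear solution $\linsol(t)=\de e^{\la_n t}\eta_{0,n}$. The difference $\difsol:=\resol-\linsol$ satisfies $\pa_t\difsol=L\difsol+N(\resol)$ with $\difsol(0)=0$.

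\textbf{Nonlinear energy estimate.} Since $\pa_x v=-H\eta$ is one derivative smoother than $\eta$ after commutation, an $\XH2$ energy estimate on the full equation should give
\begin{align*}
\frac{d}{dt}\norm{\resol}_{\XH2}^2\le C\big(1+\norm{\resol}_{W^{1,\infty}}\big)\norm{\resol}_{\XH2}^2 .
\end{align*}
The ingredients are Kato--Ponce commutator bounds, the identity $\int v\pa_x^3\eta\,\pa_x^2\eta=-\tfrac12\int \pa_xv(\pa_x^2\eta)^2$, and boundedness of $H$ on $L^2$.

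\textbf{Growth bound for the linearized operator (main obstacle).} Let $\Lambda_n$ be the supremum of the real parts of the spectrum of $L$. We need, for every $\mu>\Lambda_n$,
\begin{align*}
\norm{e^{tL}}_{\XH1\to\XH1}\le C_\mu e^{\mu t}.
\end{align*}
To obtain it, I would split $L=A+K$ with:
\begin{itemize}
\item $A\eta=\sin(nx)\cdot(\text{Hilbert-transport part})$, which is skew up to bounded terms in a weighted $\XH1$ norm;
\item $K$ compact.
\end{itemize}
From this, the essential growth bound is small, and the Gearhart--Pr\"uss theorem or a direct resolvent bound gives the estimate.

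The key point is that only the \emph{maximal} growth rate matters. If $\la_n$ is not known to be the top of the spectrum, we instead choose $\eta_{0,n}$ as an eigenfunction for an eigenvalue $\la^*$ of maximal real part $\Lambda_n$. This is possible because the essential growth bound lies below $\la_n\le\Lambda_n$, so only finitely many eigenvalues sit above it. If $\la^*$ is complex, we take the real part of the associated solution $e^{\la^*t}\phi$, which is still an exact solution of the linearized equation. We then fix $\mu\in(\Lambda_n,2\Lambda_n)$, which is possible since $\Lambda_n>0$.

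\textbf{Bootstrap.} Define the escape time $T_{\de,n}$ by $\de e^{\Lambda_n T_{\de,n}}=\ep$, with $\ep$ small to be fixed. On $[0,T_{\de,n}]$ we make the bootstrap assumption
\begin{align*}
\norm{\resol}_{\XH2}\le C_1\de e^{\Lambda_n t}.
\end{align*}
The $\XH2$ energy estimate closes this, by Gronwall, since $\int_0^t\norm{\resol}_{W^{1,\infty}}\lesssim\ep$. For the difference, $\norm{N(\eta)}_{\XH1}\lesssim\norm{\eta}_{\XH2}^2$, so Duhamel gives
\begin{align*}
\norm{\difsol}_{\XH1}\le C\int_0^t e^{\mu(t-s)}\de^2e^{2\Lambda_n s}\,ds\le C_2\de^2e^{2\Lambda_n t}.
\end{align*}
This uses $2\Lambda_n>\mu$. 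Hence, at time $T_{\de,n}$,
\begin{align*}
\norm{\resol(T_{\de,n})}_{\XH1}\ge \ep\,c_0-C_2\ep^2\ge \ep c_0/2
\end{align*}
for $\ep$ small, where $c_0$ is a lower bound for the $\XH1$ norm of the real eigen-solution at that time (for a real eigenvalue, $c_0=1$). Setting $\ep_n:=\min(\ep,\ep c_0/2)$ proves the theorem.

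Local well-posedness in $\XH2$ from~\cite{OSW08} ensures the solution exists up to $T_{\de,n}$ within the bootstrap regime. The step I expect to be hardest is the semigroup growth bound, in particular controlling the essential spectrum of the non-self-adjoint transport operator $\sin(nx)\pa_x$ near the zeros of $\sin(nx)$.
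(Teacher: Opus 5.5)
The step that fails is the closure of your $\XH{2}$ bootstrap. Gronwall applied to $\frac{d}{dt}\norm{\resol}_{\XH{2}}^2\le C(1+\norm{\resol}_{W^{1,\infty}})\norm{\resol}_{\XH{2}}^2$ gives $\norm{\resol(t)}_{\XH{2}}\le e^{C\ep}\,\de\norm{\eta_{0,n}}_{\XH{2}}\,e^{Ct/2}$. The smallness of $\int_0^t\norm{\resol}_{W^{1,\infty}}$ removes only the nonlinear factor, not the linear one. This is compatible with $\norm{\resol}_{\XH{2}}\le C_1\de e^{\Lambda_nt}$ only if $C\le2\Lambda_n$. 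However, the linear part of your estimate contains lower-order terms such as $-n^3C_nH\pa_x^{-1}\eta$, which arise when two derivatives fall on $L_n\eta$. An estimate that charges every linear term to $\norm{\resol}_{\XH{2}}^2$ therefore has $C$ of size $n^3$. At $t=T_{\de,n}$ your bound is then of order $\de^{1-C/(2\Lambda_n)}$, which is unbounded as $\de\to0$. Your Duhamel bound for $\difsol$ relies on the $\XH{2}$ bootstrap, so it fails as well.

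What is missing is the structure of Lemma~\ref{lem:estH2}, combined with a two-norm bootstrap. With $g=\pa_x^2\eta$ one has $\pa_x^2L_n\eta=\mathfrak A_ng-(\tfrac12C_n+S_nH)g+R_n\eta$. Here $\mathfrak A_n=-\frac1nS_n\pa_x-\frac12C_n$ is skew-adjoint, $\Re(-S_nHg,g)_{\XL}=\frac12([H,S_n]g,g)_{\XL}$ involves only finitely many modes, and $\norm{R_n\eta}_{\XL}\lesssim_n\norm{\eta}_{\XH{1}}$. Hence the top-order growth rate is $\frac12$, and all remaining linear terms are controlled by the $\XH{1}$ norm: $\frac12\pa_t\norm{\eta}_{\XH{2}}^2\le\frac12\norm{\eta}_{\XH{2}}^2+C_{2,n}\norm{\eta}_{\XH{1}}\norm{\eta}_{\XH{2}}+C\norm{\eta}_{\XH{1}}\norm{\eta}_{\XH{2}}^2$. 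You then bootstrap two bounds:
\begin{itemize}
\item $\norm{\eta}_{\XH{1}}\le2\de e^{\la t}$, closed by Duhamel and an $\XH{1}$ semigroup bound with rate below $2\la$;
\item $\norm{\eta}_{\XH{2}}\lesssim\de e^{\la t}$, closed by integrating $e^{(t-s)/2}$ against the $\XH{1}$ bound. This is exactly where $\la>\frac12$ is needed.
\end{itemize}

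Relatedly, the essential growth bound on $\XH{1}$ is not small. The spectrum contains the band $|\Re z|<\frac12$, so the bound equals $\frac12$, and $\la_n>0.56$ clears it only narrowly. With that correction, your soft route $\omega_0=\max(\omega_{\rm ess},s(L_n))$ is a legitimate alternative. After conjugating by $\langle k\rangle$, the operator is a skew-adjoint part plus $\frac12C_n$ plus a compact operator. The paper instead proves a computer-assisted fiber bound $\norm{e^{tL_p}}_{\XH{1}\to\XH{1}}\le C_{1,p}e^{1.1t}$. That lets it keep its explicit eigenfunction and use only $1.1<2\la_p$. If you switch to an eigenvalue $\la^*$ of maximal real part, you must also prove that its eigenfunction lies in $\XH{2}$. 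For instance, this follows from the Frobenius exponents $2\mp\la^*$ at $z=\pm1$ when $\Re\la^*>\frac12$.
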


\paragraph{Proof strategy.}

We decompose the linearized equation into Fourier modes in the same residue class modulo $n$. Each resulting eigenvalue problem reduces to an ODE on the complex disk with regular singular points at $0$, $1$ and $-1$. Matching the local Frobenius solutions gives a $4\times4$ matrix whose determinant vanishes at the eigenvalues.
This Fuchsian approach was already used in~\cite{JSS19} for the linearization around the ground state. We also refer to our previous work~\cite{BHTW}, where a matching condition for a Fuchsian system is used in another context.

It suffices to consider prime values $n=p$. We select the residue class $h_p=\big\lfloor\frac{p}{2}\big\rfloor$, so that the parameter $\te_p=\frac{h_p}{p}$ equals $\frac12$ for $p=2$ and approaches $\frac12$ as $p\to\infty$. Interval arithmetic verifies the matching condition at finitely many parameter values and on a short interval ending at $\frac12$. These checks give an unstable eigenvalue $\la_p\in(0.56,0.6)$ for every prime $p$.

For nonlinear instability, we compare the solution with the linear evolution of a real unstable eigenfunction, following Guo, Hallstrom and Spirn~\cite{GuoHallstromSpirn2004,GuoHallstromSpirn}. In $\XH1$, the linear semigroup satisfies an estimate of the form
\begin{align*}
    \norm{e^{tL_p}\eta}_\XH1\leq C_{1,p}e^{1.1t}\norm\eta_\XH1.
\end{align*}
An energy estimate in $\XH2$ has linear rate $\frac12$, a lower-order term controlled by the $\XH1$ norm, and a nonlinear term bounded by the product of the two norms. Since $\frac12<\la_p$ and $1.1<2\la_p$, a bootstrap controls the $\XH2$ norm and keeps the nonlinear correction smaller than the growing linear solution until its $\XH1$ norm reaches a fixed size. Spatial rescaling transfers the instability to every $n$ divisible by $p$.

\subsection{Computer-assisted methods}

The entries of the matching matrix are enclosed by interval arithmetic, with rigorous bounds on the tails of the Frobenius series. The semigroup estimate in $\XH1$ also uses computer-assisted bounds on finite matrices, together with analytic estimates for the remaining Fourier modes. 
The code is available on GitHub at \getvar{linkdgreg}.

There are by now many works in which computer-assisted arguments play a substantial role. We mention, for instance, Guo, Had\v zi\'c, Jang, and Schrecker~\cite{Guo-Hadzic-Jang-Schrecker:gravitational-collapse-stars-self-similar} for Euler--Poisson, Kobayashi~\cite{Kobayashi:global-uniqueness-stokes} for extreme Stokes waves, Chen, Hou, and Huang~\cite{Chen-Hou-Huang:blowup-degregorio} for the De Gregorio model, Castro, C\'ordoba, and G\'omez-Serrano~\cite{Castro-Cordoba-GomezSerrano:global-smooth-solutions-sqg} for SQG, Elgindi and Pasqualotto~\cite{ElgindiPasqualotto2025_invertibility_boussinesq} and Chen and Hou~\cite{Chen-Hou:nearly-self-similar-blowup-boussinesq-euler-analysis,Chen-Hou:nearly-self-similar-blowup-boussinesq-euler-numerics,Chen-Hou:singularity-formation-3d-euler-smooth-data} for Boussinesq-type models and 3D Euler, Cadiot~\cite{Cadiot:proofs-existence-stability-capillary-gravity-whitham} for Whitham-type equations, several works on the Kuramoto--Sivashinsky equation~\cite{Arioli-Koch:cap-stationary-ks,Figueras-DeLaLLave:cap-periodic-orbits-kuramoto,Gameiro-Lessard:periodic-orbits-ks,Figueras-Gameiro-Lessard-DeLaLLave:framework-cap-invariant-objects,Zgliczynski:periodic-orbit-kuramoto,Zgliczynski-Mischaikow:rigorous-numerics-kuramoto}, Buckmaster, Cao-Labora, and G\'omez-Serrano~\cite{Buckmaster-CaoLabora-GomezSerrano:implosion-compressible} for compressible Euler and Navier--Stokes, and Arioli, Gazzola, and Koch together with Bedrossian and Punshon-Smith~\cite{Arioli-Gazzola-Koch:uniqueness-bifurcation-ns,Bedrossian-PunshonSmith:chaos-stochastic-2d-galerkin-ns} for incompressible Navier--Stokes.

For general background on validated numerics we refer to Tucker's book~\cite{Tucker:validated-numerics-book}. For computer-assisted proofs in PDE, see also the survey~\cite{GomezSerrano:survey-cap-in-pde} and the book~\cite{Nakao-Plum-Watanabe:cap-for-pde-book}. 

\subsection{Functional setting and notation}

We consider complex-valued functions. When needed, we will explicitly indicate that a function is real-valued.

In this paper we use the following convention for the $L^2$ norm
\begin{align*}
    \norm{f}_\XL^2:=
    \intpin |f(x)|^2\dx
    =\sum_{k\in\Z}|\ha{f}_k|^2,
\end{align*}
where 
\begin{align*}
    \ha f_k:=\intpin f(x)e^{-ikx} \dx.
\end{align*}

For $s\geq0$, $\XH{s}$ denotes the completion of $C^\infty(\T)$ with respect to the norm
\begin{align*}
    \norm{f}_\XH{s}^2 :=
    \intpin |f(x)|^2+|\Lambda^sf(x)|^2\dx
    =\sum_{k\in\Z}(1+|k|^{2s})|\ha f_k|^2.
\end{align*}
Here $\Lambda^0=I$, so the weight at $k=0$ is also defined when $s=0$.
For $s>0$, whenever $\XH{-s}$ appears, it denotes the dual of $\XH{s}$ with respect to the $L^2$ pairing. In particular, every $L^2(\T)$ function belongs to $\XH{-s}$.
$\XHZmean{s}$ denotes the subspace of zero-mean functions in $\XH{s}$, endowed with the homogeneous norm
\begin{align*}
    \norm{f}_\XHZmean{s}^2 :=
    \sum_{k\neq0} |k|^{2s} |\ha f_k|^2.
\end{align*}

The operator $\pa_x^{-1}:\XH{s}\to\XH{s+1}$, for $s\geq0$, is the Fourier multiplier
\begin{align*}
    \widehat{(\pa_x^{-1}g)}_k=
    \begin{cases}
        (ik)^{-1}\ha g_k,&k\ne0,\\
        0,&k=0.
    \end{cases}
\end{align*}
Thus $\pa_x\pa_x^{-1}g=g-\ha g_0$. For smooth $g$, the equivalent integral formula is
\begin{align*}
    \pa_x^{-1}g(x)=
    \int_0^x (g(y)-\ha g_0)\dy
    -\intpin\left(\int_0^x (g(y)-\ha g_0)\dy\right)\dx.
\end{align*}
We use the same zero-mode convention for $\Lambda^{-1}$: its symbol is $|k|^{-1}$ for $k\ne0$ and zero for $k=0$.

The Hilbert transform $H$ is the Fourier multiplier
\begin{align*}
    \ha{(Hf)}_k = -i \sgn(k) \ha f_k.
\end{align*}
With these conventions $\Lambda = H\pa_x$, and the velocity field in~\eqref{DeGreg} satisfies $\pa_x u = H\om$.

$\XHholo$ denotes the space of holomorphic functions on the unit disk $\D$. If $F\in \XHholo$, then $F$ admits the expansion
\begin{align*}
    F(z)=\sum_{k \geq 0}F_k z^k
\end{align*}
with $F_k\in \C.$

For such $F$, write formally $\left.F\right|_\T(x):=\sum_{k\geq0}F_k e^{ikx}$.
We also introduce the Hardy spaces 
\begin{align*}
    \XHhol{s}
    =\{ F\in \XHholo:\, \left. F\right|_\T\in \XH{s}\},
\end{align*}
endowed with the norm
\begin{align*}
    \norm{F}_\XHhol{s} :=
    \norm{F|_\T}_\XH{s}.
\end{align*}
We will use the following notation  
\begin{align*}
    f^+ := \frac12(f + i Hf) + \frac12 \ha f_0
    ,\qquad
    f^- := \frac12(f^\sharp + i Hf^\sharp) + \frac12 \ha f_{-1},
\end{align*}
where $f^\sharp(x):=e^{-ix}f(-x)$. 

It will be convenient to write 
\begin{align*}
    \Pcal^\pm f:=f^\pm
\end{align*}
since sometimes we will need to apply $\Pcal^\pm$ to equations rather than functions.

Notice that for $f \in \XH s$, with $s>\frac{1}{2}$, the Fourier series of $f^+$ and $f^-$ are
\begin{align*}
    f^+(x) = \sum_{k \geq 0} \ha f_k \foba k
    ,\qquad
    f^-(x) = \sum_{k \geq 0} \ha f_{-k-1} \foba k.
\end{align*}

Replacing $\foba 1$ by $z$, we obtain holomorphic extensions $F^+,F^-\in \XHhol s$ of $f^+$ and $f^-$ to the unit disk
\begin{align*}
    F^+(z) := \sum_{k \geq 0} \ha f_k z^k
    ,\qquad
    F^-(z) := \sum_{k \geq 0} \ha f_{-k-1} z^k.
\end{align*}
In other words, $F^+$ encodes the nonnegative Fourier modes of $f$, while $F^-$ encodes the negative ones after reindexing.

For $n \geq 2$ and $l=1,\dots,n-1$, we also use the unfolding maps
\begin{align*}
    \ha{(\Pcal_{l,n}f)}_k=\ha{f}_{nk+l}.
\end{align*}
Thus $\Pcal_{l,n}$ selects the Fourier modes congruent to $l$ modulo $n$ and reindexes them by $nk+l\mapsto k$.

We denote by $S_n$ and $C_n$ the trigonometric functions given by
\begin{align*}
    S_n(x) = \sin(n x),\qquad
    C_n(x) = \cos(n x).
\end{align*}

Finally, we write $D_r(z_0)\subset \C$ for the open complex disk of radius $r>0$ centered at $z_0$, i.e.
\begin{align*}
    D_r(z_0):=
    \{z\in\C:|z-z_0|<r\}.
\end{align*}

\section{Linearized instability}
This section proves Theorem~\ref{the:mainlin}. We begin by linearizing~\eqref{DeGreg} around $-S_n$.

Let $\om$ and $u$ be functions solving~\eqref{DeGreg} and write
\begin{align}\label{def:pert}
    \om = -S_n + \eta.
\end{align}
We can then write $u$ as
\begin{align}
    u = \frac1n S_n + H \pa_x^{-1} \eta.
\end{align}
Linearizing~\eqref{DeGreg} around the steady state $-S_n$, the equation for $\eta$ reads
\begin{align}\label{eqn:linf}
    \pa_t \eta = L_n \eta,
\end{align}
where
\begin{align}
    \label{def:Ln}
    L_n \eta :=
        - \frac1n S_n \pa_x \eta
        + n C_n H \pa_x^{-1}\eta
        - S_n H\eta
        + C_n \eta.
\end{align}

\begin{rem}
\label{rem:linf:wellposed}
    For $s\geq0$, let $D_s$ be the Fourier multiplier with symbol $\langle k\rangle^s$, where $\langle k\rangle:=(1+k^2)^{1/2}$ denotes the Japanese bracket. Write
    \begin{align*}
        D_sL_nD_s^{-1} = \underbrace{-\tfrac1nS_n\pa_x-\tfrac12C_n}_{\mathfrak{A}_n} + \underbrace{\big(D_sL_nD_s^{-1}+\tfrac1nS_n\pa_x+\tfrac12C_n\big)}_{K_{n,s}}.
    \end{align*}
    Here $\mathfrak{A}_n$ is a skew-adjoint operator and $K_{n,s}$ is bounded on $\XL$, see Remark~\ref{rem:Kns:bounded} below. By Stone's theorem~\cite{Stone1932}, $\mathfrak{A}_n$ generates a unitary $C_0$-group on $\XL$, and by the bounded perturbation theorem (\cite{EngellNagel00}, Chapter III, Theorem 1.3), $L_n$ generates a $C_0$-semigroup on every $\XH s$. In particular, for $\eta_0\in\XH{s+1}$ there is a unique $\eta\in C^0([0,\infty);\XH{s+1})\cap C^1([0,\infty);\XH s)$ solving~\eqref{eqn:linf} with $\eta(0)=\eta_0$.
\end{rem}

\begin{theorem}
\label{the:mainlin}
    For every $n \geq 2$, there exist $\la_n\in\Ilambd$ and $0\neq \eta_n\in \XH s$ for every $s < \frac 3 2+\la_n$ such that
    \begin{align*}
        e^{\la_n t}\eta_n(x)
    \end{align*}
    solves~\eqref{eqn:linf}.
\end{theorem}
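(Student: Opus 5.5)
We follow the strategy announced in the introduction: reduce to prime $n=p$, decouple the linearized operator along Fourier residue classes modulo $p$, turn the eigenvalue problem in one class into a Fuchsian ODE on the disk, and certify a zero of a matching determinant with interval arithmetic.

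\textbf{Reduction to primes.} If $n=mp$ with $p$ prime and $\eta_p$ is an eigenfunction of $L_p$ with eigenvalue $\la_p$, set $\eta_n(x):=\eta_p(mx)$. Dilation commutes with $H$ and $\pa_x^{-1}$ up to scaling: $H$ is invariant, $\pa_x^{-1}$ gains a factor $m^{-1}$, and $\pa_x$ gains a factor $m$. Substituting into~\eqref{def:Ln} gives $L_n\eta_n=\la_p\eta_n$. The Sobolev regularity is preserved, so it suffices to treat $n=p$ prime.

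\textbf{Decoupling into residue classes.} Multiplication by $S_p$ or $C_p$ shifts Fourier indices by $\pm p$. Hence $L_p$ preserves each subspace of modes $\equiv l \pmod p$. Fix $l=h_p=\lfloor p/2\rfloor$ and apply the unfolding $\Pcal_{l,p}$. Write $\te=l/p$, so the modes are $p(k+\te)$. The equation $L_p\eta=\la\eta$ becomes a three-term recurrence in $k\in\Z$ whose coefficients are rational in $k+\te$. Splitting into $k\ge0$ and $k<0$ and forming the generating functions $F^\pm(z)$ (the $\Pcal^\pm$ construction) converts the recurrence into a first-order system for $(F^+,F^-)$ on $\D$. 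The symbols $\sgn(k)$ and $|k|^{-1}$ become tractable on each half-line after multiplying by $(k+\te)$. Eliminating gives a second-order equation for each of $F^\pm$, hence a $4$-dimensional problem, with regular singular points at $z=0,\pm1$. The coupling between $F^+$ and $F^-$ enters through the boundary modes $k=0,-1$.

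\textbf{Frobenius solutions and matching.} At $z=0$ we impose holomorphy, which with the coupling condition leaves a two-parameter family of local solutions. At $z=\pm1$ the indicial exponents depend on $\la$. One exponent gives a singularity of the type $(1-z)^{\la+1/2}$ or similar, which corresponds to Fourier decay $|\ha\eta_k|\sim |k|^{-3/2-\la}$. This is exactly membership in $\XH s$ for $s<\frac32+\la$. The other exponent is more singular and must be excluded. Analytically continuing the solutions at $0$ to a matching point, say $z=\frac12$ (with radius $\frac34$ disks as in the set variables), and comparing with the admissible Frobenius solutions at $1$ and $-1$ produces a $4\times4$ matrix $\Mcrit$. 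Eigenvalues correspond to zeros of $\detcrit(\la,\te)=\det\Mcrit$. We would show that $\detcrit$ is real (or becomes real after normalization) for real $\la$. A sign change between $\la=0.56$ and $\la=0.6$ then yields a root by the intermediate value theorem. The nonzero eigenfunction is recovered by undoing the generating functions, and the local Frobenius exponents give the stated regularity.

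\textbf{Main obstacle: uniformity over all primes.} Finitely many $p$ (up to some $N$, e.g.\ $300$) are handled by enclosing $\detcrit(0.56,\te_p)$ and $\detcrit(0.6,\te_p)$ with truncated Frobenius series plus rigorous tail bounds, with coefficients bounded via the recurrences. For large $p$ we have $\te_p=\frac{(p-1)/2}{p}\in[\frac12-\frac1{2N},\frac12]$. We therefore enclose $\detcrit$ on the whole short $\te$-interval at both endpoints $\la$, using interval inputs, and check that the signs are constant and opposite. The delicate part is making the tail bounds uniform in $(\la,\te)$. It also requires care to verify that the $\la$-dependent exponents at $\pm1$ do not differ by an integer in this range, so that no logarithmic terms appear. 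I would first try majorant series whose geometric ratio is controlled by the distance to the nearest singularity.
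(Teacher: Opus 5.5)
Your overall architecture matches the paper's. It has the same ingredients: the fiber $\te_p=\lfloor p/2\rfloor/p$, generating functions $F^\pm$ on $\mathbb{D}$, and a Fuchsian system with singular points $0,\pm1$. It also uses a $4\times4$ matching determinant that is real for real $\la$, and a certified sign change between $\la=0.56$ and $\la=0.6$ at the $\te_p$ with $p<300$ and on the short interval ending at $\frac12$. Your dilation $\eta_n=\eta_p(m\,\cdot)$ is correct and equivalent to the paper's direct lift $e^{ilx}\eta_0(nx)$ with $l=hm$.

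\textbf{The gap: boundary conditions at $z=\pm1$.} The matching matrix comes from these conditions, and you have them wrong. For the generating functions of $f=(\partial_x+i\te)^{-1}\eta$ (the antiderivative is what makes the coefficients polynomial and the equation second order), the indicial exponents are $0$ and $2-\la$ at $z=1$, but $0$ and $2+\la$ at $z=-1$. So there is no symmetric picture with an admissible and an excluded branch at each endpoint. At $z=-1$ both branches are acceptable and no condition is imposed. At $z=1$ the only condition is that the coefficient of $(z-1)^{2-\la}$ vanish.

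If you exclude a branch at both endpoints as you propose, you get four linear conditions on the two free parameters $F^\pm(0)$. Equivalently, you get eight matching equations for six unknowns. That system is overdetermined, has no square determinant, and generically has only the trivial solution. The $4\times4$ system instead has unknowns $(C^+,C^-,F^-(0),F^+(0))$. The holomorphic solutions at $0$ are unique, because the exponents there are $1-\te$ and $-1-\te$ and no nontrivial homogeneous combination, logarithm included, is holomorphic at $0$. These are matched in value and derivative at $z=\frac12$ with a particular solution plus $C^\pm$ times the holomorphic homogeneous solution at $1$. The resulting pair is then holomorphic on $\mathbb{D}\cup\{1\}$, and nothing further is required at $-1$.

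\textbf{Regularity count.} Your count is also wrong. A singularity $(1-z)^{\la+1/2}$ gives $|\hat\eta_k|\sim|k|^{-3/2-\la}$, hence only $\eta\in H^s$ for $s<1+\la$, which does not reach $H^2$. The correct source is the admissible branch $(z+1)^{2+\la}$ at $z=-1$. It gives $|\hat f_k|\sim|k|^{-3-\la}$, so $|\hat\eta_k|\sim|k|^{-2-\la}$ and $\eta\in H^s$ exactly for $s<\frac32+\la$.

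The excluded branch $(z-1)^{2-\la}$ is not outside $L^2$: it yields $\eta\in H^s$ for $s<\frac32-\la$. Indeed, every non-integer $\la$ with $|\Re\la|<\frac32-s$ produces such an $H^s$ eigenfunction. Excluding that branch at $z=1$ is what singles out an eigenvalue with the regularity claimed in the theorem; it is not a condition needed for $L^2$ membership.
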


\begin{rem}
    As we will see (Remark~\ref{rem:ess_spec_frobenius}), the spectrum of $L_n$ on $\XH s$ contains the band
   \begin{align*}
        \big\{z\in\C:|\Re(z)|<\tfrac32-s\big\}\qquad 
        \text{for all }
        s\in\big(0,\tfrac32\big).
    \end{align*}
    The spectral band prevents isolation when $s<\frac32-\la_n$. For the selected fiber, Corollary~\ref{coro:spec} and the analyticity of the matching determinant show that $\la_n$ is isolated on $\XH s$ for every $s\in[1,2]$.
\end{rem}

The rest of this section is devoted to the proof of Theorem~\ref{the:mainlin}. Notice that Theorem~\ref{thm:eig} follows from Theorem~\ref{the:mainlin} since $\la_n>\getvar{low_bnd_eig}>\frac12$, giving $2<\frac32+\la_n$ and hence $\eta_n\in\XH2$.

\subsection{Decoupling the modes}
\label{sec:decoupmodes}

We begin by reformulating the problem in a way that works uniformly for all $n$.
Denote by $\foba k:\RR\to\C$ the function given by
\begin{align*}
    \foba k(x) := e^{ikx}.
\end{align*}

The key point is that $L_n$ only couples neighboring modes inside each congruence class. In Fourier variables this takes the form
\begin{align}
\label{id:Ln:fourier}
    L_n \foba{k}=
        A_{k,n} \foba{k-n}
        +
        B_{k,n} \foba{k+n},
\end{align}
where
\begin{align}
\label{def:AkBkn}
    A_{k,n}=
        \frac{(k+n)(k-n\sgn(k))}{2nk}
    ,&&
    B_{k,n}=
        -\frac{(k-n)(k-n\sgn(k))}{2nk},
\end{align}
for $k\neq 0$.

Guided by this formula, we introduce the formal operator $L_\te^{\rm fib}$, with $\te\in(0,1)$, defined by
\begin{align}
\label{def:Lfib}
    L_\te^{\rm fib} \foba{k}
    :=
        \ti{A}_{k,\te} \foba{k-1}
        +
        \ti{B}_{k,\te} \foba{k+1},
\end{align}
where
\begin{align}
\label{def:ABkt}
    \ti{A}_{k,\te}=
        \frac{(k+\te+1)(k+\te-\sgn(k+\te))}{2(k+\te)}
    ,&&
    \ti{B}_{k,\te}=
        -\frac{(k+\te-1)(k+\te-\sgn(k+\te))}{2(k+\te)}.
\end{align}

\begin{rem}
\label{rem:AkBksym}
    Let $\te\in(0,1)$. For every $k\in\Z$,
    \begin{align*}
        \ti{A}_{-k-1,\te}=
            \ti{B}_{k,1-\te}.
    \end{align*}
    This relation will be useful when proving Lemma~\ref{lem:red_to_fin}.
\end{rem}

\begin{rem}
\label{rem:Kns:bounded}
    Recall Remark~\ref{rem:linf:wellposed}. For $s\geq0$, $K_{n,s}$ is bounded on $\XL$.
    Indeed, notice that $\mathfrak{A}_n\foba k=a_{k,n}\foba{k-n}+b_{k,n}\foba{k+n}$ with $a_{k,n}:=\frac k{2n}-\frac14$ and $b_{k,n}:=-\frac k{2n}-\frac14$.
    Define $\psi(k,n):= \big(\tfrac{\langle k-n\rangle}{\langle k\rangle}\big)^s - 1$ for all $k\in\Z$. By the mean value theorem, there is $\xi$ between $\tfrac{\langle k-n\rangle}{\langle k\rangle}$ and $1$ such that
    \begin{align*}
        \psi(k,n)=s\xi^{s-1}\frac{\langle k-n\rangle-\langle k\rangle}{\langle k\rangle}.
    \end{align*}
    Since $\langle\cdot\rangle$ is $1$-Lipschitz, $|\langle k-n\rangle-\langle k\rangle|\leq n$, so for $|k|\geq2n$ we get $\tfrac{\langle k-n\rangle}{\langle k\rangle}\in[\frac12,\frac32]$, hence $\xi\in[\frac12,\frac32]$, so $|\xi^{s-1}|\leq C_s:=\max\big\{(\tfrac12)^{s-1},(\tfrac32)^{s-1}\big\}$ and
    \begin{align*}
        |\psi(k,n)|\leq sC_sn\,\tfrac1{\langle k\rangle}\leq sC_sn\,\tfrac1{|k|},
    \end{align*}
    that is, $\psi(k,n)=O(\tfrac{1}{k})$ for a fixed $n$.

    Hence, by~\eqref{id:Ln:fourier} and~\eqref{def:AkBkn},
    \begin{align}
        (D_sL_nD_s^{-1}-\mathfrak{A}_n)\foba k
        &=
        \Big(A_{k,n}\Big(\tfrac{\langle k-n\rangle}{\langle k\rangle}\Big)^s-a_{k,n}\Big)\foba{k-n}
        +
        \Big(B_{k,n}\Big(\tfrac{\langle k+n\rangle}{\langle k\rangle}\Big)^s-b_{k,n}\Big)\foba{k+n}
        \nonumber
        \\
        &=
        \Big((A_{k,n}-a_{k,n})+A_{k,n} \psi(k,n)\Big)\foba{k-n}
        +
        \Big((B_{k,n}-b_{k,n})+B_{k,n} \psi(k,-n)\Big)\foba{k+n}
        \nonumber
        \\
        &=
        \Big(\frac34-\frac{\sgn(k)}2-\frac n{2|k|} + A_{k,n} \psi(k,n)\Big)\foba{k-n}
        \nonumber\\
        &\qquad+
        \Big(\frac34+\frac{\sgn(k)}2-\frac n{2|k|} + B_{k,n} \psi(k,-n)\Big)\foba{k+n}.
        \label{lin_semig_last_id}
    \end{align}
    Since $A_{k,n},B_{k,n}$ are $O(k)$ and $\psi(k,n),\psi(k,-n)$ are $O(k^{-1})$, every term in~\eqref{lin_semig_last_id} is bounded.
\end{rem}

The following lemma describes the precise relationship between $L_n$ and $L_\te^{\rm fib}$.
\begin{lemma}
\label{lem:unfold}
    Let $n\in\N$ with $n \geq 2$, and let $\eta:\T\to\C$ be an $\XH1$ function satisfying
    \begin{align*}
        \ha \eta_{nk} = 0\qquad \text{for all } k\in\Z.
    \end{align*}
    Then $\eta$ satisfies~\eqref{eqn:linf} if and only if, for every $1 \leq l \leq n-1$, the function $\eta^\te:=\Pcal_{l,n} \eta$ satisfies
    \begin{align}
    \label{eqn:unfold__}
        \pa_t \eta^\te = L_\te^{\rm fib} \eta^\te,\quad \text{where }
        \te:=\frac ln.
    \end{align}
\end{lemma}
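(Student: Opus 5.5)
The plan is to work entirely on the Fourier side and compare coefficients mode by mode. Identity~\eqref{id:Ln:fourier}, with the coefficients~\eqref{def:AkBkn}, is taken as given. It can be checked directly from~\eqref{def:Ln} using $\ha{(Hf)}_k=-i\sgn(k)\ha f_k$, $\ha{(\pa_x^{-1}f)}_k=(ik)^{-1}\ha f_k$, and $S_n=\frac1{2i}(\foba n-\foba{-n})$, $C_n=\frac12(\foba n+\foba{-n})$. By duality, it gives the Fourier coefficients of $L_n\eta$:
\begin{align*}
    \ha{(L_n\eta)}_m = A_{m+n,n}\,\ha\eta_{m+n} + B_{m-n,n}\,\ha\eta_{m-n}.
\end{align*}
For $\eta\in\XH1$ this holds in the distributional sense, with $L_n\eta\in L^2(\T)\subset\XH{-1}$. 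The hypothesis $\ha\eta_{nk}=0$ ensures that the coefficients are never evaluated at $k=0$, where $A_{k,n},B_{k,n}$ are undefined. The coefficient formula also shows that $L_n$ maps the class of modes $\equiv l \pmod n$ into itself. So~\eqref{eqn:linf} is equivalent to the family of scalar equations $\pa_t\ha\eta_m=\ha{(L_n\eta)}_m$ for $m\in\Z$, which splits according to the residue of $m$ modulo $n$.

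The residue class $0$ needs a short separate argument. When $m=nk$, the modes $m\pm n$ are also multiples of $n$, so $\ha{(L_n\eta)}_{nk}=0$ under the hypothesis. The equations for those modes therefore hold trivially, at least if the hypothesis is understood to hold for all times, that is, the class is invariant; I will state this explicitly.

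For the class $l\in\{1,\dots,n-1\}$, set $m=nk+l$ and $\te=l/n$. The main step is the scaling identity
\begin{align*}
    A_{nk+l,n}=\ti A_{k,\te},\qquad B_{nk+l,n}=\ti B_{k,\te}.
\end{align*}
To verify it, substitute $nk+l=n(k+\te)$ into~\eqref{def:AkBkn}. This gives $A_{nk+l,n}=\frac{n(k+\te+1)\,n(k+\te-\sgn(k+\te))}{2n\cdot n(k+\te)}$, using $\sgn(nk+l)=\sgn(k+\te)$. This is exactly~\eqref{def:ABkt}, and $B$ is handled the same way. Consequently,
\begin{align*}
    \ha{(L_n\eta)}_{nk+l}
    =\ti A_{k+1,\te}\,\ha{(\Pcal_{l,n}\eta)}_{k+1}+\ti B_{k-1,\te}\,\ha{(\Pcal_{l,n}\eta)}_{k-1}
    =\ha{(L^{\rm fib}_\te\Pcal_{l,n}\eta)}_k,
\end{align*}
where the last equality reads off the Fourier coefficients of~\eqref{def:Lfib} in the same way as for $L_n$. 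Hence $\Pcal_{l,n}(L_n\eta)=L^{\rm fib}_\te\Pcal_{l,n}\eta$. Since $\Pcal_{l,n}$ commutes with $\pa_t$, the equation for each class is exactly~\eqref{eqn:unfold__}. Collecting the classes $l=1,\dots,n-1$, together with the trivial class $0$, gives both directions of the equivalence.

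The main obstacle is not algebraic but one of bookkeeping and functional setting. One must make precise in which sense each equation holds, taking both sides as elements of $\XH{-1}$, or coefficientwise as time derivatives of Fourier coefficients. One must also check that $\Pcal_{l,n}$ maps $\XH1$ to a weighted $\ell^2$ space, which holds because $|nk+l|\sim n|k|$, so that $L^{\rm fib}_\te\eta^\te$ makes sense. Once the statement is reduced to coefficientwise identities, these points are routine.
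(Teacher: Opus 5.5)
Your proposal is correct and follows the same route as the paper: project onto the modes $nk+l$, use the scaling identity $A_{n(k+1)+l,n}=\ti A_{k+1,\te}$, $B_{n(k-1)+l,n}=\ti B_{k-1,\te}$ (your substitution $nk+l=n(k+\te)$), and identify the result with the Fourier coefficients of $L_\te^{\rm fib}\eta^\te$. Your explicit treatment of the residue class $0$, where both sides vanish provided $\ha\eta_{nk}=0$ holds at all times, makes precise a point the paper's proof leaves implicit.
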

\begin{proof}
    Projecting~\eqref{eqn:linf} onto $e^{ikx}$ for integers $k$ that are not divisible by $n$ gives
    \begin{align*}
        \pa_t \ha \eta_k 
        = 
        (\ha{L_n \eta})_k
        =
            A_{k+n,n}
            \ha \eta_{k+n}
            +
            B_{k-n,n}
            \ha \eta_{k-n}.
    \end{align*}
    Recall that the Fourier modes of $\Pcal_{l,n} \eta$ are given by
    \begin{align*}
        \ha{\Pcal_{l,n} \eta}_k = \ha \eta_{nk+l}.
    \end{align*}
    Setting $\te:=\frac ln$ and $\eta^\te:=\Pcal_{l,n}\eta$, we substitute $k\mapsto nk+l$ to obtain
    \begin{align}
        \pa_t (\ha{\eta^\te})_k
        =
        \pa_t
        \ha \eta_{nk+l}
        &=
            A_{n(k+1)+l,n}
            \ha \eta_{n(k+1)+l}
            +
            B_{n(k-1)+l,n}
            \ha \eta_{n(k-1)+l}
        \nonumber\\
        &=
            \ti{A}_{k+1,\te}
            (\ha{\eta^\te})_{k+1}
            +
            \ti{B}_{k-1,\te}
            (\ha{\eta^\te})_{k-1},
        \label{eqn:unfold:aux}
    \end{align}
    which holds for all $k\in\Z$. Finally, \eqref{eqn:unfold:aux} is precisely the projection of~\eqref{eqn:unfold__} onto $e^{ikx}$.
\end{proof}

This reduces the problem to the family of operators $L_\te^{\rm fib}$. We now show that it is enough to restrict attention to $\te\in\Itheta$.

\begin{lemma}\label{lem:1312enough}
    Let $n\in\N$ with $n \geq 2$. Then there exists $1 \leq h \leq n-1$ such that
    \begin{align*}
        \frac hn \in 
        \Itheta.
    \end{align*}
\end{lemma}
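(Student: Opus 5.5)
The natural candidate is $h:=\lfloor n/2\rfloor$, the same choice later used for the residue class $h_p$. The plan is to verify directly that $h/n\in\Itheta$, splitting into the cases $n$ even and $n$ odd, and then check that $1\le h\le n-1$.

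\textbf{Case $n$ even.} Here $h=n/2$, so $h/n=\frac12$, which is the right endpoint of $\Itheta$. Since $n\geq2$, we have $1\le n/2\le n-1$.

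\textbf{Case $n$ odd.} Since $n\geq2$, this means $n\ge3$. Write $n=2m+1$ with $m\ge1$, so that $h=m$ and
\begin{align*}
    \frac hn=\frac{m}{2m+1}=\frac12-\frac{1}{2(2m+1)}.
\end{align*}
This is strictly less than $\frac12$. The condition $\frac{m}{2m+1}\ge\frac13$ is equivalent to $3m\ge 2m+1$, that is, $m\ge1$, which holds; equality occurs for $n=3$. Moreover $1\le m\le 2m=n-1$.

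In both cases $h=\lfloor n/2\rfloor$ satisfies $1\le h\le n-1$ and $h/n\in\Itheta$. No step is a real obstacle. The only delicate point is the boundary case $n=3$, where $h/n=\frac13$ lands exactly on the left endpoint of $\Itheta$. This is why the interval must be closed at $\frac13$.
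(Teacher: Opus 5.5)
Your proof is correct, and it takes a different witness from the paper. The paper takes $h=1$ for $n=2,3$ and $h=\lceil n/3\rceil$ for $n\ge4$. It then uses $h\ge n/3$ and $3h\le n+2$, so that $h\le \frac n3+\frac23\le\frac n2$, and the last inequality needs $n\ge4$; this is the reason for its small-$n$ case split. Your choice $h=\lfloor n/2\rfloor$ replaces that ceiling estimate with a parity split and a one-line inequality $3m\ge 2m+1$. Both arguments are equally elementary.

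Your witness has an extra advantage. It is exactly the $h_p$ of Lemma~\ref{lem:cover}, so your fibers $h/n$ tend to $\frac12$, which is the property the computer-assisted certification on $J$ uses. The paper's witness $\lceil n/3\rceil/n$ drifts toward $\frac13$ for large $n$, so the paper has to re-choose the fiber (and re-verify membership in $\Itheta$) in Lemma~\ref{lem:cover}. With your choice, Lemma~\ref{lem:1312enough} and the first part of Lemma~\ref{lem:cover} are the same computation.
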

\begin{proof}
    For $n=2,3$ the choice $h=1$ works. Assume $n \geq 4$ and set
    \begin{align*}
        h=\left\lceil \frac n3\right\rceil.
    \end{align*}
    Then $h \geq \frac n3$ and $3h \leq n+2$, which gives
    \begin{align*}
        h \leq \frac n3+\frac23 \leq \frac n2,
    \end{align*}
    where the last inequality uses $n \geq 4$. Hence $\frac13 \leq \frac hn \leq \frac12$.
\end{proof}

By Lemma~\ref{lem:1312enough}, for every $n$ there exists a fraction $\te=\frac{h}{n}\in\Itheta$. We now choose this fraction so that the divisibility argument can be applied. The verification in Section~\ref{sec:eigte} is carried out at finitely many values of $\te$ and on one short interval.

At this point it is enough to consider prime values of $n$. Lemma~\ref{lem:unfold} shows that once instability is known for $L_p$, it follows for every $L_n$ with $p\mid n$. The next lemma tells us which values of $\te=\frac h p\in\Itheta$ need to be certified.

\begin{lemma}
\label{lem:cover}
    Let $N:=\getvar{prime_split_N}$ and set
    \begin{align*}
        J:=\Big[\frac12-\frac1{2N},\ \frac12\Big].
    \end{align*}
    For a prime $p$ put $h_p:=\big\lfloor\frac p2\big\rfloor$ and
    \begin{align*}
        \te_p:=\al_{0,p}:=\frac{h_p}p,
    \end{align*}
    the largest fiber of $L_p$ not exceeding $\frac12$. Then $h_p\in\{1,\dots,p-1\}$ and $\te_p\in\Itheta$ for every prime $p$, $J\subset\Itheta$, and $\te_p\in J$ for every prime $p \geq N$.
\end{lemma}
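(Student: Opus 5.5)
The plan is to split according to the parity of $p$ and check each claim directly from the definitions; no earlier result is needed beyond elementary arithmetic.

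\emph{The range of $h_p$ and the membership $\te_p\in\Itheta$.} For $p=2$ we have $h_2=1$ and $\te_2=\frac12$. For an odd prime $p\geq3$ we have $h_p=\frac{p-1}2$, so $1\leq h_p\leq p-1$ and
\begin{align*}
    \te_p=\frac12-\frac1{2p}.
\end{align*}
This quantity increases with $p$ and equals $\frac13$ at $p=3$. Hence $\frac13\leq\te_p<\frac12$, and so $\te_p\in\Itheta$ for every prime $p$. The description of $\te_p$ as the largest fiber not exceeding $\frac12$ is immediate, since $h\leq\frac p2$ is equivalent to $h\leq\lfloor\frac p2\rfloor$.

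\emph{The inclusion $J\subset\Itheta$.} This reduces to the inequality $\frac12-\frac1{2N}\geq\frac13$, that is, $N\geq3$. It holds because $N=\getvar{prime_split_N}$.

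\emph{The membership $\te_p\in J$ for $p\geq N$.} For odd $p\geq N$ the formula above gives
\begin{align*}
    \te_p=\frac12-\frac1{2p}\geq\frac12-\frac1{2N}.
\end{align*}
Together with $\te_p\leq\frac12$, this places $\te_p$ in $J$. The prime $p=2$ is not covered by this case, since $2<N$; in any case $\te_2=\frac12\in J$.

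I do not expect any real obstacle. The only point needing care is the separate treatment of $p=2$ in the formula for $\lfloor\frac p2\rfloor$, together with confirming the boundary case $p=3$, where $\te_3=\frac13$.
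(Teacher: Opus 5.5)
Your proposal is correct and follows essentially the same route as the paper. Both split by parity, use $h_2=1$ and $\te_p=\frac12-\frac1{2p}$ for odd $p$, reduce $J\subset\Itheta$ to $N\geq3$, and use $\frac1{2p}\leq\frac1{2N}$ for $p\geq N$.
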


\begin{proof}
    For $p=2$ we have $h_2=1$ and $\te_2=\frac12$. For odd $p$ we have $h_p=\frac{p-1}2$ and
    \begin{align*}
        \te_p=\frac{p-1}{2p}=\frac12-\frac1{2p},
    \end{align*}
    which is at least $\frac13$ because $p \geq 3$, and smaller than $\frac12$. In both cases $1 \leq h_p \leq p-1$ and $\te_p\in\Itheta$. Since $N \geq 3$ we have $\frac1{2N} \leq \frac16$, so $J\subset\Itheta$. Finally, if $p \geq N$ then $\frac1{2p} \leq \frac1{2N}$, hence $\te_p \geq \frac12-\frac1{2N}$ and $\te_p\in J$.
\end{proof}

It remains to prove the following spectral statement.

\begin{prop}
\label{prop:eigte}
    Let $\te=\te_p$ for some prime $p<N$, or $\te\in J$, with $\te_p,J,N$ as in
    Lemma~\ref{lem:cover}. Then there exist $\la_\te\in\Ilambd$ and $0\neq \eta_\te\in \XH s$ for every $s < \frac32 + \la_\te$ satisfying
    \begin{align}
    \label{eqn:eigthe}
        \la_\te \eta_\te = L_\te^{\rm fib} \eta_\te.
    \end{align}
\end{prop}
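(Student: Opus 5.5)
We will follow the strategy announced in the proof outline: transform the eigenvalue problem \eqref{eqn:eigthe} into a Fuchsian ODE on the disk and reduce the existence of an eigenvalue to the vanishing of an analytic matching determinant, which we then certify with interval arithmetic.

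\textbf{Step 1: splitting into Hardy components.} Write $\eta=\sum_k \ha\eta_k\foba k$ and split it into nonnegative and negative modes, encoded by $F^+(z)=\sum_{k\ge0}\ha\eta_kz^k$ and $F^-(z)=\sum_{k\ge0}\ha\eta_{-k-1}z^k$. The coefficients $\ti A_{k,\te},\ti B_{k,\te}$ in \eqref{def:ABkt} are rational in $k+\te$ with $\sgn(k+\te)$ constant on each half. Hence the recurrence $\la\ha\eta_k=\ti A_{k+1,\te}\ha\eta_{k+1}+\ti B_{k-1,\te}\ha\eta_{k-1}$ becomes, after multiplying by $(k+\te)$ and replacing $k$ by $z\pa_z$, a second-order linear ODE for $F^+$ on $\D$. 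Remark~\ref{rem:AkBksym} shows that $F^-$ satisfies the same type of ODE with $\te$ replaced by $1-\te$.

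The two equations are not independent. The recurrence couples $k=0$ and $k=-1$, so $\ha\eta_0$ and $\ha\eta_{-1}$ enter both as boundary data. I expect the singular points to be $z=0$ and $z=\pm1$, since the symbols carry factors $(1\mp z)^2$ coming from $\ti A-\ti B$. Computing the indicial exponents will give:
\begin{itemize}
\item at $z=0$, a one-dimensional analytic branch for each of $F^\pm$, parametrized by the boundary values;
\item at $z=\pm1$, exponents of the form $0$ and $\tfrac12+\la$ (or shifts thereof).
\end{itemize}
The exponent $\tfrac12+\la$ is what produces the regularity threshold $s<\tfrac32+\la$: a term $(1-z)^{1/2+\la}$ on the circle lies in $H^s$ exactly when $s<1+\la$ for the function, up to the one-derivative shift induced by the $\pa_x^{-1}$-type structure.

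\textbf{Step 2: the matching condition.} Let $\Phi^\pm_0$ be the local Frobenius solutions at $0$ (the $\pm$ branches, coupled through $\ha\eta_0,\ha\eta_{-1}$). At $z=\pm1$ we keep only the admissible Frobenius branches, namely those that are nonsingular or have the allowed exponent. Since the disks of convergence around $0$ and around $\pm1$ have radius $1$, we evaluate all series and their derivatives at intermediate points $z=\pm\tfrac12$ with center $\tfrac12$ and radius $\tfrac34$. Requiring that values and derivatives agree at these points yields a $4\times4$ linear system $\Mcrit(\la)c=0$. A nonzero $c$ produces holomorphic $F^\pm$ whose boundary traces lie in $\XH s$ for all $s<\tfrac32+\la$, and hence an eigenfunction $\eta_\te\neq0$. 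Conversely, the determinant $\detcrit(\la,\te)=\det\Mcrit$ is real-analytic in $\la$ and real-valued for real $\la$, after normalizing the Frobenius bases to be real.

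\textbf{Step 3: interval-arithmetic certification.} For each prime $p<N$ we enclose $\detcrit(\la,\te_p)$ at $\la=0.56$ and $\la=0.6$ and check that the two signs differ. The intermediate value theorem then gives a root $\la_\te\in\Ilambd$. For $\te\in J$ we enclose $\detcrit$ at the two endpoints of $\la$ over the whole interval $J$, using interval inputs for $\te$, and check that each sign is constant on $J$ and opposite to the other. This gives a root for every $\te\in J$ simultaneously.

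The rigorous enclosure requires explicit tail bounds for the Frobenius series. These come from the recurrences, whose coefficient ratios tend to $1$ or $-1$, so we can bound the tails geometrically by $\rho^k$ with $\rho=\tfrac34$ after some index $n_0$.

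\textbf{Main obstacle.} I expect the hard part to be setting up the correct admissibility conditions at $z=\pm1$. The exponents there may differ by integers or produce logarithms for special $\la$, and choosing the wrong branch would yield spurious roots of $\detcrit$ or miss the $H^s$ threshold. The uniform-in-$\te$ tail bounds on $J$ are the second difficulty.
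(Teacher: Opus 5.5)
Your outer architecture (Fuchsian reduction, a real $4\times4$ matching determinant, interval-arithmetic sign checks at $\lambda=0.56$ and $\lambda=0.6$ for the primes $p<N$ and uniformly on $J$, then the intermediate value theorem) is the paper's, and your Step~3 is fine once the matrix is correctly defined. The problems are in Steps~1--2.

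First, the generating function $\sum_{k\ge0}\hat\eta_kz^k$ does not satisfy a second-order ODE. For $k\ge1$ the recurrence reads
\[
\lambda\hat\eta_k=\frac{(k+2+\theta)(k+\theta)}{2(k+1+\theta)}\,\hat\eta_{k+1}-\frac{(k-2+\theta)^2}{2(k-1+\theta)}\,\hat\eta_{k-1}.
\]
The denominators are $k\pm1+\theta$, so multiplying by $k+\theta$ does not remove them; clearing them gives cubic coefficients and a third-order equation. The missing step is to encode $f=(\partial_x+i\theta)^{-1}\eta$ instead, i.e.\ $\hat f_k=\hat\eta_k/(i(k+\theta))$. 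The coefficients then become the quadratics $(k+2+\theta)(k+\theta)$, $2\lambda(k+\theta)$ and $(k-2+\theta)^2$. This is what yields the second-order operator $L^{\mathrm{hol}}_{\lambda,\theta}$ and the $F^\pm(0)$ coupling of Lemma~\ref{lem:red_to_fin}.

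Second, the issue you defer as the ``main obstacle'' is the heart of the argument, and your guess for it is wrong. In the $f$-variables the exponents are $0,\,2-\lambda$ at $z=1$ and $0,\,2+\lambda$ at $z=-1$. For the generating function of $\eta$ they are $1\mp\lambda$ at $z=\pm1$, not $0,\frac12+\lambda$ at both points. Indeed $(1+z)^{1+\lambda}$ on $\mathbb T$ lies in $H^s$ exactly for $s<\frac32+\lambda$, with no derivative shift to explain away.

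This asymmetry dictates admissibility. Since $\lambda>\frac12$, the branch $(z-1)^{2-\lambda}$ would leave $\eta$ only in $H^s$ for $s<\frac32-\lambda<1$, so it must be excluded. The branch $(z+1)^{2+\lambda}$ must be kept, and it is exactly what produces the threshold $s<\frac32+\lambda$. Only this choice gives a square system. The unknowns are $(C^+,C^-,a^-,a^+)$, and the four equations come from matching, at the single point $x_0=\frac12$, the holomorphic solutions at $0$ with those at $1$ (Definition~\ref{def:Mat}). No matching at $-\frac12$ is needed, because both local branches at $-1$ are admissible and continuation there imposes nothing.

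A symmetric treatment, as your exponent guess suggests, fails either way. Excluding the singular branch at both ends gives $8$ matching equations at $\pm\frac12$ for $6$ unknowns, an overdetermined system. Keeping it at both ends gives $10$ unknowns and no condition on $\lambda$ at all, so every $\lambda$ in a band becomes an eigenvalue in low regularity (Remark~\ref{rem:ess_spec_frobenius}). As written, your Step~2 therefore does not determine the matrix whose sign change you propose to certify.
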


\begin{proof}
    Section~\ref{sec:eigte} is devoted to proving Proposition~\ref{prop:eigte}.
\end{proof}

We refer to Figure~\ref{fig:greg_eigs} for a numerical illustration of the eigenvalue $\la_\te$.

\begin{figure}[htbp]
    \centering
    \includegraphics[width=8.5cm]{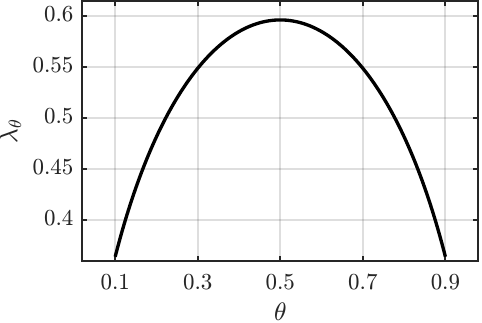}
    \caption{Numerical approximation of the eigenvalue $\la_\te$ of $L_\te^{\rm fib}$.}
    \label{fig:greg_eigs}
\end{figure}

\subsection{Conclusion of the proof of Theorem~\ref{the:mainlin}}
Fix $n \geq 2$, and choose a prime divisor $p$ of $n$. Write $n=pm$ with $m\in\N$.
By Lemma~\ref{lem:cover}, $h:=h_p\in\{1,\dots,p-1\}$ satisfies $\te:=\frac h p=\te_p\in\Itheta$, and $\te\in J$ when $p \geq N$. We may then apply Proposition~\ref{prop:eigte} and obtain $\la>\getvar{low_bnd_eig}$ and $0\neq\eta_0\in\XH s$ for every $s < \frac 3 2+\la$ satisfying~\eqref{eqn:eigthe}.

Set $l:=hm$. Since $1 \leq h \leq p-1$, we have $1 \leq l \leq n-1$, and also $\frac ln=\te$. Therefore
\begin{align*}
    \eta^\te(x,t) = e^{\la t} \eta_0(x)
\end{align*}
solves~\eqref{eqn:unfold__} with this choice of $l$.

We now define
\begin{align*}
    \ti{\eta}_0(x) := e^{ilx} \eta_0(n x).
\end{align*}
Since the $(nk+l)$-th Fourier coefficient of $\ti{\eta}_0$ is $\ha{\eta}_0(k)$ for all $k\in\Z$ and all other Fourier modes of $\ti{\eta}_0$ vanish, for $s\geq0$ we have $\norm{\ti{\eta}_0}_\XH{s}^2 = \sum_{k} (1+|nk+l|^{2s})|\ha{\eta}_0(k)|^2 \sim_{n,l,s} \norm{\eta_0}_\XH{s}^2$, so $\ti{\eta}_0\in\XH s$ for every $0\leq s < \frac 3 2+\la$.
For $s<0$, the claim follows from $\ti{\eta}_0\in\XH0\subset\XH{s}$.

Then the $nk$-th Fourier coefficient of $\ti{\eta}_0$ vanishes for every $k\in\Z$.
Moreover, for every $1 \leq l' \leq n-1$,
\begin{align*}
    \Pcal_{l',n} \ti{\eta}_0 =
    \de_{l,l'} \eta_0.
\end{align*}

By Lemma~\ref{lem:unfold}, the function $\ti{\eta}$ given by
\begin{align*}
    \ti{\eta}(x,t) = e^{\la t} \ti{\eta}_0(x)
\end{align*}
solves~\eqref{eqn:linf}. This proves Theorem~\ref{the:mainlin}, with $\la_n:=\la$ and $\eta_n:=\ti\eta_0$.

\section{Nonlinear instability}
\label{sec:nonlinear}

Let $\om$ in~\eqref{DeGreg} be given as in~\eqref{def:pert}, namely
\begin{align*}
    \om = -S_n + \eta.
\end{align*}
Then the function $\eta$ satisfies
\begin{align}
    \pa_t \eta = L_n \eta + Q(\eta,\eta),
    \label{eqn:pertsinenonline}
\end{align}
where $L_n$ is given in~\eqref{def:Ln} and $Q(\eta,\eta)$ by
\begin{align}
    Q(\eta,\eta) := 
    \eta H\eta
    -
    \pa_x^{-1}\!H\eta\,\pa_x\eta.
    \label{def:Quadop}
\end{align}
We work with real initial data of zero mean. The local existence and uniqueness result of Theorem~3.1 in~\cite{OSW08} applies to $\om=\eta-S_n$ in $\XH1$. Integration by parts and the skew-adjointness of $H$ give
\begin{align*}
    \pa_t\intpin\eta(x,t)\dx
    =\pa_t\intpin\om(x,t)\dx
    =2\intpin\om(x,t)H\om(x,t)\dx=0,
\end{align*}
so the mean of $\eta$ remains zero.

For $\eta_0\in\XHZmean2$, the solution above has a maximal interval of existence $[0,T)$ on which
\begin{align*}
    \eta\in C([0,T);\XHZmean2)\cap C^1([0,T);\XHZmean1).
\end{align*}
This follows from the Lagrangian formulation in Theorem~4.2 of~\cite{EKW10} with $k=3$ and $a=-1$.
Their vorticity has zero mean, while their velocity $U$ is normalized by $U(0,t)=0$.
Their conventions are converted to ours by reversing the sign of the vorticity and translating by the time integral of the mean velocity, that is,
\begin{align*}
    m(t):=\int_0^t\intpin U(x,s)\dx\,ds,\qquad
    u(x,t):=U(x+m(t),t)-m'(t),\qquad
    \om(x,t):=-H\pa_xU(x+m(t),t).
\end{align*}

We also need continuous dependence in $\XH2$.
With our convention for $H$, the flow $\varphi$ and the transported velocity $V=U\circ\varphi$ satisfy
\begin{align*}
    \pa_t\varphi&=V,\\
    \pa_tV&=\left(U\pa_xU+
    \int_0^x H\big(UH\pa_x^2U-\pa_xU\,H\pa_xU\big)(y,\cdot)\dy\right)\circ\varphi,
    \qquad U=V\circ\varphi^{-1},
\end{align*}
with $\varphi(x,0)=x$ and $V(x,0)=U(x,0)$.
Theorem~4.2 and Corollary~4.4 of~\cite{EKW10} show that this system has a smooth vector field at $H^3(\T)$ regularity, so its solutions exist locally and depend continuously on the initial data.
To recover $U=V\circ\varphi^{-1}$, we use continuity of $\varphi\mapsto\varphi^{-1}$ in the Sobolev diffeomorphism group, recalled at the beginning of Section~4 of~\cite{EKW10}, and continuity of composition from Lemma~A.2 of~\cite{EKW10}.
This gives continuous dependence of the velocity in $\XH3$.
Since $\om=-H\pa_xu$, this yields continuous dependence of $\eta$ in $\XH2$.
Restarting on finitely many local intervals shows that, whenever $\eta_0^{(j)}\to\eta_0$ in $\XHZmean2$, the corresponding solutions exist on every fixed $[0,T']\subset[0,T)$ for $j$ sufficiently large and satisfy
\begin{align*}
    \sup_{0\leq t\leq T'}
    \norm{\eta^{(j)}(t)-\eta(t)}_\XH2
    \longrightarrow0.
\end{align*}

Proposition~5.1 of~\cite{OSW08} preserves higher Sobolev regularity throughout the $\XH1$ lifespan.
Thus smooth approximating data give smooth solutions on $[0,T']$, and the convergence above justifies passage to the limit in the integral energy inequality of Lemma~\ref{lem:estH2}.
Finally, Theorem~3.2 and Proposition~5.1 of~\cite{OSW08} allow continuation in $\XH2$ past any finite time at which the $\XH1$ norm remains bounded, since $\norm{H\om}_{L^\infty}\lesssim\norm\om_\XH1$.

We now prove Theorem~\ref{thm:alln:nonlinear}, following the argument of Guo, Hallstrom, and Spirn~\cite{GuoHallstromSpirn2004,GuoHallstromSpirn}. The proof uses the following three estimates.

\begin{lemma}
    \label{lem:estH1}
    Let $p$ be a prime and $L_p$ the linear operator given in~\eqref{def:Ln} with $n=p$, and let $\om=1.1$.
    Then there exists $C_{1,p} > 0$ such that
    \begin{align*}
        \norm{e^{t L_p} \eta}_\XH1
        \leq
        C_{1,p} e^{t\om}
        \norm{\eta}_\XH1\qquad
        \text{for all }
        \eta\in\XHZmean{1}.
    \end{align*}
\end{lemma}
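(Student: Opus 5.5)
We prove the growth bound for $e^{tL_p}$ on $\XHZmean1$ by splitting the generator into a skew-adjoint transport part and a remainder whose $\XH1$ numerical range lies below $1.1$. Since $\eta$ has zero mean, the $\XH1$ norm is equivalent to the homogeneous norm $\norm{\eta}_\XHZmean1^2=\sum_{k\neq0}k^2|\ha\eta_k|^2$, so it suffices to bound the semigroup in $\XHZmean1$. The modes $k\in p\Z$ with $k\neq0$ and the classes $k\equiv l \pmod p$ are invariant under $L_p$. On $p\Z$ the coefficients $A_{k,p}$ and $B_{k,p}$ in~\eqref{def:AkBkn} are explicit, and this block is just the rescaled ground-state linearization. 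By Lemma~\ref{lem:unfold}, each residue class $l\neq0$ is unitarily equivalent, up to the weights $|pk+l|$, to $L^{\rm fib}_{l/p}$ acting on $\ell^2$ with weights $|k+\te|^2$.

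It therefore suffices to prove, for each of the finitely many fibers, a uniform bound $\norm{e^{tL}}\le C e^{1.1t}$ in the weighted space. Conjugating by the weight $|k+\te|$ turns the fiber operator into a tridiagonal matrix $M$ on $\ell^2(\Z)$. For large $|k|$ its entries are those of $\mathfrak A_p$ plus $O(1)$ terms, as in Remark~\ref{rem:Kns:bounded} with $s=1$. I would then write $M=M_{\rm skew}+M_{\rm sym}$, where $M_{\rm sym}=\frac12(M+M^*)$ is the symmetric part. For $|k|\ge K$, using~\eqref{lin_semig_last_id}, the entries of $M_{\rm sym}$ tend to explicit constants of the form $\frac34\pm\frac12$, corrected by the weight ratio; the ratio $\langle k\mp p\rangle/\langle k\rangle$ contributes the leading $k\cdot\frac1k$ terms. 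Their exact size determines the essential rate.

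The main computation is to check that the tail of $M_{\rm sym}$, viewed as a tridiagonal symmetric operator with off-diagonal entries $c_k$, has norm at most $\sup_k(|c_k|+|c_{k-1}|)$ plus $O(1/K)$, and that this is $<1.1$. The essential-spectrum remark gives a band $|\Re z|<\frac32-s$, which for $s=1$ has width $\frac12$, so this should leave room. If the naive bound $\sup(|c_k|+|c_{k-1}|)$ exceeds $1.1$, I would first replace the weight $|k+\te|$ by a modified weight $w_k$ that agrees with it up to bounded factors. Such a change preserves norm equivalence, and I would tune $w_k$ so that the symmetric part of the conjugated matrix has norm below $1.1$ at large $|k|$.

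For the finite block $|k|<K$, I would use a computer-assisted bound. I would compute rigorously, in interval arithmetic, the largest eigenvalue of the symmetric part of $W M W^{-1}$, including the coupling entries to the tail, and show that it is below $1.1$. A Schur-complement or Gershgorin-type splitting between the finite block and the tail would make this rigorous. Combined with the tail estimate, this gives $\Re\langle M x,x\rangle_w\le 1.1\norm{x}_w^2$ on the whole fiber. By Lumer--Phillips, using the semigroup generation from Remark~\ref{rem:linf:wellposed}, the fiber semigroup then satisfies $\norm{e^{tM}}_w\le e^{1.1t}$. Returning to the original weights costs the equivalence constant, and taking the maximum over the finitely many classes modulo $p$ gives $C_{1,p}$.

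One point needs care: for large primes $p$ the number of fibers grows, so each $p$ is handled separately. Uniformity in $p$ is not required, since $C_{1,p}$ may depend on $p$. Even so, the bound $1.1$ must hold for every fiber $\te=l/p$, so the tail analysis has to be uniform in $\te\in(0,1)$. The finite blocks would ideally be certified on $\te$-intervals covering $(0,1)$ rather than at each rational $l/p$, because of the infinitely many primes. I expect this last step, a dissipativity bound below $1.1$ that is uniform in $\te$ and obtained by choosing a suitable weight, to be the main obstacle.
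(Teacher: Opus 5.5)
Your framework is the paper's: a fiber decomposition including the $0$-fiber, diagonal weights, the Jacobi form of the Hermitian part with analytic tail bounds (the coefficients $t_{m,1,\te}$ tend to $\frac12$, Lemmas~\ref{lem:tm:tail} and~\ref{lem:tm:tail:hom}), and a computer-assisted positivity test on a finite core. In the paper that core is coupled to the tail through the boundary modes $\pm(m+1)$ (Lemma~\ref{lem:core:lmi:s1}), followed by a covering in $\te$ and Gr\"onwall with $p$-dependent equivalence constants. The gap is the step you defer as the main obstacle, and the weight you propose cannot close it.

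Because $\te=\frac1p$ occurs for every prime, the rate $1.1$ is needed as $\te\to0$. There the mode $k=0$ is singular: $\ti A_{0,\te}=-\frac{1-\te^2}{2\te}$ and $\ti B_{0,\te}=-\frac{(1-\te)^2}{2\te}$, whereas $\ti A_{1,\te}$ and $\ti B_{-1,\te}$ are $O(\te)$. With $w_k=|k+\te|$ the Jacobi coefficient $t_0=\ti A_{1,\te}\frac{w_0}{w_1}+\ti B_{0,\te}\frac{w_1}{w_0}$ is about $-\frac1{2\te^2}$. So the Hermitian part evaluated at $(\foba0-\foba1)/\sqrt2$ is about $\frac1{4\te^2}$, and with $\langle k+\te\rangle$ it is still of order $\te^{-1}$.

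This is not an artifact of the test. Starting from $\foba0$, the coefficient $\ha\eta_0$ stays of size one for times of order one, while $\ha\eta_{\pm1}$ grow at rate about $\frac1{2\te}$. Hence in the $|k+\te|$-weighted norm, or any norm comparable to it with $\te$-independent constants, the fiber semigroup has norm of order $\te^{-2}$ at times of order one. No uniform bound $e^{1.1t}$, and hence no interval certificate on a neighborhood of $\te=0$, is possible. The same happens at $k=-1$ as $\te\to1$.

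The paper's remedy is to make the singular mode heavy instead of light. In Lemma~\ref{lem:rescaled:fiber} the effective weight $\Sigma_{1,\te}D_{1,\te}$ has symbol $|k+\te|$ for $k\neq0$ but $\te^{-1}$ at $k=0$. Then $t_0\to\frac12$ and $t_{-1}\to-\frac32$, every matrix entry is rational in $\te$ and continuous at $\te=0$, and the core test with $m=8$ certifies $(0,\frac{13}{40}]$. On $[\frac{13}{40},\frac12]$ the plain weight $D_{1,\te}$ with $m=4$ suffices (Lemma~\ref{lem:h1:core:cert}).

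Relative to $|k+\te|$, this weight is off by the factor $\te^{-2}$ at $k=0$. That is harmless only because a fixed prime has finitely many fibers, and it merely enlarges the $p$-dependent constants $M_p$ and $C_{1,p}$. The end $\te\to1$ is removed by the reflection $\mathcal R$ of Lemma~\ref{lem:tm:symmetry}, which reduces everything to $(0,\frac12]$. The $0$-fiber is handled by the explicit bound $|t_{m,1,0}|<\frac23$ of Lemma~\ref{lem:H1:the:0}. Without an ingredient of this type, your plan does not give the uniform rate $1.1$.
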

\begin{proof}
    The proof will be given in Section~\ref{sec:estH1}.
\end{proof}

\begin{lemma}
    \label{lem:estquad}
    The quadratic operator $Q$ given in~\eqref{def:Quadop} satisfies
    \begin{align*}
        \norm{Q(\eta,\eta)}_\XH1\leq C_Q\norm\eta_\XH2^2
        \qquad\text{for all }\eta\in\XHZmean2.
    \end{align*}
    For every smooth real-valued $\eta$ with zero mean, it also satisfies
    \begin{align*}
        \big|(Q(\eta,\eta),\eta)_{\XH2}\big|
        \leq C\norm\eta_\XH1\norm\eta_\XH2^2.
    \end{align*}
    Here $C_Q$ and $C$ are positive absolute constants.
\end{lemma}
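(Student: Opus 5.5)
For the first estimate I will bound the two terms of $Q(\eta,\eta)$ separately in $\XH1$. The $L^2$ part is straightforward. Since $\norm{H\eta}_{L^\infty}\lesssim\norm\eta_\XH1$ and $\norm{\pa_x^{-1}H\eta}_{L^\infty}\lesssim\norm\eta_\XL$, both $\eta H\eta$ and $\pa_x^{-1}H\eta\,\pa_x\eta$ lie in $L^2$ with norm at most $C\norm\eta_\XH1^2$.

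For one derivative I expand
\begin{align*}
    \pa_x Q = \pa_x\eta\, H\eta + \eta\, H\pa_x\eta - H\eta\,\pa_x\eta - \pa_x^{-1}H\eta\,\pa_x^2\eta ,
\end{align*}
using $\pa_x\pa_x^{-1}H\eta=H\eta$, which holds because $H\eta$ has zero mean. The first and third terms cancel, leaving
\begin{align*}
    \pa_x Q=\eta\,H\pa_x\eta-\pa_x^{-1}H\eta\,\pa_x^2\eta .
\end{align*}
Each remaining term is an $L^\infty$ factor times an $L^2$ factor: $\norm{\eta}_{L^\infty}\norm{\pa_x\eta}_{L^2}$ and $\norm{\pa_x^{-1}H\eta}_{L^\infty}\norm{\pa_x^2\eta}_{L^2}$. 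This gives $\norm{Q}_\XH1\le C_Q\norm\eta_\XH2^2$, with the embedding $\XH1\subset L^\infty$ in one dimension supplying the constants.

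For the second estimate I write $(Q,\eta)_{\XH2}=(Q,\eta)_{L^2}+(\pa_x^2Q,\pa_x^2\eta)_{L^2}$, which matches the norm convention up to the normalisation. The $L^2$ pairing is bounded by $\norm Q_{L^2}\norm\eta_{L^2}\lesssim\norm\eta_\XH1^3$. The main obstacle is the top-order pairing, because $\pa_x^2Q$ contains $\pa_x^{-1}H\eta\,\pa_x^4\eta$, which involves four derivatives. Differentiating $\pa_xQ$ once more and using $\pa_x\pa_x^{-1}H\eta=H\eta$ gives
\begin{align*}
    \pa_x^2Q=\pa_x\eta\,H\pa_x\eta+\eta\,H\pa_x^2\eta-H\eta\,\pa_x^2\eta-\pa_x^{-1}H\eta\,\pa_x^3\eta .
\end{align*}
Pairing with $\pa_x^2\eta$ gives four terms.

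\textbf{Transport term.} With $v:=\pa_x^{-1}H\eta$, integrating by parts gives
\begin{align*}
    -\int v\,\pa_x^3\eta\,\pa_x^2\eta=\tfrac12\int H\eta\,|\pa_x^2\eta|^2 ,
\end{align*}
which is bounded by $\norm{H\eta}_{L^\infty}\norm\eta_{\XH2}^2\lesssim\norm\eta_\XH1\norm\eta_\XH2^2$. Real-valuedness and smoothness are used here.

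\textbf{Term $-H\eta\,\pa_x^2\eta$.} This directly gives $\norm{H\eta}_{L^\infty}\norm{\pa_x^2\eta}_{L^2}^2$.

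\textbf{Term $\pa_x\eta\,H\pa_x\eta$.} I bound it by $\norm{\pa_x\eta}_{L^4}\norm{H\pa_x\eta}_{L^4}\norm{\pa_x^2\eta}_{L^2}$. By Gagliardo--Nirenberg on zero-mean functions, $\norm{f}_{L^4}^2\lesssim\norm f_{L^2}^{3/2}\norm{\pa_xf}_{L^2}^{1/2}\le\norm f_{L^2}\norm{f}_{\XH1}$. Together with the $L^4$-boundedness of $H$, the term is bounded by $\norm\eta_\XH1\norm\eta_\XH2^2$.

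\textbf{Term $\eta\,H\pa_x^2\eta$.} This is the delicate one. The naive bound $\norm\eta_{L^\infty}\norm{\pa_x^2\eta}_{L^2}^2\lesssim\norm\eta_\XH1\norm\eta_\XH2^2$ already works, since $\norm{\eta}_{L^\infty}\lesssim\norm\eta_\XH1$ and $H$ is an isometry on $L^2$.

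Combining the four terms gives $|(Q,\eta)_{\XH2}|\le C\norm\eta_\XH1\norm\eta_\XH2^2$. The only point needing care is that the $\pa_x^3\eta$ term must be handled by the integration by parts rather than by direct bounds. This is where smoothness and real-valuedness are needed, so that $\pa_x(|\pa_x^2\eta|^2)=2\pa_x^2\eta\,\pa_x^3\eta$.
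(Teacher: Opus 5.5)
Your proof is correct and is essentially the paper's argument. The paper uses the same four-term expansion of $(\partial_x^2 Q,\partial_x^2\eta)_{L^2}$ with integration by parts on the transport term $\partial_x^{-1}H\eta\,\partial_x^3\eta$, which it merges with $-H\eta\,\partial_x^2\eta$ into $-\frac12\int H\eta\,(\partial_x^2\eta)^2$; for the first estimate it simply invokes the algebra property of $H^1$ rather than your explicit cancellation in $\partial_x Q$. The only slip is your remark that $\partial_x^2 Q$ contains $\partial_x^{-1}H\eta\,\partial_x^4\eta$: the top-order term is $\partial_x^{-1}H\eta\,\partial_x^3\eta$, exactly as your own formula for $\partial_x^2 Q$ shows.
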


\begin{proof}
    The algebra property of $\XH1$ and the boundedness of $H$ give
    \begin{align*}
        \norm{Q(\eta,\eta)}_\XH1
        \lesssim\norm\eta_\XH1^2
        +\norm\eta_\XH1\norm\eta_\XH2
        \lesssim\norm\eta_\XH2^2.
    \end{align*}
    For the second estimate,
    \begin{align*}
        \big|(\pa_x^2Q(\eta,\eta),\pa_x^2\eta)_\XL\big|
        &=
        \bigg|-\frac12\intpin H\eta(x)(\pa_x^2\eta(x))^2\dx+
        \intpin
        \pa_x\eta(x)H\pa_x\eta(x)\pa_x^2\eta(x)\dx
        \\&\qquad+
        \intpin
        \eta(x)H\pa_x^2\eta(x)\pa_x^2\eta(x)\dx
        \bigg|\lesssim 
        \norm\eta_\XH1\norm\eta_\XH2^2
        ,\\
        \big|(Q(\eta,\eta),\eta)_\XL\big|
        &=\bigg|\frac32\intpin H\eta(x)\eta(x)^2\dx\bigg|
        \lesssim
        \norm\eta_\XH1\norm\eta_\XH2^2.
    \end{align*}
\end{proof}

\begin{lemma}
    \label{lem:estH2}
Let $p$ be a prime. There are constants $C_{2,p}>0$ and $C>0$ such that, for every $T>0$, every real-valued solution $\eta\in C([0,T);\XHZmean2)$ of~\eqref{eqn:pertsinenonline} with $n=p$ satisfies
\begin{align*}
    \norm{\eta(t)}_\XH2
    \leq
    \norm{\eta(0)}_\XH2 e^{t/2}
    +
    \int_{0}^{t}
    e^{(t-s)/2}
    \Big(C_{2,p}\norm{\eta(s)}_\XH1+C\norm{\eta(s)}_\XH1\norm{\eta(s)}_\XH2\Big)
    \,ds
\end{align*}
for all $t\in[0,T)$.
\end{lemma}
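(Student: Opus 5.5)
We prove the inequality first for smooth real zero-mean solutions and then pass to the limit. For data in $\XHZmean2$ we take smooth zero-mean approximations; by the continuous dependence in $\XH2$ recalled above, the integral inequality survives the limit on every $[0,T']\subset[0,T)$. So let $\eta$ be smooth. The plan is a differential inequality for $E(t):=\norm{\eta(t)}_\XH2^2=\norm\eta_\XL^2+\norm{\pa_x^2\eta}_\XL^2$ of the form
\begin{align*}
    \tfrac12\tfrac{d}{dt}E\leq \tfrac12 E + \big(C_{2,p}\norm\eta_\XH1+C\norm\eta_\XH1\norm\eta_\XH2\big)\norm\eta_\XH2 .
\end{align*}
Writing $y=\norm\eta_\XH2$, this says formally $y'\leq \frac12 y + C_{2,p}\norm\eta_\XH1 + C\norm\eta_\XH1 y$. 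We make it rigorous with $y_\epsilon=(E+\epsilon)^{1/2}$ and let $\epsilon\to0$. Multiplying by $e^{-t/2}$ and integrating gives the Duhamel-type bound in the statement.

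The nonlinear contribution $(Q(\eta,\eta),\eta)_\XH2$ is exactly the second estimate of Lemma~\ref{lem:estquad}, which gives the $C\norm\eta_\XH1\norm\eta_\XH2^2$ term. The main work is the linear part $(L_p\eta,\eta)_\XH2$. The $L^2$ piece is trivially bounded by $C\norm\eta_\XH1\norm\eta_\XH2$, since $L_p\colon\XH1\to\XL$ is bounded. For the top-order piece, expand $\pa_x^2 L_p\eta$ by Leibniz. Every term in which at least one derivative falls on the coefficients $S_p,C_p$ contains at most two derivatives of $\eta$ together with bounded multipliers. The term $\pa_x^{-1}$ also gains one derivative, and $H$ is bounded on $L^2$. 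Hence those terms are at most $C_{2,p}\norm\eta_\XH1\norm\eta_\XH2$. The only exception is the term $-\frac1p\cdot 2\pa_x S_p\,\pa_x^2\eta = -2C_p\pa_x^2\eta$, which is genuinely second order; we keep it.

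The remaining top-order terms are
\begin{align*}
    \Big(-\tfrac1pS_p\pa_x^3\eta -2C_p\pa_x^2\eta - S_pH\pa_x^2\eta + C_p\pa_x^2\eta + pC_pH\pa_x\eta,\ \pa_x^2\eta\Big)_\XL .
\end{align*}
The last term is $\lesssim\norm\eta_\XH1\norm\eta_\XH2$. Integration by parts gives $(-\frac1pS_p\pa_x f,f)=\frac12(C_pf,f)$ for $f=\pa_x^2\eta$. The diagonal terms therefore combine to $\big((\tfrac12-2+1)C_p f,f\big)=-\tfrac12(C_pf,f)\leq \tfrac12\norm f_\XL^2$.

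The delicate term is $-(S_pHf,f)$. Write $(S_pHf,f)=\frac12([S_p,H]f,f)+\frac12((S_pH+HS_p)f,f)$. Since $H$ is skew-adjoint and $S_p$ is real, $(S_pHf,f)=-(f,HS_pf)=-(HS_pf,f)$. So $((S_pH+HS_p)f,f)=0$, and $(S_pHf,f)=\frac12([S_p,H]f,f)$.

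The commutator $[S_p,H]$ maps $L^2$ into smooth functions; its kernel is trigonometric. More precisely, it has finite rank, acting only on modes $|k|\le p$ with output of size $\lesssim\sum_{|k|\le p}|\hat f_k|$. For $f=\pa_x^2\eta$ this is bounded by $C_p\norm\eta_\XH1\norm f_\XL$, because low modes of $\pa_x^2\eta$ are controlled by $\norm\eta_\XH1$. Thus $|(S_pHf,f)|\lesssim_p\norm\eta_\XH1\norm\eta_\XH2$.

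Collecting everything, the top-order growth rate is exactly $\frac12\norm f^2$. This is the main obstacle: the coefficient must come out as $\tfrac12$ and not larger. That hinges on the precise cancellation between the transport term and the $\pa_x S_p$ term, which must be checked carefully, including the sign of the Leibniz coefficient.

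The bound also needs $E$ including the $L^2$ part. The $\norm\eta_\XL^2$ contribution is controlled by the $\norm\eta_\XH1$ terms, and $\tfrac12\norm{f}^2\le\tfrac12 E$, so the rate-$\tfrac12$ inequality holds.
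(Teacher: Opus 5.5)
Your proposal is correct and follows essentially the same route as the paper. The paper packages your integration by parts as the skew-adjoint operator $\mathfrak{A}_p=-\frac1pS_p\pa_x-\frac12C_p$ plus $\mathfrak{B}_p=-\frac12C_p-S_pH$ acting on $g=\pa_x^2\eta$, with the lower-order remainder $R_p\eta$. It then uses the same identity $\Re(-S_pHg,g)_\XL=\frac12([H,S_p]g,g)_\XL$ together with the fact that the commutator only sees modes $|k|\le p$. The $\epsilon$-regularized square root and the smooth-approximation limit are also the same as yours.
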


\begin{proof}
We first take a smooth real-valued solution and write $g=\pa_x^2\eta$.
Using the skew-adjoint operator $\mathfrak{A}_p$ from Remark~\ref{rem:linf:wellposed}, we have
\begin{align*}
    \pa_x^2L_p\eta
    &=\mathfrak{A}_pg+\underbrace{(-\tfrac12C_p-S_pH)}_{\mathfrak{B}_p}g+R_p\eta,\\
    R_p\eta
    &:=-pS_p\pa_x\eta-pC_pH\pa_x\eta-p^2S_pH\eta
      -p^2C_p\eta-p^3C_pH\pa_x^{-1}\eta.
\end{align*}
The operator $\mathfrak{B}_p$ is bounded on $\XL$, and
\begin{subequations}
\label{aux:lem33:1}
    \begin{align}
        \norm{R_p\eta}_\XL
        &\leq(p^3+2p^2+2p)\norm\eta_\XH1,
        \\
        \norm{L_p\eta}_\XL
        &\leq(p+p^{-1}+2)\norm\eta_\XH1.
    \end{align}
\end{subequations}
Using the skew-adjointness of $\mathfrak{A}_p$ and $H$, we have
\begin{subequations}
    \begin{align}
    \Re(\mathfrak{A}_p g,g)_\XL
    &=0,
    \label{aux:lem33:2a}
    \\
    \Re(-S_pHg,g)_\XL
    &=\frac12([H,S_p]g,g)_\XL,
    \label{aux:lem33:2b}
\end{align}
\end{subequations}
where $[H,S_p]g:=H(S_pg)-S_pHg$.
Multiplication by $S_p$ shifts Fourier indices by $\pm p$, so the commutator vanishes on input frequencies $|k|>p$ and
$\norm{[H,S_p]g}_\XL\leq 2p\norm\eta_\XH1$. Using~\eqref{aux:lem33:2b},
\begin{align}
    \Re(\mathfrak{B}_p g,g)_\XL \leq
    \frac12|\Re(C_p g,g)_\XL|
    +
    |\Re(-S_pHg,g)_\XL|
    \leq
    \frac12 \norm{g}_\XL^2 +
    p\norm{g}_\XL\norm\eta_\XH1.
    \label{aux:lem33:3}
\end{align}
This gives
\begin{align*}
    \Re(L_p\eta,\eta)_{\XH2}
    &=
    \Re(L_p\eta,\eta)_\XL
    +
    \Re(\pa_x^2 L_p\eta,\pa_x^2\eta)_\XL\\
    &\leq (p+p^{-1}+2)\norm{\eta}_\XL\norm{\eta}_\XH1
       +\Re(\mathfrak{B}_p g,g)_\XL+\Re(R_p\eta,g)_\XL\\
    &\leq
    \frac12\norm\eta_\XH2^2
    +C_{2,p}\norm\eta_\XH1\norm\eta_\XH2.
\end{align*}
Here, using~\eqref{aux:lem33:1},\eqref{aux:lem33:2a} and \eqref{aux:lem33:3}, one may take $C_{2,p}=p^3+2p^2+4p+2+p^{-1}$.
By Lemma~\ref{lem:estquad}, we also obtain
\begin{align*}
    \frac12\pa_t\norm\eta_\XH2^2
    \leq\frac12\norm\eta_\XH2^2
    +C_{2,p}\norm\eta_\XH1\norm\eta_\XH2
    +C\norm\eta_\XH1\norm\eta_\XH2^2.
\end{align*}
Finally, set $u_\ep:=(\norm\eta_\XH2^2+\ep^2)^{1/2}$ for $\ep>0$. Dividing the previous inequality by $u_\ep$ gives
\begin{align*}
    \pa_t u_\ep
    \leq\frac12 u_\ep
    +C_{2,p}\norm\eta_\XH1
    +C\norm\eta_\XH1 u_\ep.
\end{align*}
Multiplying by $e^{-t/2}$ and integrating on $[0,t]$ gives
\begin{align*}
    u_\ep(t)
    \leq u_\ep(0)e^{t/2}
    +\int_0^t e^{(t-s)/2}\Big(C_{2,p}\norm{\eta(s)}_\XH1+C\norm{\eta(s)}_\XH1 u_\ep(s)\Big)\,ds.
\end{align*}
Letting $\ep\to0$ proves the asserted inequality for smooth solutions.
The estimate for $\XH2$ data follows by approximation with smooth initial data, which remain smooth on every compact interval of existence by Proposition~5.1 of~\cite{OSW08}, and continuous dependence in $C([0,T'];\XH2)$ for every $T'<T$.
\end{proof}

\subsection{Proof of Theorem~\ref{thm:alln:nonlinear}}

Assume $n=p$ for a prime $p$, so that Lemmas~\ref{lem:estH1} and~\ref{lem:estH2} both apply, and let $(\la,\eta_0):=(\la_p,\eta_p)$ denote the pair given by Theorem~\ref{the:mainlin} with $n=p$, chosen real-valued by taking a nonzero real or imaginary part and normalized so that $\norm{\eta_0}_{\XH1}=1$.

Denote now by $\linsol$ the solution of the linearized equation~\eqref{eqn:linf} with initial condition $\delta \eta_0$, indeed we have
\begin{align*}
    \linsol(x,t) = \delta e^{t\la}\, \eta_0(x).
\end{align*}

Let $\nonsol$ denote the maximal solution of the nonlinear equation~\eqref{eqn:pertsinenonline} with the same initial condition $\delta\eta_0$, defined on $[0,T_{\max,\de})$. We define $\difsol$ as the difference between these two solutions,
\begin{align*}
    \difsol := \nonsol - \linsol.
\end{align*}

Let $\om=1.1$ and $C_{1,p}$, $C_{2,p}$, $C$, and $C_Q$ be the constants of Lemmas~\ref{lem:estH1}--\ref{lem:estH2}.
Define $\ti{C}_{2,p}$ and $\ti{C}_p$ by
\begin{align*}
    \ti{C}_{2,p}:=\norm{\eta_0}_\XH2+\frac{2C_{2,p}}{\la-\frac12},
    \qquad
    \ti{C}_p:=\frac{4C_{1,p}C_Q\ti{C}_{2,p}^2}{2\la-\om},
\end{align*}
which are well-defined and positive since $\la>\frac12$ and $2\la>\om$, recall $\la\in\Ilambd$ by Theorem~\ref{the:mainlin}.

Fix
\begin{align}
    \label{eqn:theta0}
    \rthr:=\min\bigg\{\frac{2\la-\frac12}{8C},\ \frac1{2\ti{C}_p}\bigg\}>0,
\end{align}
and define for every $0<\de<\rthr$,
\begin{align*}
    T_\de^{\rthr}&:=\frac1\la\log\frac\rthr\de,
    \\
    T_\de^1&:=\sup\big\{t\in[0,T_{\max,\de}):\norm{\nonsol(\cdot,s)}_\XH1\leq2\de e^{s\la}\text{ for all }s\in[0,t]\big\},
    \\
    T_\de^2&:=\sup\big\{t\in[0,T_{\max,\de}):\norm{\nonsol(\cdot,s)}_\XH2\leq2\ti{C}_{2,p}\de e^{s\la}\text{ for all }s\in[0,t]\big\}.
\end{align*}
Both $T_\de^1,T_\de^2>0$ by continuity of $\nonsol$ in $\XH2$ and the strict inequalities at $t=0$, since $\norm{\eta_0}_\XH1=1$ and $\ti{C}_{2,p}>\norm{\eta_0}_\XH2$.

Notice that $\difsol(\cdot,0)=0$, and differentiating $\difsol$ in time, we get
\begin{align}
    \label{eqn:diffsol}
    \pa_t \difsol =
    L_p \difsol + Q(\nonsol,\nonsol).
\end{align}
Therefore, Duhamel's formula applied to~\eqref{eqn:diffsol} yields
\begin{align*}
    \difsol(x,t) =
    \int_0^t e^{(t-s) L_p}
    Q(\nonsol,\nonsol)(x,s)\,ds.
\end{align*}
Taking the $\XH1$ norm and applying Lemmas~\ref{lem:estH1} and~\ref{lem:estquad},
\begin{align}
    \label{eqn:duhamelh1}
    \norm{\difsol(\cdot,t)}_\XH1
    \leq
    C_{1,p}C_Q
    \int_0^t
    e^{(t-s)\om}
    \norm{\nonsol(\cdot,s)}_\XH2^2
    \,ds.
\end{align}

Let $0\leq t<\min\{T_\de^1,T_\de^2\}$. From~\eqref{eqn:duhamelh1} and the definition of $T_\de^2$, we have
\begin{align}
    \label{eqn:difsolbound}
    \norm{\difsol(\cdot,t)}_\XH1
    \leq
    4C_{1,p}C_Q\ti{C}_{2,p}^2\de^2
    \int_0^t
    e^{(t-s)\om}
    e^{2s\la}
    \,ds
    \leq
    \underbrace{\frac{4C_{1,p}C_Q\ti{C}_{2,p}^2}{2\la-\om}}_{\ti{C}_p}\,\de^2 e^{2t\la}.
\end{align}
Using~\eqref{eqn:difsolbound}, we get the following estimate for $\nonsol$
\begin{align}
    \norm{\nonsol(\cdot,t)}_\XH1
    \leq
    \norm{\linsol(\cdot,t)}_\XH1
    +
    \norm{\difsol(\cdot,t)}_\XH1
    \leq
    \de e^{t\la}
    \big({\underbrace{\norm{\eta_0}_\XH1}_{1}}+\ti{C}_p\,\de e^{t\la}\big).
    \label{eqn:h1bound}
\end{align}
Applying Lemma~\ref{lem:estH2} to $\nonsol$, we obtain
\begin{align}
    \norm{\nonsol(\cdot,t)}_\XH2
    &\leq\de\norm{\eta_0}_\XH2e^{t/2}
    +2C_{2,p}\de\int_0^te^{(t-s)/2}e^{s\la}\,ds+4C\ti{C}_{2,p}\de^2\int_0^te^{(t-s)/2}e^{2s\la}\,ds
    \nonumber
    \\
    \nonumber
    &\leq\de\norm{\eta_0}_\XH2e^{t/2}
    +
    2C_{2,p}\de
    \frac{e^{t\la}}{\la-\frac12}
    +
    4C\ti{C}_{2,p}\de^2
    \frac{e^{2t\la}}{2\la-\frac12}\\
     &\leq
     \de e^{t\la} \bigg(
     {\underbrace{\norm{\eta_0}_\XH2
    +
    \frac{2C_{2,p}}{\la-\frac12}}_{\ti C_{2,p}}}
    +
    \ti{C}_{2,p}
    \frac{4C}{2\la-\frac12}\de e^{t\la}
    \bigg).
    \label{eqn:h2bound}
\end{align}

Set $T_\de^\ast:=\min\{T_\de^1,T_\de^2\}$. For $0\leq t<\min\{T_\de^\ast,T_\de^{\rthr}\}$, the choice~\eqref{eqn:theta0},~\eqref{eqn:h1bound}, and~\eqref{eqn:h2bound} imply
\begin{align*}
    \norm{\nonsol(\cdot,t)}_\XH1\leq\tfrac32\de e^{t\la},
    \qquad
    \norm{\nonsol(\cdot,t)}_\XH2\leq\tfrac32\ti{C}_{2,p}\de e^{t\la}.
\end{align*}
If $T_\de^\ast<T_\de^{\rthr}$, the $\XH2$ bound allows continuation past $T_\de^\ast$, and both inequalities are strict improvements of the defining bounds of $T_\de^1$ and $T_\de^2$. Continuity then contradicts their maximality. Hence $T_\de^\ast\geq T_\de^{\rthr}$. The same continuation criterion rules out $T_{\max,\de}=T_\de^{\rthr}$, so the solution is defined at the escape time and the bounds there follow by continuity.

At $t=T_\de^{\rthr}$ we have $\de e^{T_\de^{\rthr}\la}=\rthr$, so $\norm{\linsol(\cdot,T_\de^{\rthr})}_\XH1=\rthr$, while~\eqref{eqn:difsolbound} and $\rthr\leq\frac1{2\ti{C}_p}$ implies
\begin{align*}
\norm{\difsol(\cdot,T_\de^{\rthr})}_\XH1\leq\ti{C}_p\rthr^2\leq\frac12\rthr.
\end{align*}
Hence,
\begin{align*}
    \norm{\nonsol(\cdot,T_\de^{\rthr})}_\XH1
    \geq
    \norm{\linsol(\cdot,T_\de^{\rthr})}_\XH1
    -
    \norm{\difsol(\cdot,T_\de^{\rthr})}_\XH1
    \geq
    \tfrac12\rthr\qquad
    \text{for all }\de < \rthr.
\end{align*}
This shows that solutions with initial data of arbitrarily small $\XH1$ norm $\de<\rthr$ eventually leave the $\XH1$-ball of fixed radius $\frac12\rthr$.

Finally, let $n\geq2$ and let $p$ be a prime divisor of $n$, so that $n=pl$ for some integer $l\geq1$, and let $\rthr$ and $T_\de^{\rthr}$ be the quantities defined above, with $\nonsol$ the corresponding solution of~\eqref{eqn:pertsinenonline} with $n=p$.

Define $\resol(x,t):=\nonsol(lx,t)$, so that $\resol$ solves~\eqref{eqn:pertsinenonline}, with initial condition $\resol(\cdot,0)=\de\eta_0(l\,\cdot)=:\de\eta_{0,n}$.

Notice now that the Sobolev norm is equivalent after rescaling. Indeed, for every $s\geq0$ and $f\in\XH s$,
\begin{align*}
    \norm f_\XH s^2
    \leq
    \norm{f(l\,\cdot)}_\XH s^2
    \leq
    l^{2s}\norm f_\XH s^2,
\end{align*}
since $1+|j|^{2s}\leq1+l^{2s}|j|^{2s}\leq l^{2s}(1+|j|^{2s})$ for $l\geq1$. Setting $\ep_n:=\frac12\rthr$ and $T_{\de,n}:=T_\de^{\rthr}$, we have for any $\de<\rthr$ that
\begin{align*}
\norm{\resol(\cdot, T_{\de,n})}_\XH1 \geq
\norm{\nonsol(\cdot, T_{\de,n})}_\XH1 \geq
 \ep_n,
\end{align*}
while
\begin{align*}
\norm{\resol(\cdot,0)}_\XH1 \leq l\de\norm{\eta_0}_\XH1 = l\de
\end{align*}
and $\norm{\resol(\cdot,0)}_\XH2 < \infty$, both arbitrarily small as $\de\to0$.

\section{Proof of Proposition~\ref{prop:eigte}}
\label{sec:eigte}

To prove Proposition~\ref{prop:eigte}, we construct a real-valued continuous function $\detcrit$ of $\te\in\Itheta$ and $\la\in\Ilambd$ whose zeros characterize the eigenvalues of $L_\te^{\rm fib}$.
The function $\detcrit$ is defined from a real $4\times 4$ matrix $\Mcrit$ by
\begin{align}
    \detcrit(\la,\te) := \det(\Mcrit),
\end{align}
so that Proposition~\ref{prop:eigte} reduces to showing that $\detcrit(\cdot,\te)$ has a zero in $\Ilambd$ for $\te=\te_p$ at every prime $p<N$, and for every $\te\in J$.

Section~\ref{sec:redfindim} reduces
\begin{align}
\label{Lteet=laet}
    L_\te^{\rm fib}\eta = \la\eta
\end{align}
to a second-order singular ODE system. We study this system using Fuchsian theory in Section~\ref{sec:fuchs2}, where the matrix $\Mcrit$ is also constructed in Definition~\ref{def:Mat}.

\begin{rem}
    The function $\detcrit(\cdot,\te)$ characterizes the eigenvalues of $L_\te^{\rm fib}$ on a domain larger than $\Ilambd$. Indeed, we will work with $\la\in\C$ and specify any additional assumptions on $\la$ within each lemma.
\end{rem}

\subsection{Reduction to finite dimension}
\label{sec:redfindim}

Assume we have $\la\in\C$ and $0\neq \eta\in\XL$ solving~\eqref{Lteet=laet}. Taking the Fourier transform, we obtain
\begin{align}
\label{eqn:rec}
    \la \ha \eta_k =
    \ti{A}_{k+1,\te} \ha \eta_{k+1} +
    \ti{B}_{k-1,\te} \ha \eta_{k-1}
    \qquad\text{for all }k\in\Z,
\end{align}
where $\ti{A}_{k,\te}$ and $\ti{B}_{k,\te}$ were defined in~\eqref{def:ABkt}. 

Rather than solving the recurrence~\eqref{eqn:rec} directly, we follow~\cite{BHTW} and translate it into a holomorphic problem on the unit disk. To a solution $\eta$ of~\eqref{eqn:rec} we associate holomorphic functions $(F^+,F^-)$ whose generating series record the Fourier modes of $\eta$. These functions satisfy a second-order ODE system of the form~\eqref{holsys}, and its right-hand sides involve only the values $F^\pm(0)$. This reduces the eigenvalue problem to the existence of holomorphic solutions of~\eqref{holsys}.

For $\la\in\C$ and $\te\in(0,1)$, define the operator $\OLdisk\la\te:\XHholo \to \XHholo$ by
\begin{equation}
\label{def:holo}
    \OLdisk\la\te F(z)
    := 
    z^2(z^2-1)\pa_z^2 F(z) +
    z \Pglo(z)\pa_z F(z)+
    \Qglo(z)F(z),
\end{equation}
where
\begin{align}
\label{def:PQtl}
    \Pglo(z):=
        (2\te-1)z^2+2\la z-(2\te+1),&&
    \Qglo(z):=
        (\te-1)^2z^2+2\la\te z-(\te^2-1).
\end{align}
In terms of this operator, we seek pairs $(F^+,F^-)$ of holomorphic functions on the open unit disk $\D:=\{z\in\C:|z|<1\}$ satisfying the system
\begin{equation}
    \label{holsys}
    \left.
    \begin{aligned}
        \OLdisk\la{\te^+} F^+(z) =
            z \te (2-\te)
            F^-(0)
            +
            (1-\te^2)
            F^+(0)
    \\
        \OLdisk\la{\te^-} F^-(z) =
            z (1-\te^2)
            F^+(0)
            +
            \te (2-\te)
            F^-(0)
    \end{aligned}
    \right\}
\end{equation}
Here $\te^+:=\te$ and $\te^-:=1-\te$. This is the equation the generating functions $F^\pm$ satisfy directly (Lemma~\ref{lem:red_to_fin} below).

As we will see, near $z=1$, the kernel of $\OLdisk\la\te$ is locally spanned by a holomorphic function and a second independent solution of the form
\begin{align*}
    (z-1)^{2-\la}
\end{align*}
times a holomorphic function. Near $z=-1$,
the identities
\begin{align*}
    \Pglo(z)=\Pglolm(-z)
    \andspace
    \Qglo(z)=\Qglolm(-z)
\end{align*}
similarly yield a holomorphic solution and a singular solution of the form
\begin{align*}
    (z+1)^{2+\la}
\end{align*}
times a holomorphic function.

Our strategy is to construct $F^\pm$, holomorphic on $\D$, so that the coefficient of the singular term at $z=1$ vanishes. The regularity of the eigenfunction is then dictated entirely by the behavior at $z=-1$, where the exponent $2+\la$ implies that
\begin{align*}
    \eta\in\XH{s}\qquad 
    \text{for all }s<\frac32+\la.
\end{align*}

Before showing the equivalence of both problems, we prove the following technical lemma.
\begin{lemma}
\label{lem:hol_extension}
    Let $s\geq0$ and $f\in\XH{s}$ with $\ha{f}_k=0$ for all $k<0$. 
    Then the power series $F(z):=\sum_{k\geq 0}\ha{f}_k z^k$ converges absolutely for every $z\in\D$ and defines $F\in\XHhol{s}$.
\end{lemma}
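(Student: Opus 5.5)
The plan is to use only that the coefficients $\ha f_k$ are square-summable, which follows from $s\geq0$: since the weight $1+|k|^{2s}\geq1$, we have $\sum_{k\ge0}|\ha f_k|^2\le\norm f_\XL^2\le\norm f_\XH{s}^2<\infty$. The proof then has two short steps.

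\textbf{Step 1: absolute convergence on $\D$.} Fix $z\in\D$. By Cauchy--Schwarz,
\begin{align*}
\sum_{k\ge0}|\ha f_k||z|^k\le \Big(\sum_{k\ge0}|\ha f_k|^2\Big)^{1/2}\Big(\sum_{k\ge0}|z|^{2k}\Big)^{1/2}=\frac{\norm{f}_\XL}{(1-|z|^2)^{1/2}}<\infty.
\end{align*}
This gives absolute convergence at every point of $\D$. The same bound holds uniformly on each closed disk $\{|z|\le r\}$ with $r<1$, so the Weierstrass M-test (or just the fact that the radius of convergence is at least $1$) shows that $F$ is holomorphic on $\D$. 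Hence $F\in\XHholo$, with Taylor coefficients $F_k=\ha f_k$.

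\textbf{Step 2: membership in $\XHhol{s}$.} By the definition of $\left.F\right|_\T$ given in the notation section,
\begin{align*}
\left.F\right|_\T(x)=\sum_{k\ge0}F_ke^{ikx}=\sum_{k\ge0}\ha f_ke^{ikx}.
\end{align*}
Since $\ha f_k=0$ for all $k<0$, this series is exactly the Fourier series of $f$, and it converges in $\XL$. So $\left.F\right|_\T=f\in\XH{s}$, and therefore $F\in\XHhol{s}$ with $\norm F_\XHhol{s}=\norm f_\XH{s}$.

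The only point needing care is that $\left.F\right|_\T$ was introduced formally. One should therefore say explicitly that it is identified with $f$ through its Fourier coefficients, interpreted as an $\XL$-convergent series; no boundary-limit argument is needed.
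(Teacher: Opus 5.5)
Your proof is correct and follows essentially the same route as the paper: bound the Fourier coefficients, compare with a geometric series uniformly on $\{|z|\le r\}$ to get locally uniform convergence and hence holomorphy on $\D$, and note that $F|_\T=f$ by construction. The only difference is that you use square-summability and Cauchy--Schwarz, whereas the paper uses the cruder pointwise bound $|\ha f_k|\lesssim(1+k^2)^{|s|}$; your version has the small bonus of giving the explicit estimate $\norm f_\XL(1-|z|^2)^{-1/2}$.
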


\begin{proof}
    Since $f\in\XH{s}$, its Fourier coefficients satisfy
    \begin{align*}
        |\ha{f}_k|\lesssim (1+k^2)^{|s|}
        \qquad \text{for all } k\geq 0.
    \end{align*}
    For any $r\in(0,1)$, the series $\sum_{k\geq 0}|\ha{f}_k|r^k$ converges by comparison with $C\sum_{k\geq 0}(1+k^2)^{|s|} r^k<\infty$.
    Since this bound is uniform on ${|z|\leq r}$, the series converges locally uniformly in $\D$, so $F$ is holomorphic there.
    By construction $F|_\T\in \XH s$, hence $F\in\XHhol{s}$.
\end{proof}

\begin{lemma}
\label{lem:red_to_fin}
    Let $\la\in\C$, $\te\in(0,1)$, and $g\in\XH s$ for some $s\geq0$.
    Set $\te^+:=\te$, $\te^-:=1-\te$. Define $g^\pm:=\Pcal^\pm g\in\XH s$
    and let $(G^+,G^-)\in\XHholpair{s}$ given by
    \begin{align*}
        G^+(z) := 2i\sum_{k\geq 0}\wiajustao{g^+}{k} z^k
        \andspace
        G^-(z) :=-2i\sum_{k\geq 0}\wiajustao{g^-}{k} z^k
        .
    \end{align*}
    Then there exists $\eta\in\XH s$ satisfying
    \begin{align}
    \label{eqn:reducforcedcase}
        (L_\te^{\rm fib}-\la)\eta = g,
    \end{align}
    if and only if there exists
    $(F^+,F^-)\in\XHholpair{s+1}$ solving
    \begin{equation}
        \label{holsys:forced}
        \left.
        \begin{aligned}
            \OLdisk\la{\te^+}F^+(z)
                &= z\te(2-\te)F^-(0)+(1-\te^2)F^+(0)+zG^+(z)\\
            \OLdisk\la{\te^-}F^-(z)
                &= z(1-\te^2)F^+(0)+\te(2-\te)F^-(0)+zG^-(z)
        \end{aligned}
        \right\}
    \end{equation}
    In particular, taking $g=0$, the system~\eqref{holsys:forced} reduces
    to~\eqref{holsys}, recovering the eigenvalue equivalence with~\eqref{eqn:eigthe}.
\end{lemma}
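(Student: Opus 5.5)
The plan is to translate the recurrence~\eqref{eqn:rec} (with forcing) into coefficient identities for power series and to check that these are exactly the Taylor coefficients of~\eqref{holsys:forced}. Given $\eta$, I would set
\begin{align*}
    F^+(z):=\sum_{k\geq0}a_k z^k,\qquad F^-(z):=\sum_{k\geq0}b_k z^k,
\end{align*}
where $a_k$ and $b_k$ are suitable normalizations of $\ha\eta_k$ and $\ha\eta_{-k-1}$. The normalization is chosen to absorb the denominators $k+\te$ in $\ti A_{k,\te}$ and $\ti B_{k,\te}$; I would try $a_k=c\,\ha\eta_k/(k+\te)$ and $b_k=c'\,\ha\eta_{-k-1}/(k+1-\te)$. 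The gain of one derivative is what places $F^\pm$ in $\XHhol{s+1}$, and Lemma~\ref{lem:hol_extension} gives holomorphy. The constants $c,c'$ are fixed so that the forcing appears as $2i\,\ha g^+_k$ and $-2i\,\ha g^-_k$.

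For the positive modes $k\geq1$, the recurrence $(\la-\dots)\ha\eta_k=\ti A_{k+1,\te}\ha\eta_{k+1}+\ti B_{k-1,\te}\ha\eta_{k-1}-\ha g_k$ involves only $\ha\eta_{k-1},\ha\eta_k,\ha\eta_{k+1}$, all nonnegative. I would compute the coefficient of $z^{k+1}$ in $\OLdisk\la\te F$. The term $z^2(z^2-1)\pa_z^2$ together with $zP(z)\pa_z$ and $Q(z)$ produces, at $z^{k+1}$, a combination of $a_{k+1}$ (from $-z^2\pa_z^2$, $-(2\te+1)z\pa_z$, and $-(\te^2-1)$), of $a_k$ (from $2\la z^2\pa_z$ and $2\la\te z$), and of $a_{k-1}$ (from $z^4\pa_z^2$, $(2\te-1)z^3\pa_z$, and $(\te-1)^2z^2$). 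The resulting factors are $-(k+1+\te)^2+1$, $2\la(k+\te)$, and $(k-1+\te-1)^2$ up to sign. These factor as $(k+\te)(k+\te+2)$ and so on, matching $\ti A,\ti B$ times $(k+\te)$. The mode equation at index $k$ should therefore become the $z^{k+1}$ coefficient, which explains the extra factor $z$ on $G^\pm$.

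The negative modes $k\leq-2$ are handled in the same way for $F^-$ with $\te^-=1-\te$, using Remark~\ref{rem:AkBksym} ($\ti A_{-k-1,\te}=\ti B_{k,1-\te}$). The delicate part is the boundary modes $k=0$ and $k=-1$, where the recurrence couples $\ha\eta_0$ and $\ha\eta_{-1}$ across the two series. Here $\sgn(k+\te)$ flips, so the generic formulas fail. These modes correspond to the $z^0$ and $z^1$ coefficients of $\OLdisk\la{\te^\pm}F^\pm$. The constant term of $\OLdisk\la\te F$ is $-(\te^2-1)F(0)=(1-\te^2)F(0)$, which explains the constant right-hand sides. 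The $z^1$ coefficient of each equation picks up the cross term, namely $\te(2-\te)F^-(0)$, respectively $(1-\te^2)F^+(0)$, corresponding to $\ti A_{-1,\te}\ha\eta_{-1}$ and $\ti B_{0,\te}\ha\eta_0$ evaluated with the opposite sign convention. I expect this bookkeeping, and choosing $c,c'$ so that both cross coefficients come out exactly as stated, to be the main obstacle. I would first verify directly that $\ti A_{-1,\te}$ and $\ti B_{0,\te}$ are proportional to $\te(2-\te)/(1-\te)$ and $(1-\te^2)/\te$ respectively, and then fix $c,c'$ accordingly.

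For the converse, given $(F^+,F^-)\in\XHholpair{s+1}$ solving~\eqref{holsys:forced}, I would define $\ha\eta_k$ by inverting the normalization, i.e. multiplying the coefficients by $(k+\te)$, which lands in $\XH s$. Reading the coefficient identities backwards then recovers~\eqref{eqn:rec} with forcing for every $k\in\Z$. Since every step is an equivalence of coefficient identities, the two directions follow together. Finally, setting $g=0$ gives $G^\pm=0$, which yields~\eqref{holsys}.
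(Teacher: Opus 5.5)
Your plan is correct and is essentially the paper's proof. The paper sets $f:=(\pa_x+i\te)^{-1}\eta$ and takes $F^+$, $F^-$ to be the generating functions of $(\ha f_k)_{k\geq0}$ and $(\ha f_{-k-1})_{k\geq0}$, which is your normalization with $c=-i$, $c'=i$; it then reads the mode-$k$ equation as the $z^{k+1}$ coefficient and uses Remark~\ref{rem:AkBksym} for the negative modes, exactly as you propose.

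There is one small indexing slip in your boundary-mode step. The cross coefficients in modes $0$ and $-1$ are $\ti B_{-1,\te}=-\frac{\te(2-\te)}{2(1-\te)}$ and $\ti A_{0,\te}=-\frac{1-\te^2}{2\te}$, not $\ti A_{-1,\te}$ and $\ti B_{0,\te}$. Also, the $z^0$ coefficients of both equations are tautological, so both boundary modes sit in the $z^1$ coefficients.
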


\begin{proof}
    Let $\eta \in \XH s$ satisfy~\eqref{eqn:reducforcedcase}.
    Define $f := (\pa_x + i\te)^{-1} \eta\in\XH{s+1}$.
	Applying $\Pcal^+$ to~\eqref{eqn:reducforcedcase}, we obtain
	\begin{align}
        \la (\pa_x + i\te) f^+ + g^+
        =
		\Pcal^+ L_{\te}^{\rm fib} \eta
        =
        \sum_{k \geq 0}
		(
			\ha \eta_{k+1} \ti{A}_{k+1,\te}
			+
			\ha \eta_{k-1} \ti{B}_{k-1,\te}
		)
        \,\foba k.
	\end{align}

	Thus, for every $k \geq 0$,
	\begin{align}
        \la (k+\te) 
        \wiajustao{f^+}{k}=
		(k+1+\te)
        \ha f_{k+1}
        \ti{A}_{k+1,\te}
		+
		(k-1+\te)
        \ha f_{k-1}
        \ti{B}_{k-1,\te}
        +i \wiajustao{g^+}{k}.
	\end{align}
    From~\eqref{def:ABkt}, we get
    \begin{align}
		(k-2+\te)(k-2+2\de_{k,0}+\te) \ha f_{k-1}
        +
        2\la (k+\te) \wiajustao{f^+}{k}
        -
		(k+2+\te)(k+\te) \ha f_{k+1}
        =2i\wiajustao{g^+}{k}.
        \label{eqn:krelgp}
	\end{align}
	
	The projection $f^+:=\Pcal^+ f$ belongs to $\XH{s+1}$ and has only nonnegative Fourier modes. By Lemma~\ref{lem:hol_extension} applied to $f^+\in\XH{s+1}$, the power series
	\begin{align*}
		F^+(z) := \sum_{k \geq 0} \ha f_k^+ z^k
	\end{align*}
	converges absolutely for every $z\in\D$ and defines $F^+\in\XHhol{s+1}$.

    Multiplying~\eqref{eqn:krelgp} by $z^k$ and summing over $k \geq 0$, we obtain
    \begin{align}
        I_1+I_2+I_3
        =
        G^+
        \label{eqn:ktohol:sum}
    \end{align}
    where
    \begin{align*}
        I_1(z) &:= \sum_{k \geq 0}
            (k-2+\te)(k-2(1-\de_{k,0})+\te)
            \ha f_{k-1} z^k,
            \\
        I_2(z) &:= \sum_{k \geq 0}
            2\la (k+\te) 
            (\ha{f^+})_k z^k,
            \qquad
        I_3(z) := -\sum_{k \geq 0}
            (k+2+\te)(k+\te) 
            \ha f_{k+1} z^k.
    \end{align*}

    For $I_1$ we have
    \begin{align*}
        I_1(z) &= 
            z\sum_{k \geq -1}
            (k-1+\te)(k+1-2(1-\de_{k+1,0})+\te)
            \ha f_k z^k\\
        &=
            z\sum_{k \geq 0}
            (k-1+\te)^2 (\ha{f^+})_k z^k
            -
            \te (2 - \te) \ha f_{-1}\\
        &=
            z^3 \pa_z^2 F^+(z) +
            (2\te - 1) z^2 \pa_z F^+(z) +
            (1-\te)^2 z F^+(z)
            -
            \te (2-\te) \ha f_{-1}.
    \end{align*}
    Next,
    \begin{align*}
        I_2(z) &= 
            \sum_{k \geq 0}
            2\la (k+\te) (\ha{f^+})_k z^k
        =
            2\la z \pa_z F^+(z) +
            2\la\te F^+(z),
    \end{align*}
    and
    \begin{align*}
        zI_3(z) &= 
            z\sum_{k \geq 0}
            -(k+2+\te)(k+\te) \ha f_{k+1} z^k\\
        &= 
            \sum_{k \geq 0}
            -(k+1+\te)(k-1+\te) (\ha{f^+})_k z^k
            -(1-\te)(1+\te) \ha f_0\\
        &=  -
            z^2 \pa_z^2 F^+(z) -
            (2\te + 1) z \pa_z F^+(z) +
            (1-\te)(1+\te) F^+(z)
            - (1-\te)(1+\te) \ha f_0.
    \end{align*}

    Substituting the expressions for $I_1$, $I_2$, and $I_3$ into~\eqref{eqn:ktohol:sum}, we find that $F^+$ satisfies the first equation in~\eqref{holsys:forced}. 
    
    By Remark~\ref{rem:AkBksym} we have $\ti{A}_{-k,\te^+}=\ti{B}_{k-1,\te^-}$. Substituting $k\to -k-2$ and $\te^+\to\te^-$ in the same identity also yields $\ti{B}_{-k-2,\te^+}=\ti{A}_{k+1,\te^-}$. These two identities allow us to find the recurrence that the Fourier modes of $\eta^\sharp$ satisfy, as the following computations show
    \begin{align*}
        \la \wiajustao{\eta^\sharp}{k} &=
        \la \ha \eta_{-k-1} =
            \ti{A}_{-k,\te^+}
            \ha \eta_{-k}
            +
            \ti{B}_{-k-2,\te^+}
            \ha \eta_{-k-2} - \ha{g}_{-k-1}\\
        &=
            \ti{B}_{k-1,\te^-}
            \ha \eta_{-k}
            +
            \ti{A}_{k+1,\te^-}
            \ha \eta_{-k-2}
             - \ha{g}_{-k-1}
        =
            \ti{B}_{k-1,\te^-}
            \wiajustao{\eta^\sharp}{k-1}
            +
            \ti{A}_{k+1,\te^-}
            \wiajustao{\eta^\sharp}{k+1}
             - 
             \wiajustao{g^\sharp}{k}.
    \end{align*}
    For $f^- := \Pcal^- f$, we have
    \begin{align*}
        \wiajustao{f^-}{k} = \frac{i}{k+\te^-} \wiajustao{\eta^\sharp}{k}.
    \end{align*}
    Hence, for all $k \geq 0$, it holds that
    \begin{align}
    \label{eqn:krelgp2}
		(k-2+\te^-)
        (k-2(1-\de_{k,0})+\te^-) \ha{f}^\sharp_{k-1}
        &+
        2\la (k+\te^-) \wiajustao{f^-}{k}
        \nonumber
        \\
		&\quad-
		(k+2+\te^-)(k+\te^-) \ha{f}^\sharp_{k+1}
		=
		-2i\wiajustao{g^-}{k},
	\end{align}
    so that the extension $F^-$ satisfies the second equation in~\eqref{holsys:forced}.

    Conversely, given $(F^+,F^-)\in\XHholpair{s+1}$ solving~\eqref{holsys:forced},
    set $f^\pm:=F^\pm|_\T$, and define $f$ and $\eta:=\pa_x f+i\te f$ by
    \begin{align*}
        f(x) = f^+(x) + e^{-ix}f^-(-x).
    \end{align*}
    The Fourier modes of $f^\pm$ satisfy~\eqref{eqn:krelgp} and~\eqref{eqn:krelgp2},
    and reversing the computation gives~\eqref{eqn:reducforcedcase}.
\end{proof}

\subsection{Second-order ODE system}
\label{sec:fuchs2}

The analysis of~\eqref{holsys} proceeds in two steps. In Section~\ref{sec:fu:loc} we study the local behavior of solutions to~\eqref{def:holo}, while in Section~\ref{sec:fu:glo} we use these local solutions to construct global pairs of solutions to~\eqref{holsys} with good boundary behavior on $\partial\D=\T$.

The main analytical problem is to understand, for a given holomorphic function $G$, whether there exists a holomorphic solution $F:\D\to\C$ to
\begin{align}
\label{ode:LF=G}
    \OLdisk\la\te F = G,
\end{align}
whether such a solution is unique, and what boundary regularity its trace $F|_\T$ possesses. Note that~\eqref{holsys} is precisely a coupled system of two equations of the form~\eqref{ode:LF=G}, where the right-hand sides are linear in $F^\pm(0)$.
The operator $\OLdisk\la\te$ is a second-order Fuchsian differential operator with regular singular points at $z=0$ and $z=\pm 1$. 
We refer to~\cite{coddington1955theory,ince1956ordinary,whittaker1996modern,hille1997ordinary,TeschlODE} for further details and background.
This structure lets us describe the local behavior of solutions near each singularity explicitly, which is the content of Section~\ref{sec:fu:loc}.

\subsubsection{Local analysis}
\label{sec:fu:loc}

We begin with the homogeneous equation
\begin{equation}
\label{ode:hom}
    z^2(z^2-1)\pa_z^2 F(z) +
    z 
        \Pglo(z)
    \pa_z F(z)
    +
        \Qglo(z)
    F(z)=0,
\end{equation}
where $\Pglo$ and $\Qglo$ were given in~\eqref{def:PQtl}.

The equation~\eqref{ode:hom} has three regular singular points, namely
\begin{align*}
    z=0,\qquad z=1,\qquad z=-1.
\end{align*}

The next three lemmas describe the local form of the solutions near each
singular point.

\begin{lemma}
\label{lem:loc:0}
    Let $\te\in(0,1)$ and $\la\in\C$.
    Let $\Omega \subset \C$ be open and simply connected, with $0\in\Omega$ and $1,-1\notin\Omega$.
    
    Then there exists $r>0$ such that, after fixing a branch of $\log(z)$ on any simply connected set $U\subset D_r(0)-\{0\}$, every solution of~\eqref{ode:hom} on $U$ is a linear combination of the functions $\Fholh10$ and $\Fholh20$, given by
    \begin{align*}
        \Fholh10(z) = 
            z^{-\te+1} \phi_1(z)
            \andspace
        \Fholh20(z) =  
            z^{-\te-1} \phi_2(z) 
            - 
            \Fholh10(z) \log (z),
    \end{align*}
    where $\phi_1,\phi_2:D_r(0)\to\C$ are holomorphic functions with $\phi_1(0)=1$ and $\phi_2(0)=-\frac12$.
\end{lemma}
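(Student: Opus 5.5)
The plan is to apply classical Frobenius theory at the regular singular point $z=0$ of~\eqref{ode:hom}, with the recurrence written out explicitly so that the normalizations $\phi_1(0)=1$ and $\phi_2(0)=-\frac12$ can be read off. First I would compute the action of the operator in~\eqref{def:holo} on a monomial $z^m$. Using~\eqref{def:PQtl}, this gives the three-term identity
\begin{align*}
    \OLdisk\la\te z^m = I(m)z^m+J(m)z^{m+1}+K(m)z^{m+2},
\end{align*}
where
\begin{align*}
    I(m)=1-(m+\te)^2,\qquad J(m)=2\la(m+\te),\qquad K(m)=(m+\te-1)^2.
\end{align*}
The indicial polynomial is therefore $I(m)$, and its roots are $m=1-\te$ and $m=-\te-1$. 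They differ by the integer $2$, so we are in the resonant case, and a logarithm may appear in the second solution.

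For the larger exponent, I would look for a solution $z^{1-\te}\sum_k a_kz^k$ with $a_0=1$. The recurrence is
\begin{align*}
    I(1-\te+k)a_k=-J(-\te+k)a_{k-1}-K(-\te-1+k)a_{k-2}.
\end{align*}
Since $I(1-\te+k)=1-(k+1)^2\neq0$ for $k\ge1$, it determines all the $a_k$ uniquely; this series is $\phi_1$, and $\phi_1(0)=1$. To show that it converges on $D_r(0)$, I would use a standard majorant argument. The coefficients $J$ and $K$ grow at most like $k$ and $k^2$, while $|I|\sim k^2$, so the recurrence ratios stay bounded. Comparing with the normalized form of the equation, the radius of convergence is at least the distance to the nearest other singular point $\pm1$. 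Alternatively, I would simply invoke the standard Frobenius theorem, as the paper does elsewhere, with the recurrence kept only for the explicit constants.

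For the smaller exponent, I would use the ansatz $\Fholh20=z^{-\te-1}\sum_k c_kz^k-\Fholh10\log z$. Since $\Fholh10$ solves~\eqref{ode:hom}, applying the operator to $-\Fholh10\log z$ leaves only the terms produced by differentiating the logarithm. At the lowest order $z^{1-\te}$, these contribute $-I'(1-\te)=2$. The recurrence for the $c_k$ is then as follows:
\begin{itemize}
    \item At $k=0$, the equation holds automatically.
    \item At $k=1$, we have $I(-\te)=1\neq0$, so $c_1$ is determined by $c_0$.
    \item At $k=2$, the coefficient $I(1-\te)$ vanishes, which is the resonance. The equation reduces to the compatibility condition $J(-\te)c_1+K(-\te-1)c_0+2=0$. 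Since $J(-\te)=0$ and $K(-\te-1)=4$, this gives $c_0=-\frac12$, that is, $\phi_2(0)=-\frac12$.
\end{itemize}
The coefficient $c_2$ is free, and I would set it to $0$, since any other choice only adds a multiple of $\Fholh10$. For $k\ge3$, $I(-\te-1+k)\neq0$ and the same majorant argument gives convergence, with the log terms contributing a convergent forcing series built from the $a_k$.

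Finally, $\Fholh10$ and $\Fholh20$ are linearly independent because of their different leading behaviour, $z^{1-\te}$ versus $-\frac12 z^{-\te-1}$. The solution space of the second-order equation on any simply connected $U\subset D_r(0)\setminus\{0\}$ is two-dimensional, so these two functions span it once a branch of $\log$ (and hence of $z^{-\te\pm1}$) is fixed on $U$. I expect the main obstacle to be the resonance bookkeeping: getting the sign and size of the log contribution exactly right is what forces $\phi_2(0)=-\frac12$, and the remaining steps are routine.
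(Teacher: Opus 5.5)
Your proposal is correct, and the computations check out. You have $\OLdisk\la\te z^m=I(m)z^m+J(m)z^{m+1}+K(m)z^{m+2}$ with your $I,J,K$. The first recurrence gives the paper's values $a_1=\frac{2\la}3$ and $a_2=\frac{\la^2}3$. The resonance condition at $k=2$ is indeed $4c_0+2=0$, and one also gets $c_1=-\la$, which matches the paper's $\phi_2$.

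Your route differs from the paper's in how the second solution is produced. The paper works in the normalized form, with $\Ploc{0}=\Pglo/(z^2-1)$ and $\Qloc{0}=\Qglo/(z^2-1)$ expanded in power series. It builds $\Fholh20$ by reduction of order: $F=C\Fholh10$ gives $\pa_zC=z^{-3}\psi$ with $\psi=\phi_1^{-2}\exp(-\int^z\ti P^0)$. Then $\phi_2(0)=-\frac12\psi_0=-\frac12$ is automatic. The logarithmic coefficient, however, is $\psi_2$, and it has to be computed to be $-1$ from $a_1,a_2,p_1,p_2$ and the expansion of the exponential.

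You instead keep the polynomial coefficients, which gives a three-term recurrence. You fix the logarithmic coefficient at $-1$ from the start and let the compatibility condition at the resonance determine $c_0$. Since $J(-\te)=0$, this uses no Taylor data of $\phi_1$ beyond $a_0$, and it makes clear why the constant does not depend on $\la$. Your route also never divides by $\phi_1$, so you do not need to shrink $r$ to keep $\phi_1$ nonvanishing. The price is a standard convergence check for the forced recurrence of the $c_k$.

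The paper's route, in return, gives a closed formula for $\Fholh20$ in terms of $\Fholh10$ and $\psi$, which shows structurally where the logarithm comes from.
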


\prooffuchstoappendixH

\begin{lemma}
\label{lem:loc:1}
    Let $\te\in(0,1)$ and $\la\in\C-\Z$.
    Let $\Omega \subset \C$ be open and simply connected, with $1\in\Omega$ and $0,-1\notin\Omega$.

    Then there exists $r>0$ such that, after fixing a branch of $\log(z-1)$ on any simply connected set $U\subset D_r(1)-\{1\}$, every solution of~\eqref{ode:hom} on $U$ is a linear combination of the functions $\Fholh11$ and $\Fholh21$, given by
    \begin{align*}
        \Fholh11(z) =
            \phi_1(z)\andspace
        \Fholh21(z) =
            (z-1)^{2-\la} \phi_2(z),
    \end{align*}
    where $\phi_1$ and $\phi_2$ are holomorphic functions on $D_r(1)$, with $\phi_1(1)=\phi_2(1)=1$.
\end{lemma}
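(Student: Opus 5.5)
This is standard Frobenius theory at the regular singular point $z=1$. The plan is to compute the indicial equation there, check that the exponents are $0$ and $2-\la$, and use $\la\notin\Z$ to rule out logarithms.

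\textbf{Normal form.} I would divide~\eqref{ode:hom} by $z^2(z+1)$ and multiply by $(z-1)$ to get
\begin{align*}
    (z-1)^2\pa_z^2F+(z-1)p(z)\pa_zF+(z-1)q(z)F=0,
\end{align*}
where
\begin{align*}
    p(z):=\frac{\Pglo(z)}{z(z+1)},\qquad q(z):=\frac{\Qglo(z)}{z^2(z+1)}.
\end{align*}
Both $p$ and $q$ are holomorphic on $D_1(1)$, since their only poles are at $0$ and $-1$. So $z=1$ is a regular singular point, and the lowest-order coefficient multiplying $F$ is $(z-1)q(z)$, which vanishes at $z=1$.

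\textbf{Indicial equation.} The indicial polynomial is $r(r-1)+p(1)r+0=0$. From~\eqref{def:PQtl}, $\Pglo(1)=(2\te-1)+2\la-(2\te+1)=2\la-2$, so $p(1)=\la-1$. The indicial polynomial is therefore $r(r-1)+(\la-1)r=r(r+\la-2)$, with roots $r=0$ and $r=2-\la$.

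\textbf{Frobenius construction.} Write $F=(z-1)^r\sum_{m\ge0}c_m(z-1)^m$. Expanding $p$ and $(z-1)q$ in powers of $(z-1)$ gives the recurrence
\begin{align*}
    I(r+m)c_m=-\sum_{j<m}\big(\dots\big)c_j,\qquad I(r):=r(r+\la-2).
\end{align*}
This recurrence is solvable as long as $I(r+m)\neq0$ for all $m\ge1$.
\begin{itemize}
    \item For $r=0$ we need $m\neq 2-\la$, which holds because $\la\notin\Z$.
    \item For $r=2-\la$ we need $I(2-\la+m)=(2-\la+m)m\neq0$, which again holds because $\la\notin\Z$.
\end{itemize}
Hence the two roots do not differ by an integer. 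This is exactly the non-resonant case, and no logarithmic term appears. The hypothesis $\la\in\C-\Z$ is used only here.

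\textbf{Convergence and conclusion.} Convergence of both series on $D_r(1)$ for any $r<1$ follows from the standard majorant argument (Fuchs' theorem), using that $|I(r+m)|\gtrsim m^2$ while the coefficients of $p$ and $q$ decay geometrically. Normalizing $c_0=1$ gives $\phi_1(1)=\phi_2(1)=1$. Finally, $\Fholh11$ and $\Fholh21$ are linearly independent because their leading exponents differ, so they span the two-dimensional solution space on any simply connected $U\subset D_r(1)-\{1\}$ once a branch of $\log(z-1)$ is fixed to define $(z-1)^{2-\la}$.

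The main obstacle is the bookkeeping in the convergence bound, but it is the same routine estimate as in Lemma~\ref{lem:loc:0}. For constructive purposes, I would write out the explicit recurrence as in Appendix~\ref{sec:appendix:frobenius}.
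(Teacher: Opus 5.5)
Your proposal is correct and essentially matches the paper's proof. It uses the same normal form (divide by $z^2(z+1)$, multiply by $(z-1)$), the same indicial data $p_0=\la-1$, $q_0=0$ with roots $0$ and $2-\la$, and the same nonvanishing leading factors $m(m-2+\la)$ and $m(m+2-\la)$, which $\la\notin\Z$ guarantees. The only blemish is cosmetic: you use $r$ both for the Frobenius exponent and for the radius of $D_r(1)$, and these should be kept distinct.
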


\begin{rem}
    By Theorem~4.5 of~\cite{TeschlODE}, in both Lemma~\ref{lem:loc:0} and Lemma~\ref{lem:loc:1} the functions $\phi_1$ and $\phi_2$ have radius of convergence at least $1$ (the distance from each base point to the nearest other singularity).
\end{rem}

\prooffuchstoappendixH

\begin{lemma}
\label{lem:loc:-1}
    Let $\te\in(0,1)$ and $\la\in\C-\Z$.
    Let $\Omega \subset \C$ be open and simply connected, with $-1\in\Omega$ and $0,1\notin\Omega$.
    
    Then for any simply connected set $U\subset\Omega-\{-1\}$, after fixing a branch of $\log(z+1)$ on $U$, every solution of~\eqref{ode:hom} on $U$ is a linear combination of $\Fholh1{-1}$ and $\Fholh2{-1}$, defined by
    \begin{align*}
        \Fholh1{-1}(z) =
            \phi_1(z)\andspace
        \Fholh2{-1}(z) =
            (z+1)^{2+\la} \phi_2(z),
    \end{align*}
    where $\phi_1$ and $\phi_2$ are holomorphic functions on $\Omega$, with $\phi_1(-1)=\phi_2(-1)=1$.
\end{lemma}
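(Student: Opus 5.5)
The plan is to apply standard Frobenius theory at the regular singular point $z=-1$ and then continue the local solutions analytically to all of $\Omega$; this mirrors Lemma~\ref{lem:loc:1}. The only genuinely new point is that the holomorphic factors must be defined on the whole of $\Omega$, not just on a disk.

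\textbf{Step 1: indicial equation.} Divide~\eqref{ode:hom} by $z^2(z^2-1)$ and write $w=z+1$. Near $w=0$ we have $z^2(z^2-1)=z^3(z-1)(z+1)$, and $z^3(z-1)\to(-1)^3(-2)=2$ as $z\to-1$. The leading coefficients are therefore:
\begin{align*}
z\Pglo(z)\big|_{z=-1}&=-\big((2\te-1)-2\la-(2\te+1)\big)=2\la+2,\\
\Qglo(-1)&=(\te-1)^2-2\la\te-\te^2+1=2-2\te-2\la\te.
\end{align*}
Since $\Qglo(-1)$ multiplies $F$ and is only divided by a simple zero, it does not enter the indicial equation. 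The indicial equation is
\begin{align*}
2r(r-1)-(2\la+2)r=0,
\end{align*}
equivalently $2r(r-1)+p_0r=0$ with $p_0=-(2\la+2)$, because the expansion of $z(z-1)$ changes the sign. The roots are $r=0$ and $r=2+\la$. I will check the sign carefully against the symmetry $\Pglo(z)=\Pglolm(-z)$, $\Qglo(z)=\Qglolm(-z)$ stated in the paper. That symmetry maps $z=1$ with parameter $-\la$ to $z=-1$, so Lemma~\ref{lem:loc:1}'s exponent $2-\la$ becomes $2+\la$, which confirms the computation. This symmetry is the cleanest route. Since the exponents are equal, the whole statement could be obtained from Lemma~\ref{lem:loc:1} applied with $\la\mapsto-\la$ (allowed because $-\la\notin\Z$) and the change of variables $z\mapsto-z$.

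\textbf{Step 2: local solutions.} Because $\la\notin\Z$, the exponent difference $2+\la$ is not an integer. Frobenius theory then gives two solutions, $\phi_1(z)$ and $(z+1)^{2+\la}\phi_2(z)$, with no logarithmic terms. Each $\phi_j$ is given by a convergent recurrence normalized by $\phi_j(-1)=1$. The recurrences are nondegenerate because the indicial polynomial $I(r)$ does not vanish at $r+m$ for $m\ge1$, and this is exactly where $\la\notin\Z$ is used. The series converge on $D_1(-1)$, the distance from $-1$ to the nearest other singular point $0$.

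\textbf{Step 3: extension to $\Omega$ (the main obstacle).} Both $\phi_1$ and $\phi_2=(z+1)^{-(2+\la)}F_2$ are solutions of Fuchsian-type linear ODEs, the second after the substitution $F=(z+1)^{2+\la}\phi$. Their coefficients are holomorphic on $\Omega$ except at $-1$, where the singularity is regular and the solution is holomorphic. Since $\Omega$ is simply connected and avoids $0$ and $1$, analytic continuation along paths is single-valued by the monodromy theorem. Hence $\phi_1$ and $\phi_2$ extend holomorphically to $\Omega$. For $\phi_2$ I will argue on $\Omega\setminus\{-1\}$: there the equation for $\phi$ has holomorphic coefficients, so solutions continue. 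Continuation around $-1$ is trivial because $\phi_2$ is already holomorphic on a disk about $-1$. To be careful, I will cover $\Omega$ by the disk $D_1(-1)\cap\Omega$ together with continuation from a base point, and use simple connectivity of $\Omega$, which contains $-1$, for single-valuedness.

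\textbf{Step 4: spanning.} On any simply connected $U\subset\Omega\setminus\{-1\}$, fix a branch of $\log(z+1)$. The two functions are independent because their leading exponents differ. The solution space of the second-order equation on $U$ is two-dimensional, so $\Fholh1{-1}$ and $\Fholh2{-1}$ span it.
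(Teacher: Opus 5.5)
Your proposal is correct, and its core is the paper's own proof. The paper writes~\eqref{ode:hom} in Frobenius normal form at $z=-1$ by dividing by $z^2(z-1)$ and multiplying by $z+1$, which gives $p_0=P_{\la,\te}(-1)/2=-(1+\la)$ and $q_0=0$. It reads off the indicial roots $0$ and $2+\la$, and uses $\la\notin\Z$ to exclude logarithms and to keep the recurrence factors $k(k-2-\la)$ and $k(k+2+\la)$ nonzero.

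Your Step~1 arithmetic is off, however. We have $z^2(z^2-1)=z^2(z-1)(z+1)$, not $z^3(z-1)(z+1)$, and $z^2(z-1)\to-2$ at $z=-1$. That minus sign is what actually gives $2r(r-1)-(2\la+2)r=0$. With your value $+2$, the equation would be $2r(r-1)+(2\la+2)r=0$, whose roots are $0$ and $-\la$. So replace the remark about $z(z-1)$ ``changing the sign'' by the correct factorization. Your symmetry reduction ($z\mapsto-z$, $\la\mapsto-\la$, then Lemma~\ref{lem:loc:1}) is sound and a good independent check. The only point to note there is that $(-(z+1))^{2+\la}$ differs from $(z+1)^{2+\la}$ by a branch constant, which is harmless.

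Step~3 is where you go beyond the paper. The paper's proof only produces $\phi_1,\phi_2$ as power series at $-1$. It never justifies the assertion that they are holomorphic on all of $\Omega$. Your continuation argument fills that omission, and it is needed on the symmetry route too, since Lemma~\ref{lem:loc:1} is only local.

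The topology should be stated precisely. The monodromy theorem cannot be applied on $\Omega$ itself: the equations for $\phi_1$ and $\phi_2$ are singular at $-1\in\Omega$, so continuation is only guaranteed along paths in $\Omega\setminus\{-1\}$, which is not simply connected. The right input is as follows. Since $\Omega\neq\C$ is simply connected, it is biholomorphic to $\D$. Hence $\pi_1(\Omega\setminus\{-1\})\cong\Z$, generated by a small circle about $-1$. Along that circle each $\phi_j$ has trivial monodromy, because it is single-valued on a punctured disk about $-1$.

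Also, start the continuation from a small disk $D_\epsilon(-1)\subset\Omega$, not from $D_1(-1)\cap\Omega$. The latter need not be connected, and on its other components the continuation inside $\Omega$ need not agree with the power series.
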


\prooffuchstoappendixH

\begin{rem}
\label{rem:ess_spec_frobenius}
    Fix $0<s<\frac32$ and a noninteger $\la$ with $|\Re\la|<\frac32-s$. For any $\te\in(0,1)$, prescribe nonzero values $(a^+,a^-)$ in~\eqref{holsysa}.
    The resulting solutions are holomorphic throughout $\D$, and Lemma~\ref{lem:hol_impl} gives $F^\pm(0)=a^\pm$. Near $z=\pm1$, their only possible nonanalytic terms are the Frobenius branches with exponents $2\mp\la$, respectively.
    The condition on $\la$ places both components in $\XHhol{s+1}$, so Lemma~\ref{lem:red_to_fin} yields a nonzero eigenfunction of $L_\te^{\rm fib}$ in $\XH s$.
\end{rem}

In general, the local solutions from the previous lemmas are not holomorphic. However, the next lemmas show that one can still construct holomorphic solutions to~\eqref{ode:LF=G} when we have a holomorphic affine term $G$.

\begin{lemma}
\label{lem:loc:aff0}
    Let $r>0$ be as in Lemma~\ref{lem:loc:0} and let $G:D_r(0)\to\C$ be holomorphic. Then there exists a holomorphic function $\Fholp 0:D_r(0)\to\C$ satisfying~\eqref{ode:LF=G}.
    
    Moreover, the Taylor coefficients of $\Fholp 0$ can be computed recursively to arbitrary order.
\end{lemma}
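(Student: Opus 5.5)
The plan is to look for $\Fholp0$ directly as a power series $F(z)=\sum_{k\geq0}F_kz^k$, derive a recursion for the $F_k$, and then prove that the series converges on $D_r(0)$. The key point is that the indicial exponents at $z=0$ are $1-\te$ and $-1-\te$ (Lemma~\ref{lem:loc:0}), and neither is a nonnegative integer because $\te\in(0,1)$. So no resonance can block a holomorphic solution.

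\textbf{Step 1: the recursion.} I expand $\OLdisk\la\te F$ in powers of $z$ using~\eqref{def:holo} and~\eqref{def:PQtl}.
\begin{itemize}
\item The terms that do not shift the index are $-z^2\pa_z^2F$, $-(2\te+1)z\pa_zF$ and $-(\te^2-1)F$. They contribute $-\big(k(k-1)+(2\te+1)k+\te^2-1\big)F_k=-(k+\te-1)(k+\te+1)F_k$.
\item The terms $2\la z^2\pa_zF+2\la\te zF$ shift the index by one. They contribute $2\la(k-1+\te)F_{k-1}$.
\item The terms $z^4\pa_z^2F+(2\te-1)z^3\pa_zF+(\te-1)^2z^2F$ shift the index by two. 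They contribute $(k-3+\te)^2F_{k-2}$, since $(k-2)(k-3)+(2\te-1)(k-2)+(\te-1)^2=(k-3+\te)^2$.
\end{itemize}
Writing $G(z)=\sum_kG_kz^k$, equation~\eqref{ode:LF=G} becomes
\begin{align*}
    (k+\te-1)(k+\te+1)F_k = 2\la(k-1+\te)F_{k-1}+(k-3+\te)^2F_{k-2}-G_k,\qquad k\geq0,
\end{align*}
with the convention $F_{-1}=F_{-2}=0$. The prefactor $(k+\te-1)(k+\te+1)$ never vanishes for integer $k\geq0$. Hence the recursion determines every $F_k$ uniquely from $G_0,\dots,G_k$. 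This already gives the last claim of the lemma: the Taylor coefficients are computable recursively to any order.

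\textbf{Step 2: convergence on $D_r(0)$.} This is the only real work. The remark after Lemma~\ref{lem:loc:1} says we may take $r\leq1$, so we may assume $r\le 1$. Fix $\rho<r$ and $\rho<\rho'<r$. Since $G$ is holomorphic on $D_r(0)$, Cauchy's estimates give $|G_k|\leq C_G\rho'^{-k}$. Dividing the recursion by the prefactor, which is of size $k^2$, gives
\begin{align*}
    |F_k|\leq \frac{a}{k}|F_{k-1}|+\Big(1+\frac bk\Big)|F_{k-2}|+\frac{C_G}{c\,k^2}\rho'^{-k}\qquad (k\geq k_0),
\end{align*}
with constants $a,b,c$ depending only on $\la,\te$.

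I then prove by induction that $|F_k|\leq M\rho^{-k}$ for all $k$. Substituting the induction hypothesis into the bound above, it suffices that
\begin{align*}
    \frac ak\rho+\Big(1+\frac bk\Big)\rho^2+\frac{C_G}{cMk^2}(\rho/\rho')^{k}\leq 1 .
\end{align*}
Because $\rho<1$, we have $\rho^2<1$, so this inequality holds for all $k\geq k_1(\rho)$. I then choose $M$ large enough to cover the finitely many indices $k<k_1$. This yields radius of convergence at least $\rho$ for every $\rho<r$. Consequently $F$ is holomorphic on $D_r(0)$, and it solves~\eqref{ode:LF=G} there by construction.

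The main obstacle is this convergence step. The coefficient of $F_{k-2}$ tends to $1$ rather than $0$, because $z=\pm1$ are singular points at distance $1$. So the majorant must use $\rho<1$ and cannot come from a crude factorial-type bound.

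A fallback route is variation of parameters with the basis $\Fholh10,\Fholh20$. Since the exponents are non-integer, the monodromy forces the log-free part to be the holomorphic particular solution. This route is messier, so I would use the direct recursion above.
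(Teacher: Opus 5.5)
Your proof is correct. It follows the same basic strategy as the paper: a power-series ansatz, the fact that the indicial factor $(k+\te-1)(k+\te+1)$ never vanishes for integers $k\ge0$, and convergence via Cauchy estimates. The execution differs, though.

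\textbf{The paper's route.} It first divides by $z^2-1$ to reach the Fuchsian normal form $z^2\pa_z^2F+z\Ploc0\pa_zF+\Qloc0F=G/(z^2-1)$. It then invokes the general Lemma~\ref{lem:aff:rec}. There, the recurrence involves all previous coefficients through the Taylor coefficients of $\Ploc0,\Qloc0$. Convergence comes from showing that $|f_k|\rho^k$ grows at most polynomially, via the cumulative sums $\Xi_k$.

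\textbf{Your route.} You keep the polynomial coefficients, which yields an explicit three-term recurrence. Your coefficients $(k+\te-1)(k+\te+1)$, $2\la(k-1+\te)$ and $(k-3+\te)^2$ are right. You then close convergence by a direct induction $|F_k|\le M\rho^{-k}$. Choosing $M\ge1$ makes your threshold $k_1$ independent of $M$, so the induction is sound. This makes transparent why the radius is limited exactly by the singular points $\pm1$: the limiting recurrence $F_k\approx F_{k-2}$ has characteristic roots $\pm1$.

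\textbf{What each buys.} Your route is more elementary and gives the cheapest recursion, with $G$ entering through its own Taylor coefficients. The paper's route gives a single lemma that also covers the resonant point $z=1$, where $g_0=0$ and one sets $f_0:=0$. It also produces exactly the normal-form coefficients used by the residual and tail estimates in Appendix~\ref{sec:appendix:errors}.

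\textbf{One small correction.} The remark after Lemma~\ref{lem:loc:1} gives a lower bound on radii of convergence; it does not say $r\le1$. The restriction $r\le1$ is simply implicit in the statement, and the paper's proof also uses $r<1$. Without it the claim could fail, since $D_r(0)$ would contain the singular points $\pm1$.
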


\prooffuchstoappendixW{Lemma~\ref{lem:loc:0}}

We now give conditions under which, for every holomorphic function $G$ at $0$, the function $\Fholp0$ from Lemma~\ref{lem:loc:aff0} is the unique holomorphic solution to~\eqref{ode:LF=G}. To prove this, we first need the following technical lemma.
\begin{lemma}
\label{lem:tec:aff0}
	Let $\te\in(0,1)$ and $\la\in\C$.
    Let $\Fholh10, \Fholh20$ be as in Lemma~\ref{lem:loc:0}.
    Then the unique linear combination of $\Fholh10$ and $\Fholh20$, which is holomorphic at $0$, is the trivial zero function.
\end{lemma}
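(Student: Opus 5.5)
Take an arbitrary combination $F = c_1\Fholh10 + c_2\Fholh20$ on a slit neighbourhood $U\subset D_r(0)-\{0\}$, and suppose it extends holomorphically to a neighbourhood of $0$. The plan is to show first that $c_2=0$ and then that $c_1=0$, using only the leading behaviour of the Frobenius solutions from Lemma~\ref{lem:loc:0}. The two exponents at $0$ are $1-\te$ and $-1-\te$, and their difference is the integer $2$. Since $\te\in(0,1)$, neither exponent is a nonnegative integer.

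\textbf{Step 1: $c_2=0$.} Expand $\Fholh20 = -\tfrac12 z^{-\te-1}\phi_2(z)/\phi_2(0)\cdot\phi_2(0)\cdot(-2)$, which simply means $\Fholh20(z) = z^{-\te-1}\phi_2(z) - z^{1-\te}\phi_1(z)\log z$ with $\phi_2(0)=-\tfrac12\neq0$. Then
\begin{align*}
    |F(z)| \geq |c_2|\,\tfrac12 |z|^{-1-\te}\big(1+O(|z|)\big) - C|z|^{1-\te}(1+|\log z|)
\end{align*}
as $z\to0$ in $U$. If $c_2\neq0$, this is unbounded near $0$, because the $z^{-1-\te}$ term dominates both the $z^{1-\te}$ term and the logarithmic term. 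A function holomorphic at $0$ is bounded near $0$, so this is a contradiction. Hence $c_2=0$.

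\textbf{Step 2: $c_1=0$.} Now $F = c_1 z^{1-\te}\phi_1(z)$ with $\phi_1(0)=1$. Near $0$, $\phi_1$ is holomorphic and nonvanishing, so if $c_1\neq0$ then $z^{1-\te} = F/(c_1\phi_1)$ would extend holomorphically to a neighbourhood of $0$. This is impossible because $1-\te\in(0,1)$ is not an integer. One way to see it: analytic continuation of $z^{1-\te}$ once around $0$ multiplies it by $e^{2\pi i(1-\te)}\neq1$, whereas a holomorphic function is single-valued. Alternatively, a holomorphic $h$ with $h(z)=O(|z|^{1-\te})$ has $h(0)=0$, which would give $h(z)=O(|z|)$, contradicting $|h(z)|\sim|z|^{1-\te}$.

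The same monodromy argument also handles Step 1 directly. Continuation around $0$ sends $F$ to $e^{-2\pi i\te}F - 2\pi i c_2\Fholh10$, and equating this with $F$ forces both coefficients to vanish. The only subtle point in the argument is making sure the logarithmic term cannot cancel the leading singular term. This is handled by the difference in growth orders, or equivalently by the monodromy computation, since $e^{-2\pi i\te}\neq1$ for $\te\in(0,1)$.
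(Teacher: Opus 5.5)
Your argument is correct, but your main route differs from the paper's. The paper argues purely by monodromy. On the slit disk $\{se^{ix}:0<s<r,\ 0<x<2\pi\}$ it computes the jump $\lim_{x\to2\pi^-}F(se^{ix})-\lim_{x\to0}F(se^{ix})$, which must vanish because $F$ is holomorphic, and multiplies it by $s^{\te+1}$. It then observes that the coefficient $-C_2\big(e^{2\pi i(1-\te)}-1\big)$ of $s^2\log s$ must be zero, since all other terms are smooth. This gives $C_2=0$, and the surviving term $C_1\big(e^{2\pi i(1-\te)}-1\big)s^2\phi_1(s)$ then gives $C_1=0$.

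You argue by size instead. Since $\phi_2(0)=-\frac12\neq0$, the $z^{-1-\te}$ term makes $F$ unbounded on the slit disk. There $|z^a|=|z|^a$ for real $a$ and $|\log z|\leq|\log|z||+2\pi$, so the logarithmic term cannot compensate. Non-integrality of $1-\te$ then excludes $c_1\neq0$.

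Your Step 1 never uses the logarithmic term, so it does not depend on the appendix computation $\psi_2=-1$; the paper extracts $C_2$ precisely from that term. Conversely, the jump argument uses only $\te\notin\Z$ and not the sign of the exponent $-1-\te$, so it does not rely on any growth comparison.

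Your closing monodromy paragraph is essentially the paper's argument, written at the level of the monodromy matrix. Continuation actually sends $F$ to $e^{-2\pi i\te}F-2\pi i\,e^{-2\pi i\te}c_2\Fholh10$; you dropped the phase on the second term, which does not affect the conclusion. The garbled first display in Step 1 should simply be deleted in favour of the correct formula you give right after it.
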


\begin{proof}
	Assume that there exists a holomorphic function $F$ at $0$ such that
	\begin{align*}
		F 
		= 
		C_1 \Fholh10
		+
		C_2 \Fholh20
	\end{align*}
	for some $C_1,C_2\in\C$.
	
	Let $r>0$ as in Lemma~\ref{lem:loc:0} and consider the set $U$ given by
	\begin{align*}
		U:=\{s e^{ix}:\, 0<s<r\ \text{and }0<x<2\pi\},
	\end{align*}
	so that there is a choice of $\log(z)$ for which $\Fholh10$ and $\Fholh20$ are holomorphic on $U$. 
	
	Therefore, since $F$ is holomorphic, we find for all $s\in(0,r)$ that
	\begin{align}
		0 &= 
		\lim_{x\to 2\pi^-} 
			F(s e^{ix})
		-
		\lim_{x\to 0} 
			F(s e^{ix})
		\nonumber
		\\
		%%%%%%%%%%%%%%%%%%%%%%%%%%%%%%%%%%%%%%%%%%
		&=
		C_1
		\big(
		\lim_{x\to 2\pi^-}
			\Fholh10(s e^{ix})
		-
		\lim_{x\to 0}
			\Fholh10(s e^{ix})
		\big)
		+
		C_2
		\big(
		\lim_{x\to 2\pi^-} 
			\Fholh20(s e^{ix})
		-
		\lim_{x\to 0}
			\Fholh20(s e^{ix})
		\big)
		\nonumber
		\\
		%%%%%%%%%%%%%%%%%%%%%%%%%%%%%%%%%%%%%%%%%%
		&=
		C_1
		\big(
			e^{(-\te+1) 2\pi i}
		-
			1
		\big) s^{-\te+1}  \phi_1(s) -
			2\pi i C_2
			s^{-\te+1}
			e^{(-\te+1) (2\pi i)}
			\phi_1(s)
		\nonumber\\
		&\qquad
		-
			C_2
			\big(
				e^{(-\te+1) 2\pi i}
				-
				1
			\big)
			s^{-\te+1}
			\log(s)
			\phi_1(s) +
			C_2
			\big(
				e^{(-\te-1) 2\pi i}
				-
				1
			\big)
			s^{-\te-1}
			\phi_2(s)
		.\label{eqn:tec:aff0}
	\end{align}
	
	Hence, multiplying~\eqref{eqn:tec:aff0} by $s^{\te+1}$ yields
		\begin{align}
		0
		&=
		C_1
		\big(
			e^{(-\te+1) 2\pi i}
		-
			1
		\big) s^2  \phi_1(s) -
			2\pi i C_2
			s^2
			e^{(-\te+1) (2\pi i)}
			\phi_1(s)
		\nonumber\\
		&\qquad
		-
			C_2
			\big(
				e^{(-\te+1) 2\pi i}
				-
				1
			\big)
			s^2
			\log(s)
			\phi_1(s) +
			C_2
			\big(
				e^{(-\te-1) 2\pi i}
				-
				1
			\big)
			\phi_2(s)
		.\label{eqn:tec:aff1}
	\end{align}
	
	Since $\phi_1$ and $\phi_2$ are in particular smooth, the coefficient of $s^2\log(s)$ must vanish, so that
	\begin{align*}
		C_2 \big(
				e^{(-\te+1) 2\pi i}
				-
				1
			\big) =0.
	\end{align*}
	Since $\te\notin\Z$, the factor $e^{(-\te+1) 2\pi i}-1$ is nonzero, hence $C_2=0$. From~\eqref{eqn:tec:aff1} we then get
	\begin{align*}
		0
		&=
		C_1
		\big(
			e^{(-\te+1) 2\pi i}
		-
			1
		\big) s^2  \phi_1(s).
	\end{align*}
   Finally, we have $C_1=0$ since $\te\notin\Z$.
\end{proof}

\begin{corollary}\label{cor:Fp0}
	Let $G$ be a holomorphic function at $0$. Then $\Fholp0$ is the unique holomorphic function solving~\eqref{ode:LF=G}.
\end{corollary}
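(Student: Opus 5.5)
Existence is given by Lemma~\ref{lem:loc:aff0}, so only uniqueness needs proof, and I would reduce it to Lemma~\ref{lem:tec:aff0}. Let $F$ be any function holomorphic at $0$ with $\OLdisk\la\te F=G$. The difference $D:=F-\Fholp0$ is then holomorphic on a small disk $D_{r'}(0)$, with $0<r'\le r$ and $r$ as in Lemma~\ref{lem:loc:0}, and it solves the homogeneous equation~\eqref{ode:hom} there.

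Next I would choose a simply connected set $U\subset D_{r'}(0)-\{0\}$, for instance the slit disk $\{se^{ix}:0<s<r',\ 0<x<2\pi\}$ used in the proof of Lemma~\ref{lem:tec:aff0}, and fix a branch of $\log z$ on it. On $U$, the restriction of $D$ solves~\eqref{ode:hom}. By Lemma~\ref{lem:loc:0} we can therefore write $D=C_1\Fholh10+C_2\Fholh20$ on $U$ for some constants $C_1,C_2\in\C$.

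Since $D$ is holomorphic at $0$, this linear combination is holomorphic at $0$. The argument of Lemma~\ref{lem:tec:aff0} then applies: it only compares the boundary values across the slit and uses the radii $s\in(0,r')$. It gives $C_1=C_2=0$, so $D\equiv0$ on $U$. By the identity theorem $D\equiv0$ on $D_{r'}(0)$, hence $F=\Fholp0$ near $0$. If $F$ is defined on a larger connected domain, analytic continuation extends the equality to the whole common domain.

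The only point that needs care is that Lemma~\ref{lem:tec:aff0} is stated with the radius $r$ from Lemma~\ref{lem:loc:0}, while the domain of holomorphy of $F$ might be smaller. Shrinking to $r'$ does not matter, because the jump computation~\eqref{eqn:tec:aff1} holds for each small $s$ separately. Apart from this, the corollary follows directly from the two lemmas.
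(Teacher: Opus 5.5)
Your proposal is correct and matches the paper's proof: you subtract $\Fholp0$, expand the difference in the local basis $\Fholh10,\Fholh20$ from Lemma~\ref{lem:loc:0}, and use Lemma~\ref{lem:tec:aff0} to force both coefficients to vanish. Your extra remarks about shrinking the radius and extending the identity by analytic continuation are harmless refinements that the paper leaves implicit.
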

\begin{proof}
    Let $F$ be a holomorphic function at $0$ solving~\eqref{ode:LF=G}. Then $\ti F:=F-\Fholp0$ is holomorphic at $0$ and solves~\eqref{ode:hom}. By Lemma~\ref{lem:loc:0}, on a punctured neighborhood of $0$ we may write $\ti F=C_1\Fholh10+C_2\Fholh20$ for some $C_1,C_2\in\C$. Since $\ti F$ is holomorphic at $0$, Lemma~\ref{lem:tec:aff0} gives $C_1=C_2=0$, so $\ti F= 0$ and $F=\Fholp0$.
\end{proof}

\begin{lemma}
\label{lem:loc:aff1}
    Let $r>0$ and $\Fholh11:D_r(1)\to\C$ be as in Lemma~\ref{lem:loc:1}, let $G:D_r(1)\to\C$ be holomorphic and assume $\la\notin\Z$. Then there exists a holomorphic function $\Fholp1:D_r(1)\to\C$ satisfying~\eqref{ode:LF=G}.

    Therefore, every holomorphic function $F$ at $1$ solving~\eqref{ode:LF=G} can be written as
    \begin{align*}
        F = \Fholp1 + C \Fholh11,
    \end{align*}
    for some $C\in\C$.

    Moreover, the Taylor coefficients of $\Fholp1$ can be computed recursively to arbitrary order.
\end{lemma}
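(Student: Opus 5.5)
Since $1$ is a regular singular point of $\OLdisk\la\te$, the natural route is a Frobenius recurrence in the local variable $w:=z-1$. We construct $\Fholp1$ as a power series $\sum_{m\ge0}c_m w^m$, show it converges on $D_r(1)$, and then obtain the representation of every holomorphic solution from Lemma~\ref{lem:loc:1}.

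\textbf{Setting up the recurrence.} Substituting $z=1+w$ gives $z^2(z^2-1)=w(1+w)^2(2+w)$. The operator therefore takes the form
\begin{align*}
    \OLdisk\la\te F = w\,a(w)\pa_w^2F + b(w)\pa_wF + c(w)F,
\end{align*}
with $a(w)=(1+w)^2(2+w)$ (so $a(0)=2$), $b(w)=(1+w)\Pglo(1+w)$ and $c(w)=\Qglo(1+w)$. Note that $b(0)=\Pglo(1)=2\la-2$. The indicial polynomial at $w=0$ is
\begin{align*}
    I(\rho)=2\rho(\rho-1)+(2\la-2)\rho=2\rho(\rho-2+\la).
\end{align*}
Its roots are $0$ and $2-\la$, consistent with Lemma~\ref{lem:loc:1}. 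Expanding $G=\sum g_m w^m$ and comparing coefficients of $w^m$ in $\OLdisk\la\te F=G$ gives a recurrence of the form
\begin{align*}
    I(m+1)\,c_{m+1} = g_m - \sum_{j\le m}\beta_{m,j}\,c_j .
\end{align*}
The coefficients $\beta_{m,j}$ are polynomial in $m$ and involve only finitely many $j$, because $a,b,c$ are polynomials of degree at most $4$. The hypothesis $\la\notin\Z$ gives $I(m+1)\neq0$ for all $m\ge0$, since $m+1\ge1$ and $m+3-\la\neq0$. Hence, with $c_0$ fixed (say $c_0=0$), all $c_m$ are determined recursively. This gives the claim that the Taylor coefficients can be computed to arbitrary order.

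\textbf{Convergence (main obstacle).} The key step is to show that the formal series converges on $D_r(1)$. We first divide the equation by $w\,a(w)$, which is nonvanishing for $0<|w|<1$. This puts it in the normal form $w^2F''+w\tilde b F'+w\tilde c F=w\tilde G$, with $\tilde b,\tilde c,\tilde G$ holomorphic on $|w|<1$. We then apply the standard majorant argument, as in the proof of Theorem~4.5 of~\cite{TeschlODE} (inhomogeneous version). Since $|I(m+1)|\gtrsim m^2$, Cauchy estimates for $\tilde b,\tilde c,\tilde G$ on $|w|\le\rho<r$ give an inductive bound $|c_m|\le K\rho^{-m}$. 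This yields convergence on $D_r(1)$ with $r\le1$.

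Alternatively, one can use variation of parameters with the basis $\Fholh11,\Fholh21$. The particular solution has the form $F_p=w^{2-\la}\phi_2\cdot(\text{holomorphic})+\phi_1\cdot(\text{holomorphic})$. Removing the branch requires exactly the nonresonance condition, so this is a cross-check rather than a simpler proof.

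\textbf{Uniqueness up to $\Fholh11$.} Let $F$ be any holomorphic solution at $1$. Then $F-\Fholp1$ solves~\eqref{ode:hom}, so by Lemma~\ref{lem:loc:1} we have $F-\Fholp1=C\Fholh11+C'\Fholh21$ on a slit neighborhood of $1$. Since $\la\notin\Z$, the function $(z-1)^{2-\la}\phi_2$ with $\phi_2(1)=1$ has nontrivial monodromy $e^{2\pi i(2-\la)}\neq1$ around $1$. Applying the same continuation argument as in Lemma~\ref{lem:tec:aff0} then forces $C'=0$.
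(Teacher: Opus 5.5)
Your proof is correct and follows the paper's argument: a Frobenius recurrence at $z=1$ whose resonant coefficient $c_0$ is free and set to zero (in the paper this shows up as $g_0=G^1(1)=0$ after normalizing by $(z-1)/(z^2(z+1))$), convergence by a majorant bound as in Lemma~\ref{lem:aff:rec}, and exclusion of the branch $(z-1)^{2-\la}\phi_2$ because it is not holomorphic at $1$. There are two small slips, neither affecting the conclusion: first, $I(m+1)=2(m+1)(m-1+\la)$ rather than a multiple of $m+3-\la$, and it is still nonzero since $\la\notin\Z$; second, the induction $|c_m|\le K\rho^{-m}$ at the same radius $\rho$ as the Cauchy estimates does not close as stated, so you need a slightly smaller radius or a polynomial factor $m^{C}$ as in the paper, and either fix still gives convergence on $D_r(1)$.
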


\begin{proof}
    Existence of $\Fholp1$ follows by solving the Frobenius recurrence for the inhomogeneous equation at $z=1$. For the characterization, note that among the two local solutions of~\eqref{ode:hom} at $1$, only $\Fholh21=(z-1)^{2-\la}\phi_2$ fails to be holomorphic there (as $\la\notin\Z$), so any holomorphic solution of~\eqref{ode:LF=G} at $1$ can only involve $\Fholh11$. Full details are in Appendix~\ref{sec:appendix:frobenius}.
\end{proof}

\subsubsection{Global analysis}
\label{sec:fu:glo}

We now combine the local solutions of~\eqref{holsys} near $0$ and near $1$ into global pairs $(F^+,F^-)$ holomorphic on $\D\cup\{1\}$. The key step is analytic continuation along the segment $(0,1)$. Solutions of~\eqref{holsys} near $0$ and near $1$ extend to overlap on this segment, and the requirement that they agree there reduces to a $4\times 4$ homogeneous linear system whose matrix we denote by $\Mcrit$. Proposition~\ref{prop:eigte} then reduces to finding a zero of $\det\Mcritnov_{\cdot,\te}$ in $\Ilambd$ for the finitely many $\te_p = \frac{h_p}p$ with $p < N = \getvar{prime_split_N}$ prime and for the interval $J$ of Lemma~\ref{lem:cover}, which is verified with computer assistance in Lemma~\ref{lem:negposjump}.

The following lemma shows that, in~\eqref{holsys}, the values $F^+(0)$ and $F^-(0)$ may be replaced by arbitrary constants, provided we only require $F^+$ and $F^-$ to be holomorphic at $0$.
\begin{lemma}
\label{lem:hol_impl}
    Let $\te^+\in(0,1)$, $\la\in\C$ and $\te^-:=1-\te^+$.
    Let $a^+,a^-\in\C$. Let $F^+,F^-,G^+,G^-\in\XHholo$.
    
    Assume that the following identities hold
    \begin{align*}
        \OLdisk
        \la{\te^+} F^+(z) &=
            z \te^+ (2-\te^+) 
            a^-
            +
            (1-\te^+) (1+\te^+)
            a^+ + z G^+(z)\\
        %%%%%%%%%%%
        \OLdisk
        \la{\te^-} F^-(z) &=
            z \te^- (2-\te^-)
            a^+
            +
            (1-\te^-) (1+\te^-)
            a^- + z G^-(z).
    \end{align*}
    
    Then we have
    \begin{align*}
        F^+(0)=a^+\andspace
        F^-(0)=a^-.
    \end{align*}
\end{lemma}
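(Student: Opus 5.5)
Evaluating the two equations of the lemma at $z=0$ is enough. The operator $\OLdisk\la\te$ from~\eqref{def:holo} has second-order coefficient $z^2(z^2-1)$ and first-order coefficient $z\Pglo(z)$, and both vanish at $z=0$. Since $F^\pm$ are holomorphic on $\D$, $\pa_zF^\pm$ and $\pa_z^2F^\pm$ are finite at $0$. Hence
\begin{align*}
    \OLdisk\la\te F(0)=\Qglo(0)F(0)=(1-\te^2)F(0)
\end{align*}
for every $F\in\XHholo$, where $\Qglo(0)=-(\te^2-1)$ is read off from~\eqref{def:PQtl}.

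On the right-hand sides, the terms $z\te^\pm(2-\te^\pm)a^\mp$ and $zG^\pm(z)$ vanish at $z=0$, again because $G^\pm$ is holomorphic and so finite at $0$. Evaluating the first identity at $z=0$ therefore gives
\begin{align*}
    (1-(\te^+)^2)F^+(0)=(1-\te^+)(1+\te^+)a^+.
\end{align*}
The second identity gives the same relation with $\te^-$ in place of $\te^+$ and $F^-,a^-$ in place of $F^+,a^+$.

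Finally, $\te^\pm\in(0,1)$ because $\te^+\in(0,1)$ and $\te^-=1-\te^+$. So $1-(\te^\pm)^2\neq0$, and dividing by it yields $F^\pm(0)=a^\pm$. The argument has no genuine obstacle; the only point that needs care is that holomorphy at $0$ is exactly what makes the derivative terms disappear when multiplied by the vanishing coefficients.
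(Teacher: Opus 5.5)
Your proposal is correct and is essentially the paper's proof: evaluate both identities at $z=0$, where the derivative terms drop out and only $Q_{\la,\te^\pm}(0)F^\pm(0)=(1-\te^\pm)(1+\te^\pm)a^\pm$ survives. Then divide by the nonzero factor $1-(\te^\pm)^2$. You also spell out why the derivative terms vanish (holomorphy of $F^\pm$ at $0$), which the paper leaves implicit.
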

\begin{proof}
    We evaluate the system at $z=0$, yielding 
    \begin{align*}
        Q_{\la,\te^\pm}(0) F^\pm(0) &=
            (1-\te^\pm) (1+\te^\pm)
            a^\pm.
    \end{align*}

    Since $\te^\pm\notin\Z$, we find that $F^\pm(0) = a^\pm$.
\end{proof}

We now present the criterion we use to find eigenvalues. This result is purely theoretical, but its application will still require computer assistance.
We introduce the affine terms
\begin{align*}
    \Affterm 1 \te(z) :=
     \te(2-\te)z \andspace
    \Affterm 2 \te(z) :=
    (1-\te)(1+\te).
\end{align*}

The following definition introduces the notation needed for the next proposition.
\begin{defi}\label{def:Mat}
    Let $\te^+\in(0,1)$, $\la\in\C$, $\te^-:=1-\te^+$, and set the midpoint $x_0:=\frac12$. For $a^+,a^-\in\C$, consider the system
    \begin{equation}
        \label{holsysa}
        \left.
        \begin{aligned}
            \OLdisk\la{\te^+} F^+(z) =
                a^- \Affterm1{\te^+}(z)
                +
                a^+ \Affterm2{\te^+}(z),
        \\
            \OLdisk\la{\te^-} F^-(z) =
                a^+ \Affterm1{\te^-}(z)
                +
                a^- \Affterm2{\te^-}(z).
        \end{aligned}
        \right\}
    \end{equation}

    By Corollary~\ref{cor:Fp0}, the holomorphic solutions of~\eqref{holsysa} near $0$ are
    \begin{align*}
        \Fholhol0+ =
        a^- \FholpAff 1 {\te^+} 0
        +
        a^+ \FholpAff 2 {\te^+} 0
        \andspace
        \Fholhol0- =
        a^+ \FholpAff 1 {\te^-} 0
        +
        a^- \FholpAff 2 {\te^-} 0.
    \end{align*}
    By Lemma~\ref{lem:loc:aff1}, the holomorphic solutions of~\eqref{holsysa} near $1$ depend on two additional constants $C^+,C^-\in\C$ and read
    \begin{align*}
        \Fholhol{1}+ &=
        a^- \FholpAff 1 {\te^+} {1}
        +
        a^+ \FholpAff 2 {\te^+} {1}
        + C^+ \Fholhte1{1}+,
        \\
        \Fholhol{1}- &=
        a^+ \FholpAff 1 {\te^-} {1}
        +
        a^- \FholpAff 2 {\te^-} {1}
        + C^- \Fholhte1{1}-.
    \end{align*}
    By analytic continuation along the segment $(0,1)$, all four functions extend holomorphically across this segment.
    
    Define the function-valued $2\times 2$ blocks $M_\pm$ and column vectors $h_\pm$ by
    \begin{align*}
        M_+ &:=
        \begin{pmatrix}
            \oldDFp{1}{\te^+} &
            \oldDFp{2}{\te^+}\\
            \partial_z\big(
            \oldDFp{1}{\te^+}\big) &
            \partial_z\big(
            \oldDFp{2}{\te^+}\big)
        \end{pmatrix},
        &
        h_+ &:=
        \begin{pmatrix}
            \Fholhte1{1}+\\
            \pa_z\Fholhte1{1}+
        \end{pmatrix},
        \\
        M_- &:=
        \begin{pmatrix}
            \oldDFp{2}{\te^-} &
            \oldDFp{1}{\te^-}\\
            \partial_z\big(
            \oldDFp{2}{\te^-}\big) &
            \partial_z\big(
            \oldDFp{1}{\te^-}\big)
        \end{pmatrix},
        &
        h_- &:=
        \begin{pmatrix}
            \Fholhte1{1}-\\
            \pa_z\Fholhte1{1}-
        \end{pmatrix}.
    \end{align*}
    Finally, set
    \begin{align*}
        \Mcrit :=
        \begin{pmatrix}
            h_+ & 0 & M_+\\
            0 & h_- & M_-
        \end{pmatrix}\!(x_0) \in \C^{4\times 4},
    \end{align*}
    where the $0$'s denote $2\times 1$ zero blocks.
\end{defi}

The function $\detcrit$ is the determinant of $\Mcrit$. The following Proposition proves that studying the zeros of $\det \Mcrit$ is indeed a good criterion to see if $\la\notin\Z$ is an eigenvalue. In addition, requiring $\Re\la>\frac12$ also allows us to characterize the resolvent set by the nonvanishing of $\detcrit$. In our application, the zero of $\detcrit(\cdot,\te)$ will moreover be real (Lemma~\ref{lem:lareal}).

\begin{prop}
\label{prop:matrix}
    Let $\te\in(0,1)$ and $\la\in\C-\Z$.

    \emph{Homogeneous problem.}
    There exists a non-trivial pair $(F^+,F^-)\in\XHholpair{s}$ for all $s<\frac52+\Re\la$ of holomorphic functions on
    $\D\cup\{1\}$ solving~\eqref{holsys} if and only if $\det\Mcrit = 0$.
    The space of such pairs has dimension $\dim\ker\Mcrit$.

    \emph{Forced problem.}
    Assume additionally $\Re\la>\frac12$ and let $s\in[1,2]$.
    For every $(G^+,G^-)\in\XHholpair{s}$, there exists a unique pair $(F^+,F^-)\in\XHholpair{s+1}$ solving~\eqref{holsys:forced} if and only if $\det\Mcrit\neq 0$.
    Moreover, $\norm{(F^+,F^-)}_{\XHholpair{s+1}}\lesssim\norm{(G^+,G^-)}_{\XHholpair{s}}$.
\end{prop}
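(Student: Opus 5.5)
The plan is to read $\Mcrit$ as the matrix of a gluing condition. A solution pair is determined near $0$ by $(a^+,a^-)$ and near $1$ additionally by $(C^+,C^-)$. Both descriptions solve the same second-order ODE on the segment $(0,1)$, which contains no singular point. So they coincide if and only if their values and first derivatives agree at $x_0=\frac12$.

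\emph{Homogeneous problem.} Let $(F^+,F^-)$ be a nontrivial pair, holomorphic on $\D\cup\{1\}$ and solving~\eqref{holsys}.
\begin{enumerate}[label=(\roman*)]
\item Set $a^\pm:=F^\pm(0)$. Then $(F^+,F^-)$ solves~\eqref{holsysa}.
\item By Corollary~\ref{cor:Fp0}, near $0$ the pair is $(\Fholhol0+,\Fholhol0-)$.
\item By Lemma~\ref{lem:loc:aff1}, near $1$ it is $(\Fholhol1+,\Fholhol1-)$ for some $C^\pm$.
\item Subtracting the two representations at $x_0$ and differentiating once shows that $v:=(-C^+,-C^-,a^-,a^+)^T$ satisfies $\Mcrit v=0$. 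The ordering of the columns of $M_-$ is chosen exactly so that $v$ has the same last two entries in both blocks.
\item If $v=0$, then $a^\pm=0$ and $C^\pm=0$, so $F^\pm\equiv0$, a contradiction. Hence $v\neq0$ and $\det\Mcrit=0$.
\end{enumerate}
Conversely, take $v\in\ker\Mcrit$.
\begin{enumerate}[label=(\roman*)]
\item The local solutions at $0$ and at $1$ have the same Cauchy data at the regular point $x_0$. By uniqueness for the ODE they coincide on the overlap, so they glue.
\item The particular solutions $\FholpAff{j}{\te}{0}$ continue analytically along every path in $\D\setminus\{0\}$, since $\pm1\notin\D$ and the affine terms are polynomials. $\D$ is simply connected and the solution is holomorphic at $0$, so the glued function is single-valued and holomorphic on $\D\cup\{1\}$.
\item By Lemma~\ref{lem:hol_impl}, $F^\pm(0)=a^\pm$, so~\eqref{holsysa} becomes~\eqref{holsys}.
\item The map $v\mapsto(F^+,F^-)$ is linear and injective by the previous paragraph. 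It is surjective by construction, which gives the dimension count.
\end{enumerate}
For regularity, the trace is smooth away from $-1$. Near $-1$, I will show, as in Lemma~\ref{lem:loc:-1}, that the inhomogeneous equation has a particular solution holomorphic at $-1$. Then $F^\pm=\text{holomorphic}+c(z+1)^{2+\la}\phi_2$. Since the boundary trace of $(z+1)^{\al}$ lies in $H^{s}$ exactly for $s<\Re\al+\frac12$, this gives $s<\frac52+\Re\la$.

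\emph{Forced problem.} Suppose $\det\Mcrit\neq0$.
\begin{enumerate}[label=(\roman*)]
\item Add to the ansatz particular solutions with right-hand side $zG^\pm$: a holomorphic one near $0$ from Corollary~\ref{cor:Fp0}, and one near $1$.
\item The gluing condition becomes $\Mcrit v=b(G)$, where $b(G)$ is linear in the Cauchy data at $x_0$ of these particular solutions. It is uniquely solvable.
\item For uniqueness, let $s\geq1$. Any $H^{s+1}$ solution of the homogeneous problem has traces in $H^2$. The branch $(z-1)^{2-\la}$ has trace only in $H^{s'}$ for $s'<\frac52-\Re\la<2$, because $\Re\la>\frac12$. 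So this branch is excluded, and the difference of two solutions is holomorphic at $1$, hence lies in $\ker\Mcrit=\{0\}$.
\item If instead $\det\Mcrit=0$, a kernel element gives a nonzero homogeneous solution. It lies in $H^{s+1}$ because $s+1\leq3<\frac52+\Re\la$, which contradicts uniqueness.
\end{enumerate}

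\emph{Main obstacle.} The hard step is the estimate $\norm{(F^+,F^-)}_{\XHholpair{s+1}}\lesssim\norm{(G^+,G^-)}_{\XHholpair{s}}$. The difficulty is that $G$ is only in $H^s$ on $\T$ and is not analytic at $\pm1$, so Frobenius series do not apply there.
\begin{itemize}
\item Away from $\pm1$: the equation is nondegenerate on $\T$, and elliptic-type ODE regularity gains two derivatives. The bottleneck is the degenerate factor $z^2-1$ at $\pm1$.
\item Near $z=\pm1$: first use variation of parameters with the Frobenius basis $\{\phi_1,(z\mp1)^{2\mp\la}\phi_2\}$, integrating from $\pm1$ along radial paths. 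Then use Hardy-type inequalities to bound the weighted integrals $\int (z\mp1)^{-1\pm\la}G$ and similar quantities in $H^{s+1}$. This is where $\Re\la>\frac12$ and $s\in[1,2]$ enter, keeping the exponents away from resonance.
\item Assembling: the estimate then follows from boundedness of the resulting solution operator. The constants $v=\Mcrit^{-1}b(G)$ are controlled because point evaluations at $x_0$ are bounded on $\XHholo$ by $\norm{G}_{\XHhol{s}}$.
\end{itemize}
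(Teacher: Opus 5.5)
Your treatment of the homogeneous problem is essentially the paper's argument: gluing at $x_0$ by uniqueness of the Cauchy problem, Corollary~\ref{cor:Fp0} near $0$, and Lemma~\ref{lem:loc:aff1} near $1$. The same holds for uniqueness and the ``only if'' direction of the forced problem, where $(z-1)^{2-\la}$ is excluded from $\XHhol{s+1}$ because $\Re\la>\frac12$. You also supply the regularity at $z=-1$, which the paper only gives later. One small slip: the columns of $M_\pm$ are differences ``$1$-side minus $0$-side'', and $C^\pm$ enters the local solution at $1$ with a plus sign. So the kernel vector is $(C^+,C^-,a^-,a^+)^T$, not $(-C^+,-C^-,a^-,a^+)^T$. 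This does not affect the conclusion $\det\Mcrit=0$.

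The genuine gap is existence, together with the bound, in the forced problem; this is the substantive part of the statement. Your step (i) needs a particular solution near $z=1$ whose trace lies in $\XHhol{s+1}$ there. But $G^\pm\in\XHhol{s}$ is not holomorphic at $1$, so Lemma~\ref{lem:loc:aff1} does not apply. Producing such a solution, and its analogue at $z=-1$, with the estimate is exactly what must be proved, and your outline does not do it.

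As stated, the outline also breaks down. In variation of parameters with the basis $\{\phi_1,(z\mp1)^{2\mp\la}\phi_2\}$, the coefficient of the non-holomorphic branch is an integral whose integrand behaves like a nonzero multiple of $G(\pm1)(w\mp1)^{\pm\la-2}$ near $w=\pm1$. This diverges at $w=-1$ for every $\Re\la\geq-1$. It diverges at $w=1$ for $\Re\la\leq1$, which includes the relevant range $(0.56,0.6)$. So you cannot integrate from $\pm1$.
\begin{itemize}
\item At $-1$ you can move the base point away from the singularity. This is harmless, since it only adds the admissible branch.
\item At $+1$, moving the base point creates a multiple of the excluded branch $(z-1)^{2-\la}$. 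That multiple must be removed by a renormalization controlled in boundary Sobolev norms, and radial integration inside $\D$ gives no such control.
\end{itemize}

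The paper avoids constructing a solution operator for rough $G$ at all.
\begin{enumerate}
\item It writes the ODE for the trace $f=F|_\T$, localizes, rewrites it as $D_x\pa_xf=\dots$, and differentiates once. This gives a first-order Euler equation $D_xv+(\pm\la+O(x))v=K$ for $v=\pa_x^2f$ near $z=\pm1$.
\item It inverts this with $\Hard{0}{\la}$ at $z=1$ and $-\Hard{\de}{\la}$ at $z=-1$ (Lemma~\ref{lem:hardy}). This yields only $\norm{F}_{\XHhol{s+1}}\lesssim\norm{G}_{\XHhol{s}}+\norm{F}_{\XHhol{s+1-\eps}}$ (Lemma~\ref{lem:loc:apriori}).
\item It removes the lower-order term by a compactness-and-contradiction argument using the uniqueness you proved; this is where $\det\Mcrit\neq0$ enters.
\item It proves existence only for polynomial $G^\pm$, which are analytic at $\pm1$, so Lemmas~\ref{lem:loc:aff0} and~\ref{lem:loc:aff1} and the matching apply. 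It then extends to general $G^\pm$ by density using the closed estimate (Lemma~\ref{lem:forced-H1}).
\end{enumerate}
This scheme (a priori estimate modulo a compact term, compactness, density) is the idea your plan is missing.
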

\begin{proof}
    Both $\Fholhol{1}\pm$ and $\Fholhol0\pm$ solve the same second-order ODE on $(0,1)$. By uniqueness of the Cauchy problem at a regular point, $\Fholhol{1}+$ and $\Fholhol0+$ coincide on $(0,1)$ if and only if
    \begin{align*}
        \Fholhol{1}+(x_0) = \Fholhol0+(x_0)
        \andspace
        \pa_z\Fholhol{1}+(x_0) = \pa_z\Fholhol0+(x_0),
    \end{align*}
    and analogously for $\Fholhol{1}{-}$ and $\Fholhol{0}{-}$. These four scalar equations form a homogeneous linear system in 
    $(C^+,C^-,a^-,a^+)\in\C^4$ whose matrix is precisely 
    \begin{align*}
        \Mcrit,
    \end{align*}
    and a non-trivial solution exists if and only if 
    \begin{align*}
        \det\Mcrit = 0.
    \end{align*}

    Assume now that $\det\Mcrit=0$, and let $(C^+,C^-,a^-,a^+)$ be a non-trivial vector in the kernel. The functions $\Fholhol0\pm$ and $\Fholhol{1}\pm$ then agree on $(0,1)$ and define a single holomorphic function $F^\pm$ on a neighborhood of $[0,1]$. Since $\D\cup\{1\}$ is simply connected and the only interior singular point of~\eqref{def:holo}, namely $z=0$, is one where $F^\pm$ is holomorphic by Corollary~\ref{cor:Fp0}, $F^\pm$ extends uniquely by analytic continuation to all of $\D\cup\{1\}$. Moreover, $(F^+,F^-)$ is non-trivial: if $(F^+,F^-)=0$, then applying $\OLdisk\la{\te^+}$ to $\Fholhol0+=0$ yields 
    \begin{align*}
        a^-\Affterm1{\te^+}+a^+\Affterm2{\te^+}=0.
    \end{align*}
    Since $\Affterm1{\te^+}$ and $\Affterm2{\te^+}(z)$ are linearly independent and nonzero for $\te^+\in(0,1)$, this forces $a^\pm=0$. Therefore, 
    \begin{align*}
        \Fholhol{1}\pm=C^\pm\Fholhte1{1}\pm=0,
    \end{align*}
    which forces $C^\pm=0$, contradicting non-triviality of $(C^+,C^-,a^-,a^+)$.

    Conversely, given a non-trivial pair $(F^+,F^-)$ holomorphic on $\D\cup\{1\}$ solving~\eqref{holsys}, set $a^\pm:=F^\pm(0)$ (consistent with~\eqref{holsysa} by Lemma~\ref{lem:hol_impl}). By Corollary~\ref{cor:Fp0}, $F^\pm$ coincides with $\Fholhol0\pm$ near $0$. By Lemma~\ref{lem:loc:aff1}, it coincides with $\Fholhol{1}\pm$ near $1$ for some $C^\pm\in\C$. The matching at $x_0$ yields
    \begin{align*}
        \Mcrit
        \begin{pmatrix}
            C^+ \\ C^- \\ a^- \\ a^+
        \end{pmatrix} = 0,
    \end{align*}
    and the vector is non-trivial since otherwise $F^\pm$ would vanish near $0$ and hence everywhere by analytic continuation. Therefore $\det\Mcrit=0$.

    The forced part ($\Leftarrow$) is Lemma~\ref{lem:forced-H1} below and uses the estimate in Lemma~\ref{lem:loc:apriori}, proven in the appendix.
    For the converse ($\Rightarrow$), if $\det\Mcrit=0$, the homogeneous part gives a non-trivial pair $(F_0^+,F_0^-)\neq(0,0)$ solving~\eqref{holsys:forced} with $G^+=G^-=0$. Then both $(0,0)$ and $(F_0^+,F_0^-)$ solve the forced system for $G=0$, so uniqueness fails.
\end{proof}

\begin{lemma}
\label{lem:loc:apriori}
    Let $\te\in(0,1)$, $\la\in\C-\Z$ with $\Re\la>\frac12$, $s\in[1,2]$, and $\eps\in(0,1)$.
    There exists $C=C_{\la,\te,s,\eps}>0$ such that for every $F\in\XHhol{s+1}$
    satisfying $\OLdisk\la\te F=G$ with $G\in\XHhol{s}$,
    \begin{align*}
        \norm{F}_{\XHhol{s+1}}
        \leq C\bigl(
        \norm{G}_{\XHhol{s}}+\norm{F}_{\XHhol{s+1-\eps}}
        \bigr).
    \end{align*}
\end{lemma}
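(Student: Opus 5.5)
We prove the a priori estimate by a partition of unity on $\T$ that separates the two boundary singular points $z=\pm1$ from the rest of the circle, where the operator is uniformly elliptic. Write $F|_\T=f=\sum_{k\ge0}F_ke^{ikx}$ and $G|_\T=g$. On $\T$, with $z=e^{ix}$, we have $z\pa_z=-i\pa_x$, so
\begin{align*}
    \OLdisk\la\te F = (z^2-1)\big((z\pa_z)^2-z\pa_z\big)F+\Pglo(z)\, z\pa_zF+\Qglo(z)F
\end{align*}
restricts to a second-order operator in $x$ with principal coefficient $-(e^{2ix}-1)$. This coefficient vanishes only at $x=0,\pi$, i.e.\ $z=\pm1$, and it vanishes there to first order. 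Fix smooth cutoffs $\chi_0+\chi_1+\chi_{-1}=1$ on $\T$, with $\chi_{\pm1}$ supported in small arcs around $z=\pm1$ and $\chi_0$ supported away from $z=\pm1$.

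\emph{Step 1: the elliptic region.} On $\supp\chi_0$ the principal coefficient is bounded below. Interior elliptic regularity for the ODE, i.e.\ solving for $\pa_x^2 f$, then gives
\begin{align*}
    \norm{\chi_0 f}_{H^{s+2}}\lesssim \norm{g}_{H^s}+\norm{f}_{H^{s+1}(\text{larger arc})}.
\end{align*}
Iterating once, or interpolating, brings the error down to a lower norm $\norm f_{H^{s+1-\eps}}$. This is more than enough, since we only need $H^{s+1}$ control.

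\emph{Step 2: the degenerate points.} Near $x=0$, set $y=x$. The equation reads $a(y)\,y\,f''+b(y)f'+c(y)f=g$, with $a(0)\neq0$. The ratio $b(0)/a(0)$ is fixed by the indicial roots $\{0,\,2-\la\}$ of Lemma~\ref{lem:loc:1}, transplanted to the circle variable. Multiplying by $\chi_1$ and commuting produces the same degenerate equation for $\chi_1 f$. The commutator terms involve only $f,f'$ on the annulus where $\chi_1'\neq0$, which Step 1 already controls.

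For the model operator $yf''+\beta f'=g$ with $\beta=\la-1$ (up to sign convention), one can write the solution explicitly by variation of constants:
\begin{align*}
    f'(y)=|y|^{-\beta}\int_0^y |t|^{\beta-1}g(t)\,dt + c\,|y|^{-\beta}.
\end{align*}
A Hardy-type inequality then gives $\norm{f'}_{H^{s}}\lesssim\norm g_{H^s}$ on the branch with $c=0$. It is exactly here that $\Re\la>\frac12$ (so $\Re\beta>-\frac12$) and $s\in[1,2]$ enter: the Hardy operator $g\mapsto |y|^{-\beta}\int_0^y|t|^{\beta-1}g$ is bounded on $H^s$ near $0$ under these constraints.

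A singular component $c|y|^{1-\beta}\sim (z-1)^{2-\la}$ would lie in $H^{s+1}$ only if $s+1<\frac52-\Re\la$, which fails. The hypothesis $F\in\XHhol{s+1}$ therefore excludes it. This is why $\norm F_{\XHhol{s+1}}$ is assumed finite rather than proven. The lower-order perturbations $(a(y)-a(0))yf''$ and $(b-b(0))f'$ are small on a small arc, or are bounded by $\norm f_{H^{s+1-\eps}}$ after interpolation. Near $x=\pi$ ($z=-1$) the same argument applies with $\la$ replaced by $-\la$ and indicial exponent $2+\la$. There the Hardy bound needs $\Re(-\la)$ on the admissible side. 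Since that branch has exponent $2+\Re\la>\frac52$, it is already in $H^{s+1}$ for $s\le2$, so no exclusion is needed.

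Summing the three localized estimates and absorbing the small terms gives the claimed inequality. The main obstacle is Step 2: establishing the weighted Hardy bound in fractional $H^s$ for complex $\beta$, and handling the commutators uniformly in $s\in[1,2]$. I would prove it for $s=1$ and $s=2$ by differentiating the equation, which preserves its structure with $\beta$ shifted by one, and then interpolate. If the interpolation of nonlinear constraints is delicate, I would instead work directly with the Frobenius decomposition of Lemma~\ref{lem:loc:1}, combined with a local inhomogeneous solution as in Lemma~\ref{lem:loc:aff1}.
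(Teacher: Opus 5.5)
Your architecture is the paper's: a partition of unity, solving for $\pa_x^2f$ where $1-z^2\neq0$, localized Hardy-type inversion near $z=\pm1$, and discarding $(z-1)^{2-\la}$ through the $H^{s+1}$ hypothesis. However, the Hardy step is wrong as stated at both singular points.

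\textbf{The point $z=1$.} With $\beta=\la-1$ you only get $\Re\beta>-\frac12$. For $\Re\beta\leq0$, which covers the relevant $\la\in(0.56,0.6)$, the integral $\int_0^y|t|^{\beta-1}g(t)\,dt$ diverges whenever $g(0)\neq0$. So the claimed $H^s$ bound for $f'$ fails. The remedy you mention at the end is what the paper does: differentiate once and work with $v=\pa_x^2f\in H^{s-1}$. The localized equation is then $D_yV_++(\la+y\rho_+)V_+=K_+$, with $D_y=y\pa_y$ and $K_+$ containing the commutator $vD_y\chi_+$, which the interior estimate bounds. The term $y\rho_+V_+$ is top order in $v$, so the paper does not absorb it perturbatively. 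It removes it exactly with the integrating factor $e^{R_+}$, $R_+:=\int_0^y\rho_+$, giving $V_+=e^{-R_+}\Hard{0}{\la}[e^{R_+}K_+]$. The branch $|y|^{-\la}$ is excluded because it is not in $L^2$.

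No interpolation between $s=1$ and $s=2$ is needed. Interpolating an a priori inequality, as opposed to an operator bound, would in any case require recasting the argument as a bounded parametrix. The paper instead writes $\Hard{0}{\beta}w=\int_0^1t^{\beta-1}w(t\,\cdot)\,dt$ and uses $\norm{w(t\,\cdot)}_{H^\sigma(\RR)}\leq t^{-1/2}\norm{w}_{H^\sigma(\RR)}$ for $t\leq1$. This gives $\norm{\Hard{0}{\beta}w}_{H^\sigma(\RR)}\leq(\Re\beta-\frac12)^{-1}\norm{w}_{H^\sigma(\RR)}$ for every $\sigma\geq0$ at once.

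\textbf{The point $z=-1$.} Here the argument is genuinely different, and this is the missing idea in your sketch. The equation for $v$ is $D_xv+(-\la+x\rho)v=\kappa_-$. Integrating from $0$ would need $\Re(-\la)>\frac12$, so that integral diverges. Your remark that $(z+1)^{2+\la}\in H^{s+1}$ is correct. But that is exactly why this branch cannot be discarded by regularity: its coefficient must still be bounded by the data, and you supply no mechanism for this.

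The paper integrates inward from the edge of the chart. Since $V=\chi_-v$ vanishes near $|x|=\de$, one has $V=-e^{-R}\Hard{\de}{\la}[e^RK_-]$, with the outer operator $\Hard{\de}{\la}w(x)=|x|^\la\int_{|x|}^\de\tau^{-\la-1}w(\sgn(x)\tau)\,d\tau$. The support condition forces the multiples of $|x|^\la e^{-R}$ on each side to vanish. The admissible branch is thereby controlled through $K_-$, in particular through $vD_x\chi_-$.

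For the bound, write $\Hard{\de}{\la}w=\int_1^\infty t^{-\la-1}w(t\,\cdot)\,dt$ and use $\norm{w(t\,\cdot)}_{H^\sigma(\RR)}\leq t^{\sigma-1/2}\norm{w}_{H^\sigma(\RR)}$ for $t\geq1$. This gives boundedness on $H^\sigma$ when $\Re\la>\sigma-\frac12$, i.e.\ $\Re\la>s-\frac32$, which is the same condition as $(z+1)^{2+\la}\in H^{s+1}$. So the restriction $s\leq2$ enters at $z=-1$. It does not enter at $z=1$, where the inner Hardy bound holds for all $\sigma\geq0$.
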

\begin{proof}
    See Appendix~\ref{sec:appendix:apriori}.
\end{proof}

\begin{lemma}
\label{lem:forced-H1}
    Let $\te\in(0,1)$, $\la\in\C-\Z$ with $\Re\la>\frac12$, $s\in[1,2]$, and
    $\det\Mcrit\neq0$.
    For every $(G^+,G^-)\in\XHholpair{s}$, there exists a unique
    pair $(F^+,F^-)\in\XHholpair{s+1}$ solving~\eqref{holsys:forced}.
    
    Moreover, $\norm{(F^+,F^-)}_{\XHholpair{s+1}} \lesssim \norm{(G^+,G^-)}_{\XHholpair{s}}$.
\end{lemma}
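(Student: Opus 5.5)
We build the solution by the same matching procedure as in Proposition~\ref{prop:matrix}, now including the forcing, and then prove the Sobolev estimate with Lemma~\ref{lem:loc:apriori} and a compactness argument.

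\emph{Existence.} Near $0$, Corollary~\ref{cor:Fp0} gives, for any $a^\pm$, a unique holomorphic local solution of each equation in~\eqref{holsys:forced}. This solution is affine in $(a^+,a^-)$ and contains a fixed particular part coming from $zG^\pm$. Near $1$ we want Lemma~\ref{lem:loc:aff1}, but $G^\pm$ is only in $\XHhol{s}$ and need not be holomorphic at $1$. We therefore do not match at $1$ directly. Instead we work on $(0,1)$, where all data are holomorphic, and characterize admissible solutions by their behaviour as $z\to1$.

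By variation of parameters with the basis $\Fholhte11\pm$, $\Fholhte21\pm$, the solutions of $\OLdisk\la{\te^\pm}F=\text{rhs}$ on $(x_0,1)$ form an affine family with two parameters. Imposing that the coefficient of the singular branch $(z-1)^{2-\la}$ vanishes leaves an affine one-parameter family $F_{\rm part}^\pm+C^\pm\Fholhte11\pm$. Concretely, $F_{\rm part}^\pm$ is obtained by integrating the variation-of-parameters kernel from $1$, which converges because $\Re(2-\la)<\tfrac32$ and $G\in\XHhol{1}\subset C^{0,1/2}$. Matching value and derivative at $x_0$ then gives the linear system $\Mcrit(C^+,C^-,a^-,a^+)^T=b(G)$, where $b$ depends linearly and boundedly on $G$. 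Since $\det\Mcrit\neq0$, this system has a unique solution.

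The glued $F^\pm$ is holomorphic in $\D$. This follows from analytic continuation from a neighbourhood of $[0,1)$, exactly as in Proposition~\ref{prop:matrix}: the only interior singular point is $0$, where $F^\pm$ is holomorphic. Near the boundary, the only possible nonsmooth behaviour comes from the exponents $2-\la$ at $1$ and $2+\la$ at $-1$, together with the forcing. Since $\Re\la>\tfrac12$, the branch $(z+1)^{2+\la}$ lies in $\XHhol{s+1}$ for $s\le2$. Interior Sobolev regularity along $\T$ away from $\pm1$ follows from ellipticity-type gain, since $z^2(z^2-1)\neq0$ there. A cleaner route is to first approximate $G$ by polynomials, for which $F$ is smooth up to the boundary except at $\pm1$, and then pass to the limit using the estimate below.

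\emph{Uniqueness.} Uniqueness follows from the homogeneous part of Proposition~\ref{prop:matrix}. A difference of two solutions lies in $\XHhol{s+1}\subset\XHhol2$ and solves~\eqref{holsys}, so it cannot contain the branch $(z-1)^{2-\la}$, which is not in $\XHhol2$ when $\Re\la>\tfrac12$. Hence the difference is holomorphic at $1$, and the condition $\det\Mcrit\ne0$ forces it to vanish.

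\emph{Estimate.} This is the main step. Apply Lemma~\ref{lem:loc:apriori} to each component, with $G$ replaced by the full right-hand side $zG^\pm$ plus the affine terms in $F^\pm(0)$. Since $|F^\pm(0)|\lesssim\norm{F^\pm}_{\XHhol{s+1-\eps}}$, this gives
\[
\norm{F}_{\XHhol{s+1}}\le C\bigl(\norm{G}_{\XHhol s}+\norm{F}_{\XHhol{s+1-\eps}}\bigr).
\]
The lower-order term is removed by the standard contradiction argument. Suppose there are $G_j\to0$ in $\XHhol s$ with $\norm{F_j}_{\XHhol{s+1}}=1$. Compactness of $\XHhol{s+1}\hookrightarrow\XHhol{s+1-\eps}$ gives a subsequence converging in $\XHhol{s+1-\eps}$ to a limit $F$. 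By the estimate, the subsequence is Cauchy in $\XHhol{s+1}$, so $F$ has norm $1$ and solves the homogeneous system~\eqref{holsys}. By the uniqueness argument above, $F=0$, a contradiction.

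I expect the delicate point to be the existence step with nonholomorphic $G$. The plan is to sidestep it by density: prove existence for polynomial $G$ using Lemma~\ref{lem:loc:aff1} directly, obtain the uniform bound from the compactness argument applied to those solutions, and extend to all $G\in\XHhol s$ by continuity of the solution map.
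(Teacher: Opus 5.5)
Your final plan is exactly the paper's proof: Lemma~\ref{lem:loc:apriori} on each component with the coupling terms $F^\pm(0)$ absorbed into the $\XHhol{s+1-\eps}$ norm, a compactness contradiction closed by the homogeneous part of Proposition~\ref{prop:matrix} (the branch $(z-1)^{2-\la}$ is excluded from $\XHhol{s+1}$), existence for polynomial $G^\pm$ via Lemmas~\ref{lem:loc:aff0}--\ref{lem:loc:aff1} and matching, and then density. The differences are cosmetic: you close the contradiction through Cauchy-ness in $\XHhol{s+1}$ rather than $F_j\to0$, and you prove uniqueness directly rather than from the estimate with $G=0$. The variation-of-parameters route for nonholomorphic $G$ is unnecessary and not fully justified as written, so you were right to drop it in favour of density.
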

\begin{proof}
    \textbf{A priori estimate.}
    Apply Lemma~\ref{lem:loc:apriori} (with the given $s\in[1,2]$) to each scalar
    equation with $\eps\in(0,1)$ fixed.
    The coupling terms $F^\pm(0)$ are bounded by point evaluation, continuous on
    $\XHhol{s+1-\eps}$ since $s+1-\eps>\frac12$.
    Adding the two inequalities gives
    \begin{align}
    \label{eq:apriori-coupled}
        \norm{(F^+,F^-)}_{\XHholpair{s+1}}
        \leq C\bigl(
            \norm{(G^+,G^-)}_{\XHholpair{s}}
            +\norm{(F^+,F^-)}_{\XHholpair{s+1-\eps}}
        \bigr).
    \end{align}

    Suppose the stated estimate $\norm{(F^+,F^-)}_{\XHholpair{s+1}} \lesssim \norm{(G^+,G^-)}_{\XHholpair{s}}$ fails.
    Then there exist solutions $(F_j^+,F_j^-)$ with
    \begin{align*}
        \norm{(F_j^+,F_j^-)}_{\XHholpair{s+1}}=1
        \andspace
        \norm{(G_j^+,G_j^-)}_{\XHholpair{s}}\to0.
    \end{align*}
    The compact embedding $\XHholpair{s+1}\Subset\XHholpair{s+1-\eps}$ yields,
    after passing to a subsequence, strong convergence
    $(F_j^+,F_j^-)\to(F^+,F^-)$ in $\XHholpair{s+1-\eps}$.
    The limit lies in $\XHholpair{s+1}$ and solves the homogeneous
    system~\eqref{holsys}.

    Since $\Re\la>\frac12\geq\frac32-s$ (as $s\ge1$), the branch
    $(z-1)^{2-\la}$ lies in $\XHhol{t}$ only for $t<\frac52-\Re\la<2$, so
    $(z-1)^{2-\la}\notin\XHhol{s+1}$.
    Hence $(F^\pm)$ carry no contribution from this mode and extend
    holomorphically to $z=1$.
    By the homogeneous part of Proposition~\ref{prop:matrix}, the condition
    $\det\Mcrit\neq0$ forces $(F^+,F^-)=0$.
    Therefore $(F_j^+,F_j^-)\to0$ in $\XHholpair{s+1-\eps}$,
    and~\eqref{eq:apriori-coupled} gives a
    contradiction.

    \textbf{Existence.}
    If $(G^+,G^-)$ is polynomial then $zG^\pm$ is analytic near $z=1$.
    Lemmas~\ref{lem:loc:aff0} and~\ref{lem:loc:aff1} then provide holomorphic
    local solutions near $z=0$ and $z=1$, and the matching system at $x_0$ is
    uniquely solvable since $\det\Mcrit\neq0$, giving a global pair in
    $\XHholpair{s+1}$.
    For general $(G^+,G^-)$, approximate in $\XHholpair{s}$ by polynomials.
    The a priori estimate shows the corresponding solutions are Cauchy in
    $\XHholpair{s+1}$ so their limit solves the system.

    \textbf{Uniqueness} follows from the a priori estimate with $G^+=G^-=0$.
\end{proof}

\begin{corollary}
\label{coro:spec}
    Let $\te\in(0,1)$, $\la\in\C-\Z$ with $\Re\la>\frac12$.
    \begin{enumerate}
        \item\label{coro:spec:it1} If $\det\Mcrit\neq 0$, then
              $\la\in\rho(L_\te^{\rm fib})$ on $\XH s$ for every $s\in[1,2]$, that is,
              the operator $L_\te^{\rm fib}-\la:\mathcal D_s\to\XH s$ is bijective and its inverse is bounded on $\XH s$, where $\mathcal D_s:=\{\eta\in\XH s:L_\te^{\rm fib}\eta\in\XH s\}$.

        \item\label{coro:spec:it2} If $\det\Mcrit=0$, then $\la$
              is an eigenvalue of $L_\te^{\rm fib}$ on $\XH s$ for every $s<\frac32+\Re\la$. For $s\in[1,2]$, its geometric multiplicity is $\dim\ker\Mcrit$.
    \end{enumerate}
\end{corollary}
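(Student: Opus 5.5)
The plan is to combine Lemma~\ref{lem:red_to_fin} with Proposition~\ref{prop:matrix}. Both items then follow by translating statements about the holomorphic system~\eqref{holsys:forced} into statements about $L_\te^{\rm fib}-\la$.

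\emph{Item~\ref{coro:spec:it1}.} Fix $s\in[1,2]$ and $g\in\XH s$. Let $g^\pm=\Pcal^\pm g$ and build $(G^+,G^-)\in\XHholpair{s}$ as in Lemma~\ref{lem:red_to_fin}. Since $\Pcal^\pm$ are bounded on $\XH s$, we have $\norm{(G^+,G^-)}_{\XHholpair{s}}\lesssim\norm g_\XH s$.

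Because $\det\Mcrit\neq0$ and $\Re\la>\frac12$, the forced part of Proposition~\ref{prop:matrix} gives a unique $(F^+,F^-)\in\XHholpair{s+1}$ solving~\eqref{holsys:forced}, with norm $\lesssim\norm g_\XH s$. The converse direction of Lemma~\ref{lem:red_to_fin} then produces $\eta=(\pa_x+i\te)f\in\XH s$, where $f=f^++e^{-ix}f^-(-x)$, solving $(L_\te^{\rm fib}-\la)\eta=g$. Since $\te\notin\Z$, the map $f\mapsto(\pa_x+i\te)f$ is an isomorphism $\XH{s+1}\to\XH s$, so $\norm\eta_\XH s\lesssim\norm{(F^+,F^-)}_{\XHholpair{s+1}}\lesssim\norm g_\XH s$. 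Moreover $\eta\in\mathcal D_s$, because $L_\te^{\rm fib}\eta=g+\la\eta\in\XH s$.

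For injectivity on $\mathcal D_s$, suppose $\eta\in\mathcal D_s$ and $(L_\te^{\rm fib}-\la)\eta=0$. The forward direction of Lemma~\ref{lem:red_to_fin} yields a solution of~\eqref{holsys} in $\XHholpair{s+1}$, and uniqueness in the forced problem with $G=0$ forces it to vanish. The map $\eta\mapsto f=(\pa_x+i\te)^{-1}\eta\mapsto(F^+,F^-)$ is injective, since $f^\pm$ together recover all Fourier modes of $f$. Hence $\eta=0$. Together these give bijectivity and a bounded inverse.

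\emph{Item~\ref{coro:spec:it2}.} If $\det\Mcrit=0$, the homogeneous part of Proposition~\ref{prop:matrix} gives a nontrivial pair $(F^+,F^-)$, holomorphic on $\D\cup\{1\}$, solving~\eqref{holsys} and lying in $\XHholpair{t}$ for all $t<\frac52+\Re\la$. Lemma~\ref{lem:red_to_fin} with $g=0$, applied at each $s<\frac32+\Re\la$ (so $s+1<\frac52+\Re\la$), gives $\eta\in\XH s$ with $L_\te^{\rm fib}\eta=\la\eta$. This $\eta$ is nonzero by the same injectivity of the correspondence: $\eta=0$ implies $f=0$, which implies $F^\pm=0$.

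For the multiplicity when $s\in[1,2]$, the task is to show that the eigenspace in $\XH s$ is linearly isomorphic to the space of such holomorphic pairs, which has dimension $\dim\ker\Mcrit$. The map from eigenfunctions to pairs is linear and injective by the construction above. For surjectivity, one needs that any eigenfunction in $\XH s$, $s\ge1$, gives a pair in $\XHholpair{s+1}\subset\XHholpair{2}$ that is holomorphic at $z=1$.

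\textbf{Main obstacle.} This last regularity claim is where the work lies. Near $1$, the solution is a holomorphic part plus $C^\pm(z-1)^{2-\la}\phi_2$, and I would argue as in Lemma~\ref{lem:forced-H1}: since $\Re\la>\frac12$, this branch lies in $\XHhol{t}$ only for $t<\frac52-\Re\la<2$, so $C^\pm=0$.

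This requires justifying the local decomposition of a Hardy-space solution near the boundary point $1$. The plan is to use analytic continuation of the solution from the interior along the segment $(0,1)$, then apply Lemma~\ref{lem:loc:aff1} to obtain the local form. The point needing care is that the Sobolev regularity of the boundary trace detects the singular coefficient. I would verify this by localizing with a cutoff near $x=0$ on $\T$ and computing the Fourier decay of $(e^{ix}-1)^{2-\la}$.

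Conversely, every pair from $\ker\Mcrit$ lies in $\XHholpair{s+1}$ for $s\le2<\frac32+\Re\la$, so the correspondence is a bijection and the geometric multiplicity equals $\dim\ker\Mcrit$.
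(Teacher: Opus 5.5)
Your proposal is correct and follows essentially the same route as the paper. Item~1 combines Lemma~\ref{lem:red_to_fin} with the forced part of Proposition~\ref{prop:matrix} (Lemma~\ref{lem:forced-H1}). Item~2 combines the homogeneous part with the observation that $s+1\geq 2>\frac52-\Re\la$ excludes the branch $(z-1)^{2-\la}$, so every eigenfunction for $s\in[1,2]$ comes from $\ker\Mcrit$.

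You spell out a few points the paper leaves implicit:
\begin{itemize}
\item injectivity of $\eta\mapsto(F^+,F^-)$, via the isomorphism $\pa_x+i\te:\XH{s+1}\to\XH s$;
\item the local decomposition at $z=1$, which comes from the particular solution of Lemma~\ref{lem:loc:aff1} together with the homogeneous basis of Lemma~\ref{lem:loc:1}, not from Lemma~\ref{lem:loc:aff1} alone.
\end{itemize}
The paper, for its part, handles $s<0$ in item~2 by the inclusion $\XH0\subset\XH s$, since Lemma~\ref{lem:red_to_fin} is stated only for $s\geq0$.
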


\begin{proof}
    \emph{Part~\ref{coro:spec:it1}.}
    Fix any $s\in[1,2]$. For any $g\in\XH s$, Lemma~\ref{lem:red_to_fin} reduces
    solving $(L_\te^{\rm fib}-\la)\eta=g$ in $\XH s$ to finding $(F^+,F^-)\in\XHholpair{s+1}$
    solving~\eqref{holsys:forced}.
    Under $\Re\la>\frac12$ and $\det\Mcrit\neq0$, Lemma~\ref{lem:forced-H1}
    provides a unique such pair with
    $\norm{(F^+,F^-)}_{\XHholpair{s+1}}\lesssim\norm{(G^+,G^-)}_{\XHholpair{s}}
    \lesssim\norm{g}_{\XH s}$.
    Translating back via Lemma~\ref{lem:red_to_fin} gives a unique $\eta\in\XH s$
    satisfying $(L_\te^{\rm fib}-\la)\eta=g$ and $\norm{\eta}_{\XH s}\lesssim\norm{g}_{\XH s}$.

Since $L_\te^{\rm fib}\eta=\la\eta+g\in\XH s$, we have $\eta\in\mathcal D_s$ so $(L_\te^{\rm fib}-\la)^{-1}$ is bounded on $\XH s$.

\emph{Part~\ref{coro:spec:it2}.}
If $\det\Mcrit=0$, the homogeneous part of Proposition~\ref{prop:matrix} gives a nonzero space of solution pairs of dimension $\dim\ker\Mcrit$, whose members belong to $\XHholpair{r}$ for every $r<\frac52+\Re\la$.
By Lemma~\ref{lem:red_to_fin}, the reconstruction $\eta:=\pa_xf+i\te^+f$ satisfies $(L_\te^{\rm fib}-\la)\eta=0$ and belongs to $\XH s$ for every $s<\frac32+\Re\la$.
For $s<0$, the regularity follows from $\eta\in\XH0\subset\XH{s}$.

For $s\in[1,2]$, Lemma~\ref{lem:red_to_fin} associates every eigenfunction with a pair in $\XHholpair{s+1}$. Since $s+1\geq2>\frac52-\Re\la$, this regularity excludes the branch $(z-1)^{2-\la}$. The pair is therefore holomorphic at $z=1$, and Proposition~\ref{prop:matrix} identifies the eigenspace with $\ker\Mcrit$.
\end{proof}

The following lemma reduces the analysis of $\det\Mcrit$ to real values of $\la$.
\begin{lemma}
\label{lem:lareal}
    Let $\te\in(0,1)$ and $\la\in\RR-\Z$. Then $\det\Mcrit\in\RR$.
\end{lemma}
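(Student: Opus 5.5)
We argue that every entry of $\Mcrit$ is real. The approach is to show that all the local solutions entering Definition~\ref{def:Mat} are real on the real segment $(0,1)$ when $\la$ and $\te$ are real. The key observation is that the operator $\OLdisk\la\te$ has real coefficients for real $\la,\te$: the polynomials $\Pglo$ and $\Qglo$ in~\eqref{def:PQtl} have real coefficients. The affine terms $\Affterm1{\te^\pm}$ and $\Affterm2{\te^\pm}$ are also real polynomials. Hence, for any solution $F$ of $\OLdisk\la\te F=G$ with $G$ real on the real axis, the function $F^*(z):=\overline{F(\bar z)}$ solves the same equation on the conjugate domain.

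\textbf{Step 1 (the point $0$).} For $G=\Affterm j{\te^\pm}$, Corollary~\ref{cor:Fp0} says that $\FholpAff{0}{\te^\pm}{j}$ is the unique holomorphic solution near $0$. Since $D_r(0)$ is symmetric under conjugation, $F^*$ is also a holomorphic solution there. By uniqueness, $F^*=F$, so the function is real on $(-r,r)$. Alternatively, one can check directly that the Frobenius recurrence of Lemma~\ref{lem:loc:aff0} has real coefficients, so all Taylor coefficients are real.

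\textbf{Step 2 (the point $1$).} The local solutions $\Fholhte1{1}\pm=\phi_1$ are obtained from the Frobenius recurrence at $z=1$ with exponent $0$. Since $\la\notin\Z$, this recurrence is nondegenerate, its coefficients are real, and the normalization is $\phi_1(1)=1$. Hence $\phi_1$ has real Taylor coefficients. For the particular solutions $\FholpAff{1}{\te^\pm}{j}$ there is a choice, which is the main subtlety: the particular solution is only determined up to adding multiples of $\Fholh11$. I will fix $\FholpAff{1}{\te^\pm}{j}$ as the one produced by the recursive construction of Lemma~\ref{lem:loc:aff1}, namely the Frobenius recurrence with a real normalization, say zero constant term. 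With this choice its coefficients are real. If some other choice is used, replacing it by its real part still gives a solution, because the equation and its right-hand side are real. Moreover, changing the particular solution by $c\,\Fholh11$ alters $\Mcrit$ only by adding a multiple of the first or second column to another column, so $\det\Mcrit$ is unchanged. This makes the conclusion independent of the choice.

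\textbf{Step 3 (continuation).} The solutions from Steps 1 and 2 are continued along the segment $(0,1)$. The ODE restricted to the real interval is a linear ODE with real coefficients and real forcing. Real initial data at a point of $(0,r)$, or of $(1-r,1)$, therefore yields a solution that is real on the whole interval, and this real solution coincides with the analytic continuation. Consequently every entry of $h_\pm$ and $M_\pm$ evaluated at $x_0=\frac12$ is real, since derivatives of real-valued functions on the interval are real. It follows that $\Mcrit\in\RR^{4\times4}$ and $\det\Mcrit\in\RR$. The only delicate point is the normalization of the particular solution at $1$ in Step 2, and the column-operation remark handles it.
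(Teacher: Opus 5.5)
Your proposal is correct and follows essentially the same route as the paper: the real coefficients of $\Pglo$, $\Qglo$ and the affine terms make the Frobenius recurrences at $z=0$ and $z=1$ real, so the local solutions are real near their base points. Continuation along the real segment $(0,1)$ then preserves realness, and your chosen normalization at $z=1$ (zero constant term) is exactly the paper's $f_0=0$. Your column-operation remark, showing that $\det\Mcrit$ does not depend on which particular solution is taken at $z=1$, is a correct refinement that the paper does not need because it fixes that normalization.
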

\begin{proof}
    Fix $\te\in(0,1)$ and $\la\in\RR-\Z$. The polynomial coefficients $\Pglo, \Qglo$ of~\eqref{def:PQtl} and the affine right-hand sides $\Affterm 1{\te^\pm}, \Affterm 2{\te^\pm}$ have real coefficients in $z$. Therefore the Frobenius recurrences at the regular singular points $z=0$ and $z=1$ (Lemmas~\ref{lem:loc:0} and~\ref{lem:loc:1}) are real, and the local series $\phi_1, \phi_2$ together with the particular solutions $\FholpAff i{\te^\pm}{k}$ supplied by Lemmas~\ref{lem:loc:aff0} and~\ref{lem:loc:aff1} all have real Taylor coefficients at their respective base points. In particular each takes real values on a real interval around its base point.

    Analytic continuation along the real segment $(0,1)$ preserves real-valuedness, and the same holds for first derivatives. Hence every entry of $\Mcrit$ is real, and $\det\Mcrit\in\RR$.
\end{proof}

The next lemma provides the sign change needed for the intermediate value theorem, and its verification is computer-assisted.
\begin{lemma}
\label{lem:negposjump}
    For $\te=\te_p$, for every prime $p<N$, and for every $\te\in J$,
    \begin{align*}
        \det\Mcrit < 0
        \quad\text{when } \la = \getvar{low_bnd_eig},
        \qquad
        \det\Mcrit > 0
        \quad\text{when } \la = \getvar{upp_bnd_eig}.
    \end{align*}
\end{lemma}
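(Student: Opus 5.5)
The statement is a finite collection of sign conditions on the real-valued function $\detcrit(\la,\te)=\det\Mcrit$ (real by Lemma~\ref{lem:lareal}, since $0.56,0.6\notin\Z$). I would verify them with interval arithmetic, in two parts. For the finitely many points $\te=\te_p$, $p<N$ prime, I would evaluate $\detcrit$ at $\la=\getvar{low_bnd_eig}$ and $\la=\getvar{upp_bnd_eig}$. For the interval $J$, I would evaluate it on $\te$ given as an interval, or on a finite cover of $J$ by subintervals. In each case the goal is an enclosure that lies strictly on the correct side of $0$. Every entry of $\Mcrit$ is a combination of values and first derivatives at $x_0=\frac12$ of the functions $\FholpAff{i}{\te^\pm}{0}$, $\FholpAff{i}{\te^\pm}{1}$ ($i=1,2$) and $\Fholhte1{1}\pm$. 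So the task reduces to rigorous enclosures of these ten functions and their derivatives at $x_0$, valid uniformly for $\te$ in the given interval.

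\textbf{Taylor expansions at the base points.} I would expand each function in powers of $z$ (base point $0$) or of $z-1$ (base point $1$). The Taylor coefficients come from the Frobenius recurrences of Lemmas~\ref{lem:loc:aff0}, \ref{lem:loc:aff1} and~\ref{lem:loc:1}, which are three-term recurrences with rational coefficients in $(\la,\te)$. For the particular solutions at $1$, I would fix the normalization, for instance by prescribing a zero coefficient at the resonant index, so that they are well defined. This does not affect whether $\det\Mcrit$ vanishes. The sign does depend on the normalization, though, so the convention must be fixed once and used consistently for all $\te$ in the code.

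I would compute the first $N_0$ coefficients exactly in interval arithmetic, with $N_0$ a few hundred. The point $x_0=\frac12$ lies at distance $\frac12$ from each base point, while the radius of convergence is $1$. Truncation errors therefore decay like $2^{-N_0}$, which is small enough for a sign certification.

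\textbf{Tail bounds.} I would write the recurrence as $c_{k+1}=\alpha_k c_k+\beta_k c_{k-1}$ (plus an inhomogeneity that vanishes for large $k$). For $k\ge N_0$, I would bound $|\alpha_k|+|\beta_k|\le 1+\frac{K}{k}$ uniformly in the parameter box. Induction then gives $|c_k|\le M\prod(1+K/j)\lesssim M(k/N_0)^{K}$. From this, the tail of $\sum c_k(x_0-z_0)^k$ and of its derivative is bounded by an explicit geometric-polynomial sum in $\frac12$. Everything is uniform over intervals in $\te$, so one interval evaluation covers all of $J$.

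\textbf{Main obstacle.} I expect the hardest step to be the interval $J$, and in particular the endpoint $\te=\frac12$. There the $\pm$ systems coincide, since $\te^+=\te^-$. Interval enclosures over a whole $\te$-interval also inflate through the long recurrence (wrapping effect), which may destroy the sign. To handle this I would first subdivide $J$ into subintervals. If needed, I would use a centered form, namely a first-order Taylor model in $\te$ around the midpoint with a derivative enclosure, to control the width. Sufficiently high working precision (e.g.\ 200 bits) should then make the enclosures tight enough. I would also check numerically beforehand that $|\detcrit|$ at $\la=0.56$ and $\la=0.6$ stays bounded away from $0$ across $\te\in\Itheta$, so that a finite subdivision certainly succeeds.
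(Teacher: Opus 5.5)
Your overall plan matches the paper's. Both truncate Frobenius expansions at $z=0$ and $z=1$, evaluate them at $x_0=\frac12$, use first-order Taylor models in $\te$ with bisection of $J$, and check the points $\te_p$. The gap is in the tail-bound step: it fails for the six functions expanded at $z=1$.

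The three-term structure you invoke exists only at $z=0$, where the recurrence is
\[
\bigl(1-(k+\te)^2\bigr)f_k+2\la(k-1+\te)f_{k-1}+(k-3+\te)^2f_{k-2}=g_k .
\]
There $|\alpha_k|+|\beta_k|\le 1$ for large $k$, and your induction works.

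At $z=1$ the situation is different. In the variable $w=z-1$, the leading coefficient is $z^2(z^2-1)=2w+5w^2+4w^3+w^4$. So the coefficients of $\Fholhte1{1}\pm$ and $\FholpAff{i}{\te^\pm}{1}$ obey a four-term recurrence $c_{k+1}=\alpha_kc_k+\beta_kc_{k-1}+\gamma_kc_{k-2}$ with $(\alpha_k,\beta_k,\gamma_k)\to(-\frac52,-2,-\frac12)$, i.e.\ characteristic polynomial $(r+1)^2(2r+1)$. The sum of absolute values tends to $5$, so no bound of the form $1+K/k$ holds. The triangle-inequality induction then only gives $|c_k|\lesssim 5^k$, and with that the tail at $|w|=\frac12$ diverges. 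The actual growth is polynomial, since all roots have modulus at most $1$, but your estimate discards exactly the sign cancellations responsible for this. Passing to the Fuchsian normal form does not help either, because there the recurrence has full history.

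So you need a different error mechanism at $z=1$. The paper never bounds the Taylor coefficients of $f$ itself. It writes $\tau=f-f^{\rm ap}$ as the solution of $(I-T)\tau=\xi$ on $\mathcal A_{N_0}([0,\frac12])$:
\begin{itemize}
\item the contraction constant of $T$ depends only on $\sup|p|$ and $\sup|D_xp-p-q|$ over $[0,\frac12]$ (Lemmas~\ref{lem:sec_ord:LS}, \ref{lem:sup:00} and~\ref{lem:sup:-1});
\item $\xi$ is the doubly integrated residual, computed exactly up to $N_{\rm res}$, with a Cauchy tail at radius $\rho=\frac34$ (Lemma~\ref{lem:tails:xi}).
\end{itemize}
Two other routes would also work:
\begin{itemize}
\item a majorant argument on the normal-form recurrence, using $|p_k|,|q_k|\le M\rho^{-k}$ with $\rho\in(\frac12,1)$ as in the proof of Lemma~\ref{lem:aff:rec}, which gives $|c_k|\lesssim k^{C}\rho^{-k}$;
\item substituting $c_k=(-1)^kd_k$ and working with differences to resolve the double root.
\end{itemize}

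Separately, your remark that the sign of $\det\Mcrit$ depends on how the particular solutions at $1$ are normalized is not right. Replacing $\FholpAff{i}{\te^\pm}{1}$ by $\FholpAff{i}{\te^\pm}{1}+c\,\Fholhte1{1}\pm$ adds a multiple of the first or second column of $\Mcrit$ to the third or fourth. This leaves the determinant unchanged.
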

\proofcompassi

\begin{corollary}
\label{cor:detzero}
    For $\te=\te_p$, for every prime $p<N$, and for every $\te\in J$, there
    exists $\la_\te\in\Ilambd$ such
    that $\det\Mcrit = 0$.
\end{corollary}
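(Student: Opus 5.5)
The plan is to apply the intermediate value theorem to the real-valued function $\la\mapsto\detcrit(\la,\te)=\det\Mcrit$ on the closed interval $\cIlambd$, for each fixed admissible $\te$ (either $\te=\te_p$ with $p<N$ prime, or $\te\in J$).

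The first step is to check that $\detcrit(\cdot,\te)$ is real-valued on $\cIlambd$. This is immediate from Lemma~\ref{lem:lareal}, because $\cIlambd\subset\RR-\Z$.

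The second step, which carries the real content, is continuity of $\la\mapsto\det\Mcrit$ on $\cIlambd$. Each entry of $\Mcrit$ is a value or first derivative at $x_0=\frac12$ of one of the functions $\FholpAff i{\te^\pm}0$, $\FholpAff i{\te^\pm}1$, $\Fholhte1{1}\pm$.

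These functions come from Frobenius recurrences at $z=0$ and $z=1$ (Lemmas~\ref{lem:loc:0}, \ref{lem:loc:aff0}, \ref{lem:loc:1}, \ref{lem:loc:aff1}). The recurrence coefficients are rational in $\la$, and their denominators come from the indicial polynomials. At $z=1$ these denominators vanish only when $\la\in\Z$; at $z=0$ they vanish only when $\te\in\Z$. So on a complex neighborhood of $\cIlambd$ avoiding $\Z$, every Taylor coefficient is holomorphic in $\la$.

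The Taylor series have radius of convergence at least $1$ (the remark after Lemma~\ref{lem:loc:1}), and the point $x_0=\frac12$ lies strictly inside both disks $D_1(0)$ and $D_1(1)$. A uniform geometric bound on the coefficients for $\la$ in a compact neighborhood therefore gives locally uniform convergence in $\la$. Hence each entry, and thus the determinant, is holomorphic and in particular continuous in $\la$. No analytic continuation beyond the local disks is needed, since both base-point expansions already converge at $x_0$.

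The main obstacle is making the uniform coefficient bound rigorous. I would obtain it from the standard majorant argument in the Frobenius proof, deferred to Appendix~\ref{sec:appendix:frobenius}. That argument is uniform once the indicial denominators stay bounded away from zero, which holds on a compact set disjoint from $\Z$.

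Finally, Lemma~\ref{lem:negposjump} gives $\detcrit(\getvar{low_bnd_eig},\te)<0<\detcrit(\getvar{upp_bnd_eig},\te)$. The intermediate value theorem then yields $\la_\te\in\Ilambd$ with $\det\Mcrit=0$. The endpoints are excluded because the determinant is nonzero there, so the zero lies in the open interval.
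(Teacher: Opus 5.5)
Your proof is correct and follows the paper's argument. Real-valuedness comes from Lemma~\ref{lem:lareal}, continuity in $\la$ from the analytic parameter dependence of the Frobenius data, and the sign change of Lemma~\ref{lem:negposjump} then feeds the intermediate value theorem. The paper simply cites Teschl's theorem for continuity, whereas you justify it with a uniform-in-$\la$ version of the majorant bound in Lemma~\ref{lem:aff:rec}. That same lemma, rather than the remark after Lemma~\ref{lem:loc:1} (which covers only the homogeneous $\phi_j$), is what gives radius of convergence at least $1$ for the particular solutions.
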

\begin{proof}
    Fix one such $\te$. The map $\la\mapsto\det\Mcrit$ is continuous on $\cIlambd$, since by standard Frobenius theory~\cite[Theorem~4.2]{TeschlODE} the entries of $\Mcrit$ depend analytically on $(\la,\te)\in(\C-\Z)\times(0,1)$.

    By Lemma~\ref{lem:lareal} this map is real-valued, and by Lemma~\ref{lem:negposjump} it changes sign between the endpoints of $\cIlambd$, so the intermediate value theorem yields a zero in the interior.
\end{proof}

\begin{rem}
    Lemma~\ref{lem:negposjump} also certifies that $\detcrit(\cdot,\te)\neq 0$. At
    $\la=\getvar{upp_bnd_eig}$ one has $\detcrit>0$, so $\detcrit$ is not the zero function.
    Since $\detcrit(\cdot,\te)$ is analytic on $\C-\Z$ by the Frobenius theory cited above,
    the Identity Theorem implies that its zero set is discrete in $\C-\Z$.
    For each $s\in[1,2]$, Corollary~\ref{coro:spec} identifies the zeros in $\{\la\in\C-\Z:\Re\la>\frac12\}$ with the eigenvalues of $L_\te^{\rm fib}$ on $\XH s$ in that set. All these eigenvalues are therefore isolated on $\XH s$.
    See Figure~\ref{fig:crit_curves} for a numerical plot of $\detcrit(\cdot,\te)$ at the two endpoint values $\te=\frac13$ and $\te=\frac12$.
    For the endpoint values $\theta=\frac13$ and $\theta=\frac12$, the corresponding numerical eigenfunctions are shown in Figure~\ref{fig:eta-theta-n}.
\end{rem}

\begin{figure}[htbp]
    \centering
    \includegraphics[width=9.5cm]{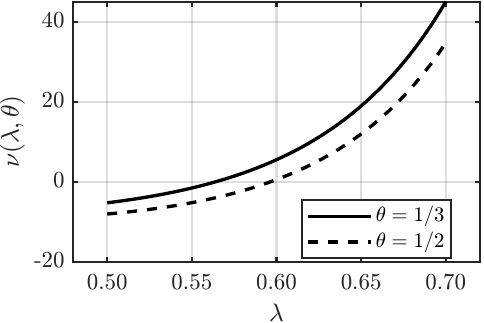}
    \caption{Numerical evaluation of $\detcrit(\cdot,\te)$ for $\te=\frac13$ and $\te=\frac12$. The zeros correspond to eigenvalues of $L_\te^{\rm fib}$.}
    \label{fig:crit_curves}
\end{figure}

\begin{figure}[htbp]
    \centering
    \includegraphics[width=13.5cm]{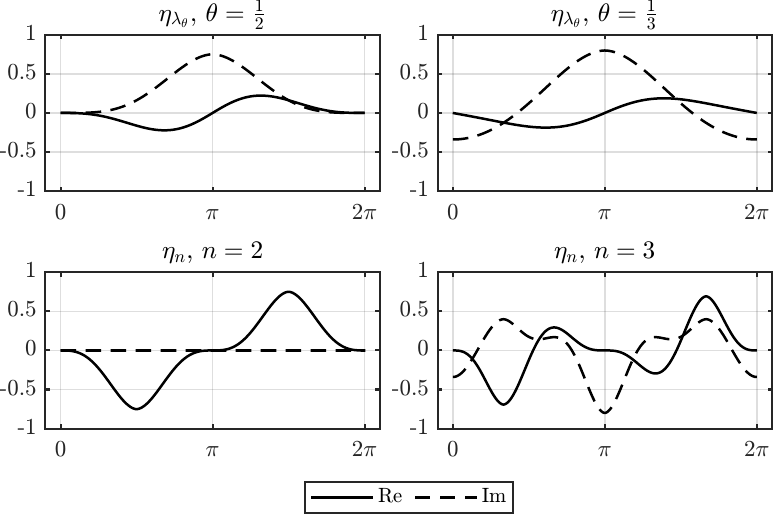}
    \caption{
    Numerical plots of the eigenfunctions obtained from the matching construction
    for $\theta=\frac12$ and $\theta=\frac13$, together with their lifted
    counterparts for $n=2$ and $n=3$. 
    }
    \label{fig:eta-theta-n}
\end{figure}

\subsection{Conclusion of the proof}
Fix $\te=\te_p$ for some prime $p<N$, or $\te\in J$. Corollary~\ref{cor:detzero} gives $\la_\te\in\Ilambd$ such that $\det\Mcrit=0$ at $(\la_\te,\te)$. By Proposition~\ref{prop:matrix}, there is then a non-trivial pair $(F^+_\te,F^-_\te)$ of holomorphic functions on $\D\cup\{1\}$ solving~\eqref{holsys} at $(\la_\te,\te)$.

The only point of $\overline{\D}$ where $F^\pm_\te$ may fail to be holomorphic is $z=-1$. Indeed, the right-hand sides of~\eqref{holsys} are analytic at $z=-1$, so local Frobenius theory shows that, near $z=-1$, $F^\pm_\te$ is of the form
\begin{align*}
    F^\pm_\te(z)=\psi^\pm(z)+\beta^\pm(z+1)^{2+\la_\te}\phi_2^\pm(z),
\end{align*}
where $\psi^\pm$ and $\phi_2^\pm$ are analytic at $-1$ and $\beta^\pm\in\C$.

Restricting to $\T$ via $z=e^{ix}$, the branch $(e^{ix}+1)^{2+\la_\te}\phi_2^\pm(e^{ix})$ behaves like $|x-\pi|^{2+\la_\te}$ near $x=\pi$. Its Fourier coefficients therefore decay like $|k|^{-3-\la_\te}$. Hence the traces $F^\pm_\te|_\T$ lie in $\XH{s}$ for every $s<\frac52+\la_\te$, equivalently $(F^+_\te,F^-_\te)\in\XHholpair{s}$ for the same range of $s$.

By Lemma~\ref{lem:red_to_fin}, this yields an eigenfunction $\eta_\te$ of $L_\te^{\rm fib}$ with eigenvalue $\la_\te$ such that $\eta_\te\in\XH{s}$ for every $s<\frac32+\la_\te$. Since $\la_\te>\getvar{low_bnd_eig}>\frac12$, it follows in particular that $\eta_\te\in\XH2$. This completes the proof of Proposition~\ref{prop:eigte}.

\section{The $H^1$ energy estimates: Lemma~\ref{lem:estH1}}

We prove Lemma~\ref{lem:estH1} by estimating the linear evolution on every fiber of $L_p$. The estimate is needed on the whole zero-mean space, so we first extend the decomposition of Lemma~\ref{lem:unfold} to include the $0$-fiber.

\subsection{The fiber decomposition}
\label{sec:fibandgen}

Lemma~\ref{lem:unfold} was stated for functions with $\ha\eta_{nk}=0$ and only for the fibers $1\leq l\leq n-1$.
We now include the $0$-fiber. The zero-mean condition used below is preserved by the nonlinear flow, as shown at the start of Section~\ref{sec:nonlinear}.
 
Extend the definition of $\Pcal_{l,n}$ to $l=0$ by the same formula,
\begin{align*}
    (\ha{\Pcal_{0,n}\eta})_k:=\ha\eta_{nk},
\end{align*}
Hence, for every $\eta\in\XL$,
\begin{align}
\label{eqn:planch:L2}
    \norm\eta_\XL^2=\sum_{l=0}^{n-1}\norm{\Pcal_{l,n}\eta}_\XL^2.
\end{align}
Therefore, we can decompose
\begin{align*}
\XL\equiv\bigoplus_{l=0}^{n-1}\Pcal_{l,n}\XL.
\end{align*}
Moreover $\ha\eta_0=(\ha{\Pcal_{0,n}\eta})_0$, so $\eta$ is zero-mean if and only if its $0$-fiber is.

\begin{lemma}
\label{lem:unfold:all}
    Let $n\geq2$ and let $\eta\in\XHZmean1$. Then $\eta$ satisfies~\eqref{eqn:linf} if and only if, for every $0\leq l\leq n-1$,
    \begin{align*}
        \pa_t\Pcal_{l,n}\eta=L_{l/n}^{\rm fib}\Pcal_{l,n}\eta.
    \end{align*}
\end{lemma}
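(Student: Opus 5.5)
The plan is to repeat the Fourier-coefficient computation of Lemma~\ref{lem:unfold}, now including the residue class $l=0$, and to check that the zero-mean hypothesis makes the $0$-fiber compatible with the formula~\eqref{def:Lfib} at $\te=0$. Since~\eqref{eqn:linf} is equivalent to its projections onto every $\foba k$, $k\in\Z$, and these frequencies split into the classes $k\equiv l \pmod n$ with $0\le l\le n-1$, it suffices to show that, for each fixed $l$, the projections onto $\foba{nk+l}$ coincide with the Fourier projections of $\pa_t\Pcal_{l,n}\eta=L_{l/n}^{\rm fib}\Pcal_{l,n}\eta$ onto $\foba k$.

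First I will record a preliminary fact: the family $L_n\foba k$ couples only frequencies in the same residue class. For $k\ne0$ this is~\eqref{id:Ln:fourier}. For $k=0$, the terms of~\eqref{def:Ln} give $H\pa_x^{-1}\foba 0=0$, $H\foba0=0$ and $\pa_x\foba0=0$, so $L_n\foba0=C_n$. This mode is absent because $\eta$ has zero mean. Hence the projection of~\eqref{eqn:linf} onto $\foba j$ reads $\pa_t\ha\eta_j=A_{j+n,n}\ha\eta_{j+n}+B_{j-n,n}\ha\eta_{j-n}$, with the convention that any coefficient with vanishing index is multiplied by $\ha\eta_0=0$.

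For $1\le l\le n-1$ the computation is exactly that of Lemma~\ref{lem:unfold}. The substitution $j=nk+l$ and the identities $A_{n(k+1)+l,n}=\ti A_{k+1,l/n}$ and $B_{n(k-1)+l,n}=\ti B_{k-1,l/n}$ give the claim. No index vanishes here, so the hypothesis $\ha\eta_{nk}=0$ of Lemma~\ref{lem:unfold} was never really used for these fibers.

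The new case is $l=0$, and this is the step I expect to require care: $\te=0$ lies outside the range $(0,1)$ where $L_\te^{\rm fib}$ was defined, and $\ti A_{k,0},\ti B_{k,0}$ are singular at $k=0$. I would interpret $L_0^{\rm fib}$ by~\eqref{def:Lfib} with $\te=0$ on the zero-mean space, that is, acting on $\foba k$ only for $k\ne0$, with the convention $\sgn(0)$-terms never being evaluated. I then check two things. First, $A_{nk,n}=\ti A_{k,0}$ and $B_{nk,n}=\ti B_{k,0}$ for $k\neq0$; this follows by pulling the factor $n$ out of~\eqref{def:AkBkn}, using $\sgn(nk)=\sgn(k)$. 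Second, the $\foba0$-component of $L_0^{\rm fib}$ applied to a zero-mean function vanishes: $\ti A_{1,0}=\frac{(2)(0)}{2}=0$ and $\ti B_{-1,0}=-\frac{(-2)(0)}{-2}=0$. Correspondingly, $A_{n,n}=B_{-n,n}=0$, which is consistent with $\pa_t\ha\eta_0=0$, the preservation of the mean. With these identities the $j=nk$ projections match the $0$-fiber equation for all $k$, and the mode $\foba0$ of $\Pcal_{0,n}\eta$ stays zero on both sides.

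Summing over $l$, using the orthogonal decomposition~\eqref{eqn:planch:L2}, yields both implications, since every frequency is accounted for exactly once. Regularity is not an issue: $\eta\in\XH1$ makes all Fourier series involved meaningful as distributions, and the identities hold coefficientwise.
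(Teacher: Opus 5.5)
Your proposal is correct and follows the paper's proof. The paper simply reruns the coefficient computation of Lemma~\ref{lem:unfold} with $\te=0$ allowed and notes that the undefined coefficients $\ti A_{0,0},\ti B_{0,0}$ only ever multiply $\ha\eta_0=0$; your explicit checks that $A_{nk,n}=\ti A_{k,0}$, $B_{nk,n}=\ti B_{k,0}$ and $\ti A_{1,0}=\ti B_{-1,0}=0$ fill in the details the paper leaves implicit.
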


\begin{proof}
    The computation is the same as in the proof of Lemma~\ref{lem:unfold}, but we also allow $\te=0$ in~\eqref{eqn:unfold:aux}. Although $\ti A_{0,0}$ and $\ti B_{0,0}$ are not defined there, the zero-mean condition on $\eta$ makes those terms vanish.
\end{proof}

\begin{rem}
	Notice that
	\begin{align*}
		\Pcal_{0,n} L_n \foba{kn} = L_1 \foba{k} = L_0^{\rm fib} \foba{k},\qquad k\neq0.
	\end{align*}
	Therefore, the analysis of $L_1$ allows us to study the $0$-fiber for every $n\geq2$, at the linear level.
\end{rem}

Fix a prime $p$ as in Lemma~\ref{lem:estH1}. Denote by
\begin{align*}
    \Gamma_{1/p}:=\Big\{\frac hp : h=0,1,\dots,p-1\Big\}
\end{align*}
the set of fibers of $L_p$. For every $\te\in\Gamma_{1/p}$ there exists $0\leq h\leq p-1$ such that $\te=\tfrac hp$, and $\Pcal_{h,p}\eta$ is the $\te$-fiber of $\eta$. By Lemma~\ref{lem:unfold:all}, we have
\begin{align*}
    \Pcal_{h,p}(L_p\eta) = L_\te^{\rm fib}\Pcal_{h,p}\eta,
    \qquad \te=\tfrac hp.
\end{align*}

\begin{rem}
\label{rem:JSS19:periodic}
    Since the $0$-fiber is invariant under $Q$, the results studied in~\cite{JSS19} extend to the excited state $-\sin(nx)$ with $n\geq2$ if we restrict the space to $\frac{2\pi}n$-periodic functions.
\end{rem}

We now define the weights that measure the fiber norms. We will only use $s=1$, although the following identities hold for every $s>0$. Let $\te\in[0,1)$ and define the weights $D_{s,\te}$ and $\Omega_{s,\te}$ by
\begin{align}
\label{def:W1te}
    (\ha{D_{s,\te}})_k:=\sqrt{1+|k+\te|^{2s}}
    \andspace
    (\ha{\Omega_{s,\te}})_k:=|k+\te|^s,
    \qquad \text{for all }k\in\Z,
\end{align}
The symbol of $D_{s,\te}$ never vanishes, so $D_{s,\te}$ is always invertible.
The symbol of $\Omega_{s,\te}$ vanishes exactly when $k+\te=0$, that is at the mode $k=0$ of the $0$-fiber, which is excluded by the zero-mean condition.
The introduction of $\Omega_{s,\te}$ is motivated by the following identity,
\begin{align*}
    \sum_{l=0}^{p-1}
    \norm{\Omega_{s,l/p} \Pcal_{l,p}\eta}_\XL^2
    &=
    \sum_{l=0}^{p-1}
    \sum_{k\in\Z}
    \big|(\ha{\Omega_{s,l/p}})_k (\ha{\Pcal_{l,p}\eta})_k\big|^2
    =
    \sum_{l=0}^{p-1}
    \sum_{k\in\Z}
    |k+l/p|^{2s}
    |\ha\eta_{pk+l}|^2
    \\
    &=
    p^{-2s}
    \sum_{k\in\Z}
    |k|^{2s}
    |\ha\eta_{k}|^2
    =
    \frac{1}{p^{2s}}
    \norm{\eta}_\XHZmean{s}^2
\end{align*}
The operator $D_{s,\te}$ is used for technical reasons. Notice that, since
\begin{align*}
    \sum_{l=0}^{p-1}
    \norm{D_{s,l/p} \Pcal_{l,p}\eta}_\XL^2
    =
    \norm{\eta}_\XL^2 +
    \frac{1}{p^{2s}}\norm{\eta}_\XHZmean{s}^2,
\end{align*}
we consequently obtain
\begin{align}
\label{eqn:planch:equiv}
    \frac{1}{p^{2s}}
    \norm{\eta}_\XH{s}^2
    \leq
    \sum_{l=0}^{p-1}
    \norm{D_{s,l/p} \Pcal_{l,p}\eta}_\XL^2
    \leq
    \norm{\eta}_\XH{s}^2.
\end{align}
Moreover, conjugating $L_\te^{\rm fib}$ by $D_{s,\te}$ gives
\begin{align*}
    \LfibD_{s,\te}:=D_{s,\te}L_\te^{\rm fib} D_{s,\te}^{-1}.
\end{align*}
By~\eqref{eqn:planch:equiv}, the $\XH s$ norm of $\eta$ is equivalent to the sum over its fibers of the $\XL$ norms of $D_{s,l/p}\Pcal_{l,p}\eta$, so $\XH s$ estimates for $L_\te^{\rm fib}$ can be obtained by bounding $\LfibD_{s,\te}$ in $\XL$ on each fiber.

For $s=1$, we first compute the quadratic forms for smooth periodic functions. Their Hermitian parts, and the weighted Hermitian parts used below, extend to bounded forms on $\XL$. This follows from the Jacobi coefficients below and the tail bounds proved in this section. At $\te=0$ we restrict to zero-mean functions.

\begin{rem}[The Jacobi form of $\LfibD_{1,\te}$]
\label{rem:tm:jacobi}
    Write $\Scal_{1,\te}:=\frac12\big(\LfibD_{1,\te}+(\LfibD_{1,\te})^\ast\big)$ for the Hermitian part of $\LfibD_{1,\te}$. Since
    \begin{align*}
        \LfibD_{1,\te}\foba m=\ti A_{m,\te}\,\frac{(\ha D_{1,\te})_{m-1}}{(\ha D_{1,\te})_m}\foba{m-1}+\ti B_{m,\te}\,\frac{(\ha D_{1,\te})_{m+1}}{(\ha D_{1,\te})_m}\foba{m+1},
    \end{align*}
    expanding $\Re(\LfibD_{1,\te}\eta,\eta)_\XL$ against $(\foba j,\foba m)=\de_{jm}$ gives, for every $\eta\in\XL$,
    \begin{align}
    \label{eq:H1:jacobi:sum}
        \Re(\LfibD_{1,\te}\eta,\eta)_\XL
        =
        (\Scal_{1,\te}\eta,\eta)_\XL
        =
        \sum_{m\in\Z} t_{m,1,\te}\,\Re\big(\ha\eta_{m+1}\overline{\ha\eta_m}\big),
    \end{align}
    where $t_{m,1,\te}$ is given by
    \begin{align*}
        t_{m,1,\te}:=\ti A_{m+1,\te}\,\frac{(\ha D_{1,\te})_m}{(\ha D_{1,\te})_{m+1}}+\ti B_{m,\te}\,\frac{(\ha D_{1,\te})_{m+1}}{(\ha D_{1,\te})_m}.
    \end{align*}
    At $\te=0$, $\ti A_{0,0}$ and $\ti B_{0,0}$ are not defined. However, this is precisely the case where we work with zero-mean functions, $\ha\eta_0=0$. Therefore~\eqref{eq:H1:jacobi:sum} is given by
    \begin{align*}
        \Re(\LfibD_{1,0}\eta,\eta)_\XL
        =
        \sum_{m\neq-1,0} t_{m,1,0}\,\Re\big(\ha\eta_{m+1}\overline{\ha\eta_m}\big).
    \end{align*}
\end{rem}

\subsection{The $H^1$ estimate: Proof of Lemma~\ref{lem:estH1}}
\label{sec:estH1}

The proof relies on the following lemma.

\begin{lemma}
\label{lem:fib:bilform}
    Let $\om=1.1$. For every $\te\in[0,1)$ there is a Hermitian form $B_\te$ equivalent to the usual norm on $\XL$ such that, for every $v\in\XL$,
    \begin{align*}
        \Re B_\te(\LfibD_{1,\te}v,v)\leq\om B_\te(v,v).
    \end{align*}
\end{lemma}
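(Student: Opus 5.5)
The plan is to reduce the statement to a bound on a weighted Jacobi quadratic form, settle the high modes analytically and a finite block by computer, and cover $\te\in[0,1)$ by finitely many parameter intervals. I would take $B_\te$ \emph{diagonal} in the Fourier basis,
\begin{align*}
    B_\te(u,v)=\sum_{m\in\Z} w_m(\te)\,\ha u_m\overline{\ha v_m},
    \qquad w_m(\te)=1\ \text{ for } |m|>M,
\end{align*}
with finitely many weights $w_m(\te)>0$ for $|m|\le M$ still to be chosen. Such a form is automatically Hermitian and equivalent to the $\XL$ norm, with constants $\min_m w_m$ and $\max_m w_m$; these constants may depend on $\te$, and only the rate $\om=1.1$ must be uniform.

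\textbf{Reduction to a weighted Jacobi form.} Exactly as in Remark~\ref{rem:tm:jacobi}, $\LfibD_{1,\te}$ maps $\foba m$ to multiples of $\foba{m\pm1}$. Hence
\begin{align*}
    \Re B_\te(\LfibD_{1,\te}v,v)=\sum_m s_m\,\Re\big(\ha v_{m+1}\overline{\ha v_m}\big),
\end{align*}
where the coefficient $s_m$ combines the two entries of $\LfibD_{1,\te}$ coupling $m$ and $m+1$, weighted by $w_m$ and $w_{m+1}$. For $|m|>M$ we have $s_m=t_{m,1,\te}$. The target inequality is then equivalent to
\begin{align*}
    \om W-\tfrac12\,\mathrm{Jac}(s)\ \text{ being positive semidefinite,}
\end{align*}
where $W=\mathrm{diag}(w_m)$ and $\mathrm{Jac}(s)$ is the symmetric tridiagonal matrix with zero diagonal and off-diagonal entries $s_m$.

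\textbf{Step 1: analytic tail.} From \eqref{def:ABkt} and $(\ha D_{1,\te})_k=\sqrt{1+(k+\te)^2}$, I would expand $t_{m,1,\te}$ for large $|m|$. The leading $O(m)$ terms cancel, since $\ti A_{k,\te}\approx\frac{k+\te}2$, $\ti B_{k,\te}\approx-\frac{k+\te}2+\sgn(k+\te)$, and the weight ratios are $1\mp\frac1{m+\te}+O(m^{-2})$. This leaves a limit $t_\pm$ as $m\to\pm\infty$ plus an explicit $O(1/|m|)$ error.

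I would record this as an explicit bound $|t_{m,1,\te}|\le \tau+c/|m|$, uniform for $\te\in[0,1)$. The value of $\tau$ should be consistent with the essential band $|\Re z|\le\frac12$ in $\XH1$ from Remark~\ref{rem:ess_spec_frobenius}, which lies strictly below $1.1$. Using $|\Re(a\bar b)|\le\frac12(|a|^2+|b|^2)$, the part of the form supported on $|m|\ge M$ is then bounded by $(\tau+c/M)\sum_{|m|\ge M}|\ha v_m|^2$. Choosing $M$ large makes this $\le(\om-\ka)$ times the tail mass for some margin $\ka>0$.

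\textbf{Step 2: finite block by computer.} Only the $(2M+2)\times(2M+2)$ block on $|m|\le M+1$ remains; the two boundary couplings are absorbed using the margin $\ka$. I would choose the weights $w_m$ numerically, for instance from an approximate positive eigenvector or a Lyapunov-type solve, so that the top eigenvalue of the weighted block is close to the true spectral bound $\approx0.6$, which leaves ample room below $1.1$. Positive definiteness of $\om W-\frac12\mathrm{Jac}(s)$ on the block would be certified in interval arithmetic by a Cholesky factorization, or by Gershgorin after a diagonal similarity.

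To cover all $\te\in[0,1)$, I would split $[\de_0,1)$ into finitely many subintervals. On each subinterval the weights are frozen and the entries of the block are enclosed over the whole subinterval, using their explicit rational and square-root dependence on $\te$.

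\textbf{Main obstacle: $\te$ near $0$.} Here $\ti A_{0,\te}=\frac{(\te+1)(\te-1)}{2\te}$ and $\ti B_{0,\te}=-\frac{(1-\te)^2}{2\te}$ blow up like $\te^{-1}$, so the couplings $t_{-1}$ and $t_0$ are unbounded. Interval enclosures cannot be uniform as $\te\to0$, and at $\te=0$ the mode $0$ is simply removed by the zero-mean condition.

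For $\te\in(0,\de_0]$ I would exploit the fact that only the equivalence constants of $B_\te$ may degenerate. I would give mode $0$ the weight $w_0$ scaled like a power of $\te$, which is natural since $\Omega_{1,\te}$ has symbol $\te$ there. I would then choose the exponent and a small prefactor so that the two large couplings out of mode $0$, after a weighted Cauchy--Schwarz $|ab|\le\ep|a|^2+\ep^{-1}|b|^2$, contribute at most a small amount to $w_{\pm1}|\ha v_{\pm1}|^2$ and to $w_0|\ha v_0|^2$. The remaining modes are handled as a perturbation of the certified $\te=0$ zero-mean case. If a purely diagonal weight cannot absorb both large terms at once, I would allow a non-diagonal $2\times2$ or $3\times3$ block of $B_\te$ on modes $\{-1,0,1\}$, chosen to diagonalize the singular part of the coupling. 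I expect this small-$\te$ analysis to be the delicate part of the proof.
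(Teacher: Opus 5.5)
Your architecture is the paper's: a diagonal weight, the analytic bound on the Jacobi coefficients for the high modes, an interval-arithmetic positivity test on a finite block that includes the two boundary modes (Lemma~\ref{lem:core:lmi:s1}), and subdivision in $\te$. Two steps, however, fail as written.

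\textbf{The limit $\te\to1$.} This limit is exactly as singular as $\te\to0$. For $k=-1$ one has $\ti A_{-1,\te}=\frac{\te^2}{2(\te-1)}$ and $\ti B_{-1,\te}=\frac{\te(2-\te)}{2(\te-1)}$. So the column of $\LfibD_{1,\te}$ at mode $-1$ blows up as $\te\to1^-$; this is the mode-$0$ singularity transported by $\mathcal R$ (Remark~\ref{rem:AkBksym}). Your covering of $[\de_0,1)$ by finitely many subintervals, with frozen weights and enclosed entries, therefore cannot reach $1$. This cannot be skipped: the lemma is stated on all of $[0,1)$, and the fibers $\frac{p-1}{p}\in\Gamma_{1/p}$ accumulate at $1$. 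The paper removes the issue with $\LfibD_{1,1-\te}=\mathcal R\LfibD_{1,\te}\mathcal R$ (Lemma~\ref{lem:tm:symmetry}), which reduces everything to $\te\in(0,\frac12]$.

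\textbf{The small-$\te$ mechanism.} As described, it cannot work. The two entries of $\LfibD_{1,\te}$ coupling modes $-1$ and $0$ are both negative, and their product is $\ti A_{0,\te}\ti B_{-1,\te}=\frac{(1+\te)(2-\te)}{4}$. This product is invariant under any diagonal rescaling. Hence, in your weighted Cauchy--Schwarz, the fractions $\de,\de'$ of $w_{-1}|\ha v_{-1}|^2$ and $w_0|\ha v_0|^2$ spent on this coupling must satisfy $\de\de'\geq\frac{(1+\te)(2-\te)}{4}\geq\frac12$. The singular couplings can be made bounded, but never small. Boundedness also pins the weight: relative to $w_{\pm1}$ one needs $w_0\propto\te^{-2}$. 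That is, mode $0$ must be made \emph{heavier}, the opposite of what the symbol $\te$ of $\Omega_{1,\te}$ suggests; any other power leaves one rescaled entry unbounded.

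Consequently, the $\te\to0^+$ limit is not a perturbation of the $\te=0$ zero-mean operator. It is that operator with an extra mode attached by $O(1)$ couplings, and this whole configuration must be certified. That is what the paper does:
\begin{enumerate}
\item $\Sigma_{1,\te}$ has symbol $\te^{-1}/\sqrt{1+\te^2}$ at mode $0$, so $L_{1,\te}^{{\rm fib}\,\Sigma}$ has rational Jacobi coefficients with finite limits as $\te\to0^+$; the two through mode $0$ tend to $\frac12$ and $-\frac32$.
\item The homogeneous tail bound of Lemma~\ref{lem:tm:tail:hom} gives $\ga=\frac12(1+\frac1{64})$ for $m=8$.
\item The block test is certified uniformly on $(0,\frac{13}{40}]$, with the endpoint $0$ handled through the exact rational expressions.
\item The range $[\frac{13}{40},\frac12]$ uses the unweighted $\LfibD_{1,\te}$ with $m=4$, and $\te=0$ is Lemma~\ref{lem:H1:the:0}.
\end{enumerate}
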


\begin{proof}
    Section~\ref{sec:fiberest:h1} is devoted to proving Lemma~\ref{lem:fib:bilform}.
\end{proof}

We now prove Lemma~\ref{lem:estH1}.
Let $p$ be prime and let $\om=1.1$. Recall that $v$ has zero mean whenever $\te=0$, so every ``for all $v\in\XL$'' below is understood with that restriction. By Lemma~\ref{lem:fib:bilform}, for every $\te\in\Gamma_{1/p}$ there is a Hermitian form $B_\te$ in $\XL$, equivalent to the usual norm, such that
    \begin{align}
    \label{eq:h1:fib:lmi}
        \Re B_\te(\LfibD_{1,\te}v,v)\leq\om B_\te(v,v)
        \qquad\text{for all } v\in\XL,
    \end{align}
    Since $\Gamma_{1/p}$ has $p$ elements, there are constants $0<m_p\leq1\leq M_p$, independent of $\te$, such that
    \begin{align}
    \label{eq:h1:fib:equiv}
        m_p\norm v_\XL^2\leq B_\te(v,v)\leq M_p\norm v_\XL^2
        \qquad\text{for all } v\in\XL,\ \te\in\Gamma_{1/p}.
    \end{align}

    First take smooth initial data of zero mean and let $\eta(t)$ solve~\eqref{eqn:linf} with $n=p$ and define
    \begin{align*}
        E_p(\eta):=
        \sum_{\te\in\Gamma_{1/p}}
        B_\te(
            D_{1,\te}\Pcal_{h,p}\eta,
            D_{1,\te}\Pcal_{h,p}\eta
        ),
        \qquad \te=\tfrac hp.
    \end{align*}

    Applying~\eqref{eq:h1:fib:equiv} to $v=D_{1,\te}\Pcal_{h,p}\eta(t)$ and summing over $\te\in\Gamma_{1/p}$,
    \begin{align*}
        m_p\sum_{\te\in\Gamma_{1/p}}\norm{D_{1,\te}\Pcal_{h,p}\eta(t)}_\XL^2\leq E_p(\eta(t))\leq M_p\sum_{\te\in\Gamma_{1/p}}\norm{D_{1,\te}\Pcal_{h,p}\eta(t)}_\XL^2,
    \end{align*}
    and, by~\eqref{eqn:planch:equiv} with $s=1$,
    \begin{align}
    \label{eq:h1:equiv:new}
        m_pp^{-2}\norm\eta_{\XH1}^2\leq E_p(\eta)\leq M_p\norm\eta_{\XH1}^2.
    \end{align}

    By Lemma~\ref{lem:unfold:all}, each fiber evolves independently,
    \begin{align}
        \label{aux:H1:proof}
        \pa_t\Pcal_{h,p}\eta=L_\te^{\rm fib}\Pcal_{h,p}\eta,
    \end{align}
    and writing $v_\te(t):=D_{1,\te}\Pcal_{h,p}\eta(t)$ in~\eqref{aux:H1:proof}, we get
    \begin{align}
        \label{aux:H1:proof2}
        \pa_tv_\te=D_{1,\te}L_\te^{\rm fib}\Pcal_{h,p}\eta=\LfibD_{1,\te}v_\te.
    \end{align}

    Since $B_\te$ is Hermitian, from~\eqref{aux:H1:proof2}
    \begin{align*}
        \pa_tB_\te(v_\te,v_\te)
        =
        2\Re B_\te(\pa_t v_\te,v_\te)
        =
        2\Re B_\te(\LfibD_{1,\te}v_\te,v_\te)
        \leq
        2\om B_\te(v_\te,v_\te),
    \end{align*}
    where we used~\eqref{eq:h1:fib:lmi} in the last step. Summing again over $\te\in\Gamma_{1/p}$ gives
    \begin{align}
    \label{eq:h1:diff:new}
        \frac12\pa_tE_p(\eta(t))\leq\om E_p(\eta(t)),
    \end{align}
    since
    \begin{align*}
        \pa_t E_p(\eta):=
        2\Re\sum_{\te\in\Gamma_{1/p}}
        B_\te(
            \pa_t v_\te,
            v_\te
        ).
    \end{align*}

    Finally, Gr\"onwall's inequality applied to~\eqref{eq:h1:diff:new}, together with~\eqref{eq:h1:equiv:new},
    \begin{align*}
        m_p p^{-2}
        \norm{\eta(t)}_\XH1^2\leq
        E_p(\eta(t))\leq
        E_p(\eta(0))e^{2t\om}
        \leq
        M_p
        \norm{\eta(0)}_\XH1^2
        e^{2t\om},
    \end{align*}
    which is Lemma~\ref{lem:estH1} with $C_{1,p}=p\sqrt{M_p/m_p}$ for smooth initial data. The result for $\XHZmean1$ data follows by density and the $C_0$ semigroup property established in Remark~\ref{rem:linf:wellposed}.

\subsubsection{Fiber estimate}
\label{sec:fiberest:h1}

This section proves Lemma~\ref{lem:fib:bilform} above. To bound the high modes of $\LfibD_{1,\te}$, we first prove the following tail estimate, uniformly over $\te\in[0,1)$.

\begin{lemma}
\label{lem:tm:tail}
    Let $\te\in[0,1)$ and let $t_{m,1,\te}$ be as in Remark~\ref{rem:tm:jacobi}. For $m\in\Z$ set
    \begin{align*}
        \ka_m:=\min\{|m+\te|,|m+1+\te|\}.
    \end{align*}
    Then, for every $m$ with $\ka_m\geq1$,
    \begin{align}
    \label{ineq:tm:tail}
        |t_{m,1,\te}|<\frac12\Big(1+\frac3{\ka_m^2}\Big).
    \end{align}
    Since the right-hand side decreases in $\ka_m$, for every $N_0\geq1$,
    \begin{align}
    \label{ineq:tm:tail:N0}
        \sup_{\ka_m>N_0}|t_{m,1,\te}|\leq\frac12\Big(1+\frac3{N_0^2}\Big).
    \end{align}
\end{lemma}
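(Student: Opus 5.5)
We plan to compute $t_{m,1,\te}$ in closed form in the single variable $a:=m+\te$ and reduce \eqref{ineq:tm:tail} to a polynomial inequality. By \eqref{def:ABkt}, with $k+\te=a+1$ in $\ti A_{m+1,\te}$ and $k+\te=a$ in $\ti B_{m,\te}$, we have
\begin{align*}
    t_{m,1,\te}=\frac{(a+2)(a+1-\sgn(a+1))}{2(a+1)}\,r-\frac{(a-1)(a-\sgn a)}{2a}\,r^{-1},
    \qquad r:=\sqrt{\frac{1+a^2}{1+(a+1)^2}}.
\end{align*}
The hypothesis $\ka_m\geq1$ means $|a|\geq1$ and $|a+1|\geq1$. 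So either $a\geq1$, with $\ka_m=a$, or $a\leq-2$, with $\ka_m=|a+1|$. In particular the two signs agree, and $t_{m,1,\te}$ is an explicit algebraic function of $a$ alone on each branch. The fiber parameter $\te$ only enters through $a$, which makes the bound uniform in $\te\in[0,1)$ automatic. At $\te=0$ the excluded indices $m=-1,0$ have $\ka_m=0$ and do not arise.

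\emph{Case $a\geq1$.} Here $t=\frac12\big(\frac{a(a+2)}{a+1}r-\frac{(a-1)^2}{a}r^{-1}\big)$. A heuristic expansion ($r-1\approx -\frac{2a+1}{2a^2}$) shows that the leading terms $\frac{a+1}{2}r$ and $-\frac{a-2}{2}r^{-1}$ cancel to $\frac12+O(a^{-2})$. This is the source of the constant $\frac12$. To make this rigorous without expansions, write $2rt=X r^2-Y$ with
\[
X:=\frac{a(a+2)}{a+1},\qquad Y:=\frac{(a-1)^2}{a},\qquad r^2=\frac{1+a^2}{a^2+2a+2}.
\]
The target $|t|<\frac12(1+3a^{-2})$ becomes $|Xr^2-Y|<(1+3a^{-2})r$. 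Since both sides are algebraic, we square and clear the positive denominators $a$, $a+1$, and $a^2+2a+2$. This yields the single polynomial inequality
\[
\big(Xr^2-Y\big)^2<(1+3a^{-2})^2r^2
\]
in $a$, times a positive factor. We then substitute $a=1+b$ with $b\geq0$ and check that the difference of the two sides is a polynomial in $b$ with positive coefficients, or at least positive on $[0,\infty)$ by grouping terms. The value $b=0$ ($a=1$, where $Y=0$) is checked by hand.

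\emph{Case $a\leq-2$.} We reduce this case to the previous one by the reflection $a\mapsto -a-1$. Remark~\ref{rem:AkBksym} gives $\ti A_{-k-1,\te}=\ti B_{k,1-\te}$, and the weights satisfy $(\ha D_{1,\te})_{m}=(\ha D_{1,1-\te})_{-m-1}$. Together these should give $t_{m,1,\te}=t_{-m-2,1,1-\te}$, which maps $a=m+\te$ to $a'=-a-1\geq1$ and preserves $\ka_m$. If this identity fails to hold exactly, we instead redo the explicit computation with the signs for $a\leq-2$, using $b:=-a-2\geq0$; the algebra is the same. Finally, \eqref{ineq:tm:tail:N0} follows immediately from \eqref{ineq:tm:tail}, since $\ka\mapsto\frac12(1+3\ka^{-2})$ is decreasing.

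The main obstacle is the polynomial inequality. It has a high degree (around $8$–$10$ after squaring), and the margin is only of order $a^{-2}$, so the leading coefficients cancel. Positivity may therefore not be visible coefficientwise. In that case we would split into $a\in[1,A]$, verified by a finite interval-arithmetic check as elsewhere in the paper, and $a\geq A$, handled by an explicit Taylor bound on $r$ with remainder.
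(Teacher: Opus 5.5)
Your proposal is correct. Its skeleton is the same as the paper's: a closed form in the single variable $a=m+\te$, the case $a\geq1$, then the reflection $a\mapsto-a-1$. The real difference is how you prove the inequality for $a\geq1$.

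The paper uses the identity $\langle\ka\rangle\langle\ka+1\rangle=\sqrt{w(\ka)^2+1}$, with $w(\ka)=\ka^2+\ka+1$, to put $t_{m,1,\te}$ over a single radical:
\begin{align*}
    t_{m,1,\te}=\frac{\ka^4+2\ka^3+5\ka^2-2}{2\ka(\ka+1)\sqrt{w(\ka)^2+1}}.
\end{align*}
Hence $t>0$ for $\ka\geq1$, and only an upper bound is needed. Replacing the radical by $w$ gives $t<\frac12+\frac{3\ka^2-\ka-2}{2\ka(\ka+1)w}$, and comparing this with $\frac3{2\ka^2}$ takes one line. This avoids squaring and stays at degree four.

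Your squaring handles the absolute value without needing the sign of $t$. The cost is a degree-ten inequality, which you left unchecked. It does close, and more easily than you expected. Since $Xr^2-Y=\frac{a^4+2a^3+5a^2-2}{a(a+1)(a^2+2a+2)}$, your squared inequality is equivalent to
\begin{align*}
    a^2\big(a^4+2a^3+5a^2-2\big)^2<(a^2+3)^2(a^2+1)(a+1)^2(a^2+2a+2).
\end{align*}
The right side minus the left side equals $14a^7+45a^6+110a^5+148a^4+126a^3+89a^2+54a+18$. The coefficients of $a^{10},a^9,a^8$ do cancel: the constant $3$ is sharp at order $a^{-2}$, and the true margin is of order $a^{-3}$. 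But every remaining coefficient is positive, so you need neither the shift $a=1+b$ nor an interval-arithmetic fallback.

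Your hedge on the reflection is also unnecessary. The coefficients $\ti A_{k,\te}$ and $\ti B_{k,\te}$ depend only on $\alpha=k+\te$, and the formula for $\ti A$ at $-\alpha$ coincides with the formula for $\ti B$ at $\alpha$. The weights $\langle a\rangle,\langle a+1\rangle$ are exchanged under $a\mapsto-a-1$. So $t_{m,1,\te}=t_{-m-2,1,1-\te}$ holds exactly, which is precisely how the paper reduces $\ka\leq-2$ to $\ka\geq1$.
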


\begin{proof}
	Denote $\kappa:=m+\te$ and set $w(\kappa):=\kappa^2 + \kappa + 1$, so that
	\begin{align*}
		\langle\kappa\rangle
		\langle\kappa+1\rangle
		=
		\sqrt{w(\kappa)^2+1}.
	\end{align*}
	Multiplying $t_{m,1,\te}$ by $\langle\ka\rangle\langle\ka+1\rangle$,
    \begin{align}
    	\label{eqn:H1:tail:aux1}
        t_{m,1,\te}\sqrt{w(\ka)^2+1}=
        \ti A_{m+1,\te}(1+\ka^2)+\ti B_{m,\te}\big(1+(\ka+1)^2\big).
    \end{align}

    For $\ka>0$, using the definitions (see~\eqref{def:ABkt}) of $\ti A_{m,\te}$ and $\ti B_{m,\te}$, we have from~\eqref{eqn:H1:tail:aux1}
    \begin{align*}
        t_{m,1,\te}=\frac{\ka^4+2\ka^3+5\ka^2-2}{2\ka(\ka+1)\sqrt{w(\ka)^2+1}}.
    \end{align*}
    For $\ka\geq1$ the numerator is positive and, since $\sqrt{w(\ka)^2+1}>w(\ka)$ ,
    \begin{align*}
        0<t_{m,1,\te}<
        \frac{\ka^4+2\ka^3+5\ka^2-2}{2\ka(\ka+1)w(\ka)}=
        \frac12+\frac{3\ka^2-\ka-2}{2\ka(\ka+1)w(\ka)}<
        \frac12+\frac3{2\ka^2}.
    \end{align*}

    For $\ka=m+\te<-1$, recall Remark~\ref{rem:AkBksym}, the symmetry $k\mapsto-k-1$, $\te\mapsto1-\te$. Applied to $k=m+1$ and to $k=m$, it gives $\ti A_{m+1,\te}=\ti B_{-m-2,1-\te}$ and $\ti B_{m,\te}=\ti A_{-m-1,1-\te}$. Writing $\nu:=-\ka-1=(-m-2)+(1-\te)$, namely~\eqref{eqn:H1:tail:aux1} at $(1-\te,\nu)$ with the roles of $\ti A,\ti B$ exchanged,
    \begin{align*}
        t_{m,1,\te}\sqrt{w(\ka)^2+1}=\ti A_{-m-1,1-\te}\big(1+\nu^2\big)+\ti B_{-m-2,1-\te}\big(1+(\nu+1)^2\big),
    \end{align*}
    and since $w(\ka)=w(\nu)$, $t_{m,1,\te}$ equals the value already found for $\ka>0$, evaluated at $\nu$ in place of $\ka$.

    In either case the bound is evaluated at the positive argument $\ka_m$, which gives~\eqref{ineq:tm:tail}.
\end{proof}

The last lemma gives a direct proof of the case $\te=0$.

\begin{lemma}
\label{lem:H1:the:0}
    Let $\om=1.1$ as above. For every smooth periodic function $v$ with zero mean,
    \begin{align*}
        \Re (\LfibD_{1,0}v,v)_\XL\leq\om \norm{v}_\XL^2.
    \end{align*}
\end{lemma}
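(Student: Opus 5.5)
The plan is to use the Jacobi representation of Remark~\ref{rem:tm:jacobi} at $\te=0$ and to bound the off-diagonal coefficients uniformly. For smooth zero-mean $v$ we have $\ha v_0=0$, and Remark~\ref{rem:tm:jacobi} gives
\begin{align*}
    \Re(\LfibD_{1,0}v,v)_\XL=\sum_{m\neq-1,0}t_{m,1,0}\,\Re\big(\ha v_{m+1}\overline{\ha v_m}\big).
\end{align*}
By $|\Re(ab)|\leq\frac12(|a|^2+|b|^2)$, and since each index $j$ appears in at most two terms ($m=j$ and $m=j-1$), the right-hand side is at most
\begin{align*}
    \Big(\sup_{m\neq-1,0}|t_{m,1,0}|\Big)\sum_{j}|\ha v_j|^2=\Big(\sup_{m\neq-1,0}|t_{m,1,0}|\Big)\norm v_\XL^2 .
\end{align*}
It therefore suffices to show $\sup_{m\neq-1,0}|t_{m,1,0}|\leq\frac23<\om=1.1$.

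At $\te=0$ the quantity $\ka_m=\min\{|m|,|m+1|\}$ is a positive integer for every $m\notin\{-1,0\}$, so Lemma~\ref{lem:tm:tail} applies to all relevant $m$. For $\ka_m\geq3$ it gives $|t_{m,1,0}|<\frac12\big(1+\frac13\big)=\frac23$. For $\ka_m=1$ and $\ka_m=2$ the tail bound is too weak (at $\ka_m=1$ it only gives $2$), so these values are handled by direct evaluation of the closed formula
\begin{align*}
    t=\frac{\ka^4+2\ka^3+5\ka^2-2}{2\ka(\ka+1)\sqrt{w(\ka)^2+1}}
\end{align*}
from the proof of Lemma~\ref{lem:tm:tail}. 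At $\ka=1$ this gives $\frac{6}{4\sqrt{10}}\approx0.47$, and at $\ka=2$ it gives $\frac{50}{12\sqrt{50}}\approx0.59$. Both are below $\frac23$.

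For negative $m$ (that is, $m\leq-2$), we use the reflection $\ka\mapsto-\ka-1$ from that proof. At $\te=0$ the reflected parameter is $1-\te=1$, which lies outside the range $[0,1)$ in which Lemma~\ref{lem:tm:tail} is stated. The main point to check is that the reflection identity of Remark~\ref{rem:AkBksym} is a purely algebraic statement about the rational expressions \eqref{def:ABkt} in the variable $k+\te$. It therefore remains valid there, and $w(\ka)=w(-\ka-1)$ shows that $t_{m,1,0}=t_{-m-1,1,0}$. Hence the negative indices reproduce exactly the values already computed for $m\geq1$. This yields $\sup|t_{m,1,0}|\leq\frac23$, and the lemma follows.
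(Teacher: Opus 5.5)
Your proof is correct and follows the paper's argument: bound the Jacobi form by $\sup_{m\neq-1,0}|t_{m,1,0}|$, use Lemma~\ref{lem:tm:tail} for large $\ka_m$, and evaluate the remaining low-index coefficients directly, using the symmetry $t_{-m-1,1,0}=t_{m,1,0}$ for negative $m$. The differences are minor. You apply the pointwise bound at $\ka_m=3$ instead of computing $t_{3,1,0}=\frac{89\sqrt{170}}{2040}$ explicitly as the paper does, and you justify why the reflection identity still holds at $\te=0$ (where $1-\te=1$), which the paper uses without comment.
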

\begin{proof}
    Recall Remark~\ref{rem:tm:jacobi}, we have
    \begin{align*}
        \Re(\LfibD_{1,0}v,v)_\XL
        \leq
        \sup_{m\neq-1,0}|t_{m,1,0}|\,\norm v_\XL^2.
    \end{align*}
    By Lemma~\ref{lem:tm:tail} with $N_0=3$,
    \begin{align*}
        |t_{m,1,0}|<\frac23\qquad\text{for } m\geq4\text{ or }m\leq-5.
    \end{align*}
    Finally, since $t_{-m-1,1,0}=t_{m,1,0}$, we just have to compute the following cases
    \begin{align*}
        t_{1,1,0}=\frac{3\sqrt{10}}{20},
        \qquad
        t_{2,1,0}=\frac{5\sqrt2}{12},
        \qquad
        t_{3,1,0}=\frac{89\sqrt{170}}{2040},
    \end{align*}
    each below $\frac23$. Hence $\Re(\LfibD_{1,0}v,v)_\XL\leq\frac23\norm v_\XL^2\leq\om\norm v_\XL^2$.
\end{proof}

In order to prove Lemma~\ref{lem:fib:bilform} for $\te\in(0,1)$, the following lemma will allow us to reduce the study to $\te\in\big(0,\tfrac12\big]$.

\begin{lemma}
\label{lem:tm:symmetry}
    Let $\te\in(0,1)$, and let $\mathcal R$ be given by $(\ha{\mathcal R\eta})_k:=\ha\eta_{-k-1}$. Then
    \begin{align*}
        \LfibD_{1,1-\te}=\mathcal R\LfibD_{1,\te}\mathcal R.
    \end{align*}
    Consequently, if $B_\te$ is a Hermitian form on $\XL$, equivalent to the usual norm, and $r>0$ is such that, for every $v\in\XL$,
    \begin{align*}
        \Re B_\te(\LfibD_{1,\te}v,v)\leq rB_\te(v,v),
    \end{align*}
    then $B_{1-\te}(v,w):=B_\te(\mathcal Rv,\mathcal Rw)$ satisfies the same inequality.
\end{lemma}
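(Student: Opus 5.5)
The identity $\LfibD_{1,1-\te}=\mathcal R\LfibD_{1,\te}\mathcal R$ will be checked on the Fourier basis, using the symmetry of Remark~\ref{rem:AkBksym}. The consequence then follows by substituting $\mathcal R v$ into the hypothesis.

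\textbf{Weights.} First I will note that $\mathcal R$ is an involution and a unitary operator on $\XL$, with $\mathcal R\foba m=\foba{-m-1}$. Next I will compare the weights. Since $|(-m-1)+\te|=|m+(1-\te)|$, we have
\begin{align*}
(\ha D_{1,\te})_{-m-1}=(\ha D_{1,1-\te})_m .
\end{align*}

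\textbf{Coefficients.} Using Remark~\ref{rem:tm:jacobi}, I will compute
\begin{align*}
\mathcal R\LfibD_{1,\te}\mathcal R\foba m=\mathcal R\LfibD_{1,\te}\foba{-m-1}.
\end{align*}
This equals
\begin{align*}
\ti A_{-m-1,\te}\frac{(\ha D_{1,\te})_{-m-2}}{(\ha D_{1,\te})_{-m-1}}\foba{m+1}+\ti B_{-m-1,\te}\frac{(\ha D_{1,\te})_{-m}}{(\ha D_{1,\te})_{-m-1}}\foba{m-1}.
\end{align*}
Remark~\ref{rem:AkBksym} with $k=m$ gives $\ti A_{-m-1,\te}=\ti B_{m,1-\te}$. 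Applying the same remark with $\te$ replaced by $1-\te$ and $k=-m-1$ gives $\ti A_{m,1-\te}=\ti B_{-m-1,\te}$. Combined with the weight identity, the two weight ratios become
\begin{align*}
\frac{(\ha D_{1,1-\te})_{m+1}}{(\ha D_{1,1-\te})_m}\andspace \frac{(\ha D_{1,1-\te})_{m-1}}{(\ha D_{1,1-\te})_m}.
\end{align*}
Hence the expression is exactly $\LfibD_{1,1-\te}\foba m$. The identity extends from trigonometric polynomials to the natural domain by linearity and density. It also holds at the level of the quadratic forms, since both sides act through the same Jacobi coefficients.

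\textbf{Consequence.} Define $B_{1-\te}(v,w):=B_\te(\mathcal Rv,\mathcal Rw)$. It is Hermitian, and it is equivalent to the $\XL$ norm because $\mathcal R$ is unitary. By the identity and $\mathcal R^2=I$,
\begin{align*}
\Re B_{1-\te}(\LfibD_{1,1-\te}v,v)=\Re B_\te(\LfibD_{1,\te}\mathcal Rv,\mathcal Rv)\le rB_\te(\mathcal Rv,\mathcal Rv)=rB_{1-\te}(v,v).
\end{align*}

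The only delicate point is the index bookkeeping in the two applications of Remark~\ref{rem:AkBksym}; the rest is formal.
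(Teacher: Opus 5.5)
Your proposal is correct and follows the paper's proof essentially step for step. Both check the conjugation identity on $\foba m$ using the weight identity $(\ha D_{1,\te})_{-m-1}=(\ha D_{1,1-\te})_m$ and Remark~\ref{rem:AkBksym}, then derive the form inequality by substituting $\mathcal Rv$ and using $\mathcal R^2=I$ and the unitarity of $\mathcal R$. The only difference is that you write out the second application of the remark, $\ti A_{m,1-\te}=\ti B_{-m-1,\te}$, which the paper uses without stating it.
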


\begin{proof}
	Notice that $(\ha D_{1,\te})_k= (\ha D_{1,1-\te})_{-k-1}$ for all $k\in\Z$. Moreover, recall Remark~\ref{rem:AkBksym}, where we also stated that $\ti A_{-k-1,\te}=\ti B_{k,1-\te}$. Using both identities,
    \begin{align*}
        \mathcal R\LfibD_{1,\te}\mathcal R\foba k
        =\mathcal R\LfibD_{1,\te}\foba{-k-1}
        &=\ti A_{-k-1,\te}\,\frac{(\ha D_{1,\te})_{-k-2}}{(\ha D_{1,\te})_{-k-1}}\foba{k+1}+\ti B_{-k-1,\te}\,\frac{(\ha D_{1,\te})_{-k}}{(\ha D_{1,\te})_{-k-1}}\foba{k-1}\\
        &=\ti B_{k,1-\te}\,\frac{(\ha D_{1,1-\te})_{k+1}}{(\ha D_{1,1-\te})_k}\foba{k+1}+\ti A_{k,1-\te}\,\frac{(\ha D_{1,1-\te})_{k-1}}{(\ha D_{1,1-\te})_k}\foba{k-1}
        =\LfibD_{1,1-\te}\foba k.
    \end{align*}

    Since $\mathcal R$ is unitary, $B_{1-\te}$ is also equivalent to the usual norm, as $\norm{\mathcal Rv}_\XL=\norm v_\XL$. Finally, using $\mathcal R^2=I$,
    \begin{align*}
        \Re B_{1-\te}(\LfibD_{1,1-\te}v,v)
        =\Re B_\te(\mathcal R^2\LfibD_{1,\te}\mathcal Rv,\mathcal Rv)
        \leq rB_\te(\mathcal Rv,\mathcal Rv)
        =rB_{1-\te}(v,v).
    \end{align*}
\end{proof}

Lemma~\ref{lem:core:lmi:s1} below reduces the inequality of Lemma~\ref{lem:fib:bilform} to a condition that can be checked by computer.

\begin{lemma}
\label{lem:core:lmi:s1}
    Let $T$ be defined on smooth periodic functions, with a Hermitian part that extends to a bounded operator on $\XL$. Assume that $T$ is tridiagonal with zero diagonal in the basis $(\foba k)_{k\in\Z}$, so that
    \begin{align*}
    	(T\foba k,\foba j)_\XL=0\qquad\text{whenever }|j-k|\neq1.
    \end{align*}
    Let $m\geq1$, define $V_m:=\spn\{\foba k:|k|\leq m\}$, and let $\ga\in\RR$ be such that
    \begin{align}
    \label{ineq:core:lmi:s1:ga}
        \big(\tfrac12(T+T^\ast)\eta,\eta\big)_\XL\leq\ga\norm\eta_\XL^2
        \qquad\text{for every }\eta\perp V_m.
    \end{align}
    Fix $r>\ga$. For $j,k\in\Z$ with $|j|\leq m$, set
    \begin{align*}
        M_{jk}:=2r(\foba k,\foba j)_\XL-(T\foba k,\foba j)_\XL-(\foba k,T\foba j)_\XL.
    \end{align*}
    Let
    \begin{align*}
        M^{V_m}:=(M_{jk})_{|j|,|k|\leq m}
        \andspace
        M^{\rm bd}:=(M_{jk})_{|j|\leq m,\,k\in\{m+1,-m-1\}},
    \end{align*}
    matrices of size $(2m+1)\times(2m+1)$ and $(2m+1)\times2$. If, for every $v=(v_1,v_2)\in\C^{2m+1}\times\C^2$,
    \begin{align}
    \label{eqn:core:lmi:psd}
        v^\ast\begin{pmatrix}M^{V_m}&M^{\rm bd}\\(M^{\rm bd})^\ast&2(r-\ga)I_2\end{pmatrix}v\geq0,
    \end{align}
    then, for every $v\in\XL$,
    \begin{align*}
        \Re(Tv,v)_\XL\leq r\norm{v}_\XL^2.
    \end{align*}
    If moreover $G$ is a positive diagonal operator in the basis with $G,G^{-1}$ bounded, then the sesquilinear form $B(u,v):=(Gu,Gv)_\XL$ satisfies, for every $v\in\XL$,
    \begin{align*}
        \Re B(G^{-1}TGv,v)\leq rB(v,v).
    \end{align*}
\end{lemma}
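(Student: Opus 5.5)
The plan is to reduce the inequality to a quadratic form bound, split $v$ into a low-mode part in $V_m$ and a tail, and use the tridiagonal structure so that the only cross terms between $V_m$ and the tail come from the two boundary modes $\pm(m+1)$. By density and boundedness of the Hermitian part $S:=\tfrac12(T+T^\ast)$, it suffices to take $v$ a trigonometric polynomial. We have $\Re(Tv,v)_\XL=(Sv,v)_\XL$, so the goal is $(2r-2S)v\cdot v\geq0$, i.e. $\big((2r-T-T^\ast)v,v\big)_\XL\geq0$.

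Write $v=v_1+w$ with $v_1\in V_m$ and $w\perp V_m$, and expand
\begin{align*}
\big((2r-T-T^\ast)v,v\big)_\XL
=\big((2r-T-T^\ast)v_1,v_1\big)_\XL+2\Re\big((2r-T-T^\ast)w,v_1\big)_\XL+\big((2r-T-T^\ast)w,w\big)_\XL.
\end{align*}
The first term is $c^\ast M^{V_m}c$, where $c=(\ha v_k)_{|k|\le m}$, since $M_{jk}$ is exactly the matrix of $2r-T-T^\ast$ in the Fourier basis. For the cross term, tridiagonality gives $(T\foba k,\foba j)=0$ unless $|j-k|=1$. With $|j|\le m$ and $|k|\ge m+1$, only $k=\pm(m+1)$ survive, and $(\foba k,\foba j)=0$ there. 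So the cross term equals $2\Re\, c^\ast M^{\rm bd}d$, where $d=(\ha v_{m+1},\ha v_{-m-1})$. For the tail term, \eqref{ineq:core:lmi:s1:ga} gives $\big((2r-T-T^\ast)w,w\big)\ge 2(r-\ga)\norm w_\XL^2\ge 2(r-\ga)|d|^2$, using $r>\ga$ and the fact that $d$ collects two of the coefficients of $w$. Summing the three pieces yields exactly the quadratic form in \eqref{eqn:core:lmi:psd} evaluated at $(c,d)$, which is nonnegative. This proves $\Re(Tv,v)_\XL\le r\norm v_\XL^2$.

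For the weighted statement, set $u=Gv$. Then
\begin{align*}
B(G^{-1}TGv,v)=(TGv,Gv)_\XL=(Tu,u)_\XL,
\end{align*}
so the first part gives $\Re B(G^{-1}TGv,v)\le r\norm u_\XL^2=rB(v,v)$. Boundedness of $G$ and $G^{-1}$ ensures that $u$ ranges over the same space, which is what justifies applying the first part.

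The main obstacle is justifying the manipulations for general $v\in\XL$ when $T$ itself is unbounded. The cleanest route is to interpret $\Re(Tv,v)$ as $(Sv,v)$ with $S$ the bounded extension. One proves the inequality on trigonometric polynomials, where all sums are finite, and then passes to the limit using continuity of $S$. The same step requires checking that the cross-term computation uses only the matrix entries of $S$, which are the symmetrized entries appearing in $M_{jk}$.
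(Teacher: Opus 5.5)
Your proof is correct and follows the paper's argument: the paper also expands the form $2r(u,v)_{L^2}-(Tu,v)_{L^2}-(u,Tv)_{L^2}$ over the splitting into $V_m$, the boundary modes $\pm(m+1)$ and the remaining modes, uses tridiagonality to kill every cross term except those through $M^{\rm bd}$, and uses the coercivity constant $2(r-\gamma)$ on $V_m^\perp$. The only differences are cosmetic. You merge the boundary modes and the far tail into one $w\perp V_m$ (using $\|w\|^2\geq|d|^2$) and argue on trigonometric polynomials before extending by density, whereas the paper works directly with the continuous extension of the form; the weighted statement is handled identically via $u=Gv$.
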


\begin{proof}
    Let $M$ denote the continuous extension to $\XL$ of the sesquilinear form
    \begin{align*}
        M(u,v):=2r(u,v)_\XL-(Tu,v)_\XL-(u,Tv)_\XL,
        \qquad u,v\text{ smooth periodic functions},
    \end{align*}
    so that $M_{jk}=M(\foba k,\foba j)$ recovers the entries above, and $M(v,u)=\overline{M(u,v)}$. Since $M(\eta,\eta)=2r\norm\eta_\XL^2-2\Re(T\eta,\eta)_\XL$, it suffices to show $M(\eta,\eta)\geq0$ for every $\eta\in\XL$.

	Define $W_m:=\overline{\spn}\{\foba k: |k|\geq m+2\}$, and let $v\in V_m$ and $w\in W_m$.
	Compute
	\begin{align}
	\label{eqn:core:lmi:decouple}
		M(w,v) = M(v,w) = 2r(v,w)_\XL-(Tv,w)_\XL-(v,Tw)_\XL = 0.
	\end{align}
	Here, since $T$ shifts indices by one place,
	every term is $0$ because both entries of the inner product are orthogonal.
	
    On $V_m^\perp$, $M$ is coercive. Indeed, for $\ti\eta\perp V_m$,
    \begin{align}
    \label{eqn:core:lmi:coercive}
        M(\ti\eta,\ti\eta)=
        2r\norm{\ti\eta}_\XL^2-\big((T+T^\ast)\ti\eta,\ti\eta\big)_\XL\geq2(r-\ga)\norm{\ti\eta}_\XL^2\geq0.
    \end{align}

    Now fix $\eta\in\XL$ and write its orthogonal decomposition as $\eta=\eta_{V_m}+\eta_{\rm bd}+\eta_{W_m}$. Here $\eta_{V_m}\in V_m$, $\eta_{\rm bd}\in\spn\{\foba{m+1},\foba{-m-1}\}$, and $\eta_{W_m}\in W_m$. We identify the first two components with their coordinate vectors in the bases $(\foba k)_{|k|\leq m}$ and $(\foba{m+1},\foba{-m-1})$, so that $M^{V_m},M^{\rm bd}$ act on them directly. By~\eqref{eqn:core:lmi:decouple}, the cross terms between $\eta_{V_m}$ and $\eta_{\rm bd}+\eta_{W_m}$ reduce to those between $\eta_{V_m}$ and $\eta_{\rm bd}$, and, since $\eta_{\rm bd}+\eta_{W_m}\perp V_m$, the coercivity bound applies to it. Hence
    \begin{align*}
        M(\eta,\eta)
        &=M(\eta_{V_m},\eta_{V_m})+2\Re M(\eta_{\rm bd},\eta_{V_m})+M(\eta_{\rm bd}+\eta_{W_m},\eta_{\rm bd}+\eta_{W_m})\\
        &\geq\underbrace{\eta_{V_m}^\ast M^{V_m}\eta_{V_m}+2\Re\big(\eta_{V_m}^\ast M^{\rm bd}\eta_{\rm bd}\big)+2(r-\ga)\norm{\eta_{\rm bd}}_\XL^2}_{\geq0\text{, \eqref{eqn:core:lmi:psd}}}+\underbrace{2(r-\ga)\norm{\eta_{W_m}}_\XL^2}_{\geq0\text{, \eqref{eqn:core:lmi:coercive}}}\geq0
    \end{align*}

    For the second claim, first take a smooth periodic function $v$. Since $G$ preserves that space, we have
    \begin{align*}
        \Re B(G^{-1}TGv,v)=\Re(TGv,Gv)_\XL\leq r(Gv,Gv)_\XL=rB(v,v).
    \end{align*}
    The inequality extends to $\XL$ by continuity of the forms.
\end{proof}

For $\te$ near $0$ we will use the operator of the following lemma, recovering $L_{1,\te}^{{\rm fib}\,\Sigma}$ by taking $G=\Sigma_{1,\te}$ in Lemma~\ref{lem:core:lmi:s1}.

\begin{lemma}
\label{lem:rescaled:fiber}
    Let $\te\in(0,1)$. Let $\Sigma_{1,\te}$ be the positive diagonal operator with symbol
    \begin{align*}
        (\Sigma_{1,\te})_k:=\frac{|k+\te|}{\sqrt{1+|k+\te|^2}}\qquad(k\neq0),
        \qquad
        (\Sigma_{1,\te})_0:=\frac{\te^{-1}}{\sqrt{1+\te^2}},
    \end{align*}
    set $L_{1,\te}^{{\rm fib}\,\Sigma}:=\Sigma_{1,\te}\LfibD_{1,\te}\Sigma_{1,\te}^{-1}$ and $B_{1,\te}:=\Sigma_{1,\te}^2$. If there are $m\geq1$, $\ga\in\RR$ and $r>\ga$ such that both hypotheses of Lemma~\ref{lem:core:lmi:s1} hold (inequality~\eqref{ineq:core:lmi:s1:ga} and~\eqref{eqn:core:lmi:psd}), we will have for $T:=L_{1,\te}^{{\rm fib}\,\Sigma}$ and $G:=\Sigma_{1,\te}$ that
    \begin{align*}
        \Re(B_{1,\te}\LfibD_{1,\te}v,v)_\XL\leq r(B_{1,\te}v,v)_\XL
        \qquad \text{for every }v\in\XL.
    \end{align*}
\end{lemma}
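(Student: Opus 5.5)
The plan is to apply the second claim of Lemma~\ref{lem:core:lmi:s1} with $T:=L_{1,\te}^{{\rm fib}\,\Sigma}$ and $G:=\Sigma_{1,\te}$, and then rewrite the resulting inequality in terms of $\LfibD_{1,\te}$ and $B_{1,\te}$.

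First I will check that $T$ and $G$ satisfy the standing hypotheses of Lemma~\ref{lem:core:lmi:s1}.

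\textbf{Structure of $G$.} $\Sigma_{1,\te}$ is diagonal in $(\foba k)_{k\in\Z}$ with strictly positive symbol. For $k\neq0$ we have $|k+\te|\geq\min\{\te,1-\te\}>0$, so $(\Sigma_{1,\te})_k\in[c_\te,1)$ with $c_\te>0$. The $k=0$ entry is a single positive number. Hence $G$ and $G^{-1}$ are bounded.

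\textbf{Structure of $T$.} Conjugating the tridiagonal, zero-diagonal operator $\LfibD_{1,\te}$ of Remark~\ref{rem:tm:jacobi} by a diagonal operator keeps it tridiagonal with zero diagonal. Each off-diagonal entry is multiplied by a ratio $(\Sigma_{1,\te})_j/(\Sigma_{1,\te})_k$ with $|j-k|=1$. These ratios are bounded, and they tend to $1$ as $|k|\to\infty$ like $1+O(k^{-3})$.

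\textbf{Boundedness of the Hermitian part.} The Hermitian part of $T$ is tridiagonal with entries
\begin{align*}
    \tfrac12\Big(\ti A_{m+1,\te}\tfrac{(\ha D)_m(\Sigma)_m}{(\ha D)_{m+1}(\Sigma)_{m+1}}+\ti B_{m,\te}\tfrac{(\ha D)_{m+1}(\Sigma)_{m+1}}{(\ha D)_m(\Sigma)_m}\Big).
\end{align*}
For large $|m|$ this is $t_{m,1,\te}$ times a factor $1+O(m^{-2})$, plus an $O(m)\cdot O(m^{-3})$ correction. By Lemma~\ref{lem:tm:tail} these entries are uniformly bounded. A tridiagonal Hermitian matrix with bounded entries defines a bounded operator on $\XL$, so $T$ is admissible. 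The hypotheses on $m$, $\ga$ and $r$ (inequality~\eqref{ineq:core:lmi:s1:ga} and~\eqref{eqn:core:lmi:psd}) are assumed in the statement.

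\textbf{Applying the lemma.} Lemma~\ref{lem:core:lmi:s1} then gives, for every $v\in\XL$,
\begin{align*}
    \Re B(G^{-1}TGv,v)\leq rB(v,v),\qquad B(u,v):=(Gu,Gv)_\XL.
\end{align*}

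\textbf{Translating back.} Since $G=\Sigma_{1,\te}$, we have $G^{-1}TG=\LfibD_{1,\te}$. Since $G$ is a real diagonal operator, it is self-adjoint, so $B(u,v)=(\Sigma_{1,\te}^2u,v)_\XL=(B_{1,\te}u,v)_\XL$. Substituting these two identities yields exactly
\begin{align*}
    \Re(B_{1,\te}\LfibD_{1,\te}v,v)_\XL\leq r(B_{1,\te}v,v)_\XL.
\end{align*}

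For smooth $v$ this holds directly. For general $v\in\XL$ it extends by continuity of the forms, because the Hermitian part is bounded. This continuity is already built into the second claim of Lemma~\ref{lem:core:lmi:s1}.

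\textbf{Main obstacle.} The step needing the most care is the boundedness of the Hermitian part of $T$. There one must confirm that the extra $\Sigma$-ratios do not spoil the cancellation behind Lemma~\ref{lem:tm:tail}. This works because $\ti A,\ti B=O(m)$ while the ratios deviate from $1$ only by $O(m^{-3})$.
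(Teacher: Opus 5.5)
Your proposal is correct and follows the paper's proof: apply the second claim of Lemma~\ref{lem:core:lmi:s1} with $G=\Sigma_{1,\te}$, then use $G^{-1}TG=\LfibD_{1,\te}$ and $(Gu,Gv)_\XL=(B_{1,\te}u,v)_\XL$. Your extra checks of the standing hypotheses (boundedness of $G^{\pm1}$, tridiagonal zero-diagonal structure, bounded Hermitian part) are ones the paper leaves implicit. For the last check there is a shortcut: for $k\neq0$ the product of the symbols of $D_{1,\te}$ and $\Sigma_{1,\te}$ is $|k+\te|$, so Lemma~\ref{lem:tm:tail:hom} bounds the relevant coefficients directly, without your perturbation argument from Lemma~\ref{lem:tm:tail}.
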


\begin{proof}
    Apply the second claim of Lemma~\ref{lem:core:lmi:s1} with $G:=\Sigma_{1,\te}$: since $G^{-1}TG=\Sigma_{1,\te}^{-1}L_{1,\te}^{{\rm fib}\,\Sigma}\Sigma_{1,\te}=\LfibD_{1,\te}$ and $B(u,v)=(Gu,Gv)_\XL=(B_{1,\te}u,v)_\XL$, it gives $\Re(B_{1,\te}\LfibD_{1,\te}v,v)_\XL\leq r(B_{1,\te}v,v)_\XL$.
\end{proof}

To bound the high modes of $L_{1,\te}^{{\rm fib}\,\Sigma}$, we need the analogue of Lemma~\ref{lem:tm:tail} for the homogeneous weight.

\begin{lemma}
\label{lem:tm:tail:hom}
    Let $\te\in[0,1)$. For $m\neq0,-1$, let $\ti t_{m,1,\te}$ be the coefficient playing, for $L_{1,\te}^{{\rm fib}\,\Sigma}$ in place of $\LfibD_{1,\te}$, the role $t_{m,1,\te}$ plays in Remark~\ref{rem:tm:jacobi}, namely
    \begin{align*}
    	\ti t_{m,1,\te}:=\ti A_{m+1,\te}\,\frac{|m+\te|}{|m+1+\te|}+\ti B_{m,\te}\,\frac{|m+1+\te|}{|m+\te|},
    \end{align*}
    and set $\ka_m:=\min\{|m+\te|,|m+1+\te|\}$. Then, for every such $m$ with $\ka_m\geq1$,
    \begin{align*}
        |\ti{t}_{m,1,\te}|<\frac12\Big(1+\frac1{\ka_m^2}\Big).
    \end{align*}
    Since the right-hand side decreases in $\ka_m$, for every $N_0\geq1$,
    \begin{align}
    \label{ineq:tm:tail:hom:N0}
        \sup_{\ka_m>N_0}|\ti t_{m,1,\te}|\leq\frac12\Big(1+\frac1{N_0^2}\Big).
    \end{align}
\end{lemma}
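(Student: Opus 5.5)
The plan is to mirror the proof of Lemma~\ref{lem:tm:tail}: obtain a closed formula for $\ti t_{m,1,\te}$ in the variable $\ka:=m+\te$, bound it for $\ka\geq1$, and reduce the case $\ka\leq-2$ (that is, $m+1+\te\leq-1$) to the positive case by the reflection symmetry of Remark~\ref{rem:AkBksym}. Since $\ka_m\geq1$ forces either $\ka\geq1$ or $\ka+1\leq-1$, these two cases exhaust the range. The homogeneous weight $|k+\te|$ makes the algebra simpler than for the weight $\langle k+\te\rangle$, since no square root appears.

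\emph{Positive case.} Assume $\ka\geq1$, so that $\ka+1>0$ as well. From~\eqref{def:ABkt},
\begin{align*}
    \ti A_{m+1,\te}=\frac{(\ka+2)\ka}{2(\ka+1)},
    \qquad
    \ti B_{m,\te}=-\frac{(\ka-1)^2}{2\ka}.
\end{align*}
Substituting these into the definition of $\ti t_{m,1,\te}$ gives
\begin{align*}
    \ti t_{m,1,\te}
    =\frac{\ka^2(\ka+2)}{2(\ka+1)^2}-\frac{(\ka-1)^2(\ka+1)}{2\ka^2}.
\end{align*}
Over the common denominator $2\ka^2(\ka+1)^2$, expanding $(\ka-1)^2(\ka+1)^3=(\ka^2-1)^2(\ka+1)$ should give
\begin{align*}
    \ti t_{m,1,\te}
    =\frac{\ka^4+2\ka^3+2\ka^2-\ka-1}{2\ka^2(\ka+1)^2}
    =\frac12+\frac{\ka^2-\ka-1}{2\ka^2(\ka+1)^2}.
\end{align*}

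For $\ka\geq1$, the numerator $\ka^4+2\ka^3+2\ka^2-\ka-1$ is positive (it is increasing in $\ka$ and equals $3$ at $\ka=1$), so $\ti t_{m,1,\te}>0$. Moreover $|\ka^2-\ka-1|<(\ka+1)^2$ for $\ka\geq1$: if $\ka^2-\ka-1\geq0$ this is clear, and otherwise $|\ka^2-\ka-1|=1+\ka-\ka^2\leq1<(\ka+1)^2$. Hence
\begin{align*}
    0<\ti t_{m,1,\te}<\frac12\Big(1+\frac1{\ka^2}\Big),
\end{align*}
and $\ka=\ka_m$ in this case.

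\emph{Negative case.} Assume $\ka+1\leq-1$. As in Lemma~\ref{lem:tm:tail}, Remark~\ref{rem:AkBksym} gives
\begin{align*}
    \ti A_{m+1,\te}=\ti B_{-m-2,1-\te}
    \andspace
    \ti B_{m,\te}=\ti A_{-m-1,1-\te}.
\end{align*}
Set $\nu:=-\ka-1=(-m-2)+(1-\te)\geq0$, so that $|m+\te|=\nu+1$ and $|m+1+\te|=\nu$. The ratio weights then swap exactly along with the roles of $\ti A$ and $\ti B$. Consequently $\ti t_{m,1,\te}$ equals the positive-case expression evaluated at $\nu$ with $\te$ replaced by $1-\te$. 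In the positive-case formula only the shift $\ka$ enters, so the same bound holds with $\nu=\ka_m\geq1$.

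The monotonicity statement~\eqref{ineq:tm:tail:hom:N0} is immediate. I expect the only real pitfall to be the bookkeeping of which ratio $|m+\te|/|m+1+\te|$ attaches to which coefficient under the reflection; it should be checked once directly on the definition.
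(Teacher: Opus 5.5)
Your proposal is correct and follows the paper's proof essentially step for step. You use the same closed form $\ti t_{m,1,\te}=\frac12+\frac{\ka^2-\ka-1}{2\ka^2(\ka+1)^2}$ for $\ka=m+\te>0$, the same bound $|\ka^2-\ka-1|<(\ka+1)^2$, and the same reduction of the case $\ka\leq-2$ to the positive case at $\nu=-\ka-1$ via Remark~\ref{rem:AkBksym}; the positivity of $\ti t_{m,1,\te}$ is not needed, and your remark that only $\ka$ enters the formula also covers $\te=0$, where $1-\te=1$ falls outside the range stated in Remark~\ref{rem:AkBksym}.
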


\begin{proof}
    For $\ka:=m+\te>0$,
    \begin{align*}
        \ti t_{m,1,\te}
        =\ti A_{m+1,\te}\frac{\ka}{\ka+1}+\ti B_{m,\te}\frac{\ka+1}{\ka}
        =\frac12+\frac{\ka^2-\ka-1}{2\ka^2(\ka+1)^2}.
    \end{align*}
    Since $|\ka^2-\ka-1|<(\ka+1)^2$ for every $\ka>0$, this gives $|\ti t_{m,1,\te}|<\frac12+\frac1{2\ka^2}=\frac12+\frac1{2\ka_m^2}$, since $\ka_m=\min\{\ka,\ka+1\}=\ka$ here. For $\ka<-1$, the symmetry $k\mapsto-k-1$, $\te\mapsto1-\te$ of Remark~\ref{rem:AkBksym} applies, since $|k+\te|$ depends on $k,\te$ only through $k+\te$, and reduces to the case $\ka>0$ evaluated at $\nu:=-\ka-1$, with $\ka_m=\min\{|\ka|,|\ka+1|\}=\nu$.
\end{proof}

\begin{lemma}
\label{lem:h1:core:cert}
    Let $\om=1.1$ and $b=\tfrac{13}{40}$. Then~\eqref{eqn:core:lmi:psd} holds:
    \begin{enumerate}[label=(\roman*),ref=(\roman*)]
        \item\label{item:h1core:i} on $\mathmakebox[\widthof{$[$}][c]{(}\mathmakebox[\widthof{$b$}][c]{0},\mathmakebox[\widthof{$\tfrac12$}][c]{b}]$, for $T=L_{1,\te}^{{\rm fib}\,\Sigma}$, $m=8$, $r=\om$ and $\ga=\frac12\big(1+\frac{\mathmakebox[\widthof{$8$}][c]{1}}{\mathmakebox[\widthof{$88$}][c]{64}}\big)$.
        \item\label{item:h1core:ii} on $\mathmakebox[\widthof{$[$}][c]{[}b,\tfrac12]$, for $T=\mathmakebox[\widthof{$L_{1,\te}^{{\rm fib}\,\Sigma}$}][l]{\LfibD_{1,\te}}$, $m=\mathmakebox[\widthof{$8$}][c]{4}$, $r=\om$ and $\ga=\frac12\big(1+\frac{\mathmakebox[\widthof{$8$}][c]{3}}{\mathmakebox[\widthof{$88$}][c]{16}}\big)$.
    \end{enumerate}
\end{lemma}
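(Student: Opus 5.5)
The plan is to verify the finite matrix inequality~\eqref{eqn:core:lmi:psd} by interval arithmetic, uniformly in $\te$ over each of the two intervals. Before that I will check that the chosen $\ga$ is admissible in~\eqref{ineq:core:lmi:s1:ga}, so that the certificate is meaningful in Lemma~\ref{lem:core:lmi:s1}.

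\textbf{Admissibility of $\ga$.} For a tridiagonal $T$ with zero diagonal, the Hermitian part restricted to $V_m^\perp$ is a Jacobi operator with off-diagonal entries $\frac12 t_m$. Its quadratic form is bounded by $\sup|t_m|\,\norm\eta^2$ over the indices $m$ for which both $m$ and $m+1$ lie outside $\{|k|\le m\}$. In case~\ref{item:h1core:ii}, with $m=4$ and $\te\in[b,\frac12]$, these indices satisfy $\ka_m\ge 4+b$. Lemma~\ref{lem:tm:tail} then gives $|t_{m,1,\te}|<\frac12(1+3/\ka_m^2)\le\frac12(1+\frac3{16})$. In case~\ref{item:h1core:i}, with $m=8$, the same reasoning with Lemma~\ref{lem:tm:tail:hom} and $\ka_m\ge8$ gives $\frac12(1+\frac1{64})$. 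Both values are well below $r=1.1$, so $2(r-\ga)>0$.

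\textbf{Building the certificate.} I will assemble the matrices $M^{V_m}$ and $M^{\rm bd}$ explicitly. The only nonzero entries are the diagonal $2r$ and the off-diagonal $-t_{k,1,\te}$ (or $-\ti t_{k,1,\te}$), because $M_{jk}=2r\de_{jk}-(T\foba k,\foba j)-\overline{(T\foba j,\foba k)}$. This makes the full $(2m+3)\times(2m+3)$ block matrix a real symmetric tridiagonal (Jacobi) matrix whose entries depend explicitly and rationally (with square roots) on $\te$. Positive semidefiniteness can then be certified on a subinterval $[\te_a,\te_b]$ as follows. Evaluate the entries in interval arithmetic over the whole subinterval. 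Run a tridiagonal $LDL^\ast$ (Sturm sequence) recursion $d_0=a_0$, $d_{i}=a_i-c_{i-1}^2/d_{i-1}$. Check that every pivot enclosure is strictly positive. Positivity of all pivots for every matrix in the interval family yields positive definiteness, hence~\eqref{eqn:core:lmi:psd}.

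\textbf{Subdividing and handling the singular endpoint.} Both $\te$-intervals are subdivided adaptively until all pivots are certified. The endpoint $\te=\frac12$ is closed and unproblematic. The main obstacle is case~\ref{item:h1core:i} near $\te\to0^+$. There, $\ti A,\ti B$ at $k=0$ and the weight $(\Sigma_{1,\te})_0$ involve $\te^{-1}$, so naive interval enclosures blow up. I would first compute the entries of $L_{1,\te}^{{\rm fib}\,\Sigma}$ touching mode $0$ symbolically and simplify them, expecting the factor $(k+\te)$ in $\ti A_{1,\te}$, $\ti B_{-1,\te}$ and the choice of $\Sigma_0$ to cancel the singularity. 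This should leave entries that are analytic and bounded as $\te\to0$, with limits matching the zero-mean $\te=0$ computation of Lemma~\ref{lem:H1:the:0}. I would then certify a small interval $(0,\te_*]$ using these simplified closed forms, for example with monotonicity or Taylor bounds in $\te$. The remaining compact interval $[\te_*,b]$ is handled by the straightforward subdivision. If a pivot fails near $0$, the fallback is to enlarge $m$ or shrink the subintervals there.
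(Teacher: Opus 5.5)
Your plan is essentially the paper's computer-assisted proof: interval enclosures of the matrix in~\eqref{eqn:core:lmi:psd} (tridiagonal once the boundary modes $\pm(m+1)$ are placed at the ends), strict positivity of all $LDL^\top$ pivots, adaptive bisection of $(0,b]$ and $[b,\frac12]$, and the $\te\to0^+$ end handled through the exact, singularity-free rational expressions of the rescaled entries. One side remark is inaccurate but plays no role in the argument: the $\te\to0^+$ limit of the rescaled matrix does not reduce to the zero-mean computation of Lemma~\ref{lem:H1:the:0}, because mode $0$ survives, with couplings tending to $-\frac32$ (to mode $-1$) and $\frac12$ (to mode $1$).
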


\proofcompassi

We now have every tool we need to prove Lemma~\ref{lem:fib:bilform}.

\begin{proofsth}{Lemma~\ref{lem:fib:bilform}}
	Let $\te\in[0,1)$. If $\te=0$, Lemma~\ref{lem:H1:the:0} applies with $B_\te$ the usual norm on $\XL$. By Lemma~\ref{lem:tm:symmetry}, we may assume $\te\in\big(0,\tfrac12\big]$, since $B_\te$ for $\te$ gives $B_{1-\te}$ for $1-\te$.

	For $\te\in\big(0,\tfrac{13}{40}\big]$, using the notation of Lemma~\ref{lem:tm:tail:hom}, since $\eta\perp V_8$ every nonzero summand in the Jacobi form has index $m\geq9$ or $m\leq-10$, so $\ka_m>8$ for every such term, and $\eqref{ineq:tm:tail:hom:N0}$ with $N_0=8$ and the Cauchy--Schwarz inequality give~\eqref{ineq:core:lmi:s1:ga} for $T=L_{1,\te}^{{\rm fib}\,\Sigma}$, $m=8$ and $\ga=\frac12\big(1+\frac1{64}\big)$, and Lemma~\ref{lem:h1:core:cert}\ref{item:h1core:i} gives~\eqref{eqn:core:lmi:psd} for $r=\om$. Lemma~\ref{lem:rescaled:fiber} then yields the desired inequality with $B_\te(u,v):=(\Sigma_{1,\te}^2u,v)_\XL$, equivalent to the usual norm since $\Sigma_{1,\te}$ and $\Sigma_{1,\te}^{-1}$ are both bounded on $\XL$.

	Finally, for $\te\in\big[\tfrac{13}{40},\tfrac12\big]$, $\eqref{ineq:tm:tail:N0}$ with $N_0=4$ and the Cauchy--Schwarz inequality give~\eqref{ineq:core:lmi:s1:ga} with $\ga=\frac12\big(1+\frac3{16}\big)$, and Lemma~\ref{lem:h1:core:cert}\ref{item:h1core:ii} gives~\eqref{eqn:core:lmi:psd} for $r=\om$. Lemma~\ref{lem:core:lmi:s1} then gives the desired inequality directly, with $G=I$ and $B_\te$ the usual norm on $\XL$.
\end{proofsth}

\appendix

\section{Fuchsian theory}
\label{sec:appendix:frobenius}

In this appendix we derive explicit local representations of the solutions to~\eqref{ode:hom}. The qualitative statements used in the main text follow from standard Frobenius theory for regular singular points, but the recursive formulas below are useful both for constructive computations and for the rigorous numerical part of the proof.

Although the computations in Lemmas~\ref{lem:loc:0} and~\ref{lem:loc:1} are essentially standard and closely follow~\cite{TeschlODE}, we include them here since the order of presentation is adapted to the supplementary code and may help the reader compare the argument with the implementation.

\subsection{Homogeneous solutions}

This section provides an overview of the Frobenius theory used in our argument.

Let $P$ and $Q$ be analytic functions on a simply connected neighborhood $V^{z_0}$ of $z_0\in\C$, and consider the equation
\begin{align}
\label{ode:fuchs2pqhom}
    (z-z_0)^2 \pa_z^2 F(z) + (z-z_0) P(z) \pa_z F(z) + Q(z) F(z) = 0.
\end{align}
The local structure of solutions on any simply connected set $U\subset V^{z_0}$ with $z_0\notin U$ is well known. After choosing a branch of $\log(z-z_0)$ on $U$, every solution is a linear combination of two explicit local branches involving analytic functions at $z_0$ and, possibly, $\log(z-z_0)$.

\begin{proofsth}{Lemma~\ref{lem:loc:0}}
    Throughout this proof, $p_k$ and $q_k$ denote the Taylor coefficients of $\Ploc0$ and $\Qloc0$ at $z=0$.
    Let
    \begin{align*}
        \Ploc0(z) :=
        \frac{
        (2\te-1)z^2+2\la z-(2\te+1)
        }{z^2-1}
    \andspace
        \Qloc0(z) :=
        \frac{
        (\te-1)^2z^2+2\la\te z-(\te^2-1)
        }{z^2-1}.
    \end{align*}
    
    Then~\eqref{ode:hom} takes the form of~\eqref{ode:fuchs2pqhom} with $P=\Ploc0$, $Q=\Qloc0$, and $z_0=0$:
    \begin{align}
    \label{ode:fuchs2pqhom0}
        z^2 \pa_z^2 F(z) + z \Ploc0(z) \pa_z F(z) + \Qloc0(z) F(z) = 0.
    \end{align}
    
    The indicial equation at $z=0$ has roots $1-\te$ and $-1-\te$.
    Since the difference between the roots equals $2$, the second local $\Fholh20$ branch may contain a logarithmic term. We now derive the corresponding constructive formulas explicitly. The first local $\Fholh10$ branch has the form
    \begin{align*}
        \Fholh10 = z^{-\te+1} \phi_1(z),
    \end{align*}
    where $\phi_1$ is analytic at $0$.
    We choose $\phi_{1}(0)=1$ and write
    \begin{align*}
        \phi_1(z) =: 1 + \sum_{k \geq 1} a_k z^k,\qquad
        \Ploc{0}(z) =: \sum_{k \geq 0} p_k z^k,\qquad
        \Qloc{0}(z) =: \sum_{k \geq 0} q_k z^k,
    \end{align*}
    
    Hence the coefficients $a_k$ are determined for all $k \geq 1$ by the following recurrence
    \begin{align}
    	\label{rec:0}
        \mathcal I(k+r) a_k +
        \sum_{j=0}^{k-1}
        a_j \big(
        (j+r) p_{k-j} + 
         q_{k-j}\big) = 0,
    \end{align}
    where $r = 1-\te$ and $\mathcal I(x):=x(x-1)+xp_0+q_0$.
    
    The Taylor expansions of $\Ploc0$ and $\Qloc0$ are given by
    \begin{align}
        \Ploc0(z)&=
        (1+2\te)
        +
        (1-\la)\frac{z}{1-z}
        -
        (1+\la)\frac{z}{1+z}
        \nonumber
        \\
        &=
        (1+2\te)
        +
        \sum_{k \geq 1}
        	(1-\la) z^k
        	+
        	(1+\la) (-z)^k
    %%%%%%%%%%%%%%%%%%%%%%
        \\
        \Qloc0(z)&=
        -2\la\te z
        -1+\te^2
        -
        (1-\te+\la\te)
        \frac{z^2}{1-z}
        -
        (1-\te-\la\te)
        \frac{z^2}{1+z}
        \nonumber
        \\
        &=
        -2\la\te z
        -1+\te^2
    		-
        \sum_{k \geq 2}\big(
        	(1-\te+\la\te) z^k
        	+
        	(1-\te-\la\te) (-z)^k\big).
    \end{align}
    
    In particular, we write the first Taylor coefficients centered at $0$ of $p$ and $q$
    \begin{align*}
    	p_0&= 1+2\te,&
    	p_1&= -2\la,&
    	p_2&= 2,\\
    	q_0&=-1+\te^2,&
    	q_1&= -2\la\te,&
    	q_2&=-2(1-\te).
    \end{align*}

    Using~\eqref{rec:0}, we have $a_0=1$ and
    \begin{align*}
    	a_1&=
    	 - \frac{r a_0 p_1 + q_1}
    	 {r(r+1) + (r+1)p_0 + q_0}
    	 =\frac{2\la}3,\\
    	a_2&=
    	 - \frac{r a_0 p_2 + (r+1) a_1 p_1 + a_0 q_2 + a_1 q_1}{
    	 (r+2)(r+1) + (r+2) p_0 + q_0}=
    	  \frac{\la^2}{3}.
    \end{align*}
    
    To construct $\Fholh20$, we use reduction of order and write it in the form $C\Fholh10$. Substituting $F(z) = C(z) F^{h,0}_1(z)$ into~\eqref{ode:fuchs2pqhom0} yields
    \begin{align}
    \label{ode:red_ord}
        z^2\pa_z^2 C(z) \Fholh10(z) + 2z^2\pa_z C(z) \pa_z \Fholh10(z) + z\pa_z C(z) \Fholh10(z)  \Ploc0(z) = 0.
    \end{align}
    
    Since $p_0=1+2\te$, the function $\ti P^0$ given by
    \begin{align*}
        \ti P^0(z) 
        := 
        \frac1z\big(
        \Ploc0(z)-(1+2\te)\big) = 
        \sum_{k \geq 0} p_{k+1} z^k
    \end{align*}
    is analytic at $0$. From~\eqref{ode:red_ord} we obtain
    \begin{align*}
        z\frac{\pa_z^2 C(z)}{\pa_z C(z)}
        \Fholh10(z)
        &=
        - 2z \pa_z \Fholh10(z)
        - \Fholh10(z) \Ploc0(z)\\
        &=
        \big(
            -2 z \pa_z \phi_1(z)
            +
            2 (\te-1) \phi_1(z)
            -
            \phi_1(z) \Ploc0(z)
        \big)
        z^{-\te+1}.
    \end{align*}
    Dividing by $z\Fholh10(z) = z^{2-\te}\phi_1(z)$ yields
    \begin{align}
        \frac{\pa_z^2 C(z)}{\pa_z C(z)}
        =
        -2\frac{\pa_z \phi_1(z)}{\phi_1(z)}
        -
        \frac1z
        \big(
        2-2\te
        +
        \Ploc0(z)
        \big)
        =
        -2\frac{\pa_z \phi_1(z)}{\phi_1(z)}
        -
        \frac3z
        -
        \ti P^0(z).
        \label{ode:red_ord2}
    \end{align}
    
    Choose $r>0$ so that $\phi_1\neq 0$ on $D_r(0)$.
    Integrating~\eqref{ode:red_ord2} we obtain 
    \begin{align*}
        \pa_z C(z) 
        =
        \frac{1}{z^3}\psi(z),&&
        \psi(z) := 
        \frac{1}{\phi_1(z)^2} \exp\!\left(
        -\int^z
        \ti P^0(y)
        \dy
        \right),
    \end{align*}
    where the antiderivative is defined by
    \begin{align*}
        \int^z \ti P^0(y)\dy := 
        \sum_{k \geq 0}
            \frac1{k+1} 
            p_{k+1}
            z^{k+1}
        = 
        \sum_{k \geq 1} 
            \frac1k p_k
            z^k.
    \end{align*}
    This primitive is holomorphic at $0$. We have the same for $\psi$ since $\phi_1$ does not vanish at $0$.
    
    Hence, integrating once more, we obtain
    \begin{align*}
        C(z) = 
        \psi_2 \log(z)
        +
        \sum_{\substack{k \geq 0\\k\neq 2}}
        \frac1{k-2}\psi_k z^{k-2}.
    \end{align*}
    Here $\psi(z)=\sum_{k\ge0}\psi_k z^k$.
    
    Define $\phi_2:D_r(0)\to\C$ by
    \begin{align*}
        \phi_2(z) =
        \phi_1(z)
        \sum_{\substack{k \geq 0\\k\neq 2}}
        \frac1{k-2}\psi_k z^k,
    \end{align*}
    so that $\Fholh20$ is given by
    \begin{align*}
        \Fholh20(z) =
        \psi_{2} \Fholh10(z) \log(z)
        +
        z^{-\te-1} \phi_2(z).
    \end{align*}
    
    Finally, we compute the exact value of $\psi_2$. Notice that
    \begin{align*}
        \pa_z \psi = 
        -2\frac{\pa_z \phi_1}{\phi_1} \psi
        -\ti P^0  \psi\andspace
        \pa_z^2 \psi = 
        -2
        \frac{\pa_z^2 \phi_1}{\phi_1}
        \psi
        +
        2\Big(\frac{\pa_z \phi_1}{\phi_1}\Big)^2
        \psi
        + \frac{\big(\pa_z \psi\big)^2}{\psi}
        -
        \pa_z \ti P^0 \psi.
    \end{align*}
    
    Since $\phi_1(0)=\psi(0)=1$, we have
    \begin{align*}
        \psi_2 = 
        \frac12 \left.\frac{d^2\psi}{dz^2}\right|_{z=0}
        =
        - 2a_2
        +
        a_1^2
        +\frac{\psi_1^2}2
        -
        \frac{p_2}2
        =
        - \frac{2\la^2}3
        +
        \frac{4\la^2}{9}
        +
        \frac{2\la^2}{9}
        -
        1=-1.
    \end{align*}
    where we have used that $\psi_1 = -2a_1-p_1 = \frac{2\la}{3}$.
\end{proofsth}

\begin{proofsth}{Lemma~\ref{lem:loc:1}}
    Throughout this proof, $p_k$ and $q_k$ denote the Taylor coefficients of $\Ploc1$ and $\Qloc1$ at $z=1$.
    Dividing~\eqref{ode:hom} by $z^2(z+1)$ and multiplying by $(z-1)$ casts it in the Fuchsian standard form of~\eqref{ode:fuchs2pqhom} with $z_0=1$:
    \begin{align*}
        (z-1)^2\pa_z^2 F(z) + (z-1)\Ploc 1(z)\pa_z F(z) + \Qloc 1(z) F(z) = 0,
    \end{align*}
    where
    \begin{align*}
        \Ploc 1(z) := \frac{\Pglo(z)}{z(z+1)},
        \qquad
        \Qloc 1(z) := \frac{(z-1)\Qglo(z)}{z^2(z+1)}.
    \end{align*}
    Both $\Ploc 1$ and $\Qloc 1$ are analytic on a neighborhood of $z=1$. Setting $p_0:=\Ploc 1(1)$ and $q_0:=\Qloc 1(1)$, direct computation yields
    \begin{align*}
        p_0 = \frac{\Pglo(1)}{2} = -(1-\la),
        \qquad
        q_0 = 0,
    \end{align*}
    so the indicial equation $r(r-1)+r p_0+q_0=0$ becomes $r^2-(2-\la)r=0$, with roots $r_1=0$ and $r_2=2-\la$. Since $\la\in\C-\Z$, the difference $r_2-r_1=2-\la\notin\Z$, and both local solutions are pure power series:
    \begin{align*}
        \Fholh11(z) = \phi_1(z),
        \qquad
        \Fholh21(z) = (z-1)^{2-\la}\phi_2(z),
    \end{align*}
    with $\phi_1,\phi_2$ analytic at $1$ and normalized so that $\phi_1(1)=\phi_2(1)=1$.

    Expanding
    \begin{align*}
        \phi_j(z) = \sum_{k \geq 0} a^{(j)}_k(z-1)^k,
        \qquad
        \Ploc 1(z) = \sum_{k \geq 0} p_k(z-1)^k,
        \qquad
        \Qloc 1(z) = \sum_{k \geq 0} q_k(z-1)^k,
    \end{align*}
    the Taylor coefficients of $\phi_j$ are determined for $k \geq 1$ by the Frobenius recurrence
    \begin{align}
    \label{rec:1}
        \big((k+r_j)(k+r_j-1)+(k+r_j) p_0+q_0\big)  a^{(j)}_k +
        \sum_{l=0}^{k-1}\big((l+r_j)p_{k-l}+q_{k-l}\big) a^{(j)}_l = 0,
    \end{align}
    together with $a^{(j)}_0=1$. The leading factor equals $k(k-2+\la)$ for $r_1=0$ and $k(k+2-\la)$ for $r_2=2-\la$. Both are nonzero for every $k \geq 1$ since $\la\notin\Z$, so~\eqref{rec:1} determines $\phi_1$ and $\phi_2$ uniquely.
\end{proofsth}

\begin{proofsth}{Lemma~\ref{lem:loc:-1}}
    Throughout this proof, $p_k$ and $q_k$ denote the Taylor coefficients of $\Ploc{-1}$ and $\Qloc{-1}$ at $z=-1$.
    Dividing~\eqref{ode:hom} by $z^2(z-1)$ and multiplying by $(z+1)$ casts it in the Fuchsian standard form of~\eqref{ode:fuchs2pqhom} with $z_0=-1$:
    \begin{align*}
        (z+1)^2\pa_z^2 F(z) + (z+1)\Ploc{-1}(z)\pa_z F(z) + \Qloc{-1}(z) F(z) = 0,
    \end{align*}
    where
    \begin{align*}
        \Ploc{-1}(z) := \frac{\Pglo(z)}{z(z-1)},
        \qquad
        \Qloc{-1}(z) := \frac{(z+1)\Qglo(z)}{z^2(z-1)}.
    \end{align*}
    Both $\Ploc{-1}$ and $\Qloc{-1}$ are analytic on a neighborhood of $z=-1$. Setting $p_0:=\Ploc{-1}(-1)$ and $q_0:=\Qloc{-1}(-1)$, direct computation yields
    \begin{align*}
        p_0 = \frac{\Pglo(-1)}{2} = -(1+\la),
        \qquad
        q_0 = 0,
    \end{align*}
    so the indicial equation $r(r-1)+r p_0+q_0=0$ becomes $r^2-(2+\la)r=0$, with roots $r_1=0$ and $r_2=2+\la$. Since $\la\in\C-\Z$, the difference $r_2-r_1=2+\la\notin\Z$, and both local solutions are pure power series:
    \begin{align*}
        \Fholh1{-1}(z) = \phi_1(z),
        \qquad
        \Fholh2{-1}(z) = (z+1)^{2+\la}\phi_2(z),
    \end{align*}
    with $\phi_1,\phi_2$ analytic at $-1$ and normalized so that $\phi_1(-1)=\phi_2(-1)=1$. The Taylor coefficients of $\phi_j$ are determined for $k\geq1$ by the Frobenius recurrence~\eqref{rec:1}, now with $(z-1)$ replaced by $(z+1)$ and with the coefficients $p_k,q_k$ of $\Ploc{-1},\Qloc{-1}$. The leading factor equals $k(k-2-\la)$ for $r_1=0$ and $k(k+2+\la)$ for $r_2=2+\la$, both nonzero for $k\geq1$ since $\la\notin\Z$.
\end{proofsth}

\subsection{Analytic particular solutions}

This section proves Lemmas~\ref{lem:loc:aff0} and~\ref{lem:loc:aff1}. Rather than constructing the particular solutions by variation of constants, we obtain them directly: we look for a power series solving the inhomogeneous equation, read off the resulting Frobenius recurrence, and prove its convergence by a majorant. This is both shorter and closer to what the computer-assisted part actually evaluates, since the same recurrence produces the Taylor coefficients used in Appendix~\ref{sec:appendix:errors}.

\subsubsection{A holomorphic solution from the Frobenius recurrence}

Let $z_0\in\C$, let $P,Q,G$ be holomorphic on a disk $D_R(z_0)$, $R>0$, and consider the inhomogeneous Fuchsian equation in normal form
\begin{align}
\label{ode:aff:normal}
    (z-z_0)^2\pa_z^2 F(z) + (z-z_0) P(z) \pa_z F(z) + Q(z) F(z) = G(z).
\end{align}
Write the local variable $x:=z-z_0$ and expand
\begin{align*}
    P(z)=\sum_{k\ge0}p_k x^k,\qquad
    Q(z)=\sum_{k\ge0}q_k x^k,\qquad
    G(z)=\sum_{k\ge0}g_k x^k.
\end{align*}
We look for a holomorphic solution $F(z)=\sum_{k\ge0}f_k x^k$. Substituting into~\eqref{ode:aff:normal} and collecting the coefficient of $x^k$ gives, for every $k\ge0$,
\begin{align}
\label{rec:aff}
    \mathcal I(k) f_k
    +
    \sum_{j=0}^{k-1}\big(jp_{k-j}+q_{k-j}\big) f_j
    = g_k,
    \qquad
    \mathcal I(k):=
    k(k-1)+kp_0+q_0,
\end{align}
where $\mathcal I$ is the indicial polynomial of~\eqref{ode:aff:normal}, with roots $r_1,r_2$ so that $\mathcal I(k)=(k-r_1)(k-r_2)$.

\begin{lemma}
\label{lem:aff:rec}
    With the notation above, assume
    \begin{enumerate}
        \item[(i)] $\mathcal I(k)\neq0$ for every integer $k\ge1$, and
        \item[(ii)] either $\mathcal I(0)\neq0$, or else $\mathcal I(0)=0$ and $g_0=0$.
    \end{enumerate}
    Then~\eqref{ode:aff:normal} has a holomorphic solution $f$ on $D_R(z_0)$ whose Taylor coefficients are determined by the recurrence~\eqref{rec:aff}. In the resonant case $\mathcal I(0)=0$ the coefficient $f_0$ is free and we fix $f_0:=0$. In particular $f$ has radius of convergence at least $R$.
\end{lemma}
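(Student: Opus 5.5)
The plan has two parts. First I show that the recurrence~\eqref{rec:aff} determines a unique coefficient sequence $(f_k)_{k\ge0}$. Then I show by a majorant argument that $\sum f_k x^k$ has radius of convergence at least $R$. Once convergence holds, term-by-term differentiation on $D_R(z_0)$ is legitimate. Collecting the coefficients of $x^k$ then reproduces exactly~\eqref{rec:aff}, so $f$ solves~\eqref{ode:aff:normal}.

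\textbf{Well-definedness.} At $k=0$ the recurrence reads $\mathcal I(0)f_0=g_0$. If $\mathcal I(0)\ne0$, this fixes $f_0=g_0/\mathcal I(0)$. In the resonant case, (ii) gives $g_0=0$, so any $f_0$ works and we set $f_0=0$. For $k\ge1$, hypothesis (i) lets us solve
\begin{align*}
f_k=\mathcal I(k)^{-1}\Big(g_k-\sum_{j<k}(jp_{k-j}+q_{k-j})f_j\Big),
\end{align*}
where the right-hand side only involves $f_0,\dots,f_{k-1}$.

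\textbf{Convergence.} Fix $\rho<R$ and pick $\rho<\rho'<R$. Cauchy estimates give a constant $M$ with $|p_k|,|q_k|,|g_k|\le M\rho'^{-k}$ for all $k$. Since $\mathcal I(k)=(k-r_1)(k-r_2)$ is quadratic with leading coefficient $1$ and never vanishes for $k\ge1$, there is $c>0$ with $|\mathcal I(k)|\ge c\,k^2$ for $k\ge1$. This uses (i) for small $k$ and the asymptotics $\mathcal I(k)\sim k^2$ for large $k$. It follows that $|k\,\mathcal I(k)^{-1}|\le C/k$ and $|\mathcal I(k)^{-1}|\le C/k^2$ for a uniform $C$. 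Using $j<k$, the recurrence then yields
\begin{align*}
|f_k|\le \frac{C}{k}\Big(M\rho'^{-k}+\sum_{j<k}M\rho'^{-(k-j)}|f_j|\Big).
\end{align*}
Set $A_k:=\max_{j\le k}|f_j|\rho^j$. Multiplying by $\rho^k$ and summing the geometric series $\sum_{i\ge1}(\rho/\rho')^i$ gives
\begin{align*}
|f_k|\rho^k\le \frac{C'}{k}\big(1+A_{k-1}\big).
\end{align*}
For $k\ge k_0:=2C'$ this becomes $|f_k|\rho^k\le\frac12(1+A_{k-1})$. An induction then gives $A_k\le\max(A_{k_0},1)$ for all $k$, so $|f_k|\le C''\rho^{-k}$. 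Since $\rho<R$ was arbitrary, the radius of convergence is at least $R$.

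The only step that needs care is this majorant. The key point is that the indicial factor grows like $k^2$ while the convolution term only carries a factor $j\le k$, which leaves a gain of $1/k$. I would be careful to use this $1/k$ gain, rather than just boundedness of the coefficients, because otherwise the induction does not close.
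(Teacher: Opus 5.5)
Your proof is correct and follows the paper's argument. Both define the coefficients by the recurrence, apply Cauchy estimates, and use the lower bound $|\mathcal I(k)|\ge ck^2$ to get a $1/k$ gain in the majorant. The only difference is how the induction closes. The paper works at a single radius $\rho$, bounds the cumulative sum $\sum_{j\le k}(j+1)|f_j|\rho^j$ through the product $\prod_{j\le k}\bigl(1+\frac{2M}{cj}\bigr)\lesssim k^{2M/c}$, and accepts a harmless polynomial loss. Your auxiliary radius $\rho'>\rho$ turns the convolution into a geometric series, so the recursion becomes eventually contractive and gives the uniform bound $|f_k|\lesssim\rho^{-k}$ directly.
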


\begin{proof}
    \emph{Coefficients.} For $k\ge1$, hypothesis (i) lets us solve~\eqref{rec:aff} for $f_k$ in terms of $f_0,\dots,f_{k-1}$. For $k=0$, \eqref{rec:aff} reads $\mathcal I(0) f_0=g_0$: if $\mathcal I(0)\neq0$ this fixes $f_0$, while if $\mathcal I(0)=0$ then $g_0=0$ by (ii), the equation $0=0$ holds automatically, and we are free to set $f_0:=0$. Thus all $f_k$ are well defined.

    \emph{Convergence.} Fix $\rho\in(0,R)$. Since $P,Q,G$ are holomorphic on $D_R(z_0)$, Cauchy's estimates provide $M\ge1$ with
    \begin{align*}
        |p_k|\leq M\rho^{-k},\qquad |q_k|\leq M\rho^{-k},\qquad |g_k|\leq M\rho^{-k}\qquad(k\ge0).
    \end{align*}
    Because $\mathcal I(k)=(k-r_1)(k-r_2)\sim k^2$ as $k\to\infty$ and $\mathcal I(k)\neq0$ for every $k\ge1$, the ratios $\frac{|\mathcal I(k)|}{k^2}$ are bounded away from $0$, so there is $c>0$ with
    \begin{align}
        |\mathcal I(k)|\geq c k^2\qquad(k\ge1).
    \end{align}
    From~\eqref{rec:aff}, using $|j p_{k-j}+q_{k-j}|\leq M(j+1)\rho^{-(k-j)}$, we get for $k\ge1$
    \begin{align*}
        |f_k|\leq\frac{1}{c k^2}\Big(M\rho^{-k}+\sum_{j=0}^{k-1}M(j+1)\rho^{-(k-j)}|f_j|\Big).
    \end{align*}
    Set $b_k:=|f_k|\rho^{k}$ and $\Xi_k:=\sum_{j=0}^{k}(j+1)b_j$. Multiplying the previous bound by $\rho^k$ gives
    \begin{align*}
        b_k\leq\frac{M}{c k^2}\big(1+\Xi_{k-1}\big)\qquad(k\ge1).
    \end{align*}
    Since $\frac{k+1}{k^2}\leq \frac{2}{k}$, it follows that for every $k\ge1$
    \begin{align*}
        \Xi_k+1
        = (\Xi_{k-1}+1)+(k+1)b_k
        \leq (\Xi_{k-1}+1)\bigg(1+\frac{2M}{c k}\bigg).
    \end{align*}
    Iterating and using 
    \begin{align*}
        \prod_{j=1}^k\bigg(1+\frac{2M}{cj}\bigg)\leq\exp\!\bigg(\frac{2M}{c}\sum_{j=1}^k\frac1j\bigg)\leq C_1k^{2M/c},
    \end{align*}
    we obtain $\Xi_k\leq C_1(\Xi_0+1) k^{2M/c}$, which grows only polynomially in $k$.
    Hence 
    \begin{align*}
        b_k\leq\frac{M}{c k^2}(1+\Xi_{k-1})\leq C_2 k^{2M/c}
    \end{align*}
    is polynomially bounded as well, so
    \begin{align*}
        |f_k|=b_k \rho^{-k}\leq C_2 \frac{k^{2M/c}}{\rho^k},
        \qquad\text{whence}\qquad
        \limsup_{k\to\infty}|f_k|^{1/k}\leq\frac1\rho.
    \end{align*}
    As $\rho\in(0,R)$ was arbitrary, $\sum_{k\ge0}f_k x^k$ has radius of convergence at least $R$. Its sum $f$ is holomorphic on $D_R(z_0)$ and, since its coefficients satisfy~\eqref{rec:aff}, which is equivalent to~\eqref{ode:aff:normal}, $f$ solves the equation.
\end{proof}

We now apply Lemma~\ref{lem:aff:rec} at the two singular points $z_0=0$ and $z_0=1$. In each case the right-hand side $G$ of~\eqref{ode:LF=G} is holomorphic, and putting~\eqref{ode:LF=G} into the normal form~\eqref{ode:aff:normal} divides $G$ by exactly the factor used for the homogeneous equation in the proofs of Lemmas~\ref{lem:loc:0} and~\ref{lem:loc:1}.

\begin{proofsth}{Lemma~\ref{lem:loc:aff0}}
    At $z_0=0$, dividing~\eqref{ode:LF=G} by $z^2-1$ casts it in the normal form~\eqref{ode:aff:normal} with $P=\Ploc0$ and $Q=\Qloc0$ as in the proof of Lemma~\ref{lem:loc:0}, and right-hand side
    \begin{align*}
        G^0(z):=\frac{G(z)}{z^2-1},
    \end{align*}
    which is holomorphic on $D_r(0)$ since $r<1$. The indicial polynomial has roots $1-\te$ and $-1-\te$, so
    \begin{align*}
        \mathcal I(n)=(n-1+\te)(n+1+\te)\neq0\qquad\text{for every integer }n\ge0,
    \end{align*}
    because $\te\in(0,1)$. Hence hypotheses (i)--(ii) of Lemma~\ref{lem:aff:rec} hold, with $\mathcal I(0)\neq0$, and Lemma~\ref{lem:aff:rec} produces a holomorphic solution $\Fholp0:D_r(0)\to\C$ of~\eqref{ode:LF=G} whose Taylor coefficients are computed recursively by~\eqref{rec:aff}.
\end{proofsth}

\begin{proofsth}{Lemma~\ref{lem:loc:aff1}}
    At $z_0=1$, dividing~\eqref{ode:LF=G} by $z^2(z+1)$ and multiplying by $(z-1)$ casts it in the normal form~\eqref{ode:aff:normal} with $P=\Ploc1$ and $Q=\Qloc1$ as in the proof of Lemma~\ref{lem:loc:1}, and right-hand side
    \begin{align*}
        G^1(z):=\frac{(z-1) G(z)}{z^2(z+1)},
    \end{align*}
    which is holomorphic on $D_r(1)$. The indicial polynomial has roots $r_1=0$ and $r_2=2-\la$, so
    \begin{align*}
        \mathcal I(n)=n (n-2+\la),
    \end{align*}
    which vanishes at $n=0$ and, since $\la\notin\Z$, is nonzero for every $n\ge1$. The root $r_1=0$ is resonant. However, the normalization above carries an explicit factor $(z-1)$, so that
    \begin{align*}
        g_0=G^1(1)=0
    \end{align*}
    for \emph{every} holomorphic $G$. Hence hypotheses (i)--(ii) of Lemma~\ref{lem:aff:rec} hold in the resonant case with $g_0=0$, and Lemma~\ref{lem:aff:rec} produces a holomorphic solution $\Fholp1:D_r(1)\to\C$ of~\eqref{ode:LF=G}, normalized by $\Fholp1(1)=f_0=0$, whose Taylor coefficients are computed recursively by~\eqref{rec:aff}.

    Finally, let $F$ be any holomorphic solution of~\eqref{ode:LF=G} at $1$. Then $\ti F:=F-\Fholp1$ is holomorphic at $1$ and solves~\eqref{ode:hom}, so by Lemma~\ref{lem:loc:1} we have $\ti F=C_1\Fholh11+C_2\Fholh21$ for some $C_1,C_2\in\C$. Since $\ti F$ and $\Fholh11=\phi_1$ are holomorphic at $1$ while $\Fholh21=(z-1)^{2-\la}\phi_2$ is not (as $\la\notin\Z$), we must have $C_2=0$, and therefore $F=\Fholp1+C_1\Fholh11$.
\end{proofsth}

%%%%%%%%%%%%%%%%%%%%%%%%%%%%%%%%%%%%%%
%%%%%%%%%%%%%%%%%%%%%%%%%%%%%%%%%%%%%%

\subsection{A priori estimate at the boundary singular points}
\label{sec:appendix:apriori}

This appendix contains the proof of Lemma~\ref{lem:loc:apriori}. The argument rests on a pair of weighted Hardy inequalities that we state and prove first.

\subsubsection{Weighted Hardy operators}

For $\sigma\geq0$, we use on $\RR$ the same Sobolev convention as on $\T$:
\begin{align*}
    \norm{f}_{H^\sigma(\RR)}^2
    :=\int_\RR(1+|\xi|^{2\sigma})|\ha f(\xi)|^2\,d\xi,
    \qquad
    \ha f(\xi):=\frac1{\sqrt{2\pi}}\int_\RR f(x)e^{-ix\xi}\,dx.
\end{align*}
At $\sigma=0$ the multiplier $|\xi|^0$ means $1$, also at $\xi=0$.
For $\de>0$, $\beta\in\C$, and $w$ supported in $(-\de,\de)$, define for $x\in(-\de,\de)-\{0\}$,
\begin{align*}
    (\Hard0\beta w)(x):=|x|^{-\beta}\int_0^{|x|}\tau^{\beta-1}w(\sgn(x)\tau)\,d\tau,\qquad
    (\Hard\de\beta w)(x):=|x|^\beta\int_{|x|}^\de\tau^{-\beta-1}w(\sgn(x)\tau)\,d\tau.
\end{align*}
For $|x|\geq\de$ use the same formula for $\Hard0\beta w$ (with $w$ extended by zero) and set $\Hard\de\beta w=0$. Writing $D_x:=x\pa_x$, these operators satisfy
\begin{equation}
    \label{eq:hardyODE}
    D_x(\Hard0\beta w)+\beta\Hard0\beta w=w,
    \qquad
    D_x(\Hard\de\beta w)-\beta\Hard\de\beta w=-w.
\end{equation}
\begin{lemma}
\label{lem:hardy}
Let $\sigma\geq0$ and $w\in H^\sigma(\RR)$ be supported in $(-\de,\de)$.
\begin{enumerate}[label=\textup{(\roman*)},ref=\textup{(\roman*)}]
    \item\label{Hardy:enu1} If $\Re\beta>\frac12$, then
    \begin{align*}
        \norm{\Hard0\beta w}_{H^\sigma(\RR)}
        \leq\frac{\norm{w}_{H^\sigma(\RR)}}{\Re\beta-\frac12}.
    \end{align*}
    \item\label{Hardy:enu2} If $\Re\beta>\sigma-\frac12$, then
    \begin{align*}
        \norm{\Hard\de\beta w}_{H^\sigma(\RR)}
        \leq\frac{\norm{w}_{H^\sigma(\RR)}}{\Re\beta-\sigma+\frac12}.
    \end{align*}
\end{enumerate}
\end{lemma}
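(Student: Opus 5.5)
The plan is to rewrite both operators as weighted superpositions of dilations of $w$, and then apply Minkowski's integral inequality in $H^\sigma(\RR)$. The constants in the statement are exactly what this produces, given the dilation bound below.

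\textbf{Change of variables.} For $x\neq0$, substitute $\tau=s|x|$ in the definitions. Since $\sgn(x)\tau=sx$, this gives
\begin{align*}
    (\Hard0\beta w)(x)=\int_0^1 s^{\beta-1}\,w(sx)\,ds,
    \qquad
    (\Hard\de\beta w)(x)=\int_1^{\infty} s^{-\beta-1}\,w(sx)\,ds.
\end{align*}
In the second formula the upper limit $\de/|x|$ has been replaced by $\infty$. This is legitimate because $w(sx)=0$ as soon as $s|x|\geq\de$, by the support assumption. The same observation shows that the formula automatically returns $0$ for $|x|\geq\de$, consistent with the convention $\Hard\de\beta w=0$ there. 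For $\Hard0\beta w$ with $|x|\geq\de$ the first formula is, by definition, the same expression with $w$ extended by zero. Hence both identities hold for all $x\neq0$, that is, almost everywhere.

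\textbf{Dilation bound.} Set $w_s(x):=w(sx)$. Then $\widehat{w_s}(\xi)=s^{-1}\ha w(\xi/s)$, and the substitution $\xi=s\eta$ gives
\begin{align*}
    \norm{w_s}_{H^\sigma(\RR)}^2=s^{-1}\int_\RR\big(1+s^{2\sigma}|\eta|^{2\sigma}\big)|\ha w(\eta)|^2\,d\eta .
\end{align*}
This yields $\norm{w_s}_{H^\sigma}\leq s^{-1/2}\norm w_{H^\sigma}$ for $s\in(0,1]$, and $\norm{w_s}_{H^\sigma}\leq s^{\sigma-\frac12}\norm w_{H^\sigma}$ for $s\geq1$. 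Both bounds hold for every $\sigma\geq0$, including $\sigma=0$ with the stated convention.

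\textbf{Minkowski's inequality.} Applying Minkowski in $H^\sigma(\RR)$ to the two representations gives
\begin{align*}
    \norm{\Hard0\beta w}_{H^\sigma}\leq\norm w_{H^\sigma}\int_0^1 s^{\Re\beta-\frac32}\,ds=\frac{\norm w_{H^\sigma}}{\Re\beta-\frac12}.
\end{align*}
The integral is finite precisely when $\Re\beta>\frac12$. Similarly,
\begin{align*}
    \norm{\Hard\de\beta w}_{H^\sigma}\leq\norm w_{H^\sigma}\int_1^\infty s^{-\Re\beta+\sigma-\frac32}\,ds=\frac{\norm w_{H^\sigma}}{\Re\beta-\sigma+\frac12},
\end{align*}
which is finite precisely when $\Re\beta>\sigma-\frac12$. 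These are exactly the two claimed inequalities.

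\textbf{Main obstacle: justifying Minkowski.} The delicate step is identifying the pointwise integral with an $H^\sigma$-valued Bochner integral, so that Minkowski's inequality applies. I would do this as follows.
\begin{itemize}
    \item The map $s\mapsto w_s$ is continuous from $(0,\infty)$ into $H^\sigma(\RR)$. This follows from continuity of dilations on $L^2$ and dominated convergence on the Fourier side.
    \item By the dilation bound, the weighted integrands are Bochner integrable under the respective conditions on $\Re\beta$.
    \item A Bochner integral in $H^\sigma\subset L^2$ agrees almost everywhere with the pointwise integral. To see this, pair both against test functions and use Fubini, which is justified by $\int\norm{w_s}_{L^2}|s^{\mp\beta-1}|\,ds<\infty$.
\end{itemize}
Alternatively, one can first prove the estimates for $w\in C_c^\infty((-\de,\de))$ and conclude by density. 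In that case one must check that both operators are bounded $L^2\to L^2$ under the stated conditions. This is the $\sigma=0$ case of the same argument, and in case (ii) it requires $\Re\beta>-\frac12$, which is implied by $\Re\beta>\sigma-\frac12$. That boundedness lets the pointwise definitions pass to the limit.
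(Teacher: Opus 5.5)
Your proof is correct and follows essentially the same route as the paper. Both write $\Hard{0}{\beta}w$ and $\Hard{\de}{\beta}w$ as integrals of the dilations $w(s\,\cdot)$ against $s^{\beta-1}$ on $(0,1)$ and $s^{-\beta-1}$ on $(1,\infty)$, bound $\norm{w(s\,\cdot)}_{H^\sigma(\RR)}$ by $s^{-1/2}$ or $s^{\sigma-1/2}$ times $\norm{w}_{H^\sigma(\RR)}$, and then apply the triangle inequality for integrals. Your identification of the pointwise integral with an $H^\sigma$-valued Bochner integral, via continuity of $s\mapsto w(s\,\cdot)$ and a Fubini pairing, justifies a step the paper only asserts.
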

\begin{proof}
Let $w$ be as in the statement and put $T_tw(x):=w(tx)$ for $t>0$. Changing variables gives
\begin{align}
    \label{eq:hardy:dilation-norm}
    \norm{T_tw}_{H^\sigma(\RR)}^2
    &=t^{-1}\int_\RR(1+t^{2\sigma}|\xi|^{2\sigma})|\ha w(\xi)|^2\,d\xi.
\end{align}
For $0<t\leq1$ the weight in~\eqref{eq:hardy:dilation-norm} is at most
$1+|\xi|^{2\sigma}$. For $t\geq1$ it is at most
$t^{2\sigma}(1+|\xi|^{2\sigma})$. Thus
\begin{equation}
    \label{eq:hardy:dilation-bound}
    \norm{T_tw}_{H^\sigma(\RR)}
    \leq
    \begin{cases}
        t^{-1/2}\norm{w}_{H^\sigma(\RR)},&0<t\leq1,\\
        t^{\sigma-1/2}\norm{w}_{H^\sigma(\RR)},&t\geq1.
    \end{cases}
\end{equation}
Under the respective hypotheses of the two parts, the substitution $\tau=t|x|$ and the support of $w$ give, for $x\ne0$,
\begin{align}
    \label{eq:hardy:dilation-representations}
    (\Hard0\beta w)(x)=\int_0^1t^{\beta-1}T_tw(x)\,dt,\qquad
    (\Hard\de\beta w)(x)=\int_1^\infty t^{-\beta-1}T_tw(x)\,dt.
\end{align}

For part~\ref{Hardy:enu1}, assume $\Re\beta>\frac12$. The first case of~\eqref{eq:hardy:dilation-bound} gives
\begin{align}
    \label{eq:hardy:origin-integrability}
    \int_0^1\norm{t^{\beta-1}T_tw}_{H^\sigma(\RR)}\,dt
    &\leq\norm{w}_{H^\sigma(\RR)}
      \int_0^1t^{\Re\beta-3/2}\,dt
     =\frac{\norm{w}_{H^\sigma(\RR)}}{\Re\beta-\frac12}.
\end{align}

For part~\ref{Hardy:enu2}, assume instead $\Re\beta>\sigma-\frac12$. The second case of~\eqref{eq:hardy:dilation-bound} gives
\begin{align}
    \label{eq:hardy:outer-integrability}
    \int_1^\infty\norm{t^{-\beta-1}T_tw}_{H^\sigma(\RR)}\,dt
    &\leq\norm{w}_{H^\sigma(\RR)}
      \int_1^\infty t^{-\Re\beta+\sigma-3/2}\,dt
     =\frac{\norm{w}_{H^\sigma(\RR)}}{\Re\beta-\sigma+\frac12}.
\end{align}
The finite bounds in~\eqref{eq:hardy:origin-integrability} and~\eqref{eq:hardy:outer-integrability} show that the integrals in~\eqref{eq:hardy:dilation-representations} converge in $H^\sigma(\RR)$. The triangle inequality then yields
\begin{align*}
    \norm{\Hard0\beta w}_{H^\sigma(\RR)}
    \leq\frac{\norm{w}_{H^\sigma(\RR)}}{\Re\beta-\frac12}\andspace
    \norm{\Hard\de\beta w}_{H^\sigma(\RR)}
    \leq\frac{\norm{w}_{H^\sigma(\RR)}}{\Re\beta-\sigma+\frac12}.
\end{align*}
\end{proof}

\begin{proofsth}{Lemma~\ref{lem:loc:apriori}}
    Throughout, $A\lesssim B$ means $A\leq C_{\la,\te,\eps}B$.
    Write $z=e^{ix}$, so $\pa_x=iz\pa_z$ and $z^2\pa_z^2=(z\pa_z)^2-z\pa_z=-\pa_x^2+i\pa_x$.
    Hence 
    \begin{align*}
        z^2(z^2-1)\pa_z^2F=(z^2-1)(-\pa_x^2+i\pa_x)F
        \andspace
        z\Pglo\pa_zF=-i\Pglo\pa_xF.
    \end{align*}
    
    The trace $f:=F|_\T$ therefore satisfies
    \begin{equation}
    \label{eq:trace:app}
      (1-z^2) \pa_x^2 f+i\bigl(z^2-1-\Pglo\bigr) \pa_xf+\Qglo f=g,\qquad g:=G|_\T.
    \end{equation}
    Since $F\in\XHhol{s+1}$ and $G\in\XHhol s$, the functions $f,\pa_xf,g$ are continuous on $\T$.

    Write $\sigma:=s-1\in[0,1]$.  Since $(1+k^2)^{s+1}\lesssim k^4(1+k^2)^{s-1}+1$,
    \begin{equation}
    \label{eq:reduction:app}
      \norm{f}_\XH{s+1}^2
      \lesssim
      \norm{\pa_x^2 f}_{H^{s-1}(\T)}^2
      +
      \norm{f}_\XL^2,
    \end{equation}
    and $\norm{f}_\XL\lesssim\norm{F}_{\XHhol{s+1-\eps}}$. It therefore suffices to prove
    \begin{equation}
    \label{eq:goal:app}
      \norm{\pa_x^2 f}_{H^{s-1}(\T)}
      \lesssim
      \norm{G}_{\XHhol s}+\norm{F}_{\XHhol{s+1-\eps}}.
    \end{equation}
    
    Fix $0<\de<\frac\pi2$ and a smooth partition $1=\chi_0^2+\chi_+^2+\chi_-^2$ on $\T$ where 
    \begin{align*}
        \chi_+&=1 \text{ on }
        \left\{|x|\leq\frac12\de
        \right\},\quad 
        \supp\chi_+\subset
        \left\{|x|<\frac34\de
        \right\},\\
        \chi_-&=1 \text{ on }
        \left\{|x-\pi|\leq\frac12\de
        \right\},\quad 
        \supp\chi_-\subset
        \left\{|x-\pi|<\frac34\de
        \right\},
    \end{align*}
     and $\chi_0$ is supported away from $z=\pm1$. Then
    \begin{equation}
    \label{eq:pou:app}
      \norm{v}_{H^{s-1}(\T)}\lesssim\norm{\chi_0 v}_{H^{s-1}(\T)}+\norm{\chi_+ v}_{H^{s-1}(\T)}+\norm{\chi_- v}_{H^{s-1}(\T)}.
    \end{equation}

    \paragraph{Interior piece.}
    On $\supp\chi_0$ one has $|1-z^2|\geq c(\de)>0$, so~\eqref{eq:trace:app} gives
    \begin{align*}
        \chi_0 \pa_x^2 f=\frac{\chi_0}{1-z^2}\big(g-i(z^2-1-\Pglo)\pa_xf-\Qglo f\big).
    \end{align*}
    The factor $\frac{\chi_0}{1-z^2}$ is smooth, so multiplication by it is bounded on $H^{s-1}(\T)$, giving
    \begin{align*}
        \norm{\chi_0 \pa_x^2 f}_{H^{s-1}(\T)}\lesssim
        \norm{g}_{H^{s-1}(\T)}
        +
        \norm{\pa_xf}_{H^{s-1}(\T)}
        +
        \norm{f}_{H^{s-1}(\T)}
        \lesssim
        \norm{G}_\XHhol{s}+
        \norm{F}_\XHhol{s+1-\eps}.
    \end{align*}

    For functions supported in either singular chart, the $H^\sigma(\T)$ norm is equivalent to the $H^\sigma(\RR)$ norm of the zero extension in the local coordinate.

    \paragraph{Near $z=-1$ (admissible branch).}
    Here the local exponent is $2+\la$, the admissible branch. Writing $x$ for the
    local coordinate at this singular point (so that $z^2-1=e^{2ix}-1$ vanishes at
    $x=0$), set
    \begin{align*}
        h(x):=\frac{e^{2ix}-1}{x}=2i\int_0^1e^{2isx}\ds.
    \end{align*}
    Notice that $h$ is smooth at $0$, $h(0)=2i$, $h\ne0$ for $|x|<\de$, after choosing $\de>0$ sufficiently small. Dividing~\eqref{eq:trace:app} by $-h$ gives on $\{|x|<\de\}$:
    \begin{equation}
    \label{eq:loc1:app}
      D_x\pa_x f=\ti{g}+\ti{p}\pa_xf+\ti{q} f,\qquad
      \ti{p}:=\frac{c_1}{h},\quad\ti{q}:=\frac{\Qglo}{h},\quad\ti{g}:=-\frac{g}{h},
    \end{equation}
    where $c_1=i(z^2-1-\Pglo)$ and $D_x f(x):=x\pa_x f(x)$.
    
    Differentiating~\eqref{eq:loc1:app} and setting $v=\pa_x^2f$:
    \begin{equation}
    \label{eq:v1:app}
      D_x v+(1-\ti{p}) v=\kappa_-,\qquad\kappa_-:=\pa_x\ti{g}+(\pa_x \ti{p}+\ti{q})\pa_x f+f \pa_x\ti{q}.
    \end{equation}
    
    Multiplying~\eqref{eq:v1:app} by $\chi_-$ and writing $V:=\chi_- v$,
    \begin{equation}
    \label{eq:v1loc:app}
      D_x V+(1-\ti{p})V=
      \chi_-\kappa_-+
      vD_x\chi_-=:K_-.
    \end{equation}
    The term $v D_x\chi_-$ is supported in $\{\frac12\de<|x|<\frac34\de\}$, where the interior estimate applies, giving
    \begin{equation}
      \norm{K_-}_{H^{s-1}(\T)}\lesssim\norm{G}_{\XHhol s}+\norm{F}_{\XHhol{s+1-\eps}}.
    \end{equation}
    Extend $K_-$ by zero to $\RR$ in the local coordinate.

Write $1-\ti p=-\la+x\rho$ and $R(x):=\int_0^x\rho(y)\,dy$.
Extend $e^RK_-$ by zero to $\RR$.
On either side of zero, the difference between $V$ and $-e^{-R}\Hard\de\la[e^RK_-]$ is a multiple of $|x|^\la e^{-R(x)}$.
Both vanish near $\pm\de$, so the multiples are zero and
\begin{equation}
    \label{eq:Vrep:app}
    V=-e^{-R}\Hard\de\la[e^RK_-].
\end{equation}
Since $\Re\la>\frac12\geq\sigma-\frac12$, Lemma~\ref{lem:hardy}\ref{Hardy:enu2} applies. A smooth cutoff supported in $(-\de,\de)$ and equal to one on $\supp\chi_-$ gives
\begin{align*}
    \norm{\chi_-v}_{H^\sigma(\RR)}
    \lesssim\norm{\Hard\de\la[e^RK_-]}_{H^\sigma(\RR)}
    \lesssim\norm{K_-}_{H^\sigma(\RR)}
    \lesssim\norm{G}_{\XHhol s}+\norm{F}_{\XHhol{s+1-\eps}}.
\end{align*}

\paragraph{Near $z=+1$ (excluded branch).}
The same local computation, in the coordinate $y$ centered at this point, gives
\begin{align*}
    y\pa_yv+(1-\ti p_+)v=\kappa_+,
    \qquad 1-\ti p_+(0)=\la.
\end{align*}
Set $V_+:=\chi_+v$ and $K_+:=\chi_+\kappa_++vD_y\chi_+$, where $D_y:=y\pa_y$. The interior estimate on $\supp D_y\chi_+$ gives, after extension by zero to $\RR$,
\begin{align*}
    \norm{K_+}_{H^\sigma(\RR)}
    \lesssim\norm{G}_{\XHhol s}+\norm{F}_{\XHhol{s+1-\eps}}.
\end{align*}
Write $1-\ti p_+(y)=\la+y\rho_+(y)$ and $R_+(y):=\int_0^y\rho_+(r)\,dr$.
Extend $e^{R_+}K_+$ by zero to $\RR$.
By Lemma~\ref{lem:hardy}\ref{Hardy:enu1}, $e^{-R_+}\Hard0\la[e^{R_+}K_+]$ belongs to $H^\sigma(-\de,\de)$.
On either side of zero, its difference from $V_+$ is a multiple of $|y|^{-\la}e^{-R_+(y)}$.
Both $V_+$ and the particular solution belong to $L^2(-\de,\de)$. Since $\Re\la>\frac12$, the multiple must vanish. Thus
\begin{align*}
    V_+=e^{-R_+}\Hard0\la[e^{R_+}K_+]
    \qquad\text{on }(-\de,\de).
\end{align*}
A smooth cutoff supported in $(-\de,\de)$ and equal to one on $\supp\chi_+$ gives
\begin{align*}
    \norm{\chi_+v}_{H^\sigma(\RR)}
    \lesssim\norm{K_+}_{H^\sigma(\RR)}
    \lesssim\norm{G}_{\XHhol s}+\norm{F}_{\XHhol{s+1-\eps}}.
\end{align*}
    
    \paragraph{Conclusion.}
    Inserting all three bounds into~\eqref{eq:pou:app} gives~\eqref{eq:goal:app}. Combined with~\eqref{eq:reduction:app} this yields
    \begin{align*}
      \norm{F}_\XHhol{s+1}=\norm{f}_\XH{s+1}
      \lesssim\norm{\pa_x^2 f}_{H^{s-1}(\T)}+\norm{f}_\XL
      \lesssim\norm{G}_{\XHhol s}+\norm{F}_{\XHhol{s+1-\eps}},
    \end{align*}
    completing the proof.
\end{proofsth}

%%%%%%%%%%%%%%%%%%%%%%%%%%%%%%%%%
%%%%%%%%%%%%%%%%%%%%%%%%%%%%%%%%%

\section{Error bounds for Taylor approximations}
\label{sec:appendix:errors}

The computer-assisted verification of Lemma~\ref{lem:negposjump} requires rigorous enclosures for the entries of $\Mcrit$, which are values at $x_0=\frac12$ of analytic functions associated with the Fuchsian operator $\OLdisk\la\te$. Each such function is approximated in practice by a polynomial Taylor truncation matching the first coefficients of the exact solution at the relevant singular point. The aim of this appendix is to develop the residual estimate that allows us to control the distance between an analytic function $f$ and its truncated Taylor approximation $f^{\rm ap}$.

\subsection{A residual estimate for inhomogeneous Fuchsian equations}

Throughout this section let $b>0$, let $p,q,g:[0,b]\to\C$ be analytic, and consider the Fuchsian operator
\begin{align*}
    \mathcal L f(x) := x^2 \pa_x^2 f(x) + xp(x) \pa_xf(x) + q(x)f(x)
\end{align*}
together with an analytic solution $f$ of
\begin{align}
\label{ode:appB:LFG}
    \mathcal L f = g \quad\text{on } [0,b].
\end{align}
Given an analytic approximation $f^{\rm ap}:[0,b]\to\C$ matching the first $N_0$ derivatives of $f$ at $0$, we measure its accuracy by the residual
\begin{align}
\label{def:appB:R}
    \xi(x) := \int_0^x\int_0^y \frac{g(s)-\mathcal L f^{\rm ap}(s)}{s^2} \ds\dy,
\end{align}
which is well defined and analytic on $[0,b]$ provided $g-\mathcal L f^{\rm ap}$ vanishes to order at least $2$ at $0$.

For $N_0\geq 2$, write
\begin{align*}
    \mathcal A_{N_0}([0,b])
    :=
    \{h \text{ analytic on } [0,b] : \pa_x^k h(0)=0 \text{ for every } 0\leq k<N_0\},
\end{align*}
and, for $h\in\mathcal A_{N_0}([0,b])$ written as $h=x^{N_0}u$, define
\begin{align*}
    \norm h_{\mathcal A_{N_0}} := \norm{u} _{L^\infty([0,b])}.
\end{align*}

\begin{lemma}
\label{lem:sec_ord:LS}
    Let $N_0 \geq 2$ and assume the contraction condition
    \begin{align}
    \label{appB:L}
        \ell:=\frac{\norm p _{L^\infty([0,b])}}{N_0}
        +
        \frac{\norm{D_x p-p-q}_{L^\infty([0,b])}}{N_0(N_0-1)}
        < 1,
    \end{align}
    where $D_x = x \pa_x$.
    
    Let $f$ be an analytic solution of~\eqref{ode:appB:LFG}, let $f^{\rm ap}:[0,b]\to\C$ be analytic with
    \begin{align*}
        {f^{\rm ap}}^{(k)}(0) = f^{(k)}(0)\qquad\text{for every }0\leq k\leq N_0-1,
    \end{align*}
    and let $\tau:=f-f^{\rm ap}$ and $\xi$ as in~\eqref{def:appB:R}. Then $\tau\in\mathcal A_{N_0}([0,b])$, and
    \begin{align}
    \label{appB:Ebound}
        \norm \tau _{\mathcal A_{N_0}([0,b])}
        \leq
        \frac{1}{1-\ell}\norm \xi _{\mathcal A_{N_0}([0,b])}.
    \end{align}
    In particular,
    \begin{subequations}
    \begin{align}
    \label{appB:Linfbound}
        \norm \tau _{L^\infty([0,b])}
        &\leq
        \frac{b^{N_0}}{1-\ell} \norm \xi _{\mathcal A_{N_0}([0,b])},
        \\
    \label{appB:Eprimebound}
        \norm{\pa_x \tau}_{L^\infty([0,b])}
        &\leq
        \frac{N_0  \ell  b^{N_0-1}}{1-\ell} \norm \xi _{\mathcal A_{N_0}([0,b])}
        +
        \norm{\pa_x \xi}_{L^\infty([0,b])}.
    \end{align}
    \end{subequations}
    
\end{lemma}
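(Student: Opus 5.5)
The plan is to rewrite the equation for the error $\tau:=f-f^{\rm ap}$ as a fixed-point identity $\tau=\xi-K\tau$. Here $K$ is a Volterra-type operator on $\mathcal A_{N_0}([0,b])$ with operator norm at most $\ell$. Then $\norm\tau_{\mathcal A_{N_0}}\leq\norm\xi_{\mathcal A_{N_0}}+\ell\norm\tau_{\mathcal A_{N_0}}$, and \eqref{appB:Ebound} follows by absorbing the $\ell$-term, which is allowed since $\ell<1$ and $\norm\tau_{\mathcal A_{N_0}}<\infty$.

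\textbf{Step 1: vanishing orders.} Since the first $N_0$ derivatives of $f$ and $f^{\rm ap}$ agree at $0$, $\tau$ is analytic and $\tau=x^{N_0}u$ with $u$ analytic, so $\tau\in\mathcal A_{N_0}$ and $\norm\tau_{\mathcal A_{N_0}}<\infty$. The operator $\mathcal L$ maps $O(x^k)$ to $O(x^k)$, because $x^2\pa_x^2$ and $x\pa_x$ preserve vanishing order. Hence $r:=g-\mathcal Lf^{\rm ap}=\mathcal L\tau=O(x^{N_0})$. As $N_0\geq2$, $\xi$ is well defined, $\pa_x^2\xi=r/x^2$, and $\xi=O(x^{N_0})$, so $\xi\in\mathcal A_{N_0}$.

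\textbf{Step 2: integral identity.} Divide $\mathcal L\tau=r$ by $x^2$:
\begin{align*}
\pa_x^2\tau=\pa_x^2\xi-\frac{p\,\pa_x\tau}{x}-\frac{q\,\tau}{x^2}.
\end{align*}
To avoid $\pa_x\tau$ on the right, integrate by parts using
\begin{align*}
\frac{p\,\pa_x\tau}{x}=\pa_x\Big(\frac{p\tau}{x}\Big)-\frac{(D_xp-p)\tau}{x^2}.
\end{align*}
Integrate from $0$ twice. All boundary terms vanish at $0$, since $\tau,\xi=O(x^{N_0})$ with $N_0\geq2$ and $\tau'(0)=\xi'(0)=0$. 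This gives $\tau=\xi-K\tau$ with
\begin{align*}
K\tau(x):=\int_0^x\frac{p(y)\tau(y)}{y}\dy+\int_0^x\!\!\int_0^y\frac{(q+p-D_xp)(s)\,\tau(s)}{s^2}\ds\dy.
\end{align*}

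\textbf{Step 3: bounds.} Insert $|\tau(s)|\leq s^{N_0}\norm\tau_{\mathcal A_{N_0}}$. The first integral is bounded by $\norm p_{L^\infty}\frac{x^{N_0}}{N_0}\norm\tau_{\mathcal A_{N_0}}$. The double integral is bounded by $\norm{D_xp-p-q}_{L^\infty}\frac{x^{N_0}}{N_0(N_0-1)}\norm\tau_{\mathcal A_{N_0}}$. Together these give $\norm{K\tau}_{\mathcal A_{N_0}}\leq\ell\norm\tau_{\mathcal A_{N_0}}$, which proves \eqref{appB:Ebound}.

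Bound \eqref{appB:Linfbound} follows from $|\tau|\leq b^{N_0}\norm\tau_{\mathcal A_{N_0}}$. For \eqref{appB:Eprimebound}, differentiate the identity: $\pa_x\tau=\pa_x\xi-\pa_x K\tau$ with
\begin{align*}
\pa_x K\tau(x)=\frac{p\tau}{x}+\int_0^x\frac{(q+p-D_xp)\tau}{s^2}\ds.
\end{align*}
The same estimates give $|\pa_xK\tau|\leq\big(\norm p_{L^\infty}+\frac{\norm{D_xp-p-q}_{L^\infty}}{N_0-1}\big)x^{N_0-1}\norm\tau_{\mathcal A_{N_0}}=N_0\ell\,x^{N_0-1}\norm\tau_{\mathcal A_{N_0}}$. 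Combining with \eqref{appB:Ebound} finishes the proof.

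The main obstacle is the integration by parts in Step 2. Without it the $\pa_x\tau$ term cannot be controlled by the $\mathcal A_{N_0}$ norm, and this manipulation is what produces exactly the combination $D_xp-p-q$ appearing in $\ell$. Care is also needed that the boundary terms at $0$ genuinely vanish.
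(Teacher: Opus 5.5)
Your proposal is correct and follows essentially the same route as the paper. Your operator $K$ is exactly $-T$, where $T$ is the paper's Volterra operator after the integration by parts that produces $D_xp-p-q$; the paper does that integration by parts inside the estimate, using $\tau=x^{N_0}u$, rather than building it into the operator. The absorption step for \eqref{appB:Ebound} and the derivative bound obtained by differentiating $\tau=\xi-K\tau$ coincide with the paper's argument.
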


\begin{proof}
    The matching conditions imply that $\tau=f-f^{\rm ap}$ belongs to
    $\mathcal A_{N_0}([0,b])$. Define $T$ on this space by
    \begin{align*}
        Th(x):=-\int_0^x\int_0^y
        \frac{s p(s)\pa_x h(s)+q(s)h(s)}{s^2}\ds\dy.
    \end{align*}
    For $h\in\mathcal A_{N_0}([0,b])$, the numerator vanishes to order
    at least $N_0$, so the integrals converge, $Th$ is analytic, and
    $Th\in\mathcal A_{N_0}([0,b])$. Since $h(0)=\pa_x h(0)=0$,
    Taylor's integral formula gives
    \begin{align*}
        (I-T)h(x)=\int_0^x\int_0^y
        \frac{\mathcal Lh(s)}{s^2}\ds\dy.
    \end{align*}
    Apply this identity directly to $\tau$. The equation
    $\mathcal L\tau=g-\mathcal Lf^{\rm ap}$ and the definition of $\xi$
    yield
    \begin{align}
        \label{eqn:I-Tt=xi}
        (I-T)\tau=\xi.
    \end{align}

    Writing $\tau(x)=x^{N_0}u(x)$, a direct computation gives
    \begin{align*}
        sp(s)\pa_x\tau(s)+q(s)\tau(s)
        =
        s^{N_0}\big(N_0 p(s)+q(s)\big)u(s)
        +
        s^{N_0+1}p(s)\pa_x u(s).
    \end{align*}

    Hence, we have
    \begin{align*}
        T\tau(x) &= -\int_0^x\int_0^y \frac{s p(s) \pa_x \tau(s)+q(s)\tau(s)}{s^2} \ds\dy\\
        &= -\int_0^x\int_0^y
        s^{N_0-2}\big(N_0 p(s)+q(s)\big)u(s)
        +
        s^{N_0-1}p(s)\pa_x u(s)
         \ds\dy\\
        &=
        -\int_0^x y^{N_0-1}p(y)u(y)\dy
        +
        \int_0^x\int_0^y s^{N_0-2}\big(s\pa_x p(s)-p(s)-q(s)\big)u(s) \ds\dy,
    \end{align*}
    where we have integrated by parts.

    Bounding each integrand by its $L^\infty$ norm and using
    \begin{align*}
        \int_0^x y^{N_0-1}\dy = \frac{x^{N_0}}{N_0},
        \qquad
        \int_0^x\int_0^y s^{N_0-2} \ds\dy = \frac{x^{N_0}}{N_0(N_0-1)},
    \end{align*}
    we obtain
    \begin{align*}
        \frac{|T\tau(x)|}{x^{N_0}} \leq \ell  \norm{u} _{L^\infty([0,b])}\qquad
        \text{for all }x\in(0,b],
    \end{align*}
    hence
    \begin{align*}
        \norm{T\tau}_{\mathcal A_{N_0}([0,b])}\leq \ell \norm \tau _{\mathcal A_{N_0}([0,b])}.
    \end{align*}

    By~\eqref{eqn:I-Tt=xi} and the preceding estimate,
    \begin{align*}
        \norm\tau_{\mathcal A_{N_0}}
        \leq\norm{T\tau}_{\mathcal A_{N_0}}+\norm\xi_{\mathcal A_{N_0}}
        \leq\ell\norm\tau_{\mathcal A_{N_0}}+\norm\xi_{\mathcal A_{N_0}}.
    \end{align*}
    Rearranging and using $\ell<1$ proves~\eqref{appB:Ebound}.
    The bound~\eqref{appB:Linfbound} follows from
    $\norm\tau_{L^\infty([0,b])}\leq b^{N_0}\norm\tau_{\mathcal A_{N_0}}$.

    For the derivative bound~\eqref{appB:Eprimebound}, differentiating~\eqref{eqn:I-Tt=xi} yields
    \begin{align*}
        \pa_x \tau(x)
        =
        -x^{N_0-1}p(x)u(x)
        +
        \int_0^x s^{N_0-2}\big(s\pa_xp(s)-p(s)-q(s)\big)u(s) \ds
        +
        \pa_x\xi(x).
    \end{align*}
    Bounding each integrand by its $L^\infty$ norm and using
    \begin{align*}
        x^{N_0-1} \leq b^{N_0-1},
        \qquad
        \int_0^x s^{N_0-2} \ds = \frac{x^{N_0-1}}{N_0-1} \leq \frac{b^{N_0-1}}{N_0-1},
    \end{align*}
    together with the identity $\norm p _{L^\infty([0,b])} + \frac{\norm{D_x p-p-q}_{L^\infty([0,b])}}{N_0-1} = N_0 \ell$, gives
    \begin{align*}
        \norm{\pa_x\tau}_{L^\infty([0,b])} \leq N_0  \ell  b^{N_0-1} \norm{u} _{L^\infty([0,b])}
        +
        \norm{\pa_x \xi}_{L^\infty([0,b])},
    \end{align*}
    and the bound~\eqref{appB:Eprimebound} follows from $\norm{u} _{L^\infty([0,b])}=\norm \tau _{\mathcal A_{N_0}([0,b])}\leq (1-\ell)^{-1}\norm \xi _{\mathcal A_{N_0}([0,b])}$.
\end{proof}

\subsection{Application to the entries of $\Mcrit$}
\label{sec:app:apply}

The matrix $\Mcrit$ defined in Definition~\ref{def:Mat} is assembled from the
values at $x_0=\frac12$ of the functions
\begin{align*}
    \Fholhte1{1}\pm
    \andspace
    \FholpAff i {\te^\pm} k,
    \quad i\in\{1,2\},\quad k\in\{0,1\},
\end{align*}
together with their first derivatives $\pa_z$.  For each such global
function $F$ we pass to the local coordinate $x$ at the relevant
singular point. Near $z=1$ set $f(x):=F(1-x)$ (so $x=1-z$), and near $z=0$ set
$f(x):=F(x)$ (so $x=z$).  Each $f$ is analytic near $x=0$ and satisfies a local
Fuchsian equation $\mathcal{L}_{z_0}f=g$ identified below.  Writing $b_0:=\frac12$,
the global midpoint $z=x_0=\frac12$ corresponds to $x=b_0$ in both charts, and the matrix entries of Definition~\ref{def:Mat} relate to
$f$ via
\begin{align}\label{eq:local-global}
    F(x_0) = f(x_*),
    \qquad
    \pa_z F(x_0) = \sigma\pa_x f(x_*),
    \qquad
    x_* :=
    b_0,
    \qquad
    \sigma :=
    \begin{cases}
       -1 & \text{near }z=1,\\
       +1 & \text{near }z=0.
    \end{cases}
\end{align}

\paragraph{Local equations at $z=0$.}
Set $x:=z$ and write the
local operator at $z=0$ as
\begin{align}
    \mathcal{L}_0 f(x)
    :=
    x^2\pa_x^2 f(x)+x \ploc{0} (x)\pa_x f(x)+\qloc{0}(x)f(x),
\end{align}
where $\ploc{0}$ and $\qloc{0}$ are the coefficients of the Frobenius normal form at $z_0=0$, written in the local variable $x=z$. From $\Ploc0,\Qloc0$ in the proof of Lemma~\ref{lem:loc:0} one obtains the rational functions
\begin{align*}
    \ploc{0}(x) = \Ploc0(x) = \frac{\Pglo (x)}{x^2-1},
    \qquad
    \qloc{0}(x) = \Qloc0(x) = \frac{\Qglo (x)}{x^2-1},
\end{align*}
which are analytic on $|x|<1$, with nearest poles at $x=\pm1$.

The norms $\bignorm{p}_{L^\infty}$ and $\bignorm{D_x p-p-q}_{L^\infty}$ entering the contraction condition~\eqref{appB:L} are bounded by hand in the following lemma.

\begin{lemma}
\label{lem:sup:00}
    It holds for $\la\in\Slambd$ and $\te\in(0,1)$ that
    \begin{align*}
        \bignorm{
        \ploc0
        }_{L^\infty([0,\frac12])}
        <4,
        \andspace
        \bignorm{D_x \ploc{0} - \ploc{0} - \qloc{0} 
        }_{L^\infty([0,\frac12])}
        <8.
    \end{align*}

    Consequently, the contraction condition~\eqref{appB:L} is satisfied for all $N_0 \geq 6$.
\end{lemma}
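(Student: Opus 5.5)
The plan is to bound each quantity directly from the explicit partial-fraction forms of $\Ploc0$ and $\Qloc0$ in the proof of Lemma~\ref{lem:loc:0}, using only the triangle inequality and elementary monotonicity on $[0,\frac12]$. Throughout, $\la\in\{0.56,0.6\}$, so $0<\la\le 0.6$, and $\te\in(0,1)$.

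\emph{Step 1: bound on $\ploc0$.} Write
\begin{align*}
    \ploc0(x)=(1+2\te)+(1-\la)\frac{x}{1-x}-(1+\la)\frac{x}{1+x}.
\end{align*}
On $[0,\frac12]$ we have $0\le\frac{x}{1-x}\le1$ and $0\le\frac{x}{1+x}\le\frac13$. Combining these with $1+2\te<3$, $1-\la\le0.44$ and $1+\la\le1.6$ gives
\begin{align*}
    |\ploc0|\le 3+0.44+\tfrac{1.6}{3}<3.98<4.
\end{align*}

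\emph{Step 2: bound on $D_x\ploc0-\ploc0-\qloc0$.} We use the crude estimate
\begin{align*}
    |D_x\ploc0-\ploc0-\qloc0|\le|D_x\ploc0|+|\ploc0|+|\qloc0|
\end{align*}
and bound each term separately.
\begin{itemize}
    \item For the first term, $D_x\frac{x}{1-x}=\frac{x}{(1-x)^2}\le2$ and $D_x\frac{x}{1+x}=\frac{x}{(1+x)^2}\le\frac29$ on $[0,\frac12]$, so $|D_x\ploc0|\le 0.44\cdot2+1.6\cdot\frac29<1.25$.
    \item For the last term, use
\begin{align*}
    \qloc0(x)=-2\la\te x-(1-\te^2)-(1-\te+\la\te)\frac{x^2}{1-x}-(1-\te-\la\te)\frac{x^2}{1+x}.
\end{align*}
    On $[0,\frac12]$ we have $2\la\te x\le0.6$, $1-\te^2\le1$, $|1-\te+\la\te|\le1.6$ with $\frac{x^2}{1-x}\le\frac12$, and $|1-\te-\la\te|\le1$ with $\frac{x^2}{1+x}\le\frac16$. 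Hence $|\qloc0|\le 0.6+1+0.8+\frac16<2.57$.
\end{itemize}
Adding the three bounds gives $3.98+1.25+2.57=7.8<8$. This step has the least slack, so I would track the constants carefully here. If the margin turned out too thin, I would first sharpen the argument by combining $(1-\la)\frac{x}{1-x}$ with its $D_x$ derivative before applying the triangle inequality, since both terms carry the same sign.

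\emph{Step 3: contraction.} Substituting the two bounds into~\eqref{appB:L} gives
\begin{align*}
    \ell<\frac4{N_0}+\frac8{N_0(N_0-1)}.
\end{align*}
The right-hand side is decreasing in $N_0$ and equals $\frac23+\frac4{15}=\frac{14}{15}<1$ at $N_0=6$. Hence the contraction condition~\eqref{appB:L} holds for every $N_0\ge6$.
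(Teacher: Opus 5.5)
Your proposal is correct. The partial-fraction forms of $\ploc0$ and $\qloc0$ you use agree with those in the proof of Lemma~\ref{lem:loc:0}, and your numerical bounds hold. The three pieces give roughly $3.97+1.24+2.57\approx 7.78<8$, so the margin is sufficient without the sharpening you held in reserve.

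The route differs from the paper's for the second bound. The paper writes $D_x\ploc0-\ploc0-\qloc0$ over the common denominator $(x^2-1)^2$ and computes the quartic numerator $r_{\la,\te}(x)$ explicitly. It then groups monomials of constant sign, with the positive part in $[0,3]$ and the negative part in $[-4,0]$, which gives $\frac{16}{9}\cdot 4=\frac{64}{9}\approx 7.1$.

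Your splitting $|D_x\ploc0|+|\ploc0|+|\qloc0|$ avoids that polynomial algebra and reuses your Step~1 bound. The price is that it discards the partial cancellation between $\ploc0$ and $\qloc0$. For example, at $x=0$ the true value is $-(2\te+\te^2)$, with modulus below $3$, while your estimate gives $4$. Your constant is therefore less sharp, but only $<8$ is needed, so nothing is lost.

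For the first bound the paper uses the combined form $\ploc0=1+2\te+\frac{2x(x-\la)}{1-x^2}$ and gets $3+\frac45$, while you get about $3.97$; either suffices. The final contraction step is identical to the paper's: $\ell<\frac{14}{15}$ at $N_0=6$, and the bound decreases in $N_0$.
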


\begin{proof}
    We have
    \begin{align*}
        \ploc0(x)&=
        1+2\te
        -
        (1+\la)\frac{x}{1+x}
        +
        (1-\la)\frac{x}{1-x}
        = 
        1+2\te
        +
        2x\frac{x-\la}{1-x^2}.
    \end{align*}

    Taking the supremum yields
    \begin{align*}
        \bignorm{\ploc0}_{L^\infty([0,\frac12])} \leq 
        1+2\te
        +
        \frac{|x-\la|}{1-\frac{1}{4}} \leq 3 + 
        \frac45
        <4,
    \end{align*}
    Here we have used that $\la\in\{0.56,0.6\}$ so that $|x-\la| \leq \frac{3}{5}$ for all $x\in[0,\frac12]$.

    The second bound in the statement is derived in the same way. A direct computation yields
    \begin{align}
        x \pa_x \ploc{0}(x) - \ploc{0}(x) - \qloc{0}(x) 
        =\frac{
        r_{\la,\te}(x)
        }
        {(x^2 - 1)^2}.
    \end{align}
    where the function $r_{\la,\te}$ is given by
    \begin{align*}
        r_{\la,\te}(x):=
        -x^4 \te^2 - (2 \la \te+ 4 \la) x^3 + (2 \te^2 + 2 \te + 4)x^2 + 2x \la \te - \te^2 - 2\te.
    \end{align*}
    Notice that each monomial of $r_{\la,\te}(x)$ has constant sign for $\la,\te\geq0$ and $x\geq0$. We then have for all $x\in[0,\frac12]$, $\te\in(0,1)$ and $\la\in\Slambd$ that
    \begin{align*}
        (2 \te^2 + 2 \te + 4)x^2 + 2x \la \te \in
        [0,3]\andspace
        -x^4 \te^2 - (2 \la \te+ 4 \la) x^3 - \te^2 - 2\te \in[-4,0].
    \end{align*}
    Therefore, we have
    \begin{align*}
        \bignorm{D_x \ploc{0} - \ploc{0} - \qloc{0} 
        }_{L^\infty([0,\frac12])} \leq \frac{16}{9} 
        \sup_{x\in[0,\frac12]}
        |r_{\la,\te}(x)| \leq 8.
    \end{align*}

    Finally, we check that the contraction condition~\eqref{appB:L} is satisfied for all $N_0 \geq 6$:
    \begin{align*}
        \ell \leq \frac{4}{N_0} + \frac{8}{N_0(N_0-1)} \leq \frac{28}{30}<1.
    \end{align*}
\end{proof}

\paragraph{Local equations at $z=1$.}
Set $x:=1-z$ and write the local operator at $z=1$ as
\begin{align}
\label{eq:op-m1}
    \mathcal{L}_{1} f(x)
    :=
    x^2 \pa_x^2 f(x)+x\ploc{1}(x)\pa_x f(x)+\qloc{1}(x)f(x),
\end{align}
where $\ploc{1}$ and $\qloc{1}$ are the coefficients of the Frobenius normal form~\eqref{ode:fuchs2pqhom} at $z_0=1$, written in the local variable $x$. Constructing them from~\eqref{def:PQtl} as in the proof of Lemma~\ref{lem:loc:1} gives the rational functions
\begin{align*}
    \ploc{1}(x) = \frac{\Pglo(1-x)}{(x-1)(x-2)},
    \qquad
    \qloc{1}(x) = \frac{x \Qglo(1-x)}{(x-1)^2(x-2)},
\end{align*}
which are analytic on $|x|<1$, with nearest poles at $x=1$ and $x=2$.

The corresponding bounds at $z=1$ are as follows.

\begin{lemma}
\label{lem:sup:-1}
    It holds for $\la\in\Slambd$ and $\te\in(0,1)$ that
    \begin{align*}
        \bignorm{
        \ploc{1}
        }_{L^\infty([0,\frac12])} \leq 8
        \andspace
        \bignorm{
        D_x \ploc{1} - \ploc{1} - \qloc{1}
        }_{L^\infty([0,\frac12])}
        \leq
        16.
    \end{align*}
    Consequently, the contraction condition~\eqref{appB:L} is satisfied for all $N_0 \geq 10$.
\end{lemma}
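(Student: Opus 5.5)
The plan is to follow the proof of Lemma~\ref{lem:sup:00}: rewrite the coefficients in the global variable, bound numerators and denominators separately on the relevant interval, and finish with the arithmetic of~\eqref{appB:L}.

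\emph{Change of variable.} With $z=1-x$, the interval $x\in[0,\frac12]$ corresponds to $z\in[\frac12,1]$. There
\begin{align*}
    \ploc1(x)=\frac{\Pglo(z)}{z(z+1)},\qquad
    \qloc1(x)=-\frac{(1-z)\Qglo(z)}{z^2(z+1)},\qquad
    D_x=x\pa_x=(z-1)\pa_z .
\end{align*}
Partial fractions give
\begin{align*}
    \frac{\Pglo(z)}{z(z+1)}=(2\te-1)-\frac{2\te+1}{z}+\frac{2+2\la}{z+1}.
\end{align*}
This is consistent with $\ploc1(0)=\la-1$ from the proof of Lemma~\ref{lem:loc:1}.

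\emph{First bound.} I bound the numerator directly. Since $|2\te-1|\le1$, $z\le1$, $\la\le\frac35$ and $2\te+1\le3$,
\begin{align*}
    |\Pglo(z)|\le 1+2\la+3\le 5.2 .
\end{align*}
The denominator satisfies $z(z+1)\ge\frac12\cdot\frac32=\frac34$. Hence $\|\ploc1\|_{L^\infty}\le 5.2\cdot\frac43<7<8$.

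\emph{Second bound.} Differentiating the partial fractions gives
\begin{align*}
    D_x\ploc1=(z-1)\Big(\frac{2\te+1}{z^2}-\frac{2+2\la}{(z+1)^2}\Big).
\end{align*}
This is a difference of two same-sign terms, so with $|z-1|\le\frac12$ its modulus is at most $\frac12\max\{3\cdot4,\ 3.2\cdot\frac49\}=6$.

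For $\qloc1$, on $[\frac12,1]$ each coefficient of $\Qglo$ has modulus at most $1$, $1.2$ and $1$ respectively, so $|\Qglo|\le 3.2$. This gives $|\qloc1|\le \frac12\cdot 3.2/(\frac14\cdot\frac32)<4.3$.

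The crude triangle inequality therefore gives about $6+7+4.3$, which slightly exceeds $16$. This is the main obstacle. I would not sum the three pieces separately. Instead I would put $D_x\ploc1-\ploc1-\qloc1$ over the common denominator $z^2(z+1)^2$, which is bounded below by $\frac14\cdot\frac94=\frac9{16}$ on $[\frac12,1]$. I would then expand the numerator as a polynomial $\tilde r_{\la,\te}(z)$ and group its monomials by sign, as was done for $r_{\la,\te}$ in Lemma~\ref{lem:sup:00}. The positive group and the negative group are each bounded on $[\frac12,1]$ using $\te\in(0,1)$ and $\la\le\frac35$. The modulus of $\tilde r$ is then at most the larger of the two, not their sum, and the aim is $\sup|\tilde r|\le 9$, so that the whole expression is at most $\frac{16}9\cdot 9=16$.

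If the sign grouping is still too lossy, the fallback is to exploit the cancellation between $D_x\ploc1$ and $-\ploc1$ in the $\frac{2\te+1}{z}$ terms. Explicitly,
\begin{align*}
    (z-1)\frac{2\te+1}{z^2}+\frac{2\te+1}{z}=(2\te+1)\frac{2z-1}{z^2},
\end{align*}
which lies between $0$ and $3\cdot 0=0$ at $z=\frac12$ and between $0$ and $3$ at $z=1$, and is nonnegative with modulus at most $3$ throughout $[\frac12,1]$. The analogous combination of the $\frac{2+2\la}{z+1}$ terms is bounded the same way.

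\emph{Consequence.} With the two bounds $8$ and $16$ in hand, condition~\eqref{appB:L} reads
\begin{align*}
    \ell\le\frac8{N_0}+\frac{16}{N_0(N_0-1)}\le\frac8{10}+\frac{16}{90}<1 \qquad\text{for } N_0\ge10,
\end{align*}
since the right-hand side decreases in $N_0$.
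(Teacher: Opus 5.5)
Your first bound and the closing arithmetic for~\eqref{appB:L} are fine. Your $|\Pglo|\leq5.2$ with $z(z+1)\geq\frac34$ is even sharper than the paper's enclosure of the numerator in $[-4,2]$ with $|(x-1)(x-2)|\geq\frac12$. The problem is your main line for the second bound, which does not reach $16$. Over the common denominator $z^2(z+1)^2$ the numerator is
\begin{align*}
\tilde r_{\la,\te}(z)=-\te^2z^4-(4\la+2\la\te)z^3+(2\te^2+2\te+4)z^2+2\la\te z-\te^2-2\te .
\end{align*}
Its positive monomials alone tend to $8+2\la>9$ as $z\to1$, $\te\to1$. So grouping by sign in the variable $z$, with the uniform denominator bound $\frac9{16}$, gives at best $\frac{16}{9}(8+2\la)>16$.

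The paper uses the same denominator, since $(x-1)^2(x-2)^2=z^2(z+1)^2$. It avoids this loss by expanding $r_{\la,\te}(x)=\tilde r_{\la,\te}(1-x)$ in the local variable. There the constant term is $4-4\la\leq2$ and every other monomial carries a power of $x\leq\frac12$. The monomial enclosures then give $r_{\la,\te}\in[-4,3]$, and even the cruder bound $(x-1)^2(x-2)^2\geq\frac14$ yields exactly $16$.

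Your fallback, on the other hand, is correct and is a genuinely different route; make it the proof and finish it. Your identity, together with its analogue for the $(2+2\la)$-terms, gives
\begin{align*}
D_x\ploc{1}-\ploc{1}=(2\te+1)\frac{2z-1}{z^2}-(2+2\la)\frac{2z}{(z+1)^2}-(2\te-1).
\end{align*}
From $(z-1)^2\geq0$ you get $0\leq\frac{2z-1}{z^2}\leq1$ and $0<\frac{2z}{(z+1)^2}\leq\frac12$ on $[\frac12,1]$. This, not the endpoint values, is what justifies ``throughout''. The three terms therefore lie in $[0,3]$, $[-\frac85,0)$ and $(-1,1)$, so $|D_x\ploc{1}-\ploc{1}|<4$. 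Adding your bound $|\qloc{1}|<4.3$ gives $\norm{D_x\ploc{1}-\ploc{1}-\qloc{1}}_{L^\infty([0,\frac12])}<8.3\leq16$.

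Compared with the paper, your route uses the cancellation between $D_x\ploc{1}$ and $-\ploc{1}$ at the level of partial fractions. It needs no quartic bookkeeping and gives a constant about half as large. The paper's version has the merit of being the same mechanical monomial-by-monomial enclosure as in Lemma~\ref{lem:sup:00}.
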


\begin{proof}
    The first estimate is derived as follows
    \begin{align*}
        \bignorm{
        \ploc{1}
        }_{L^\infty([0,\frac12])}&=
        \sup_{x\in[0,\frac12]}
        \left|
        \frac{(2\te-1)(x-1)^2-2\la (x-1)-(2\te+1)}{(x-1)(x-2)}\right|\\
        & \leq 2
        \sup_{x\in[0,\frac12]}
        \big|(2\te-1)(x-1)^2-2\la (x-1)-(2\te+1)\big|.
    \end{align*}
    As in the proof of Lemma~\ref{lem:sup:00}, we enclose each monomial separately. Let $x\in[0,\frac12]$. Then
    \begin{align*}
        (2\te-1)(x-1)^2
        \in[-1,1],\qquad
        -2\la (x-1)\in 
        \left[\tfrac12,\tfrac65\right],\qquad
        -(2\te+1)\in [-3,-1].
    \end{align*}
    Hence, we obtain
    \begin{align*}
        \bignorm{
        \ploc{1}
        }_{L^\infty([0,\frac12])}
         \leq 2
        \left|
            [-1,1] 
            + 
            \big[\tfrac12,\tfrac65\big] 
            + 
            [-3,-1]
        \right|
        \leq 
        2\big|[-4,2]\big| \leq 8.
    \end{align*}

    In order to compute the second bound, we write
    \begin{align*}
        x\pa_x \ploc{1}(x)
        -
        \ploc{1}(x)
        -
        \qloc{1}(x)=
        \frac{r_{\la,\te}(x)}
        {(x - 2)^2 (x - 1)^2},
    \end{align*}
    where $r_{\la,\te}$ is given by
    \begin{align*}
        r_{\la,\te}(x)&:=
        -x^4 \te^2 + 
        (4 \te^2 + 2 \te \la + 4 \la) x^3\\ &\qquad+ 
        (- 4 \te^2 - 6 \te \la + 2 \te - 12 \la + 4)x^2 +
        (4 \te \la - 4 \te + 12 \la - 8) x
        - 4 \la + 4.
    \end{align*}

    Hence, for every $x\in[0,\frac12]$ we have
    \begin{align*}
        \begin{gathered}
        -x^4 \te^2 \in \left[-\tfrac{1}{16},0\right],
        \qquad
        (4 \te^2 + 2 \te \la + 4 \la)x^3
        \in[0,1],\qquad
        (- 4 \te^2 - 6 \te \la + 2 \te - 12 \la + 4)x^2\in 
        [-3,0],
        \\
        (4 \te \la - 4 \te + 12 \la - 8) x\in
        [-2,0],
        \qquad
        - 4 \la + 4 \in \left[\tfrac85,2\right].
        \end{gathered}
    \end{align*}
    The fourth bound holds because $4\te\la-4\te+12\la-8=4\te(\la-1)+4(3\la-2)\in[-4,0]$ for $\te\in[0,1]$, $\la\in\Slambd$ (both summands are non-positive since $\la<1$ and $3\la<2$), so the product with $x\leq\frac12$ lies in $[-2,0]$. Adding all the intervals yields
    \begin{align*}
        r_{\la,\te}(x) \in [-4,3]\qquad\text{for all }x\in[0,\tfrac12].
    \end{align*}

    Since $|x-2|\geq 1$ and $|x-1|\geq\frac12$ on
    $[0,\frac12]$, we obtain
    \begin{align*}
        \bignorm{
        D_x \ploc{1}
        -
        \ploc{1}
        -
        \qloc{1}
        }_{L^\infty([0,\frac12])}
        \leq
        \frac{4}{1^2(1/2)^2}
        = 16.
    \end{align*}
    The contraction condition then follows from
    \begin{align*}
        \ell
        \leq
        \frac{8}{N_0}
        +
        \frac{16}{N_0(N_0-1)}
        \leq
        \frac{8}{10}+\frac{16}{90}
        = \frac{44}{45}
        <1
    \end{align*}
    for every $N_0 \geq 10$.
\end{proof}

\subsection{Rigorous control of the tails}

The contraction constant in~\eqref{appB:L} is controlled by the hand estimates of Lemmas~\ref{lem:sup:00} and~\ref{lem:sup:-1}. 
The code does not recompute those suprema from Taylor coefficients. 
What remains to be computed for each matrix function is the residual size $\bignorm{\xi}_{\mathcal{A}_{N_0}}$ and the derivative size $\bignorm{\pa_x\xi}_{L^\infty}$. 
These are infinite sums, and this subsection records the Cauchy estimates used by the code to bound their tails after the exact coefficients have been summed up to order $N_{\rm res}$.

We fix throughout $N_0\geq 2$, $N_{\rm res}\geq N_0$, $b_0=\frac12$, and a Cauchy radius $\rho\in(b_0,1)$. 
We set $r:=\frac{b_0}\rho\in(0,1)$, which is the geometric
ratio governing all tail bounds below.

We first record explicit sup-bounds $M_p(\rho)$ and $M_q(\rho)$ at each singular point.
The constants $c^p_j$ and $c^q_j$ in Lemmas~\ref{lem:cau:Mbound:00} and~\ref{lem:cau:Mbound:m1} below are the partial fraction coefficients of
\begin{align*}
    \frac{\Pglo(z)}{z(z^2-1)}
    \andspace
    \frac{\Qglo(z)}{z^2(z^2-1)}.
\end{align*}
Explicitly, $c^p_0,c^p_1,c^p_2$ are the coefficients of $\frac1z,\frac1{z-1},\frac1{z+1}$, respectively, in the first expansion, and $c^q_0,c^q_1,c^q_2,c^q_3$ are the coefficients of $\frac1z,\frac1{z^2},\frac1{z-1},\frac1{z+1}$, respectively, in the second. See~\eqref{PQfracform}.
The local formulas use $z=x$ near $z=0$ and $z=1-x$ near $z=1$.

In each case, with $z_0\in\{0,1\}$ the corresponding singular point, $M_p(\rho)$ and $M_q(\rho)$ denote explicit upper bounds for 
\begin{align*}
    \max_{|x|\leq \rho}|\ploc{z_0}(x)| = 
    \bignorm{\Ploc{z_0}}_{L^\infty(D_\rho(z_0))}
    \andspace
    \max_{|x|\leq \rho}|\qloc{z_0}(x)| = 
    \bignorm{\Qloc{z_0}}_{L^\infty(D_\rho(z_0))}.
\end{align*}

\begin{lemma}
\label{lem:cau:Mbound:00}
    Fix $\rho\in(0,1)$. In the local coordinate $x=z$, the coefficients of
    $\mathcal{L}_0$ can be written as
    \begin{align*}
        \ploc{0}(x) = c^p_0 + x\Big(\frac{c^p_1}{x-1}+\frac{c^p_2}{x+1}\Big)
        \andspace
        \qloc{0}(x) = c^q_1 + c^q_0 x
                   + x^2\Big(\frac{c^q_2}{x-1}+\frac{c^q_3}{x+1}\Big).
    \end{align*}
    Since $|x\pm 1|\geq 1-\rho$ for $|x|\leq\rho<1$,
    \begin{align*}
        M_p(\rho) &= |c^p_0|+\frac{\rho (|c^p_1|+|c^p_2|)}{1-\rho}
        \andspace
        M_q(\rho) = |c^q_1|+|c^q_0| \rho+\frac{\rho^2(|c^q_2|+|c^q_3|)}{1-\rho}
    \end{align*}
    are upper bounds for $\max_{|x|\leq\rho}|\ploc{0}(x)|$ and
    $\max_{|x|\leq\rho}|\qloc{0}(x)|$, respectively.
\end{lemma}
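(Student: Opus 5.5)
The plan is to obtain the two representations by partial fractions and then bound each term separately on the closed disk $|x|\leq\rho$.

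First, since $\deg\Pglo=2$ and $z(z^2-1)$ has simple roots $0,\pm1$, partial fractions give
\begin{align*}
\frac{\Pglo(z)}{z(z^2-1)}=\frac{c^p_0}{z}+\frac{c^p_1}{z-1}+\frac{c^p_2}{z+1},
\end{align*}
with no polynomial part. Multiplying by $z$ yields
\begin{align*}
\ploc0(z)=\frac{\Pglo(z)}{z^2-1}=c^p_0+z\Big(\frac{c^p_1}{z-1}+\frac{c^p_2}{z+1}\Big),
\end{align*}
and setting $x=z$ gives the first representation. Likewise, $\deg\Qglo=2<4$, so
\begin{align*}
\frac{\Qglo(z)}{z^2(z^2-1)}=\frac{c^q_0}{z}+\frac{c^q_1}{z^2}+\frac{c^q_2}{z-1}+\frac{c^q_3}{z+1}.
\end{align*}
Multiplying by $z^2$ gives $\qloc0(x)=c^q_1+c^q_0x+x^2\big(\frac{c^q_2}{x-1}+\frac{c^q_3}{x+1}\big)$.

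Second, for $|x|\leq\rho<1$ the reverse triangle inequality gives $|x\pm1|\geq 1-|x|\geq1-\rho$. Combined with $|x|\leq\rho$ and $|x|^2\leq\rho^2$, the triangle inequality bounds each term of the two representations by the corresponding term of $M_p(\rho)$ and $M_q(\rho)$.

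Nothing in this argument is a real obstacle. The only point needing care is that the degree conditions rule out polynomial parts in the partial fraction expansions, which is what makes the representations exact.
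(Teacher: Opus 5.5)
Your proposal is correct and follows the paper's proof, which likewise applies the triangle inequality to the two representations using $|x|\leq\rho$ and $|x\pm1|\geq1-\rho$; your partial-fraction derivation only makes explicit what the paper takes as the definition of the coefficients $c^p_j$, $c^q_j$ (cf.\ \eqref{PQfracform}). One small correction: $\deg\Pglo=2$ fails at $\te=\frac12$, where the leading coefficient $2\te-1$ vanishes, but properness needs only $\deg\Pglo\leq2<3$, so the argument is unaffected.
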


\begin{proof}
    For $|x|\leq \rho <1$ we have
    \begin{align*}
        |x\pm 1| \geq 1-\rho.
    \end{align*}
    Applying the triangle inequality to the decompositions
    \begin{align*}
        \ploc{0}(x) = c^p_0 + x\Big(\frac{c^p_1}{x-1}+\frac{c^p_2}{x+1}\Big)
        \andspace
        \qloc{0}(x) = c^q_1 + c^q_0 x
                   + x^2\Big(\frac{c^q_2}{x-1}+\frac{c^q_3}{x+1}\Big)
    \end{align*}
    gives
    \begin{align*}
        |\ploc{0}(x)|
        &\leq |c^p_0|+\frac{|x|(|c^p_1|+|c^p_2|)}{1-\rho}
        \leq |c^p_0|+\frac{\rho(|c^p_1|+|c^p_2|)}{1-\rho},
        \\
        |\qloc{0}(x)|
        &\leq |c^q_1|+|c^q_0| |x|
        +\frac{|x|^2(|c^q_2|+|c^q_3|)}{1-\rho}
        \leq |c^q_1|+|c^q_0| \rho
        +\frac{\rho^2(|c^q_2|+|c^q_3|)}{1-\rho}.
    \end{align*}
    Taking the maximum over $|x|\leq \rho$ gives the claimed bounds.
\end{proof}

\begin{lemma}
\label{lem:cau:Mbound:m1}
In the local coordinate $x=1-z$ of~\eqref{eq:op-m1}, the coefficients of
$\mathcal L_1$ are
\begin{align*}
 \ploc{1}(x)&=c^p_1+x\Big(\frac{c^p_0}{x-1}+\frac{c^p_2}{x-2}\Big),\\
 \qloc{1}(x)&=-c^q_2x+x^2\Big(-\frac{c^q_0}{x-1}
                  +\frac{c^q_1}{(x-1)^2}-\frac{c^q_3}{x-2}\Big).
\end{align*}
For $|x|\leq\rho<1$, the bounds $|x-1|\geq1-\rho$ and
$|x-2|\geq2-\rho$ give
\begin{align*}
 M_p(\rho)&=|c^p_1|+\rho\Big(\frac{|c^p_0|}{1-\rho}
                            +\frac{|c^p_2|}{2-\rho}\Big),\\
 M_q(\rho)&=|c^q_2|\rho+\rho^2\Big(\frac{|c^q_0|}{1-\rho}
          +\frac{|c^q_1|}{(1-\rho)^2}+\frac{|c^q_3|}{2-\rho}\Big)
\end{align*}
as upper bounds for $\max_{|x|\leq\rho}|\ploc1(x)|$ and
$\max_{|x|\leq\rho}|\qloc1(x)|$, respectively.
\end{lemma}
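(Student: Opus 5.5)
The plan is to derive both decompositions directly from the partial fraction expansions~\eqref{PQfracform} by the substitution $z=1-x$, and then bound each term with the triangle inequality, as in the proof of Lemma~\ref{lem:cau:Mbound:00}. The only structural input is how $\Ploc1,\Qloc1$ from the proof of Lemma~\ref{lem:loc:1} relate to the two rational functions whose partial fractions define $c^p_j,c^q_j$:
\begin{align*}
    \Ploc1(z)=\frac{\Pglo(z)}{z(z+1)}=(z-1)\,\frac{\Pglo(z)}{z(z^2-1)},
    \qquad
    \Qloc1(z)=\frac{(z-1)\Qglo(z)}{z^2(z+1)}=(z-1)^2\,\frac{\Qglo(z)}{z^2(z^2-1)}.
\end{align*}

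\textbf{Coefficient $\ploc1$.} I insert $\frac{\Pglo(z)}{z(z^2-1)}=\frac{c^p_0}{z}+\frac{c^p_1}{z-1}+\frac{c^p_2}{z+1}$ into the first identity. This gives
\begin{align*}
    \Ploc1(z)=c^p_1+(z-1)\Big(\frac{c^p_0}{z}+\frac{c^p_2}{z+1}\Big).
\end{align*}
In the coordinate $x=1-z$ I have $z-1=-x$, $z=-(x-1)$ and $z+1=-(x-2)$. The two minus signs cancel in each quotient, and this yields the stated formula for $\ploc1(x)=\Ploc1(1-x)$.

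\textbf{Coefficient $\qloc1$.} I multiply $\frac{c^q_0}{z}+\frac{c^q_1}{z^2}+\frac{c^q_2}{z-1}+\frac{c^q_3}{z+1}$ by $(z-1)^2=x^2$. The $c^q_2$ term becomes $c^q_2(z-1)=-c^q_2x$. The other three terms become
\begin{align*}
    -\frac{c^q_0x^2}{x-1},\qquad \frac{c^q_1x^2}{(x-1)^2},\qquad -\frac{c^q_3x^2}{x-2}.
\end{align*}
As a sanity check, I will compare these expressions with the closed forms $\ploc1(x)=\frac{\Pglo(1-x)}{(x-1)(x-2)}$ and $\qloc1(x)=\frac{x\Qglo(1-x)}{(x-1)^2(x-2)}$ recorded just before~\eqref{eq:op-m1}. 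I expect this sign bookkeeping under $z\mapsto1-x$ to be the only place where an error could slip in.

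\textbf{Bounds.} For $|x|\leq\rho<1$ the reverse triangle inequality gives $|x-1|\geq1-\rho$ and $|x-2|\geq2-\rho$. I then bound each term of the two decompositions by absolute values, using $|x|\leq\rho$ and $|x|^2\leq\rho^2$. This produces exactly $M_p(\rho)$ and $M_q(\rho)$. Taking the maximum over $|x|\leq\rho$ finishes the proof, and the maxima agree with the sup norms over $D_\rho(1)$ because $x=1-z$ maps $D_\rho(1)$ onto $\{|x|\leq\rho\}$.
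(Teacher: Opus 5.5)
Your proposal is correct and follows the paper's argument. The paper applies the triangle inequality to the displayed representations, using $|x-1|\geq 1-\rho$ and $|x-2|\geq 2-\rho$. You also derive those representations from~\eqref{PQfracform} via $\Ploc1=(z-1)\frac{\Pglo(z)}{z(z^2-1)}$ and $\Qloc1=(z-1)^2\frac{\Qglo(z)}{z^2(z^2-1)}$. That derivation is accurate, including the signs under $z=1-x$, and it supplies a step the paper leaves implicit.
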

\begin{proof}
Apply the triangle inequality to the displayed partial fraction
representations and use the lower bounds for $|x-1|$ and $|x-2|$.
\end{proof}

\begin{lemma}
\label{lem:forcing:Mbound}
Let $\te\in(0,1)$ and $\rho\in(0,1)$. For $j=1,2$, the affine term
$\Affterm j\te$ in~\eqref{holsysa} gives the following forcing terms in the
local equations at $z=0$ and $z=1$, respectively:
\begin{align*}
 g_{0,j}(x)&=\frac{\Affterm j\te(x)}{x^2-1},
 &g_{1,j}(x)&=-\frac{x\Affterm j\te(1-x)}{(1-x)^2(2-x)}.
\end{align*}
Define
\begin{align*}
M_{g_{0,1}}(\rho)&:=\frac{\te(2-\te)\rho}{1-\rho^2},&
M_{g_{1,1}}(\rho)&:=\frac{\te(2-\te)\rho}{(1-\rho)(2-\rho)}\\
M_{g_{0,2}}(\rho)&:=\frac{(1-\te)(1+\te)}{1-\rho^2},&
M_{g_{1,2}}(\rho)&:=\frac{(1-\te)(1+\te)\rho}{(1-\rho)^2(2-\rho)}.
\end{align*}
For each $k\in\{0,1\}$ and $j\in\{1,2\}$,
\begin{align*}
\sup_{|x|\leq\rho}|g_{k,j}(x)|\leq M_{g_{k,j}}(\rho).
\end{align*}

For the homogeneous solution, take $M_g=0$.
\end{lemma}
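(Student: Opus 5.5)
The plan is to derive each forcing term directly from the normal-form reduction already used in the proofs of Lemmas~\ref{lem:loc:aff0} and~\ref{lem:loc:aff1}, and then bound it on $|x|\leq\rho$ by the triangle inequality together with elementary lower bounds on the denominators, in the same way as in Lemmas~\ref{lem:cau:Mbound:00} and~\ref{lem:cau:Mbound:m1}.

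\textbf{At $z=0$.} With $x=z$, the proof of Lemma~\ref{lem:loc:aff0} casts $\OLdisk\la\te F=G$ into the form~\eqref{ode:aff:normal} by dividing by $z^2-1$. This gives the forcing term $g_{0,j}(x)=\Affterm j\te(x)/(x^2-1)$. For $|x|\leq\rho$ we have $|x^2-1|\geq 1-|x|^2\geq1-\rho^2$. We also have $|\Affterm1\te(x)|=\te(2-\te)|x|\leq\te(2-\te)\rho$ and $\Affterm2\te=(1-\te)(1+\te)$, which is constant and positive for $\te\in(0,1)$. These two bounds give $M_{g_{0,1}}$ and $M_{g_{0,2}}$.

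\textbf{At $z=1$.} With $x=1-z$ and $f(x)=F(1-x)$, we have $(z-1)\pa_z=x\pa_x$ and $(z-1)^2\pa_z^2=x^2\pa_x^2$, so the normal form~\eqref{eq:op-m1} is obtained as in the proof of Lemma~\ref{lem:loc:aff1}: divide by $z^2(z+1)$ and multiply by $z-1$. Substituting $z-1=-x$, $z^2=(1-x)^2$ and $z+1=2-x$ yields exactly
\begin{align*}
g_{1,j}(x)=-\frac{x\Affterm j\te(1-x)}{(1-x)^2(2-x)}.
\end{align*}
This is consistent with the coefficients $\ploc1,\qloc1$ already recorded, which is a useful check on the signs. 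For $j=1$ one factor $(1-x)$ cancels, because $\Affterm1\te(1-x)=\te(2-\te)(1-x)$, leaving
\begin{align*}
g_{1,1}(x)=-\frac{\te(2-\te)\,x}{(1-x)(2-x)}.
\end{align*}
Using $|1-x|\geq1-\rho$ and $|2-x|\geq2-\rho$ for $|x|\leq\rho<1$, we get $M_{g_{1,1}}$. For $j=2$ there is no cancellation: the constant $(1-\te)(1+\te)$ times $|x|\leq\rho$, divided by $(1-\rho)^2(2-\rho)$, gives $M_{g_{1,2}}$.

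\textbf{Homogeneous case and main obstacle.} For the homogeneous solution $\Fholhte1{1}\pm$ the forcing is identically zero, so $M_g=0$ trivially. The only real care is needed in the chart change at $z=1$: tracking the sign of $z-1=-x$ and noticing the cancellation of $(1-x)$ in $g_{1,1}$, since without it the bound would carry an extra factor $(1-\rho)^{-1}$ and would not match the stated $M_{g_{1,1}}$.
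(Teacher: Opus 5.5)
Your proposal is correct and follows the paper's own argument. Both derive the forcing terms by dividing~\eqref{holsysa} by the leading coefficient in the charts $z=x$ and $z=1-x$, and then bound them with the triangle inequality using $|x^2-1|\ge1-\rho^2$, $|1-x|\ge1-\rho$ and $|2-x|\ge2-\rho$. Your explicit observation that $\Affterm1\te(1-x)=\te(2-\te)(1-x)$ cancels one factor of $1-x$ in $g_{1,1}$ is exactly what yields the stated $M_{g_{1,1}}$; the paper leaves this detail implicit.
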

\begin{proof}
The formulas follow from~\eqref{holsysa} by setting $z=x$ or $z=1-x$ and
dividing by the leading coefficient. The bounds follow by the triangle
inequality.
\end{proof}

We now bound the residual tail, including the normalized forcing.

\begin{lemma}
\label{lem:tails:xi}
Let $f^{\rm ap}(x)=\sum_{m=0}^{N_0-1}f_m^{\rm ap}x^m$, with
$N_0\geq2$, $N_{\rm res}\geq N_0$, $b_0=\frac12$, and
$\rho\in(b_0,1)$. Let $M_p,M_q$ be the bounds at radius $\rho$ from
Lemma~\ref{lem:cau:Mbound:00} or Lemma~\ref{lem:cau:Mbound:m1}, according to
the local chart, and let $M_g$ be the corresponding bound from
Lemma~\ref{lem:forcing:Mbound}. Define
\begin{align*}
 C_{\rm val}&:=\sum_{m=0}^{N_0-1}|f_m^{\rm ap}|\rho^m,
 &C_{\rm der}&:=\sum_{m=1}^{N_0-1}m|f_m^{\rm ap}|\rho^m.
\end{align*}
Set $r:=\frac{b_0}{\rho}$. 
Then, for $k>N_{\rm res}$,
\begin{align*}
 |\xi_k|&\leq\frac{M_pC_{\rm der}+M_qC_{\rm val}+M_g}{k(k-1)}\rho^{-k},
 &k|\xi_k|&\leq\frac{M_pC_{\rm der}+M_qC_{\rm val}+M_g}{k-1}\rho^{-k},
\end{align*}
and consequently
\begin{align*}
 \sum_{k>N_{\rm res}}|\xi_k|b_0^{k-N_0}
 &\leq\frac{M_pC_{\rm der}+M_qC_{\rm val}+M_g}{N_{\rm res}(N_{\rm res}+1)b_0^{N_0}}
       \frac{r^{N_{\rm res}+1}}{1-r},\\
 \sum_{k>N_{\rm res}}k|\xi_k|b_0^{k-1}
 &\leq\frac{M_pC_{\rm der}+M_qC_{\rm val}+M_g}{N_{\rm res}b_0}
       \frac{r^{N_{\rm res}+1}}{1-r}.
\end{align*}
\end{lemma}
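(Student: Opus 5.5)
The plan is to compute the Taylor coefficients of $\xi$ directly from the definition~\eqref{def:appB:R} and bound each contribution with Cauchy's estimates on the closed disk $|x|\leq\rho$. This disk lies inside the common domain of analyticity $|x|<1$ of $\ploc{z_0}$, $\qloc{z_0}$ and the forcing $g$ (Lemmas~\ref{lem:cau:Mbound:00}, \ref{lem:cau:Mbound:m1}, \ref{lem:forcing:Mbound}).

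\textbf{Coefficient formula.} Write $e(x):=g(x)-\mathcal L f^{\rm ap}(x)=\sum_k e_kx^k$. The double integral $\int_0^x\int_0^y s^{k-2}\,ds\,dy$ equals $x^k/(k(k-1))$ for $k\geq2$. Hence
\begin{align*}
\xi_k=\frac{e_k}{k(k-1)}\qquad\text{for } k\geq2.
\end{align*}

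\textbf{Bound on $e_k$ for $k>N_{\rm res}$.} Since $f^{\rm ap}$ is a polynomial of degree at most $N_0-1<k$, the term $x^2\pa_x^2f^{\rm ap}$ has no $x^k$ coefficient. The remaining terms are products. Writing $x\pa_xf^{\rm ap}=\sum_{m=1}^{N_0-1}mf^{\rm ap}_mx^m$, we get
\begin{align*}
[x\,p\,\pa_xf^{\rm ap}]_k=\sum_{m=1}^{N_0-1}p_{k-m}\,m f^{\rm ap}_m,
\qquad
[q f^{\rm ap}]_k=\sum_{m=0}^{N_0-1}q_{k-m}f^{\rm ap}_m .
\end{align*}
Cauchy's estimates give $|p_j|\leq M_p\rho^{-j}$, $|q_j|\leq M_q\rho^{-j}$ and $|g_k|\leq M_g\rho^{-k}$. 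Factoring out $\rho^{-k}$ leaves exactly $\sum m|f^{\rm ap}_m|\rho^m=C_{\rm der}$ and $\sum|f^{\rm ap}_m|\rho^m=C_{\rm val}$. Therefore
\begin{align*}
|e_k|\leq(M_pC_{\rm der}+M_qC_{\rm val}+M_g)\rho^{-k}.
\end{align*}
Dividing by $k(k-1)$, or by $k-1$ after multiplying by $k$, gives the two pointwise bounds on $\xi_k$ and $k\xi_k$. In the homogeneous case $M_g=0$, consistent with the convention in Lemma~\ref{lem:forcing:Mbound}.

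\textbf{Summing the tails.} Write $M_{\rm tot}:=M_pC_{\rm der}+M_qC_{\rm val}+M_g$ and use $b_0^k\rho^{-k}=r^k$. For $k\geq N_{\rm res}+1$ we have $k(k-1)\geq N_{\rm res}(N_{\rm res}+1)$, and the geometric series gives $\sum_{k>N_{\rm res}}r^k=r^{N_{\rm res}+1}/(1-r)$. Thus
\begin{align*}
\sum_{k>N_{\rm res}}|\xi_k|b_0^{k-N_0}\leq\frac{M_{\rm tot}}{N_{\rm res}(N_{\rm res}+1)\,b_0^{N_0}}\frac{r^{N_{\rm res}+1}}{1-r}.
\end{align*}
Similarly $k-1\geq N_{\rm res}$, and pulling out $b_0^{-1}$ gives the derivative tail bound.

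All steps are routine. The only point requiring care is the coefficient extraction: one must check that $x^2\pa_x^2 f^{\rm ap}$ drops out and that the product indices make the $\rho^{m}$ weights match $C_{\rm val}$ and $C_{\rm der}$. This is where I expect any slip to occur.
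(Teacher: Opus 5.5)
Your proposal is correct and follows the paper's proof essentially step for step. The paper also drops $x^2\pa_x^2 f^{\rm ap}$ for $k>N_{\rm res}\geq N_0$, bounds $d_k=(g-pD_xf^{\rm ap}-qf^{\rm ap})_k$ by Cauchy's estimates, uses $\xi_k=d_k/(k(k-1))$, and sums the geometric tails with $k(k-1)\geq N_{\rm res}(N_{\rm res}+1)$ and $k-1\geq N_{\rm res}$. Your explicit convolution check, showing that the $\rho^m$ weights produce $C_{\rm der}$ and $C_{\rm val}$, is precisely the step the paper leaves implicit.
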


\begin{proof}
For $k>N_{\rm res}\geq N_0$, the coefficient of
$x^2\partial_x^2f^{\rm ap}$ vanishes. Set
\begin{align*}
 d_k:=\big(g-pD_xf^{\rm ap}-qf^{\rm ap}\big)_k.
\end{align*}
Cauchy's estimates give
\begin{align*}
 |d_k|
 \leq(M_pC_{\rm der}+M_qC_{\rm val}+M_g)\rho^{-k}.
\end{align*}
Since $\xi_k=\frac{d_k}{k(k-1)}$ by~\eqref{def:appB:R},
\begin{align*}
 \sum_{k>N_{\rm res}}|\xi_k|b_0^{k-N_0}
 &\leq
 \sum_{k>N_{\rm res}}
\frac{M_pC_{\rm der}+M_qC_{\rm val}+M_g}{k(k-1)}
\frac{r^k}{b_0^{N_0}}
\leq
\frac{M_pC_{\rm der}+M_qC_{\rm val}+M_g}
 {N_{\rm res}(N_{\rm res}+1)b_0^{N_0}}
 \frac{r^{N_{\rm res}+1}}{1-r},\\
 \sum_{k>N_{\rm res}}k|\xi_k|b_0^{k-1}
 &\leq
\sum_{k>N_{\rm res}}
\frac{M_pC_{\rm der}+M_qC_{\rm val}+M_g}{k-1}
\frac{r^k}{b_0}
\leq
\frac{M_pC_{\rm der}+M_qC_{\rm val}+M_g}
 {N_{\rm res}b_0}
 \frac{r^{N_{\rm res}+1}}{1-r}.
\end{align*}
\end{proof}

\subsection{Certified bounds for the matrix entries}

We now combine the residual estimate with the inputs established above. The
contraction constants come from the hand estimates of
Lemmas~\ref{lem:sup:00} and~\ref{lem:sup:-1}. The residual norms are computed
from the exact coefficients up to $N_{\rm res}$, plus the Cauchy tail of
Lemma~\ref{lem:tails:xi}.

\begin{lemma}
\label{lem:delta-cert}
Fix $N_0\geq 2$ and assume that the corresponding contraction constant
$L_f$ satisfies $L_f<1$. For every local representative $f$ in the sense
of~\eqref{eq:local-global} of each function among
$\Fholhte1{1}{\pm}$ and $\FholpAff{j}{\te^\pm}{k}$,
$j\in\{1,2\}$, $k\in\{0,1\}$, let $f^{\rm ap}$ be its order-$N_0$
Taylor approximation and let $\xi_f$ be the residual defined
by~\eqref{def:appB:R}. Then there are explicitly computable bounds
$\de_f,\de'_f>0$, valid for each fixed $\la\in\Slambd$ and for
$\te$ in the certified interval, such that
\begin{align}
    \bigl|f(b_0) - f^{\rm ap}(b_0)\bigr| \leq \de_f,
    \qquad
    \bigl|\pa_x f(b_0) - \pa_x f^{\rm ap}(b_0)\bigr| \leq \de'_f,
\end{align}
with $b_0=\frac12$. Explicitly,
\begin{align}\label{eq:delta-explicit}
    \de_f
    =
    \frac{b_0^{N_0}}{1-L_f} \bignorm{\xi_f}_{\mathcal{A}_{N_0}},
    \qquad
    \de'_f
    =
    \frac{N_0 L_f b_0^{N_0-1}}{1-L_f} \bignorm{\xi_f}_{\mathcal{A}_{N_0}}
    +
    \bignorm{\pa_x \xi_f}_{L^\infty([0,b_0])},
\end{align}
where $L_f$ is the contraction constant~\eqref{appB:L} for the corresponding
local operator.
\end{lemma}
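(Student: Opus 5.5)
The bounds follow by applying Lemma~\ref{lem:sec_ord:LS} to each local representative $f$ on $[0,b_0]$, and then showing that the two norms of $\xi_f$ appearing in~\eqref{eq:delta-explicit} can be enclosed by a finite computation plus the tails of Lemma~\ref{lem:tails:xi}.

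First I identify the equation. In the chart near $z=0$ (with $x=z$), $f$ solves $\mathcal L_0 f=g_{0,j}$, or the homogeneous version. In the chart near $z=1$ (with $x=1-z$), $f$ solves $\mathcal L_1 f=g_{1,j}$, or the homogeneous equation when $f$ is the local representative of $\Fholhte1{1}\pm$. This comes from Lemmas~\ref{lem:loc:aff0}, \ref{lem:loc:aff1} and~\ref{lem:forcing:Mbound}. Each such $f$ is analytic on $|x|<1$, which contains $[0,b_0]$, by the remark after Lemma~\ref{lem:loc:1} and by Lemma~\ref{lem:aff:rec}.

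Next I choose $f^{\rm ap}$ to be the Taylor polynomial of degree $N_0-1$. Its coefficients are produced exactly by the recurrences~\eqref{rec:1} and~\eqref{rec:aff}, so it matches $f^{(k)}(0)$ for $k<N_0$, as Lemma~\ref{lem:sec_ord:LS} requires. The contraction constant $L_f=\ell$ is bounded uniformly in $\te\in(0,1)$ and $\la\in\Slambd$ by Lemma~\ref{lem:sup:00} or Lemma~\ref{lem:sup:-1}. Hence $L_f<1$ holds once $N_0\geq 10$, and this is the standing assumption.

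With these inputs, \eqref{appB:Linfbound} gives $|f(b_0)-f^{\rm ap}(b_0)|\leq\|\tau\|_{L^\infty}\leq\de_f$. Likewise \eqref{appB:Eprimebound} evaluated at $b_0$ gives the bound $\de'_f$. This is exactly~\eqref{eq:delta-explicit}.

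It remains to make the norms of $\xi_f$ explicitly computable. Since $g-\mathcal L f^{\rm ap}$ vanishes to order $N_0$ at $0$, we have $\xi=\sum_{k\geq N_0}\xi_k x^k$ with $\xi_k=d_k/(k(k-1))$. I then use two elementary inequalities on $[0,b_0]$:
\begin{align*}
\|\xi\|_{\mathcal A_{N_0}}\leq\sum_{k\geq N_0}|\xi_k|b_0^{k-N_0},
\qquad
\|\pa_x\xi\|_{L^\infty}\leq\sum_{k\geq N_0}k|\xi_k|b_0^{k-1}.
\end{align*}
I split each sum at $N_{\rm res}$. The finitely many $d_k$ with $k\leq N_{\rm res}$ are finite convolutions of the Taylor coefficients of $p,q,g$ (obtained from the partial fractions~\eqref{PQfracform}) with those of $f^{\rm ap}$, so interval arithmetic encloses them. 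The parts with $k>N_{\rm res}$ are bounded by Lemma~\ref{lem:tails:xi}, using $M_p,M_q$ from Lemma~\ref{lem:cau:Mbound:00} or Lemma~\ref{lem:cau:Mbound:m1} and $M_g$ from Lemma~\ref{lem:forcing:Mbound}. All of these are explicit at any $\rho\in(b_0,1)$. The quantities $C_{\rm val}$ and $C_{\rm der}$ are finite sums of the computed coefficients.

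The main obstacle is uniformity when $\te$ ranges over the interval $J$ rather than a single point. The recurrences have to be run in interval arithmetic with $\te$ an interval, and the resulting enclosures must stay finite. This requires checking that the indicial factors, $k(k-2+\la)$ at $z=1$ and $(k-1+\te)(k+1+\te)$ at $z=0$, are bounded away from zero on the whole parameter box. That holds because $\la\in\Slambd$ is not an integer and $\te\in(0,1)$. Once this is verified, every bound above is an explicit interval function of $(\la,\te)$, which yields $\de_f$ and $\de'_f$.
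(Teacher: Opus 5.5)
Your proposal is correct and follows essentially the same route as the paper. You apply Lemma~\ref{lem:sec_ord:LS} to each local representative, take the contraction bound from Lemmas~\ref{lem:sup:00} and~\ref{lem:sup:-1}, and enclose $\bignorm{\xi_f}_{\mathcal A_{N_0}}$ and $\bignorm{\pa_x\xi_f}_{L^\infty}$ by finite sums up to $N_{\rm res}$ plus the Cauchy tail of Lemma~\ref{lem:tails:xi}; the only inessential difference is that the paper computes these coefficients exactly as rational functions over $\mathrm{Frac}(\Q[\te])$ and encloses them afterwards, rather than running the recurrences in interval arithmetic with $\te$ an interval.
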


\begin{proof}
Each local representative $f$ is analytic near $x=0$ and satisfies
$\mathcal{L}_{z_0}f=g$ with $\mathcal{L}_{z_0}\in\{\mathcal{L}_{1},
\mathcal{L}_0\}$, as described in Section~\ref{sec:app:apply}. The
approximation $f^{\rm ap}$ matches the first $N_0$ Taylor coefficients of
$f$ by construction, so Lemma~\ref{lem:sec_ord:LS} applies. Its value and
derivative estimates give~\eqref{eq:delta-explicit}. The condition
$L_f<1$ is guaranteed by Lemmas~\ref{lem:sup:00} and~\ref{lem:sup:-1} for
the values of $N_0$ used in the proof. The quantities
$\bignorm{\xi_f}_{\mathcal{A}_{N_0}}$ and
$\bignorm{\pa_x\xi_f}_{L^\infty}$ are then obtained from the exact finite
sums and the Cauchy tail of Lemma~\ref{lem:tails:xi}.
\end{proof}

\begin{corollary}
\label{cor:matrix-enclose}
    Let the local functions defining the entries of $\Mcrit$ be ordered as in
    Definition~\ref{def:Mat}. For each entry of $\Mcrit$, let
    $\varepsilon_{jk}$ be the sum of the corresponding bounds $\de_f$ or
    $\de'_f$ from Lemma~\ref{lem:delta-cert}, according to whether the entry
    involves values or derivatives and whether contributions from both singular
    points are present. Then the true matrix $\Mcrit$ is enclosed
    entrywise by the truncated matrix $\Mcritap$ enlarged by these
    radii:
    \begin{align*}
        \bigl|(\Mcrit)_{jk}-(\Mcritap)_{jk}\bigr| \leq \varepsilon_{jk}.
    \end{align*}
\end{corollary}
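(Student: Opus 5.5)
The plan is to assemble the entrywise enclosure directly from Definition~\ref{def:Mat}, the local--global relation~\eqref{eq:local-global}, and Lemma~\ref{lem:delta-cert}. First I would write out each entry of $\Mcrit$ explicitly. The first two columns are $\Fholhte1{1}{\pm}(x_0)$ and $\pa_z\Fholhte1{1}{\pm}(x_0)$, or zero. The last two columns are differences $\FholpAff{j}{\te^\pm}{1}-\FholpAff{j}{\te^\pm}{0}$ evaluated at $x_0$, or the $\pa_z$-derivatives of these differences. The zero blocks are exact, so I would set $\varepsilon_{jk}=0$ there. The approximate matrix $\Mcritap$ is defined by the same formulas, with each global function $F$ replaced by its local truncation $f^{\rm ap}$ and each derivative by $\sigma\pa_x f^{\rm ap}$, using the chart sign $\sigma$ of~\eqref{eq:local-global}.

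Next, for an entry involving a single function (the $h_\pm$ columns), I would use~\eqref{eq:local-global}. It gives $F(x_0)=f(b_0)$ and $\pa_zF(x_0)=\sigma\pa_xf(b_0)$ with $|\sigma|=1$. Lemma~\ref{lem:delta-cert} then bounds the error by $\de_f$ for a value entry and by $\de'_f$ for a derivative entry. The sign $\sigma$ does not affect the absolute value.

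For an entry of the $M_\pm$ blocks, the entry is the difference of the value (or $z$-derivative) of a function represented in the chart at $z=1$ and of one represented in the chart at $z=0$. Both are evaluated at $z=x_0$, which corresponds to $x=b_0$ in each chart. Here I would apply the triangle inequality:
\[
|(F_1-F_0)(x_0)-(f_1^{\rm ap}(b_0)-f_0^{\rm ap}(b_0))|\le \de_{f_1}+\de_{f_0},
\]
and similarly with $\de'$ for the derivative rows, where the signs $\sigma=\mp1$ are absorbed into the definition of $\Mcritap$. This sum is exactly the prescribed $\varepsilon_{jk}$.

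The only delicate point is bookkeeping rather than analysis. I must check that the analytic continuation implicit in Definition~\ref{def:Mat} agrees with the local Taylor representative evaluated at $b_0=\frac12$. This holds because each local series has radius of convergence at least $1>\frac12$, by Lemma~\ref{lem:aff:rec} and the remark after Lemma~\ref{lem:loc:1}, so no continuation beyond the disk of convergence is needed. I must also check that the normalizations $\phi_1(1)=1$ and $\Fholp1(1)=0$ used in the code match those in the definition. With these consistency checks, the corollary follows.
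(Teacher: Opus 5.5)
Your proposal is correct and is the argument the paper has in mind: the corollary is stated without a separate proof, as an immediate consequence of Lemma~\ref{lem:delta-cert}. Each entry of $\Mcrit$ is either exactly zero, or a single value or derivative bounded via \eqref{eq:local-global} and Lemma~\ref{lem:delta-cert} (the chart sign $\sigma=\pm1$ does not affect absolute values), or a difference of one function from each chart, which the triangle inequality handles. Your radius-of-convergence check is harmless but not needed, since Lemma~\ref{lem:sec_ord:LS} only uses analyticity of the continued local representative on $[0,b_0]$, and the Taylor truncation depends only on the germ at the base point.
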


\section{Computer-assisted proofs}
\label{sec:appendix:impl}

This appendix describes the computer-assisted proofs of Lemmas~\ref{lem:negposjump} and~\ref{lem:h1:core:cert}. We explain how the programs represent the quantities appearing in these lemmas, describe the main routines, and give the verification procedure for each result. The analytic estimates used to control the errors in Lemma~\ref{lem:negposjump} are established in Appendices~\ref{sec:appendix:frobenius} and~\ref{sec:appendix:errors}.

The source code is available at \getvar{linkdgreg}. The programs are written in Python for SageMath~10.9. Symbolic computations use exact rational arithmetic. Numerical evaluations use Arb ball arithmetic through \verb|RealBallField| at $\getvar{proof_bits}$ bits of precision. A real ball represents an interval containing the exact quantity, and arithmetic operations preserve this inclusion. A numerical inequality is accepted only when the resulting enclosure proves it on the entire parameter interval under consideration.

\subsection{Supplementary code and computational data}

The supplementary files are organized according to the two lemmas being verified. The data needed for the proofs are constructed by the programs from the equations and rational parameters specified below.

\paragraph{Files for the determinant certificate.}
The construction of the matrix $\Mcrit$ and its residuals is implemented in \verb|local_fuchs.py| and \verb|cert_residuals.py|. The modules \verb|cert_eval.py| and \verb|cert_det_post.py| enclose the truncation errors and the determinant. The covering of the required parameter set is implemented in \verb|cert_proof.py|. Finally, \verb|run_proof.py| runs the symbolic checks in \verb|verify_residuals.py| and then the determinant certificate.

For this certificate, the parameters are
\begin{align*}
    N&=\getvar{prime_split_N}, &
    N_0&=\getvar{proof_n0}, &
    N_{\rm res}&=\getvar{proof_nres},\\
    b_0&=\getvar{proof_bzero}, &
    \rho&=\getvar{proof_rho}, &
    \texttt{center}&=\getvar{proof_center}.
\end{align*}
Here $N$ is the prime threshold in Lemma~\ref{lem:cover}, $N_0$ is the number of Taylor coefficients retained in each local approximation, and $N_{\rm res}$ is the largest residual index computed explicitly. The point $b_0$ is the local coordinate of the matching point, and $\rho$ is the radius used for the Cauchy estimates. The two values of $\la$ are $\getvar{low_bnd_eig}$ and $\getvar{upp_bnd_eig}$.

\paragraph{Local solution data.}
For each fixed rational value of $\la$, the local Taylor coefficients are elements of
\begin{align*}
    K=\mathrm{Frac}(\Q[\te]).
\end{align*}
The program uses the same global coordinate $z$ as the manuscript. The keys \verb|p1| and \verb|00| correspond to $z=1$ and $z=0$, with local coordinates $x=1-z$ and $x=z$, respectively. Both charts reach the matching point at $x=\frac12$. The keys \verb|the| and \verb|eht| respectively represent the branches $\te$ and $1-\te$.

For each chart and branch, \verb|all_res_sym()| returns a block containing the coefficients of the approximations $f^{\rm ap}$, the coefficients $\xi_k$ of their residuals, and the partial fraction coefficients \verb|term_pp| and \verb|term_qq|. The function keys \verb|phi1|, \verb|fp1|, and \verb|fp2| identify the normalized homogeneous solution and the two particular solutions. Only \verb|fp1| and \verb|fp2| are needed at $z=0$.

Each approximation has the form
\begin{align*}
    f^{\rm ap}(x)=\sum_{k=0}^{N_0-1}f_k x^k.
\end{align*}
For each such function, the program computes the finite sequence $(\xi_0,\ldots,\xi_{N_{\rm res}})$ exactly over $K$. Its entries with $k<N_0$ vanish by construction. The full residual generally has infinitely many nonzero coefficients. The contribution of the coefficients with $k>N_{\rm res}$ is bounded separately by Lemma~\ref{lem:tails:xi}.

\paragraph{Files for the $H^1$ certificate.}
The modules \verb|cert_near_0.py| and \verb|cert_away_0.py| verify the two parts of Lemma~\ref{lem:h1:core:cert}. Both use the interval matrix test \verb|certify_maxeig_below()| from \verb|cert_bloch.py|. The driver \verb|run_h1_metrics.py| calls both coverings with $r=\frac{11}{10}$ and $200$ bits of precision. It constructs the required matrices during the run and verifies every accepted interval. The two coverings use $m=8$ on $\big(0,\frac{13}{40}\big]$ and $m=4$ on $[\frac{13}{40},\frac12]$.

\subsection{Objects, methods and routines}

\paragraph{Construction of the local solutions.}
The routines in \verb|local_fuchs.py| construct the normalized local equations and solve their coefficient recurrences. The global partial fraction identities are
\begin{subequations}
\label{PQfracform}
\begin{align}
    \frac{\Pglo(z)}{z(z^2-1)}
    &=\frac{1+2\te}{z}+\frac{\la-1}{z-1}-\frac{\la+1}{z+1},\\
    \frac{\Qglo(z)}{z^2(z^2-1)}
    &=-\frac{2\la\te}{z}-\frac{1-\te^2}{z^2}
      +\frac{1-\te+\la\te}{z-1}-\frac{1-\te-\la\te}{z+1}.
\end{align}
\end{subequations}
The affine terms satisfy
\begin{align*}
    \frac{\Affterm1\te(z)}{z^2(z^2-1)}
    &=-\frac{\te(2-\te)}{2}
      \left(\frac2z-\frac1{z-1}-\frac1{z+1}\right),\\
    \frac{\Affterm2\te(z)}{z^2(z^2-1)}
    &=-\frac{1-\te^2}{2}
      \left(\frac2{z^2}-\frac1{z-1}+\frac1{z+1}\right).
\end{align*}
The routines \verb|default_parameter_matrices()| and \verb|specialize_terms()| produce these coefficients for each branch. The routines \verb|local_coeffs_at_p1()| and \verb|local_coeffs_at_0()| then construct the corresponding normalized local equations. The signs of the affine terms agree with Definition~\ref{def:Mat}.

The homogeneous approximation is computed by \verb|hom_sol_fro_const()| using the Frobenius recurrence. The particular approximations are computed directly by \verb|particular_sol_r1_is_0_direct()| at $z=1$ and \verb|particular_sol_ordinary_direct()| at $z=0$. These routines use the coefficient recurrence of the local equation, with constant coefficient zero for the particular solutions at $z=1$. The routine \verb|symbolic_matrix_A()| assembles the values and derivatives at the matching point in the order prescribed by Definition~\ref{def:Mat}. Global derivatives are obtained as $-\partial_x$ in the first chart and $\partial_x$ in the second chart.

\paragraph{Residual bounds.}
The module \verb|cert_eval.py| implements the estimates of Appendix~\ref{sec:appendix:errors} for inputs enclosed in real balls. Its main routines are as follows.
\begin{itemize}
    \item \verb|L_bound()| computes the rational upper bound for the contraction constant in~\eqref{appB:L}, using Lemmas~\ref{lem:sup:00} and~\ref{lem:sup:-1}.
    \item \verb|rational_M_00()| and \verb|rational_M_p1()| compute $M_p(\rho)$ and $M_q(\rho)$ from Lemmas~\ref{lem:cau:Mbound:00} and~\ref{lem:cau:Mbound:m1}. The routine \verb|forcing_M()| computes the forcing bound $M_g$ of Lemma~\ref{lem:forcing:Mbound}.
    \item \verb|fapprox_norms()| computes $C_{\rm val}$ and $C_{\rm der}$ from the coefficients of $f^{\rm ap}$. The routine \verb|xi_tails()| combines these with $M_p$, $M_q$, and $M_g$ to bound the residual tails in Lemma~\ref{lem:tails:xi}.
    \item \verb|norm_AN0()| and \verb|norm_deriv()| compute the finite sums involving $\xi_k$ for $N_0\leq k\leq N_{\rm res}$.
    \item \verb|delta_block()| adds the finite and tail contributions, checks that the contraction bound is less than one, and applies Lemma~\ref{lem:delta-cert}. Its output is the pair $(\de_f,\de'_f)$ bounding the errors in the value and derivative of $f$.
\end{itemize}

\paragraph{Taylor models for rational functions.}
The entries of the truncated matching matrix are rational functions of $\te$. To enclose them on $[c-r,c+r]$, \verb|_entry_tm()| in \verb|cert_det_post.py| constructs a Taylor model of order one. Write an entry as $\frac{u}{v}$, where $u,v\in\Q[\te]$, and set $t=\te-c$. Exact polynomial arithmetic gives
\begin{align*}
    u(c+t)-(a_0+a_1t)v(c+t)=t^2h(t),
\end{align*}
where $a_0$ and $a_1$ are the first two Taylor coefficients of $\frac{u}{v}$. If $v(c+t)=\sum_j v_jt^j$ and $h(t)=\sum_j h_jt^j$, the program computes
\begin{align*}
    d=|v_0|-\sum_{j\geq1}|v_j|r^j.
\end{align*}
When $d>0$, the remainder is bounded by
\begin{align*}
    \left|\frac{u(c+t)}{v(c+t)}-a_0-a_1t\right|
    \leq \frac{r^2}{d}\sum_j|h_j|r^j,
    \qquad |t|\leq r.
\end{align*}
If the denominator bound is not positive, the interval must be subdivided. At a single rational parameter value, the entry is evaluated directly.

The class \verb|TM| represents the polynomial part and an interval remainder, and implements addition and multiplication with certified remainders. The exact rational coefficients are enclosed in real balls before the models are combined. The routine \verb|_tm_det()| computes a determinant by cofactor expansion in this arithmetic, and \verb|enclose()| returns an interval containing all its values. The routine \verb|eval_series_tight()| also uses shifted numerators and denominators to enclose the coefficients needed in the residual estimates.

\paragraph{Interval matrix test.}
The routine \verb|certify_maxeig_below()| tests an upper bound for the largest eigenvalue of a real symmetric matrix using interval $LDL^\top$ factorization. In the $H^1$ certificate it is applied to $-H$ with upper bound zero, where $H$ is the matrix in~\eqref{eqn:core:lmi:psd}. The test succeeds when all pivot enclosures for $H$ are strictly positive. This proves positive definiteness and hence the required nonnegative quadratic form. An undecided pivot causes the interval test to fail and the covering routine to subdivide the parameter interval.

\paragraph{Adaptive subdivision.}
The covering routines start with a list containing the entire rational parameter interval. They remove one interval, enclose the required quantity over it, and accept the interval if the desired inequality is certified. Otherwise they bisect it and return both halves to the list. The accepted intervals together with the pending intervals always cover the original domain. A successful return requires that the pending list be empty. Bounds on the subdivision depth or the number of intervals cause an error if the verification cannot be completed.

\subsection{Details of the proofs}

\begin{proofsth}{Lemma~\ref{lem:negposjump}}
Fix one of the two rational endpoints $\la\in\Slambd$. The routine \verb|build_post()| constructs
\begin{align*}
    \Mcritap=\texttt{symbolic\_matrix\_A}(N_0-1,\la),
\end{align*}
obtained by replacing every local solution in Definition~\ref{def:Mat} by its polynomial approximation. The routine \verb|all_res_sym()| constructs the corresponding residual data. Both computations are exact over $K$ and are performed once for each value of $\la$.

Let $I$ be a parameter interval. The Taylor model determinant gives an enclosure $\nu^{\rm ap}(I)$ containing $\det\Mcritap$ for all $\te\in I$. Next, \verb|delta_per_function()| encloses the residual coefficients, approximation coefficients, and partial fraction coefficients on $I$. It passes these to \verb|delta_block()| to obtain the bounds $(\de_f,\de'_f)$ from~\eqref{eq:delta-explicit}. The routine \verb|_radius_matrix()| arranges them into the entrywise radii $\varepsilon_{jk}$ of Corollary~\ref{cor:matrix-enclose}. Thus
\begin{align*}
    \Mcrit=\Mcritap+E,
    \qquad |E_{jk}|\leq\varepsilon_{jk}
\end{align*}
uniformly for $\te\in I$.

Choose certified upper bounds $S_j$ for the row sums $\sum_k|(\Mcritap)_{jk}|$, and set $\varepsilon_j=\sum_k\varepsilon_{jk}$. Multilinearity in the rows and the bound of a determinant by the product of the row $\ell^1$ norms give
\begin{align*}
    |\det\Mcrit-\det\Mcritap|
    \leq \Delta,
    \qquad
    \Delta=\prod_j(S_j+\varepsilon_j)-\prod_jS_j.
\end{align*}
Indeed, the difference of the determinants is the sum of all terms containing at least one row of $E$. Bounding each such term gives precisely the expansion on the right. The routine \verb|certified_det_post()| therefore returns an enclosure of
\begin{align*}
    \nu^{\rm ap}(I)+[-\Delta,\Delta].
\end{align*}
A strictly negative enclosure proves the required sign at $\la=\getvar{low_bnd_eig}$, and a strictly positive enclosure proves it at $\la=\getvar{upp_bnd_eig}$.

It remains to cover the parameter values in Lemma~\ref{lem:cover}. The routine \verb|prove()| first checks each prime $p<N$ at
\begin{align*}
    \te_p=\frac{\lfloor\frac p2\rfloor}{p}.
\end{align*}
This is the value returned by \verb|theta_of()| with \verb|center| equal to $\frac12$. The remaining primes are covered by the interval
\begin{align*}
    J=\left[\frac12-\frac1{2N},\frac12\right].
\end{align*}
The routine \verb|verify_det_sign()| applies adaptive bisection to $J$, accepting an interval only when the determinant enclosure has the required strict sign. It subdivides intervals for which the sign is undecided or a coefficient denominator has not been bounded away from zero. A certified opposite sign or an unresolved interval below the prescribed minimum width causes an error.

Before the covering begins, the program checks the contraction bounds for both local charts. It also checks finitely many parameter values as a diagnostic. An undecided point enclosure causes an error reporting the truncation order and arithmetic precision. These point checks accompany the subsequent certification over the whole interval.

The driver \verb|run_proof.py| performs the symbolic consistency checks and then both sign coverings with the parameters given above. The residual estimates and determinant enclosures on the accepted intervals provide the computer-assisted verification of the lemma.
\end{proofsth}

\begin{proofsth}{Lemma~\ref{lem:h1:core:cert}}
Set $r=\frac{11}{10}$. For each parameter interval, the program constructs an interval enclosure of the real symmetric matrix
\begin{align*}
    H_\te=
    \begin{pmatrix}
        M^{V_m}&M^{\rm bd}\\
        (M^{\rm bd})^\top&2(r-\ga)I_2
    \end{pmatrix}
\end{align*}
from~\eqref{eqn:core:lmi:psd}. The core has indices $-m,\ldots,m$, and the two boundary indices are $m+1$ and $-m-1$. The analytic tail bounds supply $\ga$. Thus the finite matrix test includes the interaction between the core and the infinite complement through the two boundary modes.

For part~\ref{item:h1core:i}, \verb|certify_near_0_lmi()| uses $T=L_{1,\te}^{{\rm fib}\,\Sigma}$, $m=8$, and
\begin{align*}
    \ga=\frac12\left(1+\frac1{64}\right),
\end{align*}
as supplied by Lemma~\ref{lem:tm:tail:hom}. The driver sets the core correction \verb|X| to zero, so the metric for the rescaled operator is the identity. The matrix entries are first constructed as exact rational functions of the parameter and then evaluated in ball arithmetic. The routine \verb|cover_near_0_region()| starts with $[0,\frac{13}{40}]$ and certifies its subintervals by the interval matrix test. The endpoint zero is handled by the rational expressions for the rescaled matrix. The conclusion needed here is the inequality for $0<\te\leq\frac{13}{40}$.

For part~\ref{item:h1core:ii}, \verb|certify_away_0_lmi()| uses $T=\LfibD_{1,\te}$, $m=4$, and
\begin{align*}
    \ga=\frac12\left(1+\frac3{16}\right),
\end{align*}
as supplied by Lemma~\ref{lem:tm:tail}. It evaluates the shifted-weight matrix entries directly in ball arithmetic. The routine \verb|cover_away_0_region()| applies the same interval matrix test and subdivision procedure on $[\frac{13}{40},\frac12]$.

In both cases, a subinterval is accepted only when every pivot enclosure is strictly positive. When all intervals have been accepted, the inequality holds throughout the corresponding parameter range. The driver \verb|run_h1_metrics.py| recomputes both coverings and reports success only after both have been completed. Together they verify the two assertions of the lemma.
\end{proofsth}

\paragraph{Reproducing the computations.}
From the directory containing the supplementary code, the two certificates are run with
\begin{verbatim}
sage -python run_proof.py
sage -python run_h1_metrics.py
\end{verbatim}
The first command includes the exact symbolic consistency checks and the determinant sign verification. The second verifies both parameter ranges of the $H^1$ estimate.

%%%%%%%%%%%%%%%%%%%%%%%%%%%%%%%%%
%%%%%%%%%%%%%%%%%%%%%%%%%%%%%%%%%

\subsection*{Acknowledgments}

The author acknowledges financial support from the Severo Ochoa Programme for Centers of Excellence (Grants \nolinkurl{CEX2019-000904-S} and \nolinkurl{CEX-2023-001347-S}) funded by \nolinkurl{MCIN/AEI/10.13039/501100011033}. 
Additional support was provided by Grants \nolinkurl{PID2020-114703GB-I00}, \nolinkurl{PID2024-158418NB-I00}, and \nolinkurl{RED2024-153842-T}, all funded by \nolinkurl{MICIU/AEI/10.13039/501100011033}.
The author also thanks \'Angel Castro and Javier G\'omez-Serrano for their valuable comments and discussions in the preparation of this work.

\subsection*{Generative AI statement}

During the preparation of this paper, the author used different Claude (Anthropic) and GPT (OpenAI) models to assist with copyediting and language revision throughout. Claude was used to port the author's MATLAB code to Sage for the computer-assisted verification, including the adaptations needed for interval arithmetic. In the nonlinear part, the mathematical arguments were developed jointly by the author, GPT, and Claude. The author reviewed and verified all content and takes full responsibility for it.

\setlength{\bibitemsep}{4pt}
{\setlength{\emergencystretch}{1em}\printbibliography}

\begin{flushleft}
	\bigskip
	Miguel M. G. Pascual-Caballo\\
	\textsc{Instituto de Ciencias Matem\'aticas\\
		28049 Madrid, Spain}\\
	\textit{E-mail address:} \nolinkurl{miguel.martinez@icmat.es}
\end{flushleft}

\end{document}